\newcommand{\boldf}[1]{\bar{#1}}
\newcommand{\struct}[1]{\mathbf{#1}}  
\newcommand{\of}[1]{\raisebox{0.5pt}{\textup{{\relsize{-2}[}\kern-0.025em{\relsize{-1}\raisebox{-0.5pt}{$#1$}}\kern-0.025em{\relsize{-2}]}\kern0.025em}}}
\newcommand{\ar}{\mathrm{ar}}
\DeclareMathOperator{\Min}{\mathrm{min}}  
\DeclareMathOperator{\Max}{\mathrm{max}}   
\newcommand{\Ideal}[2]{{\downarrow}_{#1}\llbracket #2 \rrbracket} 
\newcommand{\Filter}[2]{{\uparrow}_{#1}\llbracket #2 \rrbracket} 
 \newcommand{\supp}{\mathrm{supp}} 
\newcommand{\MS}{\ensuremath{\chi_{\boldf{0}}}} 
\newcommand{\Minset}{\mathrm{argmin}} 
\newcommand{\Minsystem}{\mathrm{SMS}} 
\DeclareMathOperator{\elel}{\mathrm{ll}}
\DeclareMathOperator{\dual}{\mathrm{dual}}
\DeclareMathOperator{\med}{\mathrm{twin}} 
\DeclareMathOperator{\mx}{\mathrm{mx}}
\DeclareMathOperator{\mi}{\mathrm{mi}}
\DeclareMathOperator{\pp}{\mathrm{pp}}
\DeclareMathOperator{\Aut}{Aut}
\DeclareMathOperator{\End}{End}   
\newcommand{\ifp}{\mathsf{ifp}}
\newcommand{\rk}{\mathsf{rk}}
\newcommand{\dfp}{\mathsf{dfp}}
\DeclareMathOperator{\pr}{proj}    
\DeclareMathOperator{\con}{ctrn}    
\DeclareMathOperator{\lex}{\mathrm{lex}}
\DeclareMathOperator{\Pol}{Pol}
\DeclareMathOperator{\CSP}{CSP}
\DeclareMathOperator{\LFP}{LFP}
\DeclareMathOperator{\FP}{FP}
\DeclareMathOperator{\FPC}{FPC}
\DeclareMathOperator{\FPR}{FPR}
\DeclareMathOperator{\Datalog}{Datalog} 
\DeclareMathOperator{\FO}{FO}  
\DeclareMathOperator{\FOC}{FOC}  
\DeclareMathOperator{\IFP}{IFP}
\DeclareMathOperator{\Ifp}{\mathsf{Ifp}}
\DeclareMathOperator{\Dfp}{\mathsf{Dfp}}
\DeclareMathOperator{\Op}{\mathsf{Op}}
\DeclareMathOperator{\Mat}{\mathsf{Mat}}
\newcommand{\blue}[1]{{\color{black}#1}}
\newcommand{\red}[1]{{\color{black}#1}}
\newcommand{\teal}[1]{{\color{black}#1}}
\theoremstyle{plain}
\newtheorem{theorem}{Theorem}[section]
\newtheorem{conjecture}[theorem]{Conjecture}
\newtheorem{proposition}[theorem]{Proposition}
\newtheorem{lemma}[theorem]{Lemma}
\newtheorem{corollary}[theorem]{Corollary}
\newtheorem{claim}[theorem]{Claim}  
\theoremstyle{definition}  
\newtheorem{definition}[theorem]{Definition}
\theoremstyle{remark}
\newtheorem{remark}[theorem]{Remark}
\newtheorem{example}[theorem]{Example}
\sffamily\contentslabel{6mm}\small\sffamily\uppercase}
\sffamily\contentslabel{12mm}\small\sffamily}
\begin{document} 
	
 \let\oldaddcontentsline\addcontentsline
 \renewcommand{\addcontentsline}[3]{}
	
	\title[On the Descriptive Complexity of Temporal Constraint Satisfaction Problems]{On the Descriptive Complexity of Temporal Constraint Satisfaction Problems}       
	\titlenote{An extended version of an article  which appeared in LICS 2020 \cite{bodirsky2020temporal}}

	\author{Manuel Bodirsky}
	\affiliation{%
		\institution{TU Dresden}
    	\country{Germany}
    	\city{Dresden}
    }
	\authornote{Manuel Bodirsky was supported by the European Research Council (ERC) under the European Union's Horizon 2020 research and innovation programme (Grant Agreement No 681988, CSP-Infinity).} 
	\email{manuel.bodirsky@tu-dresden.de} 
	
	\author{Jakub Rydval}
	\affiliation{%
		\institution{TU Dresden}
       \country{Germany}	
           	\city{Dresden}
}
		\authornote{Jakub Rydval was supported by the DFG Graduiertenkolleg 1763 (QuantLA)}
	\email{jakub.rydval@tu-dresden.de}

	
	\begin{abstract}  
		Finite-domain constraint satisfaction problems are either
		solvable by Datalog, or not even expressible
		in fixed-point logic with counting.
		The border between the two regimes can be described by a  universal-algebraic minor condition.
		For infinite-domain CSPs, the situation is more complicated even
		if the template structure of the CSP is model-theoretically tame.
		We prove that there is no Maltsev condition that characterizes
		Datalog already for the CSPs of
		first-order reducts of
		$({\mathbb Q};<)$; such CSPs are
		called \emph{temporal CSPs} and
		are of fundamental importance in
		infinite-domain constraint satisfaction.
		Our main result is a complete classification of temporal CSPs
		that can be expressed in one of the following logical formalisms: Datalog, fixed-point logic (with or without counting), or fixed-point logic with the \teal{mod-$2$} rank operator. The classification shows that many of
		the equivalent conditions in the finite fail to capture expressibility in Datalog or fixed-point logic already for temporal CSPs. 
	\end{abstract}	

 	  \begin{teaserfigure}    
    \includegraphics[width=\textwidth]{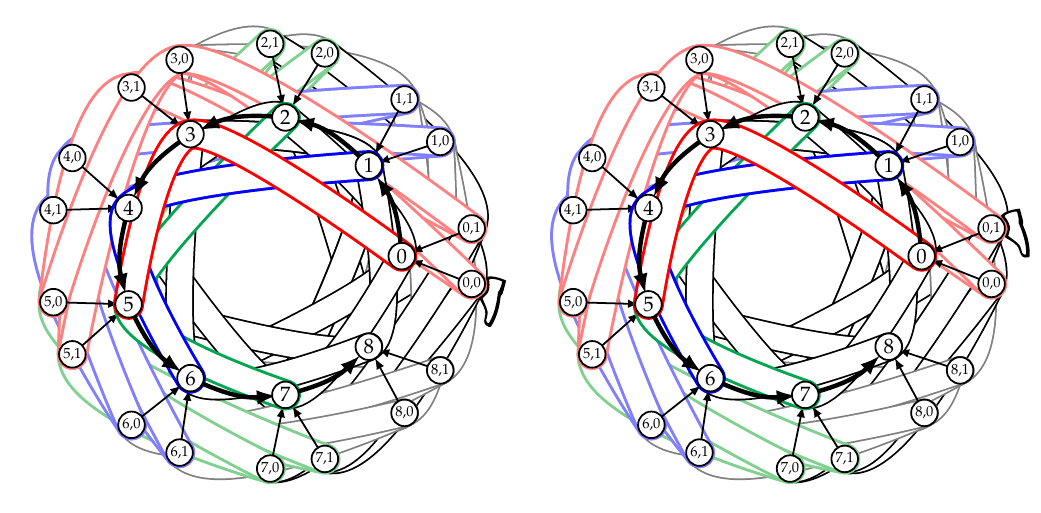}
 	\caption{\label{multipede}A pair of non-isomorphic expansions of an odd $2$-meager  multipede by a constant (shoe).} 
 \end{teaserfigure}
	 \maketitle
	 
 \renewcommand{\contentsname}{\raggedright\bfseries\textsf{CONTENTS}}
	   
\tableofcontents

\DontPrintSemicolon
\let\addcontentsline\oldaddcontentsline

\vspace{-3em}

\section{Introduction}   

The quest for finding a logic capturing Ptime is an ongoing challenge in the field of finite model theory originally motivated by questions from database theory \cite{grohe2008quest}. 
Ever since its proposal, most candidates are based on various extensions of \emph{fixed-point logic} ($\FP$), for example by \emph{counting} or by \emph{rank operators}. 
Though not a candidate for capturing Ptime, \emph{Datalog} is perhaps the most studied fragment of $\FP$. 
Datalog is particularly well-suited for formulating various algorithms
for solving \emph{constraint satisfaction problems} (CSPs); examples of famous
algorithms that can be formulated in Datalog are the \emph{arc consistency}
procedure and the \emph{path consistency} procedure.
In general, the expressive power of  $\FP$ is limited as it fails to express counting properties of finite structures such as even cardinality.
However, the combination
of a mechanism for iteration and a mechanism for counting
provided by \emph{fixed-point logic with counting} (FPC) is strong
enough to express most known algorithmic techniques leading
to polynomial-time procedures \cite{dawar2015nature,gradel2019rank}. In fact, all known
decision problems for finite structures that provably separate
FPC from Ptime are at least as hard as deciding solvability of
systems of equations over a \blue{fixed} non-trivial finite Abelian
group \cite{pakusa2015linear}. If we extend $\FPC$ further by the \emph{\blue{mod-2} rank operator}~\cite{gradel2019rank}, we
obtain the logic $\FPR_2$ which is known to capture Ptime for CSPs \blue{of two-element structures}~\cite{wang2018descriptive}.
\blue{Extending $\FPC$ by a single rank operator modulo a fixed prime number is not sufficient for expressing the solvability of equations modulo a different prime number~\cite{gradel2019rank,GradelGPP19}. 
	Instead, one typically considers the extension $\FPR$ by rank operators modulo every prime number.
	This logic is currently one of the two leading candidates for capturing Ptime for finite-domain CSPs, the other being \emph{choiceless polynomial time} (CPT).
	Outside of the scope of CSPs, the logic $\FPR$ has already been eliminated as a candidate and replaced with the more expressive extension $\FPR^{*}$ by the \emph{uniform rank operator}~\cite{gradel2019rank}. 
	It has recently been announced~\cite{lichter2021separating} that the satisfiability of mod-$2^i$  equations where $i$ is a part of the input, a problem that is clearly in Ptime, is not even expressible in $\FPR^{*}$.
	However, the results in~\cite{lichter2021separating} have no consequences for finite-domain CSPs \red{in the standard setting of structures with a \emph{finite} relational signature}.}

%
The first inexpressibility result for $\FPC$ is due to Cai, F\"urer, and Immerman for systems of equations over $\mathbb{Z}_{2}$ \cite{cai1992optimal}. In 2009, this result was extended to arbitrary non-trivial finite Abelian groups by Atserias, Bulatov, and Dawar \cite{atserias2009affine}; their work was formulated purely in the framework of CSPs. 
At around the same time, Barto and Kozik \cite{barto2014constraint} settled the closely related bounded width conjecture of Larose and Z\'adori \cite{larose2007bounded}.
%
A combination of both works together with results from \cite{maroti2008existence,kozik2015characterizations} yields the following theorem.
\begin{theorem}\label{finitedomainsituation} For a finite structure $\struct{B}$, the following seven statements are equivalent.  
\begin{enumerate} 
	\item $\CSP(\struct{B})$ is expressible in $\Datalog$~\textup{\cite{barto2014constraint}}.
	\item $\CSP(\struct{B})$ is expressible in $\FP$~\textup{\cite{atserias2009affine}}.
	\item $\CSP(\struct{B})$ is expressible in $\FPC$~\textup{\cite{atserias2009affine}}. 
	\item $\struct{B}$ does not pp-construct equations over any non-trivial finite Abelian group~\textup{\cite{larose2007bounded,AbilityToCount}}.
	\item $\struct{B}$ does not pp-construct equations over $\mathbb{Z}_p$ for any prime $p\geq 2$ \textup{\cite{barto2017polymorphisms,zhuk2021strong}}.
	\item $\struct{B}$ has weak near-unanimity polymorphisms for all but finitely many arities~\textup{\cite{maroti2008existence}}.
	\item  $\struct{B}$ has weak near-unanimity polymorphisms $f, g$ that satisfy  $	g(x, x, y) \approx f(x, x, x, y)$~\textup{\cite{kozik2015characterizations}}.
	\item \blue{$\struct{B}$ has $(n+3)$-polymorphisms for some $n$~\textup{\cite{olvsak2019generalizing}}.}
\end{enumerate} 
\end{theorem}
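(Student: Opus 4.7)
The plan is to establish the seven equivalences through a cycle of implications, treating them as a synthesis of the cited works rather than a monolithic argument. The easy direction $(1)\Rightarrow(2)\Rightarrow(3)$ is immediate from the syntactic containments $\Datalog \subseteq \FP \subseteq \FPC$. For $(3)\Rightarrow(4)$, I would invoke the theorem of Atserias, Bulatov, and Dawar: if $\struct{B}$ pp-constructs the CSP template for linear equations over a non-trivial finite Abelian group $\struct{A}$, then by the standard CSP reduction (pp-constructions yield first-order interpretations that compose with CSP) the non-solvability of systems of linear equations over $\struct{A}$ reduces to $\CSP(\struct{B})$, and the Cai--Fürer--Immerman construction lifted to arbitrary Abelian groups shows this target problem is not in $\FPC$; contrapositively, $\FPC$-expressibility forbids any such pp-construction.

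For $(4)\Leftrightarrow(5)$, the reduction from arbitrary non-trivial finite Abelian groups to prime cyclic groups $\mathbb{Z}_p$ follows from the primary decomposition and the fact that linear equations over a product of primary groups pp-construct linear equations over each factor, so nontrivial pp-constructibility can always be detected at some $\mathbb{Z}_p$; this is precisely the content of Theorem~47 in Barto--Krokhin--Willard. For $(4)\Leftrightarrow(6)$, I would combine the universal-algebraic dichotomy of Larose--Zádori with Maróti--McKenzie: not pp-constructing linear equations is equivalent to the variety generated by $\struct{B}$ omitting tame congruence types $\mathbf{1}$ and $\mathbf{2}$, which in turn is equivalent to the existence of weak near-unanimity polymorphisms of all but finitely many arities. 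The refinement $(6)\Leftrightarrow(7)$ is the Kozik--Krokhin--Valeriote--Willard strengthening, which produces compatible pairs $(f,g)$ satisfying $g(x,x,y)\approx f(x,x,x,y)$ out of the WNU family.

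The keystone and hardest implication is $(7)\Rightarrow(1)$, which is the Barto--Kozik \emph{bounded width theorem}: if $\struct{B}$ admits WNU polymorphisms satisfying the compatibility identity, then $\CSP(\struct{B})$ has bounded relational width, and hence solvability can be decided by a $k$-consistency procedure for some fixed $k$, which is expressible in $\Datalog$. The proof of this implication is the most involved part of the chain, relying on absorption theory and an intricate analysis of how WNU polymorphisms propagate through projections of instance solution sets, but for the present purpose it is invoked as a black box.

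The main obstacle in assembling this theorem is thus not any single implication but the reconciliation of the two very different frameworks: the descriptive-complexity lower bounds rest on pebble-game invariants for counting logic applied to the CFI construction, whereas the algebraic characterizations rest on tame congruence theory and absorption. The bridge in both directions is pp-constructibility, which is precisely the operation under which all of $\Datalog$-expressibility, $\FPC$-expressibility, omitting types $\{\mathbf{1},\mathbf{2}\}$, and existence of WNU polymorphisms are preserved; this explains why conditions $(1)$--$(7)$ collapse in the finite-domain setting, and it is also the key feature that will fail to transfer to the temporal setting discussed in the remainder of the paper.
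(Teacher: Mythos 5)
Your synthesis is correct. The paper itself does not provide a proof of Theorem~\ref{finitedomainsituation}: it simply collates the cited results, stating that ``a combination of both works together with results from \cite{maroti2008existence,kozik2015characterizations} yields the following theorem,'' with each item carrying its own citation, and your cycle of implications $(1)\Rightarrow(2)\Rightarrow(3)\Rightarrow(4)\Leftrightarrow(5)$, $(4)\Leftrightarrow(6)\Leftrightarrow(7)$, $(7)\Rightarrow(1)$ accurately reflects how those cited results chain together, including the correct attribution of the keystone bounded-width step to Barto--Kozik and the closing observation that pp-constructibility is the invariant uniting the descriptive-complexity and tame-congruence-theoretic characterizations.
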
 
\noindent In particular, $\Datalog$, $\FP$, and $\FPC$  are equally expressive when it comes to finite-domain CSPs.
This observation raises the question whether the above-mentioned fragments and extensions of FP might collapse on CSPs in general.
In fact, this question was already answered negatively in 2007 by Bodirsky and K\'ara in their investigation of the CSPs of first-order reducts of ${(\mathbb{Q};<)}$, also known as (infinite-domain) \emph{temporal CSPs} \cite{bodirsky2010complexity}; the decision problem $\CSP(\mathbb{Q};\mathrm{R}_{\Min})$, where  
\[ \mathrm{R}_{\Min}\coloneqq\{(x,y,z)\in \mathbb{Q}^{3} \mid y<x\vee z<x \}, \]
is provably not solvable by any $\Datalog$ program \cite{bodirsky2010fast} but it is expressible in FP, as we will see later. 
Since every CSP represents a class of finite structures whose complement is closed under homomorphisms, this simultaneously yields an alternative proof of a result from \cite{dawar2008datalog} stating that the homomorphism preservation theorem fails for FP.

Several famous NP-hard problems such as the \emph{Betweenness} problem
or the \emph{Cyclic Ordering} problem are temporal CSPs.
Temporal CSPs have been studied for example in artificial intelligence  \cite{nebel1995reasoning}, scheduling \cite{bodirsky2010fast}, and approximation \cite{guruswami2011beating}.
Random instances of temporal CSPs have been studied in \cite{goerdt2009random}.
Temporal CSPs fall into the larger class of CSPs of \emph{reducts of finitely bounded homogeneous structures}. It is an open problem whether all CSPs of reducts of finitely bounded homogeneous structures have a complexity dichotomy in the sense that they are in P or NP-complete (\blue{Conjecture~\ref{conjecture}}).
In this class, temporal CSPs play a particular role since they are among the few known cases where the important technique of reducing infinite-domain CSPs to finite-domain CSPs from \cite{bodirsky2016reducts} fails to provide any polynomial-time tractability results.

\subsection{\blue{Contributions}}
We present a complete classification of temporal CSPs that can be solved in $\Datalog$, $\FP$, $\FPC$, or $\FPR_{2}$.
The classification leads to the following sequence of inclusions for temporal CSPs:
$$\Datalog \subsetneq \FP = \FPC   \subsetneq \FPR_{2}.$$
Our results show that the expressibility of temporal CSPs in these logics can be characterised in terms of avoiding pp-constructibility of certain structures, namely $(\mathbb{Q};\mathrm{R}_{\Min})$, $(\mathbb{Q};\mathrm{X})$ where \[\mathrm{X}\coloneqq   \{(x,y,z)\in \mathbb{Q}^{3}\mid  x=y<z \vee y=z<x\vee z=x<y\},\]
and $(\{0,1\};1\textup{IN}3)$ where \[
1\textup{IN}3\coloneqq \{(1,0,0),(0,1,0),(0,0,1)\}. \]

\begin{theorem} \label{datalogmainresult} Let $\struct{B}$ be a temporal structure. The following are equivalent: 
\begin{enumerate} 
	\item $\CSP(\struct{B})$ is expressible in $\Datalog$.
	\item  $\struct{B}$ does not pp-construct $(\{0,1\};1\textup{IN}3)$ and  $(\mathbb{Q};\mathrm{R}_{\Min})$. 
	\item $\struct{B}$ is preserved by $\elel$ and $\dual\elel$, or by a constant operation.    
\end{enumerate} 
\end{theorem}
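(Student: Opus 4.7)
The overall strategy is to use the Bodirsky--K\'ara dichotomy for temporal CSPs as a scaffold. That classification lists a finite number of polymorphism types (constants together with operations such as $\elel$, $\dual\elel$, $\mi$, $\mx$, $\pp$, $\med$), each of which guarantees tractability, and declares every other temporal CSP NP-hard. The plan is to prove the cycle $(1) \Rightarrow (2) \Rightarrow (3) \Rightarrow (1)$, refining the dichotomy to isolate the Datalog-expressible cases.

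For $(1) \Rightarrow (2)$, I would invoke the standard fact that Datalog expressibility of CSPs is preserved under pp-constructions, together with known non-Datalog results for both witnesses: $\CSP(\{0,1\};1\textup{IN}3)$ is NP-complete, and $\CSP(\mathbb{Q};\mathrm{R}_{\Min})$ is not in Datalog by the already-cited Bodirsky--K\'ara result \cite{ll}. Hence if $\struct{B}$ pp-constructs either structure, $\CSP(\struct{B})$ cannot lie in Datalog.

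For $(2) \Rightarrow (3)$, I would argue contrapositively and work case-by-case through the dichotomy. Assuming $\struct{B}$ is not preserved by a constant, nor by $\elel$, nor by $\dual\elel$, the dichotomy places $\struct{B}$ in one of finitely many explicit classes; for each such class I would exhibit a concrete primitive positive construction witnessing a pp-construction of either $(\{0,1\};1\textup{IN}3)$ (when $\struct{B}$ turns out to be NP-hard) or of $(\mathbb{Q};\mathrm{R}_{\Min})$ (for the tractable classes falling outside condition~(3), namely those preserved by $\mi$, $\mx$, $\pp$, or $\med$ alone). Careful bookkeeping is needed to ensure every case of the dichotomy is covered by one of these two pp-constructions.

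For $(3) \Rightarrow (1)$, the constant case is immediate, since then every nonempty instance is trivially satisfiable by the constant assignment. The substantive content is to show that $\CSP(\struct{B})$ is expressible in Datalog whenever $\struct{B}$ is preserved by both $\elel$ and $\dual\elel$. My plan is to exhibit a concrete Datalog program---most naturally some fixed level of path- or $(k,\ell)$-consistency tailored to temporal constraints---and to prove that every locally consistent instance has a solution by a back-and-forth construction on $(\mathbb{Q};<)$, extending partial solutions by lex-minimal choices via $\elel$ and symmetrically by lex-maximal choices via $\dual\elel$. This last step is the principal obstacle: one must pin down the correct consistency level and verify that the back-and-forth never gets stuck, which hinges on a structural understanding of the relations jointly preserved by $\elel$ and $\dual\elel$. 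The argument will resemble the bounded-width machinery for finite CSPs in spirit, but has to cope with the infinite rational-order base and the fact that $\elel$ and $\dual\elel$ are not standard bounded-width witnesses.
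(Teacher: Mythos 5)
Your overall scaffold is right, and your treatment of $(1)\Rightarrow(2)$ matches the paper's argument exactly. However, for the other two implications you have genuine gaps.

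For $(3)\Rightarrow(1)$, the back-and-forth/consistency argument you sketch is precisely the place where you say you are stuck, and that is not a side issue: it is the heart of the implication. The paper does \emph{not} re-derive a bounded-width argument from scratch. Instead it passes through Ord-Horn definability: being preserved by $\elel$ and $\dual\elel$ (together with a pp-definition of $<$, which can be added harmlessly) forces, via Theorem~\ref{minordhorn}, preservation by \emph{every} binary injective operation on $\mathbb{Q}$ preserving $\leq$; by Proposition~\ref{ordhorn} this is equivalent to all relations being Ord-Horn definable; and Ord-Horn satisfiability is already known to be solvable in Datalog by Nebel and B\"urckert~\cite{nebel1995reasoning}. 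So the ``structural understanding of the relations jointly preserved by $\elel$ and $\dual\elel$'' you flag as missing is exactly Ord-Horn definability, and you should be citing this characterization rather than attempting to rebuild local consistency for the infinite base.

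For $(2)\Rightarrow(3)$, two problems. First, the unifying tool you are missing is again Theorem~\ref{minordhorn}: given that $<$ is pp-definable, it yields a clean dichotomy --- either $\struct{B}$ pp-defines $\mathrm{R}_{\Min}$ or $-\mathrm{R}_{\Min}$ (and these are homomorphically equivalent, hence pp-interconstructible), or $\struct{B}$ is preserved by every binary injective operation preserving $\leq$, in particular by $\elel$ and $\dual\elel$. You would not need to work separately through the $\Min$/$\mi$/$\mx$/$\pp$ cases; the theorem subsumes them. Second, and more importantly, you overlook the subcase in which $<$ is \emph{not} pp-definable in $\struct{B}$. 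In that case $\mathrm{R}_{\Min}$ might still fail to be pp-definable for a trivial reason, so Theorem~\ref{minordhorn} does not apply, and a fresh argument is needed. The paper handles this by showing (via Theorem~10.3.2 in~\cite{Bodirsky-HDR} and the exclusion of $\mathrm{Cycl}$, $\mathrm{Betw}$, $\mathrm{Sep}$) that $\struct{B}$ must be an equality constraint language, and then invoking results from~\cite{bodirsky2008complexity} to get a binary injective polymorphism, hence preservation by $\elel$ and $\dual\elel$. Your case analysis as stated would not cover this.
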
 
\begin{samepage} 
\begin{theorem} \label{mainresult} Let $\struct{B}$ be a temporal structure. The following are equivalent:
	\begin{enumerate}
		\item $\CSP(\struct{B})$ is expressible in $\FP$.
		\item $\CSP(\struct{B})$ is expressible in $\FPC$.
		\item $\struct{B}$ does not pp-construct $(\{0,1\};1\textup{IN}3)$ and $(\mathbb{Q};\mathrm{X})$.  
		\item $\struct{B}$ is preserved by $\Min$, $\mi$, $\elel$, the dual of one of these operations, or by a constant operation.
	\end{enumerate} 
\end{theorem}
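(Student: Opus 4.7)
The plan is to prove the cycle of implications $(1) \Rightarrow (2) \Rightarrow (3) \Rightarrow (4) \Rightarrow (1)$. The implication $(1) \Rightarrow (2)$ is immediate since $\FP$ is syntactically contained in $\FPC$. For $(2) \Rightarrow (3)$, I would first establish the general lemma that pp-constructibility preserves $\FPC$-expressibility of the CSP, so that it suffices to rule out $\CSP(\{0,1\};1\textup{IN}3)$ and $\CSP(\mathbb{Q};\mathrm{X})$ from $\FPC$. The first is a direct consequence of the Cai--F\"urer--Immerman and Atserias--Bulatov--Dawar results, since $1\textup{IN}3$ pp-constructs linear equations over $\mathbb{Z}_2$. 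The second is the novel inexpressibility claim and, I expect, the main technical hurdle of this direction: one has to produce a sequence of $\FPC$-indistinguishable pairs of finite $\mathrm{X}$-instances that differ in satisfiability. The most natural route is a CFI-like gadget construction transferred through the ternary cyclic relation $\mathrm{X}$, possibly by pp-constructing from $\mathrm{X}$ a structure whose $\FPC$-inexpressibility is already known. For $(3) \Rightarrow (4)$, I would rely on the polymorphism-clone classification for temporal structures due to Bodirsky and K\'ara: it supplies, for every temporal $\struct{B}$ not preserved by one of the six operations listed in (4), an explicit pp-construction of either $(\{0,1\};1\textup{IN}3)$ or $(\mathbb{Q};\mathrm{X})$, so this step should reduce to a careful inspection of the known clone lattice rather than a new idea.

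For $(4) \Rightarrow (1)$ I would treat each polymorphism class separately. The constant case is trivial. Preservation by $\Min$ or its dual already gives $\Datalog$-expressibility by Theorem~\ref{datalogmainresult}, and hence $\FP$-expressibility. The essential new work lies in the $\mi$, $\elel$, and dual-$\elel$ cases: arc-consistency is insufficient here, as witnessed by $\CSP(\mathbb{Q};\mathrm{R}_{\Min})$ in the introduction, and one must iterate a global reduction that repeatedly identifies variables forced to attain the minimum in some solution and propagates the resulting simplification. I would formalise each such iteration as an $\ifp$-program over the input structure, show that the fixed point is reached in polynomially many stages, and verify that it correctly characterises satisfiability of the instance. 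The interplay between this outer fixed-point iteration and the (Datalog-expressible) arc-consistency check used at each stage is what pushes the algorithms from $\Datalog$ into $\FP$ but keeps them within $\FP$.

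The two steps I expect to dominate the proof are (i) the $\FPC$-inexpressibility of $\CSP(\mathbb{Q};\mathrm{X})$, which does not appear to reduce to known abelian-group inexpressibility in an obvious way and likely requires a bespoke pebble-game or CFI-style indistinguishability argument tailored to the cyclic structure of $\mathrm{X}$; and (ii) the correctness proofs for the $\mi$ and $\elel$ algorithms once cast as $\FP$-formulas, since these are precisely the polymorphism classes sitting strictly between $\Datalog$ and $\FPC$, so both the inclusion in $\FP$ and the non-inclusion in $\Datalog$ need matching algorithmic and logical evidence.
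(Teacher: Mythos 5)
Your overall decomposition matches the paper's, but there are two concrete errors and one strategic misdirection worth flagging.

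First, in $(4)\Rightarrow(1)$ you assert that preservation by $\Min$ (or its dual) already yields $\Datalog$-expressibility via Theorem~\ref{datalogmainresult}. This is false, and in fact it inverts the paper's main point: $(\mathbb{Q};\mathrm{R}_{\Min})$ is preserved by $\Min$, yet $\CSP(\mathbb{Q};\mathrm{R}_{\Min})$ is precisely the canonical example of a temporal CSP expressible in $\FP$ but \emph{not} in $\Datalog$. Theorem~\ref{datalogmainresult} characterises $\Datalog$-expressibility by preservation under both $\elel$ and $\dual\elel$ (or a constant), not by $\Min$. The $\Min$ case genuinely requires the bespoke $\FP$ algorithm of Proposition~\ref{LFPminCorrectnessSoundness}, on the same footing as the $\mi$ and $\elel$ cases; your plan omits it. Relatedly, you cite $\CSP(\mathbb{Q};\mathrm{R}_{\Min})$ as the obstruction showing $\mi$ and $\elel$ lie beyond $\Datalog$, but $\mathrm{R}_{\Min}$ sits in the $\Min$-preserved class, so the example does not support the claim you attach it to.

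Second, you underestimate $(3)\Rightarrow(4)$. The Bodirsky--K\'ara dichotomy (Theorem~\ref{TCSPdichot}) only tells you that a structure not pp-constructing $(\{0,1\};1\textup{IN}3)$ is preserved by one of $\Min,\mi,\mx,\elel$, their duals, or a constant --- and $\mx$ is conspicuously absent from item~(4). Closing that gap requires showing that any temporal structure preserved by $\mx$ but not by $\Min$ or a constant pp-defines $\mathrm{X}$; this is Theorem~\ref{mixedmx} of the paper, a nontrivial new argument involving the linear-algebraic structure $\MS(R)$ of $\mx$-preserved relations, not a ``careful inspection of the known clone lattice.'' Finally, for $(2)\Rightarrow(3)$ you suggest deriving the $\FPC$-inexpressibility of $\CSP(\mathbb{Q};\mathrm{X})$ by pp-constructing from $\mathrm{X}$ a structure already known to be hard for $\FPC$; the paper explicitly shows (Corollary~\ref{nointerpretations}) that $(\mathbb{Q};\mathrm{X})$ pp-constructs \emph{no} $\struct{E}_{\mathscr{G},3}$, so that route is closed. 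Your fallback --- a tailored pebble-game / CFI-style indistinguishability argument --- is the direction the paper actually takes, via the multipede construction, so that hedge is sound; the primary suggestion is not.
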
 
\end{samepage}
\begin{theorem} \label{FPRclassification} 
Let $\struct{B}$ be a temporal structure. The following are equivalent:
\begin{enumerate} 
	\item $\CSP(\struct{B})$ is expressible in $\FPR_2$.
	\item $\struct{B}$ does not pp-construct $(\{0,1\};1\textup{IN}3)$.
	\item $\struct{B}$ is preserved by $\mx,$ $\Min$, $\mi$, $\elel$, the dual of one of these operations, or by a constant operation.
\end{enumerate} 
\end{theorem}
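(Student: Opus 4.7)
The plan is to establish the cycle $(1)\Rightarrow(2)\Rightarrow(3)\Rightarrow(1)$, mirroring the strategy already used for Theorems~\ref{datalogmainresult} and~\ref{mainresult}. The three ingredients are the preservation of $\FPR_2$-expressibility under pp-constructions, the Bodirsky--K\'ara classification of tractable temporal CSPs, and a reduction of the genuinely new polymorphism case to systems of linear equations over $\mathbb{Z}_2$.

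For $(1)\Rightarrow(2)$ I would argue that $\FPR_2$-expressibility transfers along pp-constructions: a pp-construction of $\struct{A}$ from $\struct{B}$ induces an $\FP$-definable, hence $\FPR_2$-definable, reduction from $\CSP(\struct{A})$ to $\CSP(\struct{B})$. It then suffices to exhibit one temporal structure whose CSP is provably outside $\FPR_2$, and the natural candidate is $(\{0,1\};1\textup{IN}3)$. Since $\FPR_2$ captures $\mathrm{Ptime}$ on Boolean CSPs~\cite{wang2018descriptive} and $1\textup{IN}3$-\textsc{Sat} is $\mathrm{NP}$-complete, this non-expressibility is immediate modulo $\mathrm{P}\neq\mathrm{NP}$; where available I would prefer to cite an unconditional Cai--F\"urer--Immerman-style lower bound for $1\textup{IN}3$ to keep the result unconditional.

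For $(2)\Rightarrow(3)$ I would invoke the Bodirsky--K\'ara classification of temporal CSPs: a temporal structure that does not pp-construct $(\{0,1\};1\textup{IN}3)$ is tractable and therefore preserved by a constant or by one of $\Min$, $\mi$, $\elel$, $\mx$, $\pp$, or a dual. The only case not already listed in~(3) is $\pp$ and its dual, and the remaining substep would be to show that any temporal structure admitting $\pp$ as a polymorphism also admits one of the operations in~(3); this amounts to a case analysis of how $\pp$ can act on the orbits of tuples in $(\mathbb{Q};<)$ and should force either a degeneration to a constant polymorphism or the presence of $\mx$ (or $\Min$, $\mi$, $\elel$, or a dual).

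The main technical content sits in $(3)\Rightarrow(1)$. For $\struct{B}$ preserved by $\Min$, $\mi$, $\elel$, a dual, or a constant, the CSP already lies in $\FP$ by Theorem~\ref{mainresult} and hence in $\FPR_2$. The genuinely new case is $\mx$ (and its dual), and here I would construct an $\FP$-definable reduction from $\CSP(\struct{B})$ to the problem of deciding consistency of a system of linear equations over $\mathbb{Z}_2$, the latter being the canonical problem in $\FPR_2$ handled by the Boolean rank operator. The main obstacle will be this reduction: isolating a suitable normal form for $\mx$-preserved temporal languages, showing that satisfiability of an instance is equivalent to the consistency of an associated $\mathbb{Z}_2$-linear system, and verifying that the passage from instance to system can be carried out inside $\FP$ so that the whole procedure can be expressed in $\FPR_2$.
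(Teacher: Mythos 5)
Your overall skeleton — the cycle $(1)\Rightarrow(2)\Rightarrow(3)\Rightarrow(1)$, with $(3)\Rightarrow(1)$ handled by splitting into the FP cases from Theorem~\ref{mainresult} plus the $\mx$ case reduced to a $\mathbb{Z}_2$-linear system — is the paper's approach, and your plan for $(3)\Rightarrow(1)$ is essentially correct. However, there is a genuine gap in $(1)\Rightarrow(2)$ and a smaller confusion in $(2)\Rightarrow(3)$.

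For $(1)\Rightarrow(2)$, the argument you give is conditional on $\mathrm{P}\neq\mathrm{NP}$, and the fallback you sketch cannot work as stated. A Cai--F\"urer--Immerman construction yields inexpressibility in $\FPC$, and its witness is precisely a system of linear equations over $\mathbb{Z}_2$ — but that problem \emph{is} expressible in $\FPR_2$, since solving $\mathbb{Z}_2$-systems is exactly what the Boolean rank operator is for. So no ``CFI-style lower bound for $1\textup{IN}3$'' in the naive sense can separate $\FPR_2$ from $\mathrm{Ptime}$. What the paper actually uses is that $(\{0,1\};1\textup{IN}3)$ pp-constructs $\struct{E}_{\mathbb{Z}_3,3}$, and then the Gr\"adel--Pakusa result that $\CSP(\struct{E}_{\mathbb{Z}_3,3})$ is \emph{not} in $\FPR_2$ (a generalized CFI argument over $\mathbb{Z}_3$, against which $\mathbb{Z}_2$-rank gives no leverage). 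This is the unconditional input your proof needs, and it is not a routine variant of the classical CFI bound; without naming it, the step is open.

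For $(2)\Rightarrow(3)$, your worry about an extra $\pp$ case is unfounded: Theorem~\ref{TCSPdichot} as stated already lists exactly $\Min$, $\mi$, $\mx$, $\elel$, their duals, and a constant — which is verbatim item~(3) of the theorem. The operation $\pp$ is an auxiliary polymorphism used in the algorithmic analysis (and is in fact implied by $\Min$, $\mi$, or $\mx$), not a separate branch of the dichotomy, so no case analysis for $\pp$ is required.
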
 
As a byproduct of our classification we get that all polynomial-time algorithms for temporal CSPs from \cite{bodirsky2010complexity} can be implemented in $\FPR_{2}$. 
Our results also show that every temporal CSP of a structure that pp-constructs $(\mathbb{Q};\mathrm{X})$ but not $(\{0,1\};1\textup{IN}3)$
is solvable in polynomial time, is not expressible in $\FPC$, and cannot encode systems of equations over any non-trivial finite Abelian group. 
Such temporal CSPs are \blue{equivalent to the following decision problem \teal{under} Datalog-reductions}:

\vbox{\medskip
\noindent \textbf{\textup{3-Ord-Xor-Sat}}  

\smallskip
\noindent  INPUT:  A finite homogeneous system of \blue{mod-2} equations of length $3$.

\noindent QUESTION:  Does every non-empty subset $E$ of the equations have a solution where at least one variable  in an equation from $E$ denotes the value $1$?
\medskip} 

\teal{As we will see, there exists a straightforward FP-reduction from 3-Ord-Xor-Sat to satisfiability of mod-$2$ equations.
However, it is unclear whether there exists an FP-reduction in the opposite direction.
In our inexpressibility result for 3-Ord-Xor-Sat, we use a Datalog-reduction from satisfiability of mod-$2$ equations restricted to those systems which have at most one solution.}
We have eliminated the following candidates for general algebraic criteria for expressibility of CSPs in FP  motivated by the articles \cite{atserias2009affine}, \cite{bodirsky2016reducts}, and \cite{barto2014constraint}, respectively.
\blue{\begin{theorem}   $\CSP(\mathbb{Q};\mathrm{X})$ is inexpressible in FPC, but
	\begin{enumerate} 
		\item $(\mathbb{Q};\mathrm{X})$ does not pp-construct equations over any non-trivial finite Abelian group,
		\item $(\mathbb{Q};\mathrm{X})$ has pseudo-WNU polymorphisms $f,g$ that satisfy  $ g(x, x, y) \approx f(x, x, x, y)$,
		\item $(\mathbb{Q};\mathrm{X})$ has a $k$-ary pseudo-WNU polymorphism for all but finitely many $k\in \mathbb{N}$.
	\end{enumerate}
\end{theorem}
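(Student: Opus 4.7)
The plan is to use $\struct{B} \coloneqq (\mathbb{Q};\mathrm{X})$ as a single temporal structure that simultaneously witnesses all three parts of the theorem. Inexpressibility of $\CSP(\struct{B})$ in $\FPC$ is immediate from Theorem~\ref{mainresult}(3), since $\struct{B}$ trivially pp-constructs itself. Moreover, Theorem~\ref{FPRclassification} guarantees that $\CSP(\struct{B})$ lies in $\FPR_2$ and that $\struct{B}$ is preserved by $\mx$ (or its dual); this structural fact drives each of the three verifications.

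For item~(1), the plan is to invoke the consequence of the main classification already recorded in the introduction: every temporal structure that pp-constructs $(\mathbb{Q};\mathrm{X})$ but not $(\{0,1\};1\textup{IN}3)$ fails to encode systems of linear equations over any non-trivial finite Abelian group. Since $\struct{B}$ pp-constructs $(\mathbb{Q};\mathrm{X})$ trivially and fails to pp-construct $(\{0,1\};1\textup{IN}3)$ by Theorem~\ref{FPRclassification} (as $\CSP(\struct{B}) \in \FPR_2$), the claim follows. Alternatively, one can argue directly at the level of minions: a pp-construction of affine equations over a non-trivial finite Abelian group would induce a clone homomorphism from the polymorphism clone of $\struct{B}$ to an affine clone, but $\mx$ satisfies identities modulo $\Aut(\mathbb{Q};<)$ that no affine operation satisfies.

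For items~(2) and~(3), the plan is to construct pseudo-WNU polymorphisms of $\struct{B}$ of arbitrarily large arities by symmetrising $\mx$ under the action of $\Aut(\mathbb{Q};<)$. The relevant observation is that for each $k \geq 3$ and each $i \in \{1,\ldots,k\}$, all tuples of the form $(x,\ldots,x,y,x,\ldots,x) \in \mathbb{Q}^k$ with $y$ in coordinate $i$ lie in a single orbit of $\Aut(\mathbb{Q};<)$, so the pseudo-WNU identities reduce to orbit-equality statements on the outputs $\mx(x,\ldots,y,\ldots,x)$. A suitable symmetrisation of $\mx$ then satisfies these orbit-equalities for every sufficiently large $k$, proving item~(3). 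For item~(2), one additionally enforces the linear identity $g(x,x,y) \approx f(x,x,x,y)$ by taking $g$ to be obtained from a specific identification of arguments in a $4$-ary pseudo-WNU built from $\mx$, absorbing any residual discrepancy into an automorphism of $\struct{B}$.

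The main obstacle is the explicit construction and verification of the pseudo-WNU polymorphisms: one must show that the symmetrised operations still preserve the relation $\mathrm{X}$ and that the required linear identity genuinely holds up to post-composition with automorphisms. Both points rely on the combinatorics of $\mx$ and on the high transitivity of $\Aut(\mathbb{Q};<)$ on tuples of fixed order type, as developed in earlier sections of the paper. Item~(1), by contrast, is comparatively clean, reducing once and for all to the pp-construction dichotomy between $(\mathbb{Q};\mathrm{X})$ and $(\{0,1\};1\textup{IN}3)$ supplied by the main classification.
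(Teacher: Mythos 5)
Your choice of $(\mathbb{Q};\mathrm{X})$ as the single witnessing structure is correct, and the FPC-inexpressibility step is fine (it is exactly Theorem~\ref{FPxorsat}, which Theorem~\ref{mainresult} packages). However, the rest of the proposal has genuine gaps.

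For item~(1), your primary argument is circular: the ``consequence recorded in the introduction'' is precisely a restatement of what the theorem asserts, and the introduction derives it from Corollary~\ref{nointerpretations}, which is the thing you would actually have to prove. Your fallback via $\FPR_2$ also fails: $\CSP(\struct{E}_{\mathbb{Z}_2,3})$ \emph{is} expressible in $\FPR_2$, so knowing that $\CSP(\mathbb{Q};\mathrm{X})\in\FPR_2$ does not rule out pp-constructing Boolean linear equations. Indeed the paper explicitly remarks that the FPC-inexpressibility of $\CSP(\mathbb{Q};\mathrm{X})$ cannot be obtained by pp-constructing Boolean linear equations, so one must do something else. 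The paper's actual route (Corollary~\ref{nointerpretations}) argues that a uniformly continuous \emph{minion} homomorphism (not clone homomorphism, as you write) to $\Pol(\struct{E}_{\mathscr{G},3})$ would transport the pseudo $3$-$4$ WNU condition; and the transport of a \emph{pseudo} minor condition is not automatic\textemdash it hinges on the extreme amenability of $\Aut(\mathbb{Q};<)$ (i.e., the Ramsey property), via a canonisation theorem from Barto--Pinsker. You gesture at ``$\mx$ satisfies identities modulo $\Aut(\mathbb{Q};<)$'' but do not name the Ramsey/extreme amenability ingredient, without which the argument does not close. Note also the logical dependency runs the opposite way from what you suggest: item~(1) in the paper \emph{uses} item~(2), so item~(2) must be established first.

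For items~(2) and~(3), ``symmetrising $\mx$ under $\Aut(\mathbb{Q};<)$'' does not describe a construction; there is no averaging over the infinite group that stays inside $\Pol(\mathbb{Q};\mathrm{X})$, and nothing in your sketch yields a concrete operation. The paper instead writes down explicit $\mx$-terms\textemdash $f(x_1,x_2,x_3)=\mx(\mx(x_1,x_2),\mx(x_2,x_3))$ and $g=\mx(\mx(x_1,x_2),\mx(x_3,x_4))$ for item~(2), and the recursively defined $\mx_k$ for item~(3)\textemdash and verifies the identities by direct computation of the values $\beta^m\circ\alpha^n(\min(x,y))$ combined with Lemma~\ref{lem:construct}. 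Finally, for item~(2) you propose to ``absorb any residual discrepancy into an automorphism,'' but the identity $g(x,x,y)\approx f(x,x,x,y)$ in the theorem is an exact identity (the paper computes both sides equal to $\alpha^2(\min(x,y))$), not a pseudo-identity, so the absorption step would prove a strictly weaker statement than what is claimed.
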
} 
We have good news and bad news regarding the existence of general algebraic criteria for expressibility of CSPs in fragments and/or extensions of FP. 
The bad news is that there is no Maltsev condition that would capture expressibility of temporal CSPs in Datalog (see Theorem~\ref{topology})
which carries over to CSPs of reducts of finitely
bounded homogeneous structures and more generally to CSPs of \emph{$\omega$-categorical} templates.
\begin{theorem} \label{topology} There is no condition preserved by uniformly continuous clone homomorphisms that would capture the expressibility of temporal CSPs in $\Datalog$.
\end{theorem}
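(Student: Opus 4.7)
The plan is to apply the topological Birkhoff theorem of Bodirsky and Pinsker: on oligomorphic polymorphism clones, conditions preserved by uniformly continuous clone homomorphisms coincide with sets of height-one (minor) identities. Proving Theorem~\ref{topology} thus reduces to exhibiting two temporal structures $\struct{B}_1, \struct{B}_2$ whose polymorphism clones satisfy the same height-one identities yet differ on Datalog-expressibility; equivalently, uniformly continuous clone homomorphisms $\Pol(\struct{B}_1) \to \Pol(\struct{B}_2)$ and $\Pol(\struct{B}_2) \to \Pol(\struct{B}_1)$ should both exist.

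A natural non-Datalog candidate is $\struct{B}_2 = (\mathbb{Q}; \mathrm{R}_{\Min})$, established in \cite{ll} not to lie in Datalog. Since $<$ is pp-definable from $\mathrm{R}_{\Min}$ via $x < y \Leftrightarrow \mathrm{R}_{\Min}(y, x, x)$, one has the inclusion $\Pol(\mathbb{Q}; \mathrm{R}_{\Min}) \subseteq \Pol(\mathbb{Q}; <)$, which is already a uniformly continuous clone homomorphism in that direction. For $\struct{B}_1$ one needs a Datalog-solvable temporal structure---preserved by $\elel$ and $\dual\elel$ by Theorem~\ref{datalogmainresult}---whose polymorphism clone introduces no new height-one identity beyond those of $\Pol(\mathbb{Q}; \mathrm{R}_{\Min})$. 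The obvious choice $\struct{B}_1 = (\mathbb{Q}; <)$ does \emph{not} work: $\Pol(\mathbb{Q}; <)$ contains the median, a near-unanimity polymorphism, and this identity is not witnessed in $\Pol(\mathbb{Q}; \mathrm{R}_{\Min})$. Hence $\struct{B}_1$ must be calibrated to lie strictly between these two clones, blocking all such strong polymorphism witnesses while retaining $\elel$ and $\dual\elel$.

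The principal obstacle is the construction of such a $\struct{B}_1$, equivalently the verification that every height-one identity witnessed in $\Pol(\struct{B}_1)$ is already witnessed in $\Pol(\mathbb{Q}; \mathrm{R}_{\Min})$. I would approach this through the canonical-function machinery: on oligomorphic clones, the height-one theory is controlled by the canonical quotient onto a finite-domain clone indexed by the orbits of tuples under $\Aut(\mathbb{Q}; <)$, so one designs $\struct{B}_1$ by expanding a suitable temporal base by relations preserved by $\elel$ and $\dual\elel$ so that the canonical quotient of $\Pol(\struct{B}_1)$ coincides with that of $\Pol(\mathbb{Q}; \mathrm{R}_{\Min})$. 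The delicate combinatorial bookkeeping---keeping $\elel$ and $\dual\elel$ among the polymorphisms (to guarantee Datalog-solvability) without picking up near-unanimity, Siggers, or any other h1-witnessing operation that $\Pol(\mathbb{Q}; \mathrm{R}_{\Min})$ lacks---is the technical crux. Once the pair $(\struct{B}_1, \struct{B}_2)$ is in place, the extension to reducts of finitely bounded homogeneous structures and to general $\omega$-categorical CSPs is immediate, because the topological Birkhoff framework applies uniformly across all oligomorphic polymorphism clones and the temporal counterexample already witnesses the failure in these broader classes.
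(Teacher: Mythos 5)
Your reduction is over-strong in a way that makes the rest of the plan intractable, and it also rests on a conceptual slip. To refute a hypothetical characterisation $\mathcal{E}$ preserved by uniformly continuous clone homomorphisms, you only need a \emph{single} uniformly continuous clone homomorphism $\Pol(\struct{B}_1)\to\Pol(\struct{B}_2)$ where $\CSP(\struct{B}_1)$ is in Datalog and $\CSP(\struct{B}_2)$ is not: $\Pol(\struct{B}_1)\models\mathcal{E}$ would propagate to $\Pol(\struct{B}_2)$, forcing $\CSP(\struct{B}_2)$ into Datalog, a contradiction. Demanding bidirectional homomorphisms is what makes your proposal collapse into ``the technical crux is deferred.'' Also, Theorem~\ref{topology} is about arbitrary conditions (nested identities), not height-one/minor conditions; clone homomorphisms preserve all identities while minion homomorphisms preserve only minor ones (see Definition~\ref{def:morphisms}), so invoking topological Birkhoff to reduce to the height-one theory both misstates the target and muddles minion with clone homomorphisms. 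Finally, your observed inclusion $\Pol(\mathbb{Q};\mathrm{R}_{\Min})\subseteq\Pol(\mathbb{Q};<)$ points the wrong way: it goes from the non-Datalog clone into a Datalog one, which is useless here.

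The paper's proof requires none of the canonical-function machinery you sketch. It takes $\struct{B}_1=(\mathbb{Q};\neq,\mathrm{S}_{\elel})$, which is Ord-Horn and hence in Datalog by Theorem~\ref{datalogmainresult}, and directly constructs a uniformly continuous clone homomorphism $\xi\colon\Pol(\mathbb{Q};\neq,\mathrm{S}_{\elel})\to\Pol(\mathbb{Q};\mathrm{R}_{\Min})$ (Proposition~\ref{temporal-structureS_datalog}~(2)). The key observations are that every polymorphism of $(\mathbb{Q};\neq,\mathrm{S}_{\elel})$ has an injective essential part (since the relation $I_4$ is pp-definable, cf.\ Proposition~6.1.4 in \cite{Bodirsky-HDR}), and that one may define $\xi(f)$ to be $\Min$ applied to the coordinates on which $f$ depends; injectivity of essential parts makes $\xi$ respect composition, and uniform continuity is immediate. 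This single map, together with the inexpressibility of $\CSP(\mathbb{Q};\mathrm{R}_{\Min})$ in Datalog from \cite{ll}, finishes the argument. Your proposal identifies roughly the right structures to consider but never produces the needed homomorphism, which is the entire content of the theorem.
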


This is particularly striking because $\omega$-categorical CSPs are otherwise well-behaved when it comes to expressibility in $\Datalog$\textemdash every $\omega$-categorical $\CSP$ expressible in $\Datalog$ admits a \emph{canonical Datalog program} \cite{bodirsky2013datalog}.
\blue{The good news is that the expressibility in FP for finite-domain and temporal CSPs can be characterised by universal-algebraic minor conditions.
We introduce a family $\mathcal{E}_{k,n}$ of minor conditions that are similar to the \emph{dissected weak near-unanimity} identities from \cite{barto2019equations,gillibert2022symmetries}.}
\begin{theorem} \label{alternativefp} Let $\struct{B}$ be a finite structure or a temporal structure. The following are equivalent.
\begin{enumerate} 
\item $\CSP(\struct{B})$ is expressible in $\FP$ / $\FPC$. 
\item $\Pol(\struct{B})$ satisfies $\mathcal{E}_{k,k+1}$ for all but finitely many $k\in \mathbb{N}$.  
\end{enumerate}
\end{theorem}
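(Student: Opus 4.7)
The plan is to prove the equivalence (1)$\Leftrightarrow$(2) separately in the finite and temporal regimes, using Theorem~\ref{finitedomainsituation} on the finite side and Theorem~\ref{mainresult} on the temporal side as black boxes. Since $\mathcal{E}_{k,k+1}$ is a minor condition (a system of height-1 identities), two general principles guide the whole argument: (a) if $\Pol(\struct{B})$ contains an operation tuple witnessing $\mathcal{E}_{k,k+1}$, then so does any expansion of $\Pol(\struct{B})$; and (b) $\mathcal{E}_{k,k+1}$ is preserved under pp-constructions, so if $\struct{B}$ pp-constructs $\struct{C}$ and $\Pol(\struct{B})$ satisfies $\mathcal{E}_{k,k+1}$, then $\Pol(\struct{C})$ satisfies it too.

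For (1)$\Rightarrow$(2), in the finite case Theorem~\ref{finitedomainsituation}(6,7) supplies $k$-ary weak near-unanimity polymorphisms of $\struct{B}$ for all but finitely many $k$, together with the linking identity $g(x,x,y)\approx f(x,x,x,y)$; one then checks by direct substitution that such a WNU pair witnesses $\mathcal{E}_{k,k+1}$. In the temporal case Theorem~\ref{mainresult}(4) reduces the task to exhibiting the required witnesses from one of the six basic operations $\Min$, $\mi$, $\elel$, their duals, or a constant; this amounts to a case analysis on orbits of tuples under $\Aut(\mathbb{Q};<)$ and goes through uniformly for all sufficiently large $k$. The direction (2)$\Rightarrow$(1) is handled by contraposition: if $\CSP(\struct{B})$ is not expressible in $\FP$, then by Theorem~\ref{finitedomainsituation}(4) in the finite case, or Theorem~\ref{mainresult}(3) in the temporal case, $\struct{B}$ pp-constructs one of the obstructions, namely linear equations over a non-trivial finite Abelian group, $(\{0,1\};1\textup{IN}3)$, or $(\mathbb{Q};\mathrm{X})$. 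By principle (b) above, it therefore suffices to show that none of these three structures satisfies $\mathcal{E}_{k,k+1}$ for cofinitely many $k$; for the two finite obstructions this will follow by standard Schaefer/affine-type arguments evaluating the identities on a single non-trivial solution.

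The main obstacle lies in verifying that $(\mathbb{Q};\mathrm{X})$ fails $\mathcal{E}_{k,k+1}$ for infinitely many $k$, because its polymorphism clone is much richer than those of the finite obstructions. I would attack this by applying the Ramsey-theoretic canonization machinery for reducts of $(\mathbb{Q};<)$ to a hypothetical tuple of polymorphisms of $(\mathbb{Q};\mathrm{X})$ witnessing $\mathcal{E}_{k,k+1}$, thereby reducing to \emph{canonical} polymorphisms whose behaviour on tuples is determined by finitely many parameters, and then ruling out each canonical behaviour using the defining identities of $\mathcal{E}_{k,k+1}$. The dissection parameter $k+1$ is tuned so that $\mathrm{X}$ sits precisely on the borderline between semilattice-like behaviour (which would satisfy the identities) and $1\textup{IN}3$-like behaviour (which certainly fails). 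A secondary technical point is the explicit construction of $k$-ary witnesses from $\mi$ and $\elel$ in the positive direction: one builds the required auxiliary operations by iterating and composing $\mi$ and $\elel$ along a carefully chosen schedule so that all identities of $\mathcal{E}_{k,k+1}$ hold simultaneously.
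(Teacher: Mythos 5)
Your plan shares the paper's overall architecture: route both directions through the classification theorems, and use preservation of minor conditions under pp-constructions. But you are missing the paper's central technical tool, Theorem~\ref{testing}, which recasts satisfaction of $\mathcal{E}(\struct{A}_1,\struct{A}_2)$ in $\Pol(\struct{B})$ as a purely combinatorial condition on pp-powers of $\struct{B}$ (namely $\struct{A}_1 \rightarrow \struct{C}$ implies $\struct{A}_2 \rightarrow \struct{C}$). Without it, your three auxiliary arguments all face real obstructions. For $(\mathbb{Q};\mathrm{X})$ you invoke Ramsey canonization, but the paper instead uses Theorem~\ref{testing} with a concrete target: the relation $R^{\mx}_{[k],k}$ is pp-definable in $(\mathbb{Q};\mathrm{X})$, and for odd $k$ it admits a homomorphism from $\struct{A}_1 = (\{0,1\};1\textup{IN}k)$ (because setting one coordinate strictly above the others leaves $k-1$ minimal entries, an even number) but not from $\struct{A}_2$ (because the corresponding homogeneous Boolean system $\sum_{j \neq i} x_j = 0$ for $i \in [k+1]$ has only the trivial solution, so no free set exists). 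This is a short parity argument; the Ramsey route is much heavier and you have not sketched how the canonical behaviours would actually be ruled out against a $k$-parametric family of identities.

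The more serious soft spot is your treatment of the affine obstructions. You propose ``evaluating the identities on a single non-trivial solution,'' but this misses the number-theoretic subtlety that Lemma~\ref{noaffinecombinations} handles: $\Pol(\struct{E}_{\mathbb{Z}_n,3})$ \emph{does} satisfy $\mathcal{E}_{k,k+1}$ whenever $\gcd(k,n)=1$ (via the affine combination $\lambda(x_1 + \cdots + x_k)$ with $k\lambda = 1 \bmod n$), and fails precisely when $\gcd(k,n)>1$. A naive single-solution evaluation could easily (and incorrectly) suggest failure for all $k$; what you actually need is that $\Pol(\struct{E}_{\mathbb{Z}_n,3})$ fails $\mathcal{E}_{k,k+1}$ for infinitely many $k$ (multiples of some prime dividing $n$), which requires tracking the divisibility carefully. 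Finally, for the positive direction in the $\mi$ and $\elel$ cases, the witnesses $\mi_k, \elel_k$ are only \emph{pseudo}-WNU: they satisfy the identities up to composition with automorphisms depending on a finite set of inputs. Passing from this local agreement to actual satisfaction of the minor condition $\mathcal{E}_{k,k+1}$ requires the pp-power/orbit argument embedded in Theorem~\ref{testing} together with a compactness step, not merely a ``carefully chosen schedule'' of compositions. Your outline would break at this point without developing that machinery.
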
  
The polymorphism clone of
every first-order reduct of a finitely bounded homogeneous structure
known to the authors satisfies $\mathcal{E}_{k,k+1}$
for all but finitely many $k$ if and only if its CSP is in $\FP$ / $\FPC$.
This includes in particular all CSPs that are in the complexity class AC$_0$:
all of these CSPs can be expressed as CSPs of reducts of finitely bounded homogeneous structures, by a combination of results
of Rossman~\cite{Rossman08}, Cherlin, Shelah, and Shi~\cite{cherlin1999universal}, and \blue{Hubi\v{c}ka} and Ne\v{s}et\v{r}il~\cite{hubivcka2016homomorphism} (see~\cite{bodirsky2021complexity}, Section~5.6.1), and their polymorphism clones satisfy $\mathcal{E}_{k,k+1}$
for all but finitely many $k$. 
To prove that the polymorphism clone of a given temporal structure does or does not satisfy $\mathcal{E}_{k,k+1}$ we apply a new general characterisation of the satisfaction of minor conditions in polymorphism clones of $\omega$-categorical structures (Theorem~\ref{testing}). 

\subsection{\blue{Outline of the article}}
In Section~\ref{section_preliminaries}, we introduce various basic concepts from algebra and logic as well as some specific ones for temporal CSPs.
In Section~\ref{section_tcsps_in_fp}, we start discussing the descriptive complexity of temporal CSPs by expressing some particularly chosen tractable temporal CSPs in FP.
In Section~\ref{section_inexpressibility},  we continue the discussion by showing that $\CSP(\mathbb{Q};\mathrm{X})$ is inexpressible
in $\FPC$ but
expressible in $\FPR_2$. 
At this point we have enough information so that in Section~\ref{section_classification} we can 
classify the temporal CSPs which are expressible in $\FP$ / $\FPC$ and the temporal CSP which are expressible in $\FPR_2$.
%
In Section~\ref{section_datalog} we 
classify the temporal CSPs which are expressible in $\Datalog$.
%
In Section~\ref{section_algebraic_conditions} we provide  results regarding  algebraic criteria for expressibility of finitely bounded homogeneous CSPs in $\Datalog$ and in $\FP$ based on our investigation of temporal CSPs.


\section{Preliminaries}\label{section_preliminaries}
%
%
%
The set $\{1,\dots,n\}$ is denoted by $[n]$.
\red{The set of rational numbers is denoted by ${\mathbb Q}$, and the set of positive rational numbers by ${\mathbb Q}_{> 0}$.}

We use the bar notation for tuples; for a tuple ${\boldf{t}}$ indexed by a set $I$, the value of ${\boldf{t}}$ at the position $i\in I$ is denoted by  $\boldf{t}\of{i}$.
%
%
For a function $f\colon A^n \rightarrow B$ ($n\geq 1$) and $k$-tuples $\boldf{t}_1,\ldots,\boldf{t}_n \in A^{k}$,   
we sometimes use $f(\boldf{t}_1,\ldots,\boldf{t}_n)$ as a shortcut for the $k$-tuple 
$(f(\boldf{t}_1\of{1},\ldots\boldf{t}_n\of{1}),\dots, f(\boldf{t}_1\of{k},\ldots,\boldf{t}_n\of{k}) )$.
\blue{This is usually called the \emph{component-wise action} of $f$ on $A^k$~\cite{barto2021algebraic}.}

\subsection{\blue{Structures and first-order logic}}

A (\emph{relational}) \emph{signature} $\tau$ is a set of \emph{relation symbols}, each $R\in\tau$ with an associated natural number $\ar(R)$ called \emph{arity}.
A (\emph{relational}) \emph{$\tau$-structure} $\struct{A}$ consists of a set $A$ (the \emph{domain}) together with the relations $R^{\struct{A}}\subseteq A^{k}$ for each relation symbol $R\in \tau$ with arity $k$.
We often describe structures by listing their domain and relations, that is, we write $\struct{A}=(A;R^{\struct{A}}_{1},\dots)$.
\blue{We sometimes identify relation symbols with the relations interpreting them, but only when it improves readability of the text.}
An \emph{expansion} of $\struct{A}$ is a $\sigma$-structure $ \struct{B}$ with $A=B$ such that $ \tau\subseteq \sigma$, $R^{\struct{B}}=R^{\struct{A}}$ for each relation symbol $R\in \tau$. Conversely, we call $\struct{A}$ a \emph{reduct} of $\struct{B}$.  
\blue{We write $(\struct{A},R)$ for the expansion of $\struct{A}$ by the relation $R$ over $A$.}
In the context of relational structures, we reserve the notion of a \emph{constant} for singleton unary relations. A \emph{constant symbol} is then a symbol of such a relation.

A \emph{homomorphism} $h\colon \struct{A} \rightarrow \struct{B}$ for $\tau$-structures $\struct{A},\struct{B}$ is a mapping $h\colon  A\rightarrow B$ that \emph{preserves} each relation of $\struct{A}$, that is, if $ \boldf{t} \in R^{\struct{A}}$ for some $k$-ary relation symbol $R\in \tau$, then $h(\boldf{t})\in R^{\struct{B}}$.
We write $\struct{A} \rightarrow \struct{B}$ if $\struct{A}$ maps homomorphically to $\struct{B}$ and $\struct{A} \centernot{\rightarrow} \struct{B}$ otherwise. 
We say that $\struct{A}$ and $\struct{B}$ are \emph{homomorphically equivalent} if $\struct{A} \rightarrow \struct{B}$ and $\struct{B} \rightarrow \struct{A}$.
An \emph{endomorphism} is a homomorphism from $\struct{A}$ to $\struct{A}$. 
\blue{The set of all endomorphisms of $\struct{A}$ is denoted by $\End(\struct{A})$.}
We call a homomorphism $h\colon \struct{A} \rightarrow \struct{B}$ \emph{strong} if it
additionally satisfies the following condition: for every $k$-ary relation symbol $R\in \tau$ and $\boldf{t}\in A^{k}$ we have $h(\boldf{t})\in R^{\struct{B}}$ only if $\boldf{t}\in R^{\struct{A}}.$
An \emph{embedding} is an injective strong homomorphism.
We write $\struct{A}\hookrightarrow \struct{B}$ if $\struct{A}$ embeds to $\struct{B}$. 
A \emph{substructure} of $\struct{A}$ is a structure $\struct{B}$ over $B\subseteq A$ such that the inclusion map $i\colon B\rightarrow A$ is an embedding.   
An \emph{isomorphism} is a surjective embedding. Two structures $\struct{A}$ and $\struct{B}$ are \emph{isomorphic} if there exists an isomorphism from $\struct{A} $ to $\struct{B}$.  An \emph{automorphism} is an isomorphism from $\struct{A}$ to $\struct{A}$.  
The set of all automorphisms of $\struct{A}$, denoted by $\Aut(\struct{A})$, forms a \emph{permutation group} w.r.t.\ the map composition \cite{hodges1997shorter}.  
The \emph{orbit} of a tuple $\boldf{t}\in A^{k}$ under the \blue{component-wise} action of $\Aut(\struct{A})$ on $A^{k}$ is the set $\{g(\boldf{t}) \mid g\in \Aut(\struct{A})\}.$

An $n$-ary \emph{polymorphism} of a relational structure $\struct{A}$ is a mapping $f\colon A^{n}\rightarrow A$ such that, for every $k$-ary relation symbol $R\in \tau$ and tuples $\boldf{t}_{1},\dots,\boldf{t}_{n}\in R^{\struct{A}}$,  we have 
$  f(\boldf{t}_1,\dots,\boldf{t}_n)\in R^{\struct{A}}$.
We say that $f$ \emph{preserves} $\struct{A}$ to indicate that $f$ is a polymorphism of $\struct{A}$. We might also say that an operation \emph{preserves} a relation $R$ over $A$ if it is a polymorphism of $(A;R)$.  
%

%
%
We assume that the reader is familiar with classical \emph{first-order} logic (FO); we allow the first-order formulas $x=y$ and $\bot$.  
The \emph{positive quantifier-free} fragment of FO is abbreviated by pqf. 
A first-order $\tau$-formula $\phi$ is \emph{primitive positive} (pp) if it is of the form $\exists x_{1},\dots,x_{m}  (\phi_{1}\wedge \dots \wedge \phi_{n})$, where each $\phi_{i}$ is \emph{atomic}, that is, of the form $\bot$, $x_{i}=x_{j}$, or $R(x_{i_{1}},\dots,x_{i_{\ell}})$ for some $R\in \tau$. 
Note that if $\psi_{1},\dots,\psi_{n}$ are primitive positive formulas, then $\exists  x_{1},\dots,x_{m}  (\psi_{1} \wedge \dots \wedge \psi_{n})$ can be re-written into an equivalent primitive positive formula, so we sometimes treat such formulas as primitive positive formulas as well. 
If $\struct A$ is a $\tau$-structure and $\phi(x_1,\dots,x_n)$ is a $\tau$-formula with free variables $x_1,\dots,x_n$, then the relation 
$\{{\boldf{t}}\in A^{n} \mid \struct{A}\models \phi({\boldf{t}}) \}$ is called the \emph{relation defined by $\phi$ in $\struct A$}, and denoted by $\phi^{\struct{A}}$. 
If $\Theta$ is a set of $\tau$-formulas, we say that an $n$-ary relation has a \emph{$\Theta$-definition} in $\struct{A}$ if it is of the form  $\phi^{\struct{A}}$ for some   $\phi \in \Theta$.
%
When we work with tuples $\boldf{t}$ in a relation defined
by a formula $\phi(x_1,\dots,x_n)$, then we sometimes refer to the entries of ${\boldf{t}}$
through the free variables of $\phi$, and write 
$\boldf{t}\of{x_i}$ instead of $\boldf{t}\of{i}$. 
\begin{proposition}[e.g.\ \cite{bodirsky2021complexity}]  \label{InvAutPol} Let $\struct{A}$ be a relational structure and $R$ a relation over $A$.  
\begin{enumerate}
\item If $R$ has a first-order definition in $\struct{A}$, then it is preserved by all  automorphisms of $\struct{A}$.
\item If $R$ has a primitive positive definition in $\struct{A}$, then it is preserved by all  polymorphisms of $\struct{A}$.
\end{enumerate} 
\end{proposition}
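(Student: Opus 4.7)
The plan is to prove both parts by a routine structural induction, relying only on unfolding the semantic definitions; no additional machinery is needed.

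For part (1), I would argue that any automorphism $g$ of $\struct{A}$ preserves the truth of every first-order formula, i.e., $\struct{A}\models \phi(\boldf{t})$ iff $\struct{A}\models \phi(g(\boldf{t}))$, by induction on $\phi$. The base case covers the three atomic shapes: $\bot$ is vacuous; equalities $x_i=x_j$ are preserved because $g$ is a function (and reflected because it is injective); and atoms $R(x_{i_1},\dots,x_{i_\ell})$ are preserved because $g$ is a homomorphism, and reflected because $g^{-1}$ is also an automorphism. The inductive steps for $\neg,\wedge,\vee$ are immediate, and for quantifiers one uses that $g$ is a bijection of $A$, so re-indexing the witnesses through $g$ gives the desired equivalence. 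Applying this to any $\boldf{t}\in R=\phi^{\struct{A}}$ yields $g(\boldf{t})\in R$.

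For part (2), fix a pp-definition $\phi(x_1,\dots,x_n)=\exists y_1,\dots,y_m\,(\phi_1\wedge\cdots\wedge\phi_k)$ of $R$ in $\struct{A}$ and a $d$-ary polymorphism $f$ of $\struct{A}$. Given $\boldf{t}_1,\dots,\boldf{t}_d\in R$, I would pick, for each $i\in[d]$, a tuple $\boldf{s}_i\in A^m$ certifying the existential quantifiers, so that $\struct{A}\models \phi_j(\boldf{t}_i,\boldf{s}_i)$ for every conjunct $\phi_j$. I would then take $\boldf{s}:=f(\boldf{s}_1,\dots,\boldf{s}_d)$ computed coordinatewise and claim that it witnesses $f(\boldf{t}_1,\dots,\boldf{t}_d)\in R$. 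This reduces to checking that each atomic conjunct is preserved by $f$: for $R'(x_{i_1},\dots,x_{i_\ell})$ this is exactly the polymorphism property; for an equality $x_i=x_j$ the preservation is automatic because $f$ returns the same value on identical input columns; and $\bot$ never appears as a satisfied conjunct so nothing has to be checked.

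No genuine obstacle is expected. The only two small points worth flagging are the use of $g^{-1}\in\Aut(\struct{A})$ to get biconditional preservation in the first-order induction (which is why the statement fails for mere endomorphisms), and the treatment of the equality atoms in the pp-case (which is why the statement fails, for example, for partial polymorphisms without a coordinatewise extension). Both points follow directly from the definitions given in the preliminaries.
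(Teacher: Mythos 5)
The paper does not prove this proposition itself; it is stated as a standard fact with a citation to a reference work. Your argument is correct and is the standard one: structural induction over first-order formulas for part (1), using bijectivity for the quantifier step and $g^{-1}\in\Aut(\struct A)$ for the biconditional, and pushing the polymorphism coordinatewise through the existential witnesses for part (2), with equality conjuncts handled by the observation that $f$ applied coordinatewise maps identical columns to identical outputs.
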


The main tool for complexity analysis of CSPs is the concept of \emph{primitive positive constructions} (see Theorem~\ref{REDUCTION}).
\begin{definition}[\cite{barto2018wonderland}] Let $\struct{A}$ and $\struct{B}$ be relational structures with signatures $\tau$ and $\sigma$, respectively. 
We say that $\struct{B}$ is a \emph{($d$-dimensional)  pp-power} of $\struct{A}$ if $B = A^d$ for some $d\geq 1$ and, for every $R\in \tau$, the relation 
$
\{({\boldf{t}}_{1}\of{1},\dots,{\boldf{t}}_{1}\of{d}, \dots,{\boldf{t}}_{n}\of{1},\dots,{\boldf{t}}_{n}\of{d}) \in A^{n\cdot d} \mid  (\boldf{t}_{1},\dots,\boldf{t}_{n}) \in R^{\struct{B}} \} 
$
has a pp-definition in $\struct{A}$. 
We say that $\struct{B}$ is \emph{pp-constructible} from $\struct{A}$ if  $\struct{B}$ is  homomorphically equivalent to a pp-power of $\struct{A}$.
If $\struct{B}$ is a $1$-dimensional pp-power of $\struct{A}$, then we say that $\struct{B}$ is \emph{pp-definable} in $\struct{A}$.
\end{definition}
Primitive positive constructibility, seen as a binary relation, is transitive~\cite{barto2018wonderland}.

A structure is $\omega$-\emph{categorical} if its first-order theory has exactly one countable model up to isomorphism.  
The theorem of Engeler, Ryll-Nardzewski, and Svenonius (Theorem~6.3.1 in \cite{hodges1997shorter}) asserts that the following statements are equivalent for a countably infinite
structure $\struct{A}$ with countable signature:
\begin{itemize}
\item $\struct{A}$ is $\omega$-categorical. 
\item Every relation over $A$ preserved by all automorphisms of $\struct{A}$ has a first-order definition in $\struct{A}$.
\item For every $k\geq 1$,  there are finitely many orbits of $k$-tuples under the component-wise action of $\Aut(\struct{A})$ on $A^k$.
\end{itemize} 
%

A structure $\struct{A}$ is \emph{homogeneous} if every isomorphism between finite substructures of $\struct{A}$ extends to an automorphism of $\struct{A}$.
If the signature of $\struct{A}$ is finite, then $\struct{A}$ is homogeneous if and only if $\struct{A}$ is $\omega$-categorical and admits quantifier-elimination \cite{hodges1997shorter}.
A structure $\struct{A}$ is \emph{finitely bounded} if there is a universal first-order sentence $\phi$ such that a finite structure embeds into $\struct{A}$ if and only if it satisfies $\phi$. 
The standard example of a finitely bounded homogeneous structure is $(\mathbb{Q};<)$ \cite{bodirsky2016reducts}. 

\subsection{\blue{Finite variable logics and counting}}\label{fragments}
%
%
%

We denote the fragment of $\FO$ in which every formula uses only the variables $x_1,\dots,x_k$ by $\mathcal{L}^{k}$, and its existential positive fragment by $\exists^{+}\!\!\mathcal{L}^{k}$.
By $\FOC$ we denote the extension of $\FO$ by the counting quantifiers $\exists^{i}$. If  $\struct{A}$ is a $\tau$-structure and $\phi$ a $\tau$-formula with a free variable $x$, then  $\struct{A}\models \exists^{i}x.\,  \phi(x)  $ if and only if there exist $i$ distinct $a\in A$ such that $\struct{A} \models \phi(a)$. 
While $\FOC$ is not more expressive than $\FO$, the presence of counting quantifiers might affect the number of variables that are necessary to define a particular relation.   
The  $k$-variable fragment of $\FOC$ is denoted by $\mathcal{C}^{k}$. 
The infinitary logic $\mathcal{L}_{\infty\omega}^{k}$ extends $\mathcal{L}^{k}$ with infinite disjunctions and conjunctions.
The extension of $\mathcal{L}_{\infty\omega}^{k}$ by the counting quantifiers $\exists^{i}$ is denoted by $\mathcal{C}_{\infty\omega}^{k}$, and    $\mathcal{C}_{\infty\omega}^{\omega}$ stands for $  \bigcup_{k\in \mathbb{N}} \mathcal{C}_{\infty\omega}^{k}$.

We understand the notion of a \emph{logic} as defined in \cite{grohe2008quest}. Given two $\tau$-structures $\struct{A}$ and $\struct{B}$ and a logic $\mathcal{L}$, we write 
$\struct{A} \equiv_{\mathcal{L}} \struct{B}$ to indicate that a $\tau$-sentence from $\mathcal{L}$ holds in $\struct{A}$ if and only if it holds in $\struct{B}$,
and we write $\struct{A} \Rightarrow_{\mathcal{L}} \struct{B}$ to indicate that every $\tau$-sentence from $\mathcal{L}$ which is true in $\struct{A}$ is also true in $\struct{B}$.
\blue{By definition, the relation $\struct{A} \Rightarrow_{\mathcal{L}} \struct{B}$ is reflexive and transitive, and $\struct{A} \equiv_{\mathcal{L}} \struct{B}$ is an equivalence relation.}
\blue{The relations  $\Rightarrow_{\exists^{+}\!\!\mathcal{L}^{k}}$ and $\equiv_{\mathcal{C}^{k}}$ have well-known characterizations
in terms of two-player pebble games; $\Rightarrow_{\exists^{+}\!\!\mathcal{L}^{k}}$ is characterized by the \emph{existential $k$-pebble game}, and $\equiv_{\mathcal{C}^{k}}$ is characterized by the \emph{bijective $k$-pebble game}. 
See, e.g., \cite{atserias2019definable} for details about the approach to these relations via model-theoretic games. 
Here we only give a brief definition.

For $\tau$-structures $\struct{A}$ and $\struct{B}$ and  $Y\subseteq A$,  a map $f\colon Y \rightarrow B$ is \red{called a} \emph{partial homomorphism} (\emph{isomorphism})  if it is a homomorphism (an embedding) from the substructure of $\struct{A}$ on $Y$ to $\struct{B}$. 
\red{Both the existential and the bijective game}  are played on an ordered pair of $\tau$-structures \red{$(\struct{A},\struct{B})$} by two players, Spoiler and Duplicator, using $k$ pairs of pebbles $(a_1,b_1),\dots, (a_k,b_k)$.
In the existential $k$-pebble game, in each move, Spoiler chooses $i\in [k]$ and places the pebble $a_i$ (which might or might not already be on an element of $\struct{A}$) on any element of $\struct{A}$.
Duplicator has to respond by placing the pebble $b_i$ on an element of $\struct{B}$.
If at any point, the partial map specified by the pairs of pebbles placed on the board is not a partial homomorphism from $\struct{A}$ to $\struct{B}$, then \red{the game is over and} Spoiler \red{wins} the game.
In the bijective $k$-pebble game, in each move, Spoiler chooses $i\in [k]$.
Duplicator has to respond by selecting a bijection $f\colon A \rightarrow B$ with $f(a_j)=b_j$ for all $j\in [k]\setminus\{i\}$ such that the pair $(a_j,b_j)$ is already placed on the board. Then Spoiler places the pebble $a_i$ on any element of $\struct{A}$ and the pebble $b_i$ on its image under $f$.
If at any point, the partial map specified by the pairs of pebbles placed on the board is not a partial isomorphism from $\struct{A}$ to $\struct{B}$, then \red{the game is over and} Spoiler \red{wins}  the game.
In both games, Duplicator wins if the game continues forever.}

\subsection{\blue{Fixed-point logics}}

\emph{Inflationary fixed-point logic}  ($\IFP$)  is defined by adding formation rules to $\FO$ whose semantics is defined with inflationary fixed-points of arbitrary operators, and \emph{least fixed-point logic} ($\LFP$) is defined by adding formation rules to $\FO$ whose semantics is defined using least  fixed-points of monotone operators. 
The logics $\LFP$ and $\IFP$ are \emph{equivalent} in the sense that they define the same relations over the class of all structures \cite{kreutzer2004expressive}. For this reason, they are both commonly referred to as FP (see, e.g., \cite{atserias2019definable}).  
\emph{Datalog} is usually understood as the existential positive fragment of LFP (see \cite{dawar2008datalog}). The existential positive fragments of LFP and IFP are equivalent, because the fixed-point operator induced by a formula from either of the fragments is monotone, which implies that its least and inflationary fixed-point coincide (see Proposition~10.3 in \cite{libkin2013elements}). This means that we can informally identify $\Datalog$ with the  existential positive fragment of FP. 
For the definitions of the counting extensions IFPC and LFPC  we refer the reader to \cite{gradel2007finite}.
One important detail is that the equivalence LFP $\equiv$ IFP extends to LFPC $\equiv$ IFPC (see p.~189 in \cite{gradel2007finite}).  
Again, we refer to both counting extensions simply as FPC. It is worth mentioning that the extension of Datalog with counting is also equivalent to
$\FPC$ \cite{gradel1992inductive}.
All we need to know about FPC in the present article is Theorem~\ref{separation}.
\begin{theorem}[Immerman and Lander \cite{dawar2015nature}] \label{separation} 
\blue{For every $\FPC$ $\tau$-sentence $\phi$, there exists $k\in \mathbb{N}$ such that, for all finite $\tau$-structures $\struct{A}$ and $\struct{B}$, if $\struct{A} \equiv_{\mathcal{C}^{k}}\! \struct{B}$, then $\struct{A}\models \phi $ if and only if $\struct{B} \models \phi$.}
\end{theorem}  
\noindent This result follows from the fact that for every FPC formula $\phi$ there exists $k$ such that, on structures with at most $n$ elements, $\phi$ is equivalent to a formula of $\mathcal{C}^{k}$ whose quantifier depth is bounded by a polynomial function of $n$ \cite{dawar2015nature}. 
%
%
Moreover, every formula of FPC is  equivalent to a formula of $\mathcal{C}_{\infty\omega}^{k}$ for some $k$, that is, FPC forms a fragment of the infinitary logic $\smash{\mathcal{C}_{\infty\omega}^{\omega}}$ (Corollary~4.20 in \cite{otto2000bounded}).

The logic $\FPR_{2}$ extends $\FPC$ by the \blue{mod-2} rank operator making it the most expressive logic explicitly treated in this article.
It adds an additional logical constructor that can be used to form a rank term $[\rk_{x,y}\phi(x, y)\bmod 2]$ from a given formula $\phi(x, y)$. The value of $[\rk_{x,y}\phi(x, y)\bmod 2]$ in an input structure $\struct{A}$ is the rank of a $\{0,1\}$-matrix specified by $\phi(x, y)$ through its evaluation in $\struct{A}$. 
For instance, $[\rk_{x,y} (x=y) \wedge \psi(x)\bmod 2]$ computes in an input structure $\struct{A}$  the number of elements $a\in A$ such that $\struct{A}\models \psi(a)$ for a given formula $\psi(x)$ \cite{dawar2009logics}.
The  satisfiability of a suitably encoded system of \blue{mod-2} equations ${M}\boldf{x}=\boldf{v}$ can be tested in $\FPR_{2}$ by comparing the rank of ${M}$ with the rank of the extension of ${M}$ by $\boldf{v}$ as a last column.
A thorough definition of $\FPR_{2}$ can be found in \cite{dawar2009logics,gradel2019rank}; our version below is rather simplified, e.g., we disallow the use of $\leq$ for comparison of numeric terms, and also the use of free variables over the numerical sort.

Let $S$ be a finite set. A \emph{fixed-point} of an operator $F\colon \mathrm{Pow}(S) \rightarrow \mathrm{Pow}(S)$ is an element $U\in \mathrm{Pow}(S)$ with $U = F(U)$. A fixed-point $U$ of $F$ is called \emph{inflationary}  if it is the limit of the sequence $U_{i+1}\coloneqq U_{i} \cup F(U_{i})$ with $U_{0}=\emptyset$ in which case we write  $U=\Ifp(F)$, and \emph{deflationary} if it is the limit of the sequence $U_{i+1}\coloneqq U_{i}\cap F(U_{i})$ with $U_{0}=S$ in which case we write  $U=\Dfp(F)$. 
The members of either of the sequences are called the \emph{stages} of the induction. Clearly, $\Ifp(F)$ and $\Dfp(F)$ exist and are unique  for every such operator $F$.

Let $\tau$ be a relational signature. The set of \emph{inflationary fixed-point (IFP) formulas} over $\tau$ is defined inductively as follows.  
Every atomic $\tau$-formula is an IFP $\tau$-formula and formulas built from IFP $\tau$-formulas using the usual first-order constructors are again IFP $\tau$-formulas.
If $\phi(\boldf{x},\boldf{y})$ is an IFP $(\tau\cup \{R\})$-formula for some relation symbol $R\notin \tau$ of arity $k$, $\boldf{x}$ is $k$-ary, and $\boldf{y}$ is $\ell$-ary, then $[\ifp_{R,\boldf{x}}\phi]$ is an IFP $\tau$-formula with the same set of free variables.
The semantics of inflationary fixed-point logic is defined similarly as for first-order logic; we only discuss how to interpret the inflationary fixed point constructor. 
Let $\struct{A}$ be a finite relational $\tau$-structure.
For every  $\boldf{c}\in A^{\ell}$, we consider the induced operator 
$  \Op^\struct{A} \llbracket \phi(\cdot ,\boldf{c})  \rrbracket   \colon  \mathrm{Pow}(A^{k})  \rightarrow  \mathrm{Pow}(A^{k}),  R\mapsto  \{ {\boldf{t}} \in A^{k} \mid  (\struct{A},R) \models \phi(\boldf{t}, \boldf{c})   \}.  $
Then $\struct{A} \models [\ifp_{R,\boldf{x}} \phi]({\boldf{t}}, \boldf{c})$ if and only if 
${\boldf{t}}\in \Ifp  \Op^\struct{A} \llbracket  \phi(\cdot,\boldf{c})  \rrbracket$.
To make our IFP formulas more readable, we introduce the expression
$[\dfp_{R,\boldf{x}}\phi]$ as a shortcut for the IFP formula $\neg [\ifp_{R,\boldf{x}} \neg  \phi_{R/ \neg R}] $ where $\phi_{R/ \neg R}$ is obtained from $\phi$ by replacing every occurrence of $R$ in $\phi$ with $\neg R$. 
Note that  ${\boldf{t}}\in \Dfp \Op^\struct{A} \llbracket  \phi(\cdot,\boldf{c})  \rrbracket$ if and only if $\struct{A} \models [\dfp_{R,\boldf{x}} \phi]({\boldf{t}},\boldf{c})$.

Finally, we present a simplified version of the \blue{mod-2} rank operator which is, nevertheless, expressive enough for the purpose of capturing those temporal CSPs that are expressible in $\FPR_2$.
We define the set of \emph{numeric terms} over $\tau$ inductively as follows. 
Every $\IFP$ $\tau$-formula is a numeric term taking values in $\{0,1\}$ corresponding to its truth values when evaluated in $\struct{A}$.
Composing numeric terms with the nullary function symbols $0$, $1$ and the binary function symbols $+,\cdot $, which have the usual interpretation over $\mathbb{N}$, yields numeric terms taking values in $\mathbb{N}$ when evaluated in $\struct{A}$.
Finally, if $f(\boldf{x},\boldf{y},\boldf{z})$ is a numeric term where $\boldf{x}$ is $k$-ary, $\boldf{y}$ is $\ell$-ary, and $\boldf{z}$ is $m$-ary, then $[\rk_{\boldf{x},\boldf{y}}f\bmod 2]$ is a numeric term with free variables consisting of the entries of $\boldf{z}$.
We use the notation $f^{\struct{A}}$ for the evaluation of a numeric term $f$ in $\struct{A}$.
For $\boldf{c}\in A^{m}$,  we write $\Mat^\struct{A}_2\llbracket  f(\cdot,\cdot,\boldf{c}) \rrbracket$ for the $\{0,1\}$-matrix 
whose entry at the coordinate 	$(\boldf{t},\boldf{s}) \in A^{k}\times A^{\ell}$ is   $  f^{ \struct{A}}(\boldf{t},\boldf{s}, \boldf{c}) \bmod 2.$
Then $[\rk_{\boldf{x},\boldf{y}}f \bmod 2]^{ \struct{A}}(\boldf{c})$ denotes the rank of $\Mat^\struct{A}_2\llbracket  f(\cdot,\cdot,\boldf{c}) \rrbracket$.  
The value for $[\rk_{\boldf{x},\boldf{y}}f\bmod 2]$ is well defined because the rank of $\Mat^\struct{A}_2\llbracket  f(\cdot,\cdot,\boldf{c}) \rrbracket$ does not depend on the ordering of the rows and the columns.
Now we can define the set of $\FPR_2$ $\tau$-formulas.
Every $\IFP$ $\tau$-formula is an $\FPR_2$ $\tau$-formula.   
If $f(\boldf{x})$ and $g(\boldf{y})$ are numeric terms, then $f = g$ is an $\FPR_2$ $\tau$-formula whose free variables are the entries of $\boldf{x}$ and $\boldf{y}$.
The latter carries the obvious semantics $\struct{A}\models (f = g)(\boldf{t},\boldf{s})$ if and only if $f^{\struct{A}}(\boldf{t}) =  g^{\struct{A}}(\boldf{s})$.

\begin{example} \red{The  $\FPR_2$ formula}  
$ \bigwedge_{j=0}^{i-1} \neg ([\rk_{x,y}(x=y)\wedge \phi(x)\bmod 2]  = j)$  is equivalent to  $\exists^{i}x.\, \phi(x)$.
\end{example} 

%
%

\subsection{\blue{Logical expressibility of constraint satisfaction problems\label{section_CSPs}}}
The \emph{constraint satisfaction problem} $\CSP(\struct{B})$ for a structure $\struct{B}$ with a finite relational signature $\tau$ is the computational problem of deciding whether a given finite $\tau$-structure $\struct{A}$ maps homomorphically to $\struct{B}$.  
By a standard result from database theory, $\struct{A}$ maps homomorphically to $\struct{B}$ if and only if the \emph{canonical conjunctive query} $Q_{\struct{A}} $ is true in $\struct{B}$ \cite{chandra1977optimal}; $Q_{\struct{A}}$ is the pp-sentence whose variables are the domain elements of $\struct{A}$ and whose quantifier-free part is the conjunction of all atomic formulas that hold in $\struct{A}$. 
We might occasionally refer to the atomic subformulas of $Q_{\struct{A}}$ as \emph{constraints}.
We call $\struct{B}$ a \emph{template} of $\CSP(\struct{B})$. A \emph{solution} for an instance $\struct{A}$ of $\CSP(\struct{B})$ is a homomorphism $\struct{A}\rightarrow \struct{B}$.

Formally, $\CSP(\struct{B})$ stands for the class of all finite $\tau$-structures that homomorphically map to $\struct{B}$.  
Following Feder and Vardi~\cite{feder1998computational}, we say that the CSP of a $\tau$-structure $\struct{B}$ is \emph{expressible} in a logic $\mathcal{L}$ if there exists a sentence in $\mathcal{L}$ that defines the complementary class $\text{co-CSP}(\struct{B})$ of all finite $\tau$-structures which do not homomorphically map to $\struct{B}$.  
\begin{example} 	$ \exists z   [ \ifp_{T,(x,y)}\,  x<y \vee \exists h  (x<h \wedge T(h,y))] (z,z)$ defines $\textup{co-CSP}(\mathbb{Q};<)$. 
\end{example}
\blue{Naturally, showing logical inexpressibility of CSPs becomes more difficult the further we get in the search for a logic capturing Ptime.
Fortunately, inexpressibility in fixed-point logics can often be proved by showing inexpressibility in a much stronger infinitary logic with finitely many variables, e.g., in $\smash{\mathcal{C}_{\infty\omega}^{\omega}}$ for FPC.
In the case of FPC, we adapt the terminology from \cite{dawar2017definability} and call this proof method the unbounded counting width argument.
Formally, the \emph{counting width} of $\CSP(\struct{B})$ for a $\tau$-structure $\struct{B}$ is the function that assigns to each $n\in \mathbb{N}$ the minimum value $k$ for which there is a $\tau$-sentence $\phi$ in $\mathcal{C}^{k}$ such that, for every $\tau$-structure $\struct{A}$ with $|A|\leq n$, we have $\struct{A} \models \phi$ if and only if $\struct{A} \rightarrow \struct{B}$ \cite{dawar2017definability}. 
By Theorem~\ref{separation}, if $\CSP(\struct{B})$ has unbounded counting width, then it is inexpressible in FPC.
The main tool for transferring logical (in)expressibility results for CSPs are logical reductions.}
\begin{definition}[\cite{atserias2009affine}] 
Let $\sigma$, $\tau$ be finite relational signatures. Moreover, let $\Theta$ be a set of  $\FPR_{2}$ $\sigma$-formulas.
A \emph{$\Theta$-interpretation of $\tau$ in $\sigma$  with $p$ parameters} is a tuple $\mathcal{I}$ of $\sigma$-formulas from $\Theta$ consisting of a distinguished
$(d+p)$-ary \emph{domain} formula $\delta_{\mathcal{I}}(\boldf{x},\boldf{y})$ and,
for each $R\in \tau$,  an $(n\cdot d+p)$-ary formula $\phi_{\mathcal{I},R}(\boldf{x}_{1},\dots,\boldf{x}_{n},\boldf{y})$ where $n =\ar(R)$.
The \emph{image of $\struct{A}$ under $\mathcal{I}$ with parameters $\boldf{c}\in A^p$} is the $\tau$-structure $\mathcal{I}(\struct{A},\boldf{c})$ with domain $\{\boldf{t} \in A^{d} \mid \struct{A} \models \delta_{\mathcal{I}}(\boldf{t},\boldf{c})\}$ and  relations  
$$
R^{\mathcal{I}(\struct{A},\boldf{c})} = \{(\boldf{t}_{1},\dots,\boldf{t}_{n}) \in (A^{d})^n  \mid  \struct{A} \models \phi_{\mathcal{I},R}(\boldf{t}_1,\dots, \boldf{t}_n,\boldf{c})   \}.
$$ 
Let $\struct{B}$ be a $\sigma$-structure and $\struct{C}$ a $\tau$-structure. We write $\CSP(\struct{B}) \leq_{\Theta} \CSP(\struct{C})$ and say that $\CSP(\struct{B})$ \emph{reduces to $\CSP(\struct{C})$ under $\Theta$-reducibility} if  there exists a $\Theta$-interpretation $\mathcal{I}$ of $\tau$ in $\sigma$ with $p$ parameters such that, for every finite $\sigma$-structure $\struct{A}$ with $|A|\geq p$,  the following are  equivalent:   
\begin{itemize}
\item $\struct{A} \rightarrow \struct{B}$,
\item $\mathcal{I}(\struct{A},\boldf{c}) \rightarrow \struct{C}$ for some injective tuple $\boldf{c}\in A^p$,
\item $\mathcal{I}(\struct{A},\boldf{c}) \rightarrow \struct{C}$ for every injective tuple $\boldf{c}\in A^p$.
\end{itemize} 
\end{definition} 

Seen as a binary relation, $\Theta$-reducibility is transitive if $\Theta$ is any of the standard logical fragments or extensions of FO we have mentioned so far.
The following reducibility result was obtained in \cite{atserias2009affine} for finite-domain CSPs. A close inspection of the  original proof reveals that the statement holds for infinite-domain CSPs as well.  
\begin{theorem}[Atserias, Bulatov, and Dawar \cite{atserias2009affine}]  \label{REDUCTION}  
Let  $\struct{B}$ and $\struct{C}$ be structures with finite relational signatures such that  $\struct{B}$ is pp-constructible from $\struct{C}$. Then $\CSP(\struct{B})\leq_{\textup{Datalog}} \CSP(\struct{C}).$     
\end{theorem}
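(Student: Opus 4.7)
The plan is to split pp-constructibility into its two ingredients. Since $\struct{B}$ is homomorphically equivalent to a $d$-dimensional pp-power $\struct{B}'$ of $\struct{C}$, and $\CSP(\struct{B})$ and $\CSP(\struct{B}')$ coincide as classes of yes-instances, the identity interpretation already witnesses $\CSP(\struct{B})\leq_{\Datalog}\CSP(\struct{B}')$. By transitivity of $\Datalog$-reducibility it therefore suffices to handle the case that $\struct{B}$ itself is a $d$-dimensional pp-power of $\struct{C}$, which is what I treat next.

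For each $R\in\sigma$ of arity $n$ fix a pp-definition
\[
\exists y_1\cdots y_{m_R}.\,\psi_R(\boldf{x}_1,\dots,\boldf{x}_n,y_1,\dots,y_{m_R})
\]
of the associated $(nd)$-ary relation in $\struct{C}$, where the $\boldf{x}_i$ are disjoint $d$-tuples of variables and $\psi_R$ is a conjunction of atomic $\tau$-formulas. The natural auxiliary target is the $\tau$-structure $\struct{A}'$ whose domain consists of the pairs in $A\times[d]$ together with one fresh witness per existential variable per constraint of $\struct{A}$, and whose atomic $\tau$-constraints are exactly those obtained, for every constraint $R(a_1,\dots,a_n)$ of $\struct{A}$, by substituting the $d$ coordinates of $a_i$ for $\boldf{x}_i$ and the attached witnesses for the $y_k$ in each conjunct of $\psi_R$. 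A homomorphism $\struct{A}'\to\struct{C}$ then encodes exactly an assignment $A\to C^d$ satisfying every pp-constraint of $\struct{A}$, that is, a homomorphism $\struct{A}\to\struct{B}$.

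To realise $\struct{A}\mapsto\struct{A}'$ as a $\Datalog$-interpretation $\mathcal{I}$ of $\tau$ in $\sigma$, choose the interpretation dimension $D$ large enough to inject both $A\times[d]$ and the set of fresh witnesses into $A^D$ — for instance larger than $d$ and than the maximum arity of a symbol in $\sigma$ — and use a short injective parameter tuple $\boldf{c}\in A^p$ as position tags. Each pair $(a,j)$ is encoded by a $D$-tuple whose only non-parameter entry is $a$ and whose parameter pattern selects $j$; each existential witness for a constraint $R(a_1,\dots,a_n)$ is encoded by a $D$-tuple containing $a_1,\dots,a_n$ in prescribed positions together with a parameter pattern that identifies $R$ and the index of the witness. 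With this encoding the domain formula $\delta_{\mathcal{I}}$ and each relation formula $\phi_{\mathcal{I},S}$ for $S\in\tau$ becomes a finite disjunction of primitive-positive $\sigma$-formulas over $\struct{A}$, and therefore lives in the existential positive fragment of $\FO$ and a fortiori in $\Datalog$.

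The main obstacle is purely combinatorial bookkeeping: engineering the encoding so that distinct pairs $(a,j)$ and distinct fresh witnesses receive distinct $D$-tuples for \emph{every} injective choice of $\boldf{c}$, and verifying that the disjunction defining each $\phi_{\mathcal{I},S}$ indexes exactly the conjuncts of the various $\psi_R$ that produce an $S$-atom under the substitution above. Once this is arranged, the required equivalence is immediate: a homomorphism $\struct{A}\to\struct{B}$, read off coordinate-wise and supplemented by witnesses obtained from the existential quantifiers in the $\psi_R$, lifts to a homomorphism $\mathcal{I}(\struct{A},\boldf{c})\to\struct{C}$ for every injective $\boldf{c}$; conversely, restricting any such homomorphism to the encodings of $A\times[d]$ yields the $d$ component functions of a homomorphism $\struct{A}\to\struct{B}$, independently of the specific $\boldf{c}$ chosen.
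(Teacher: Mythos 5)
Your plan captures the easier half of the argument but omits the step that actually needs $\Datalog$ rather than existential positive first-order logic: the treatment of equality atoms in the pp-definitions. By the paper's definition, a primitive positive formula may contain atoms $x_i = x_j$, and these can occur between the free variables of $\psi_R$ (for instance whenever $R^{\struct{B}}$ forces two coordinates of its arguments to coincide; equality over $C^d$ is itself such a relation). In that case your translated structure $\struct{A}'$ would have to carry a constraint asserting that two of its elements are equal; but $\struct{A}'$ is required to be a structure in the signature of $\struct{C}$, and $\struct{C}$ need not have equality among its relations, so there is no relation symbol to place such a constraint in. Identifying the two elements is the right idea, but then the identifications have to be propagated transitively across all constraints of $\struct{A}$, and this closure cannot be computed by a fixed finite disjunction of primitive positive $\sigma$-formulas. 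It genuinely requires the $\Datalog$ fixpoint, and this is why the theorem is a statement about $\leq_{\Datalog}$ and not about a positive quantifier-free reduction.

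The paper isolates exactly this step: it first replaces $\struct{B}$ by a template $\tilde{\struct{B}}$ in which each relation has been projected onto a set of coordinates carrying no forced equalities, and the reduction $\CSP(\struct{B}) \leq_{\Datalog} \CSP(\tilde{\struct{B}})$ uses an $\ifp$ over a binary variable-identification relation in the input. Once that is done, the remaining pp-definitions can be taken equality-free (equalities involving a bound variable are eliminated by substitution, and there are none among free variables by construction of $\tilde{\struct{B}}$), and your unfolding-with-fresh-witnesses interpretation with injective parameter tuples is essentially what the paper then does in positive quantifier-free logic. So the second half of your argument is sound in spirit, but the claim that the whole reduction stays within the existential positive fragment cannot be correct, and as written the construction of $\struct{A}'$ is not even well-defined when $\psi_R$ contains an equality atom between free variables.
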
   
%
%
%
\red{It is important to note that $\leq_{\textup{Datalog}}$ preserves the expressibility of CSPs in $\mathcal{L}$ for every $\mathcal{L} \in \{\Datalog, \FP, \FPC , \FPR_{2}\}$. 
This 
fact is mentioned in \cite{atserias2009affine} for $\mathcal{L}=\mathcal{C}_{\infty\omega}^{\omega}$
and in~\cite{atserias2008digraph} for Datalog (referring to the techniques in~\cite{kolaitis2000conjunctive}); we include a short proof which uses a result from~\cite{feder2003homomorphism}.} 
\begin{proposition}  
\label{prop:reduction_pp_constructible}  Let $\struct{B}$, $\struct{C}$ be structures with finite relational signatures.   If  $\CSP(\struct{B})\leq_{\textup{Datalog}} \CSP(\struct{C})$ and $\CSP(\struct{C})$ is expressible in $\mathcal{L} \in \{\Datalog, \FP, \FPC, \FPR_{2}\}$, then $\CSP(\struct{B})$ is expressible in $\mathcal{L}$. 
\end{proposition} 
\begin{proof} We only prove the statement in the case of $\Datalog$. The remaining cases  are analogous and in fact even simpler, because $\FP$, $\FPC$ and $\FPR_{2}$ allow inequalities. 

Let $\sigma$ be the signature of $\struct{B}$, let $\tau$ be the signature of $\struct{C}$, and let $\phi_{\struct{C}}$ be a $\Datalog$ $\tau$-sentence that defines  $\text{co-CSP}(\struct{C})$. 
Let $\mathcal{I}$ be an interpretation of $\tau$ in $\sigma$ with $p$ parameters witnessing that $\CSP(\struct{B})\leq_{\textup{Datalog}} \CSP(\struct{C}).$  
Consider the sentence $\phi'_{\struct{B}}$ obtained from $\phi_{\struct{C}}$ by the following sequence of syntactic replacements. 
First, we introduce a fresh $p$-tuple $\boldf{y}$ of existentially quantified variables.
Second, we replace each existentially quantified variable $x_{i}$ in $\phi_{\struct{C}}$ by a $d$-tuple $ \boldf{x}_i$ of fresh existentially quantified variables    and conjoin $\phi_{\struct{C}}$ with the formula $\bigwedge_{i}\delta_{\mathcal{I}}(\boldf{x}_i,\boldf{y})$. 
Then, we replace each atomic formula in $\phi_{\struct{C}}$ of the form $R(x_{i_{1}},\dots,x_{i_{n}})$ for $R\in \tau$ by the formula $\phi_{\mathcal{I},R}(\boldf{x}_{i_{1}},\dots,\boldf{x}_{i_{n}},\boldf{y}) $;
we also readjust the arities of the auxiliary relation symbols and the amount of the first-order free variables in each IFP subformula of $\phi_{\struct{C}}$. 
Finally, we conjoin the resulting formula with  $\bigwedge_{i\neq j} \boldf{y}\of{i} \neq \boldf{y}\of{j}$.
Now, for all $\sigma$-structures $\struct{A}$ with $|A|\geq p$, we have that $\struct{A} \models \phi'_{\struct{B}}$ if and only if $\mathcal{I}(\struct{A},\boldf{c}) \models \phi_{\struct{C}}$ for some injective tuple $\boldf{c} \in A^p$. 
Since $\phi_{\struct{C}}$ defines the class of all instances of $\CSP(\struct{C})$ which have no solution,  $\phi'_{\struct{B}}$ defines the class of all instances of $\CSP(\struct{B})$ with at least $p$ elements which have no solution.
Let $\phi''_{\struct{B}}$ be the disjunction of the canonical conjunctive queries for all the finitely many instances $\struct{A}_{1},\dots,\struct{A}_{\ell}$ of $\CSP(\struct{B})$ with less than $p$ elements which have no solution.
Then $\phi''_{\struct{B}}$ defines the class of all instances of $\CSP(\struct{B})$ with less than $p$ elements which have no solution.
Let $\struct{A}$ be a $\sigma$-structure with $|A|<p$.
If $\struct{A}\centernot{\rightarrow} \struct{B}$, then $\struct{A} \models Q_{\struct{A}_{i}}$ for some $i\in [\ell]$, which implies $\struct{A} \models \phi''_{\struct{B}}$. 
If $\struct{A}\rightarrow \struct{B}$, then $\struct{A} \centernot{\models} Q_{\struct{A}_{i}}$ for every $i\in [\ell]$, otherwise $\struct{A}_{i} \rightarrow \struct{A}$ for some $i\in [\ell]$ which yields a contradiction to $\struct{A}_i \centernot{\rightarrow} \struct{B}$. 
Thus $ \phi'_{\struct{B}}\vee \phi''_{\struct{B}}$ defines $\text{co-CSP}(\struct{B})$.
We are not finished yet, because $\phi'_{\struct{B}}\vee \phi''_{\struct{B}}$ is not a valid $\Datalog$ sentence. It is,  however, a valid sentence in $\Datalog(\neq)$, the expansion of $\Datalog$ by inequalities between variables.
Note that, if $\struct{A} \centernot{\rightarrow} \struct{B}$ and $\struct{A} \rightarrow \struct{A}' $, then $\struct{A}' \centernot{\rightarrow} \struct{B}$, i.e., $\text{co-CSP}(\struct{B})$ is a class closed under homomorphisms. 
Thus, by Theorem~2 in \cite{feder2003homomorphism}, there exists a $\Datalog$ sentence $\phi_{\struct{B}}$ that defines $\text{co-CSP}(\struct{B})$. 
We conclude that $\CSP(\struct{B})$ is expressible in $\Datalog$.  
\end{proof}
We now introduce a formalism that  simplifies the presentation of algorithms for TCSPs.
\blue{For an $n$-ary tuple $\boldf{t}$ and $I\subseteq [n]$, we use the notation $\pr_{I}(\boldf{t})$ for the tuple $(\boldf{t}\of{i_1},\dots, \boldf{t}\of{i_{m}})$ where $I=\{i_1,\dots, i_{m}\}$ with $i_1<\dots< i_{m}$.
The function $\pr_{I}$ naturally extends to relations. For $R\subseteq B^n$ and $I\subseteq [n]$, the \emph{projection} of $R$ to $I$ is defined as $\pr_{I}(R) $.
We call \red{the projection} \emph{proper} if $I \notin \{ \emptyset,[n]\}$, and \emph{trivial} if it equals $B^{|I|}$.  
For $R\subseteq B^n$ and ${\sim}\subseteq [n]^2$, the \emph{contraction} of $R$ modulo $\sim$, denoted by $\con_{\sim}(R)$, is defined as  $\{\boldf{t} \in R \mid \boldf{t}\of{i}=\boldf{t}\of{j} \text{ for all } (i,j) \in {\sim} \}$.}
Whenever it is convenient, we will assume that the set of relations of a temporal structure is closed under \blue{projections and contractions}.
Note that \red{these relations are pp-definable in the structure, and hence} adding them to the structure does not influence the set of polymorphisms (Proposition~\ref{InvAutPol}), and it also does not influence the expressibility of its CSP in $\Datalog$, $\FP$, $\FPC$, or $\FPR_{2}$ (\red{Theorem~\ref{REDUCTION}}).  
\begin{definition}[Projections and contractions]	Let $\struct{A}$ be an instance of $\CSP(\struct{B})$ for a $\tau$-structure $\struct{B}$.

\blue{The \emph{projection} of $\struct{A}$ to $V\subseteq A$ is the $\tau$-structure $\pr_{V}(\struct{A})$  obtained from $\struct{A}$ as follows. 
The domain of $\pr_{V}(\struct{A})$ is $V$ and, for $R\in\tau$, the relation $R^{\pr_{V}(\struct{A})}$ consists of all tuples $\boldf{t}$ for which there exists $\tilde{R}\in \tau $ such that $\boldf{t}\in \pr_{I}(\tilde{R}^{\struct{A}})$ and $R^{\struct{B}}=\pr_{I}(\tilde{R}^{\struct{B}})$ where $I\coloneqq \{i\in [n] \mid \boldf{t}\of{i}\in V\}$.  

The \emph{contraction} of $\struct{A}$ modulo $C \subseteq A^2$ is the $\tau$-structure $\con_{C}(\struct{A})$  obtained from $\struct{A}$ as follows. 
The domain of $\con_{C}(\struct{A})$ is $A$ and, for $R\in\tau$, the relation $R^{\con_{C}(\struct{A})}$ consists of all tuples $\boldf{t}$ for which there exists $\tilde{R}\in \tau $ such that $\boldf{t}\in \tilde{R}^{\struct{A}}$ and $R^{\struct{B}} = \con_{\sim}(\tilde{R}^{\struct{B}})$ where ${\sim} \coloneqq \{(i,j)\in [n]^2 \mid  (\boldf{t}\of{i},\boldf{t}\of{j})  \in C\}$.}
%
\end{definition}  
\subsection{\blue{Temporal CSPs}}\label{temporal structuresubsection}
A structure with domain ${\mathbb Q}$ is called \emph{temporal} if its relations are first-order definable in $(\mathbb{Q};<)$.  
An important observation is that if $\struct{B}$ is a temporal structure and $f$ is an order-preserving map between two finite subsets of $\mathbb{Q}$, then $f$ can be extended to an automorphism of $\struct{B}$.
This is a consequence of Proposition~\ref{InvAutPol} and the fact that $(\mathbb{Q};<)$ is homogeneous.
Relations which are first-order definable in $(\mathbb{Q};<)$ are called \emph{temporal}.  
The \emph{dual} of a  temporal relation $R$ is defined as \blue{$\{-\boldf{t} \mid \boldf{t}\in R\}$, where the operation $x\mapsto -x$ acts component-wise.}
The \emph{dual} of a temporal structure is the temporal structure whose relations are precisely the duals of the relations of the original one.
Every temporal structure is homomorphically equivalent to its dual via the map $x\mapsto -x$, which means that both structures have the same CSP.
The CSP of a temporal structure is called a \emph{temporal CSP} (TCSP).

\begin{definition}[Min-tuples]  The \emph{min-indicator function} \label{minindicator} $\chi\colon \mathbb{Q}^{k}\rightarrow \{0,1\}^{k}$ is defined by $\chi({\boldf{t}})\of{i}\coloneqq 1$ if and only if ${\boldf{t}}\of{i}$ is a minimal entry in ${\boldf{t}}$; we call $\chi({\boldf{t}})\in \{0,1\}^{k}$  the \emph{min-tuple} of ${\boldf{t}}\in \mathbb{Q}^{k}$. 
As usual, if $R \subseteq {\mathbb Q}^k$, then $\chi(R)$ denotes $\{\chi(\boldf{t}) \mid \boldf{t} \in R\}$.  
For  ${\boldf{t}}\in \mathbb{Q}^{k}$, we set $\Minset(\boldf{t})\coloneqq \{i\in [k] \mid   \chi(\boldf{t})\of{i}=1\}.$
\end{definition}
\begin{definition}[Free sets]\label{def:freeset}
Let $\struct{B}$ be a temporal structure with signature $\tau$ and $\struct{A}$ an instance of $\CSP(\struct{B})$. 
A \emph{free set} of $\struct{A}$ is a non-empty subset $F\subseteq A$ such that, if $R\in \tau$ is $k$-ary and $\boldf{s}\in R^{\struct{A}} $, then either no entry of $\boldf{s}$ is contained in $F$, or there exists a tuple $\boldf{t}\in R^{\struct{B}}$ such that $\Minset(\boldf{t})= \{i\in [k] \mid \boldf{s}\of{i}\in F\}$.
If $R\in \tau$ has arity $k$ and $\boldf{s}\in  A^k$, we define the \emph{system of min-sets} $\Minsystem_{R}(\boldf{s})$ as the set of all $M\subseteq \{\boldf{s}\of{1},\dots, \boldf{s}\of{k}\}$ for which there exists $\boldf{t}\in R^{\struct{B}}$ such that  $\Minset(\boldf{t})= \{i\in [k] \mid \boldf{s}\of{i}\in M\}$.
For a subset $V$ of $\{\boldf{s}\of{1},\dots, \boldf{s}\of{k}\}$, we define $\Ideal{R}{V}(\boldf{s})$ as the set of all $M \in \Minsystem_{R}(\boldf{s})$ such that $M \subseteq V $, and $\Filter{R}{V}(\boldf{s})$ as the set of all $M \in  \Minsystem_{R}(\boldf{s})$ such that $V \subseteq M$. 
\end{definition}
\blue{Let $\struct{B}$ be a temporal structure. If an instance $\struct{A}$ of $\CSP(\struct{B})$ has a solution, then there must exist a non-empty set $F\subseteq A$ consisting of the elements of $A$ which \red{have the} minimal value in the solution.
It is easy to see that every such $F$ is a free set of $\struct{A}$.
However, it is not the case that the existence of a free set guarantees the existence of a solution.
For example, if $\struct{B}=(\mathbb{Q};<)$, then $\emptyset \subsetneq F\subseteq A$ is a free set of $\struct{A}$ if and only if the elements of $F$ do not appear in the second argument of any $<$-constraint of  $\struct{A}$.
But even if such $F$ exists, the remaining part of $\struct{A}$ might contain a directed $<$-cycle and thus be unsatisfiable.
Only a repeated search for free sets can guarantee the existence of a solution, and testing containment in a free set is a decision problem in itself whose complexity depends on the first-order definitions of the relations of $\struct{B}$ in $(\mathbb{Q};<)$.
This topic is covered in Section~\ref{section_tcsps_in_fp}.}
\subsection{\blue{Clones and minions}}
An at least unary operation on a set $A$ is called a \emph{projection onto the $i$-th coordinate}, and denoted by $\pr_{i}$, if it returns the $i$-th argument for each input value.
A set of $\mathscr{A}$ operations over a fixed set $A$ is called a \emph{clone} (over $A$) if it contains all projections and, whenever $f\in \mathscr{A}$ is $n$-ary and $g_1,\dots, g_{n} \in \mathscr{A}$ are $m$-ary, then $f(g_1,\dots,g_n) \in \mathscr{A}$, \red{where} $f(g_1,\dots,g_n)$ is the $m$-ary map $(x_1,\dots, x_m) \mapsto f(g_1(x_1,\dots, x_m),\dots, g_n(x_1,\dots, x_m))$. 
The set of all polymorphisms of a relational structure $\struct{A}$, denoted by $\Pol(\struct{A})$, is a clone. 
For instance, the clone $\Pol(\{0,1\}; 1\textup{IN}3 )$ consists of all projection maps on $\{0, 1\}$, and is called the \emph{projection clone} \cite{bodirsky2015topological}.  
\blue{A \emph{minion} is a set of functions with a common domain which is closed under compositions of a single function with projections; in particular, clones are minions.}
\red{Let $\mathscr{A}$ and $\mathscr{B}$ be sets of operations over $A$ and $B$, respectively.} 
A map $\xi \colon \mathscr{A} \rightarrow \mathscr{B}$ is called
\begin{itemize}
\item  a \emph{clone homomorphism}  
if it preserves arities, projections, and compositions, that is, $$\xi(f(g_{1},\dots,g_{n}))=\xi(f)(\xi(g_{1}),\dots , \xi(g_{n}))$$ holds for all $n$-ary $f$ and  $m$-ary $g_{1},\dots,g_{n}$ from $\mathscr{A}$,
\item  a \emph{minion homomorphism}   
if it preserves arities and those compositions as above where $g_{1},\dots,g_{n}$ are projections, 
\item  \emph{uniformly continuous} if for every finite $B'\subseteq B$ there exists a finite $A'\subseteq A$ such that if $f,g\in \mathscr{A}$ of the same arity agree on $A'$, then $\xi(f)$ and $\xi(g)$ agree on $B'$.
\end{itemize}     
The recently closed finite-domain CSPs tractability conjecture can be reformulated as follows: the polymorphism clone of a finite structure $\struct{A}$ either admits a minion homomorphism to the projection clone in which case $\CSP(\struct{A})$ is NP-complete, or it does not and $\CSP(\struct{A})$ is polynomial-time tractable \cite{barto2018wonderland}.
The former is the case if and only if $\struct{A}$ pp-constructs all finite structures.
\blue{Later, we will need the following lemma.

\begin{lemma}[\cite{barto2018wonderland}] \label{pp_clone_minion} Let $\struct{A}$ and $\struct{B}$ be structures such that  $\struct{B}$ is pp-constructible from $\struct{A}$. Then there is a minion homomorphism from $\Pol(\struct{A}) $ to $ \Pol(\struct{B})$. 
\end{lemma}
\begin{proof} For some $d\geq 1$, there exists a $d$-dimensional pp-power $\struct{A}'$ of $\struct{A}$ and homomorphisms $h'\colon \struct{A}'\rightarrow \struct{B}$ and $h\colon \struct{B}\rightarrow \struct{A}'$. Let $\xi'$ be the map that sends $f\in \Pol(\struct{A})$ to its component-wise action on $A^d$.
By Proposition~\ref{InvAutPol}, $\xi'$ is a clone homomorphism from $\Pol(\struct{A})$ to $\Pol(\struct{A}')$.
Then it is easy to see that $\xi(f)\coloneqq h'\circ \xi'(f)\circ h$ is a minion homomorphism from $\Pol(\struct{A})$ to $\Pol(\struct{B})$.
\end{proof}}

\subsection{\blue{Polymorphisms of temporal structures}}
\label{sect:pols}
The following notions were used in the P versus NP-complete complexity classification of TCSPs \cite{bodirsky2010complexity}. 
Let $\Min$ denote the binary minimum operation on $\mathbb{Q}$.  
The \emph{dual} of a $k$-ary operation $f$ on $\mathbb{Q}$ is the map $(x_1,\dots,x_k)\mapsto -f(-x_{1}, \dots , -x_{n})$. 
Let us fix any endomorphisms $\alpha,\beta,\gamma$ of $(\mathbb{Q};<)$ such that $\alpha(x)<\beta(x)<\gamma(x)<\alpha(x+\varepsilon)$ for every $x\in \mathbb{Q}$ and every $\varepsilon\in \mathbb{Q}_{>0}$. 	
Such unary operations can be constructed inductively, see the paragraph below Lemma~26 in \cite{bodirsky2010complexity}.
\blue{Later in the article, we will need the following observation which highlights the special properties of these endomorphisms.
\begin{lemma} \label{claim:comparisson} For all $x,y\in \mathbb{Q}$, we have the following
\begin{enumerate}
\item \label{item_alpha_beta_1} $\alpha(x) < \beta(y)$ if and only if $x\leq y$,
\item \label{item_alpha_beta_2} $\beta(x)  < \alpha(y)$ if and only if $x<y$.
\end{enumerate} 
\end{lemma}
\begin{proof}[Proof of Lemma~\ref{claim:comparisson}] 
We get \eqref{item_alpha_beta_2} simply by negating \eqref{item_alpha_beta_1} because $\alpha$ and $\beta$ have disjoint images.

For \eqref{item_alpha_beta_1}, \red{arbitrarily choose} $x,y\in \mathbb{Q}$. If $x\leq y$, then $\alpha (x) \leq \alpha(y) $ because $\alpha$ is an endomorphism of $(\mathbb{Q};<)$. Moreover, $\alpha(y)<\beta(y)$ by the definition of $\alpha$ and $\beta$. Thus $\alpha(x) < \beta(y)$ in this case.
If $x>y$, then $\alpha(x)>\beta(y)$ follows directly from the definition of $\alpha$ and $\beta$.  
\end{proof} 
\noindent Then $\mi$ and $\mx$ are the binary operations on $\mathbb{Q}$ defined by
\[ \mi(x,y)\coloneqq \begin{cases}
\alpha(\Min(x,y)) & \text{ if } x= y, \\ \beta(\Min(x,y)) & \text{ if } x<y, \\ \gamma(\Min(x,y)) & \text{ if } x>y,  
\end{cases}
\qquad\text{and}\qquad
\mx(x,y)\coloneqq  \begin{cases}
\alpha(\Min(x,y)) & \text{if } x\neq y, 	\\ \beta(\Min(x,y)) & \text{if } x= y,
\end{cases}
\]   
respectively.
Note that the kernels of the operations $\mi$ and $\mx$ refine the kernel of the operation $\Min$.
Namely, $\mi(x,y) < \mi(x',y')$ if and only if 
\begin{itemize}
\item  $\min(x,y)< \min(x',y')$, or
\item $\min(x,y) = \min(x',y')$ and $-\chi(x,y)$ is smaller than $-\chi(x',y')$ w.r.t.\ the lexicographic order,
\end{itemize}
and $\mx(x,y) < \mx(x',y')$ if and only if 
\begin{itemize}
\item  $\min(x,y)< \min(x',y')$, or
\item $\min(x,y) = \min(x',y')$ and $x\neq y$ while $x'=y'$.
\end{itemize}
In \cite{bodirsky2010complexity}, the operation $\mi$ defined exactly the opposite order on pairs $(x,y)$ and $(y,x)$ for distinct $x,y\in \mathbb{Q}$ than in our case. However, it is easy to see that our version of the operation generates the same clone. We only deviate from the original definition for cosmetic reasons which will become clear in Definition~\ref{definition:PWNUs}.}
Let $\elel$ be an arbitrary binary operation on $\mathbb{Q}$ such that $\elel(x,y)<\elel(x',y')$ if and only if
\begin{itemize}
\item $x\leq 0$ and $x<x'$, or
\item $x\leq 0$ and $x=x'$ and $y<y'$, or
\item $x,x'>0$ and $y<y'$, or
\item $x>0$ and $y=y'$ and $y<y'$.
\end{itemize}  
\begin{theorem}[Bodirsky and K\'ara \cite{bodirsky2010complexity,bodirsky2011reducts}] \label{TCSPdichot} Let $\struct{B}$ be a temporal structure. Either $\struct{B}$ is preserved by $\Min$, $\mi$, $\mx$, $\elel$, the dual of one of these operations, or a constant operation and $\CSP(\struct{B})$ is in P, or $\struct{B}$ pp-constructs $(\{0,1\};1\textup{IN}3)$ and $\CSP(\struct{B})$ is NP-complete.    
\end{theorem}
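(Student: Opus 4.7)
The plan is to adopt the universal-algebraic approach and rely on the fact that $(\mathbb{Q};<)$ is $\omega$-categorical and homogeneous, so by Proposition~\ref{InvAutPol} the first-order reducts of $(\mathbb{Q};<)$ are classified by the closed supergroups of $\Aut(\mathbb{Q};<)$ and, at the level of pp-definability, by the closed clones containing $\Aut(\mathbb{Q};<)$. The task therefore splits into (i) structurally analysing the possible polymorphism clones of a temporal structure to identify the six "tractable behaviours" in the statement, and (ii) providing, for each such behaviour, a polynomial-time algorithm solving the CSP, while for all other clones pp-constructing $(\{0,1\};1\textup{IN}3)$.

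For the hardness direction, the plan is to invoke the Ramsey property of $(\mathbb{Q};<)$ to \emph{canonise} polymorphisms: given any $f\in\Pol(\struct{B})$ one can pre- and post-compose with elements of $\Aut(\mathbb{Q};<)$ to obtain, on any finite substructure, an operation whose behaviour depends only on the $\Aut(\mathbb{Q};<)^n$-orbit of the input. Canonical binary functions on $(\mathbb{Q};<)$ have a finite catalogue, and the plan is to show that, up to composition with endomorphisms of $(\mathbb{Q};<)$, any non-constant canonical binary polymorphism either generates one of $\Min$, $\mi$, $\mx$, $\elel$ or the dual of one of these, or else $\Pol(\struct{B})$ admits a uniformly continuous minion homomorphism onto the projection clone. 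The latter yields a pp-construction of $(\{0,1\};1\textup{IN}3)$, and hence NP-hardness via Theorem~\ref{REDUCTION}. A separate step in this analysis handles higher-arity polymorphisms: one must rule out the possibility that a "bad" canonical behaviour at arity $2$ is compensated by a helpful polymorphism at higher arity, which is done by canonising higher-arity operations and reducing their behaviour to binary behaviour on projections.

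For tractability, the plan is to dispose of the trivial constant case, reduce the dual cases via the isomorphism $x\mapsto -x$, and then treat the four operations $\Min$, $\mi$, $\mx$, $\elel$ separately. In the $\Min$ case, arc-consistency / the path-consistency based min-set propagation procedure suffices: the preservation by $\Min$ guarantees that the set of variables that can be assigned a minimal value is closed under component-wise $\Min$, and hence propagation converges to a correct solution schema. For $\mi$ and $\mx$, the approach is to exploit that both operations refine the kernel of $\Min$: one can first detect, via consistency propagation on the min-sets from Definition~\ref{def:freeset}, the variables that must be identified, contract them (Definition~\ref{def:contractions}), and recurse on the contracted instance. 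For $\elel$, the lexicographic order of $\elel$ allows a recursive algorithm that at each step guesses a free set (a candidate block of variables sharing the lexicographically smallest value), verifies consistency of this guess via propagation, solves the residual instance on the complement, and recurses; correctness is established by lifting a consistent propagation certificate to an actual solution using $\elel$-polymorphism.

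The main obstacle will be the algorithmic side, especially the correctness of the $\elel$-procedure: one needs to argue that the partial consistency conditions maintained during propagation suffice to produce an actual solution when a solution exists, which is precisely where preservation by $\elel$ is used in a non-trivial way to combine partial assignments. A secondary technical burden is the canonisation step on the hardness side, where the Ramsey argument must be combined with a careful enumeration of orbits of pairs under $\Aut(\mathbb{Q};<)^2$ and a verification that, in each of the "bad" cases, there is no way for higher-arity polymorphisms to rescue tractability — the standard way to handle this is to show that the absence of the listed binary behaviours forces the existence of a uniformly continuous minion homomorphism to the projection clone, which together with Theorem~\ref{REDUCTION} closes the dichotomy.
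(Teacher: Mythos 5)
Theorem~\ref{TCSPdichot} is cited from \cite{bodirsky2010complexity,bodirsky2011reducts} and is not reproven in this paper, so there is no in-text proof to compare against; you are effectively reconstructing the argument of those references. Your sketch follows the modern Ramsey/canonisation route rather than the original, more combinatorial argument of Bodirsky and K\'ara, which is a defensible choice and yields the pp-construction formulation of the hardness half directly.

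Two concrete corrections on the algorithmic side. You have the roles of projection and contraction reversed: templates preserved by $\Min$, $\mi$, or $\mx$ are all preserved by $\pp$, and it is exactly this fact that licenses the iterated \emph{projection} of the union of free sets with no contraction at all (Proposition~\ref{blockedSetProjection} and Figure~\ref{solvestar}, with the free-set oracles of Figures~\ref{algo:FreeSetsMIN}, \ref{algo:FreeSetsMI}, \ref{algo:FreeSetsMX}). It is the $\elel$ case where $\pp$ is unavailable, and the procedure must interleave \emph{contractions} of irreducible free sets with projections (Proposition~\ref{atomic}, Lemma~\ref{blockedSetProjection2}, Figure~\ref{algo:master_ll}). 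The observation that $\mi$ and $\mx$ ``refine the kernel of $\Min$'' is not a reason to contract; it is precisely what gives preservation by $\pp$ and hence a projection-only algorithm. Relatedly, ``guesses a free set'' is not a step in a deterministic polynomial-time (let alone choiceless) procedure; what the known algorithms do is compute, by a fixed-point propagation, the union of all (respectively all irreducible) free sets and then argue soundness of projecting/contracting that entire union, without making a choice.

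On the hardness side your plan is broadly right, but the sentence about canonising higher-arity polymorphisms and ``reducing their behaviour to binary behaviour on projections'' papers over the hardest lemma: one does not transfer an $n$-ary canonical operation to a binary one by restriction, since that destroys the useful structure. What is actually shown is that if none of the listed binary behaviours (nor a constant) occurs, then the structure pp-constructs $(\{0,1\};1\textup{IN}3)$ outright (equivalently, there is a uniformly continuous minion homomorphism to the projection clone), which precludes any rescue by higher-arity polymorphisms; the argument that rules out all higher arities simultaneously is the content of the dichotomy, not a corollary of the binary case.
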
 
There are two additional operations that appear in soundness proofs of algorithms for TCSPs;
$\pp$ is an arbitrary binary operation on $\mathbb{Q}$ that satisfies $\pp(x,y)\leq \pp(x',y')$ if and only if 
\begin{itemize}
\item $x\leq 0$ and $x\leq x'$, or
\item  $0<x$, $0<x'$, and $y\leq y'$, 
\end{itemize}
and $\lex$ is an arbitrary binary operation on $\mathbb{Q}$ that satisfies $\lex(x,y)< \lex(x',y')$ if and only if   
\begin{itemize}
\item $x<x'$, or
\item   $x=x'$ and $y < y'$.  
\end{itemize} 
If a temporal structure is preserved by $\Min, \mi$, or $\mx$, then it is preserved by $\pp$, and if a temporal structure is preserved by $\elel$, then it is preserved by $\lex$ \cite{bodirsky2010complexity}.

\section{\label{section_tcsps_in_fp}Fixed-point algorithms for TCSPs}
In this section, we discuss the expressibility in $\FP$  for some particularly chosen TCSPs that are provably in P. 
%
%
By Theorem~\ref{TCSPdichot}, a TCSP is polynomial-time tractable if its template is preserved by one of the operations $\Min, \mi, \mx$, or $\elel$.  
In the case of $\Min$, the known algorithm from \cite{bodirsky2010complexity} can be formulated as an FP algorithm.
In the case of $\mi$ and $\elel$, the known algorithms from \cite{bodirsky2010complexity,bodirsky2010fast} cannot be implemented in FP  as they involve choices of arbitrary elements. We show that there exist  choiceless versions that can be turned into FP sentences.
In the case of $\mx$, the known algorithm from \cite{bodirsky2010complexity} cannot be turned into an FP sentence because it relies on the use of linear algebra.
We show in Section~\ref{section_inexpressibility} that, in general, the CSP of a temporal structure preserved by $\mx$ cannot be expressed in FP but it can be expressed in the logic $\FPR_{2}$.

\subsection{\blue{A procedure for TCSPs with a template preserved by pp}}\label{parallelizedalgominmimx}
We first describe a procedure for temporal languages preserved by $\pp$ as it appears in \cite{bodirsky2010complexity}, and then the choiceless version that is necessary for the translation into an $\FP$ sentence.

Let $\struct{A}$ be an instance of $\CSP(\struct{B})$.  
The original procedure searches for a non-empty set $S\subseteq A$  
for which there exists a solution $\struct{A} \rightarrow \struct{B}$ 
under the assumption that the projection of $\struct{A}$ to $A\setminus S$ has a solution as an instance of $\CSP(\struct{B})$.
It was shown in \cite{bodirsky2010complexity} that $S$ has this property if it is a free set of $\struct{A}$, and that $\struct{A}\centernot{\rightarrow} \struct{B}$ if no free set of $\struct{A}$ exists.  
We improve the original result by showing that the same holds if we replace ``a free set'' in the statement above with ``a non-empty union of free sets''. 
\begin{proposition}
\label{blockedSetProjection}
Let $\struct{A}$ be an instance of $\CSP(\struct{B})$ for some temporal structure $\struct{B}$ preserved by $\pp$ and let $S$ a union of free sets of $\struct{A}$. Then $\struct{A}$ has a solution if and only if  $\pr_{A\setminus S}(\struct{A})$ has a solution. 
\end{proposition}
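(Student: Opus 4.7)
The forward direction is immediate: if $h \colon \struct{A} \to \struct{B}$ is a homomorphism and $R(\boldf{s})$ is a constraint of $\struct{A}$ with $I \coloneqq \{i : \boldf{s}\of{i} \in A\setminus S\}$, then $h(\boldf{s}) \in R^{\struct{B}}$ yields $\pr_I(h(\boldf{s})) \in (\pr_I R)^{\struct{B}}$, so the restriction $h|_{A\setminus S}$ is a solution of $\pr_{A\setminus S}(\struct{A})$.

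For the backward direction, I would proceed by induction on the minimum number $m$ of free sets needed to cover $S$. The base case $m = 1$ is precisely the single-free-set reduction of \cite{bodirsky2010complexity} recalled immediately above the proposition.

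The technical heart of the inductive step is the following transfer lemma, which I would prove first: if $F$ is a free set of $\struct{A}$, $T \subseteq A$, and $F \cap T \neq \emptyset$, then $F \cap T$ is a free set of $\pr_T(\struct{A})$. To see this, take a constraint $(\pr_I R)(\pr_I(\boldf{s}))$ of $\pr_T(\struct{A})$ originating from a constraint $R(\boldf{s})$ of $\struct{A}$ with $I = \{i : \boldf{s}\of{i} \in T\}$, and suppose some entry of $\pr_I(\boldf{s})$ lies in $F \cap T$. Then $\boldf{s}$ meets $F$, so freeness of $F$ in $\struct{A}$ provides $\boldf{t} \in R^{\struct{B}}$ with $\Minset(\boldf{t}) = \{j : \boldf{s}\of{j} \in F\}$. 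Since this set intersects $I$, the global minimum of $\boldf{t}$ is attained at some position in $I$, and hence $\Minset(\pr_I(\boldf{t})) = \Minset(\boldf{t}) \cap I = \{j \in I : \boldf{s}\of{j} \in F \cap T\}$, which corresponds exactly to the positions of $\pr_I(\boldf{s})$ with values in $F \cap T$.

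With this lemma in hand, the inductive step writes $S = F_1 \cup \cdots \cup F_m$ with each $F_i$ a free set of $\struct{A}$, and sets $\struct{A}' \coloneqq \pr_{A\setminus F_1}(\struct{A})$. Applying the lemma with $T = A\setminus F_1$ shows that each non-empty $F_i \setminus F_1$ (for $i \geq 2$) is a free set of $\struct{A}'$, so $S \setminus F_1 = \bigcup_{i \geq 2}(F_i \setminus F_1)$ is a union of at most $m - 1$ free sets of $\struct{A}'$. Since the composition of projections onto nested subsets collapses, $\pr_{A\setminus S}(\struct{A}') = \pr_{A\setminus S}(\struct{A})$; the assumed solution of the latter combined with the inductive hypothesis applied to $\struct{A}'$ yields a solution of $\struct{A}'$, and a final application of the base case with free set $F_1$ lifts this to a solution of $\struct{A}$. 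I expect the main obstacle to be the transfer lemma itself, and in particular the observation that the nonemptiness of $F \cap T$ along $\boldf{s}$ is exactly what forces the global minimum of $\boldf{t}$ to survive the projection so that $\Minset$ is preserved; the remainder is routine bookkeeping with projections.
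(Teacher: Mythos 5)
Your proof is correct, and it takes a genuinely different route from the paper's. The paper's argument is a single, direct construction: starting from a solution $s$ of $\pr_{A\setminus S}(\struct{A})$, it explicitly builds a candidate extension $s'$ that orders the free-set layers $S_1 < \cdots < S_k$ below $A\setminus S$, and then verifies that $s'$ is a homomorphism by iteratively applying $\pp$ to auxiliary tuples $\boldf{r}_i$, tracking five inductive invariants on the resulting sequence $\boldf{p}_k, \dots, \boldf{p}_1$. Your argument instead isolates a purely combinatorial transfer lemma---that if $F$ is a free set of $\struct{A}$ and $F \cap T \neq \emptyset$, then $F \cap T$ is a free set of $\pr_T(\struct{A})$---and then runs an induction on the size of the free-set cover, delegating all the $\pp$-dependent reasoning to the cited single-free-set result of Bodirsky and K\'ara. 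The transfer lemma is a clean observation (the key point, which you identify, is that if $\Minset(\boldf{t})$ meets $I$ then the projection to $I$ preserves the global minimum, so $\Minset(\pr_I(\boldf{t})) = \Minset(\boldf{t}) \cap I$) and it makes no use of the polymorphism $\pp$ at all. The trade-off is that the paper's proof is self-contained and exhibits the solution constructively, with full control over the order type of the values it assigns---information that the proof of Lemma~\ref{blockedSetProjection2} later reuses---whereas your proof is shorter and more modular but treats the $m=1$ case as a black box and does not directly yield the explicit form of the extension. One minor point of bookkeeping that you handle implicitly but correctly: when $m \geq 2$ is minimal, $S\setminus F_1$ is automatically nonempty, so the inductive hypothesis applies, and the identity $\pr_{A\setminus S}(\pr_{A\setminus F_1}(\struct{A})) = \pr_{A\setminus S}(\struct{A})$ indeed holds by the composability of projections onto nested subsets.
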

\begin{proof}  \label{proof_blockedSetProjection}  
Let $F_{1},\dots,F_{k}$ be free sets of $\struct{A}$ and set $S\coloneqq F_{1} \cup \cdots \cup F_{k}$. Clearly, if $\struct{A}$ has a solution then so has $\pr_{A\setminus S}(\struct{A})$. For the converse, suppose that $\pr_{A\setminus S}(\struct{A})$ has a solution \blue{$f$}.  Let $S_j \coloneqq  F_j \setminus (F_1 \cup \cdots \cup F_{j-1})$ for every $j \in \{1,\dots,k\}$.
\blue{We claim that a map $f' \colon A \rightarrow \mathbb{Q}$ is a solution to $\struct{A}$ if $f'|_{A\setminus S}=f$,
$f'(S_1) < f'(S_2) < \cdots < f'(S_k) < f'(A \setminus S)$, and $f'$ is constant on $S_i$ for every $i\in [k]$.}
To verify this,
let $\boldf{s}$ be an arbitrary tuple from $R^{\struct{A}}\subseteq A^m$ such that, without loss of generality,
$
\{\boldf{s}\of{1},\dots,\boldf{s}\of{m}\} \cap S
= \{\boldf{s}\of{1},\dots,\boldf{s}\of{\ell}\} \neq \emptyset.
$
By the definition of $\pr_{A\setminus S}(\struct{A})$,  there is a tuple ${\boldf{t}} \in R^{\struct{B}}$ such that ${\boldf{t}}\of{i} = \blue{f}({\boldf{s}}\of{i})$ for every $i \in \{\ell+1,\dots,m\}$.
Since $F_1,\dots,F_k$ are free,
there are tuples ${\boldf{t}}_1,\dots,{\boldf{t}}_k \in R^{\struct{B}}$
such that, for every $i \in [k]$ and every $j \in [m]$, we have 
$
j \in \Minset({\boldf{t}}_{i}) \text{ if and only if } \boldf{s}\of{j} \in F_{i}.
$
For every $i \in [k]$ let $\alpha_i \in \Aut(\mathbb{Q};<)$ be such that $\alpha_i$ maps the minimal entry
of ${\boldf{t}}_i$ to $0$. The tuple
${\boldf{r}}_i \coloneqq  \pp(\alpha_i {\boldf{t}}_i,{\boldf{t}})$ is contained in $R^{\struct{B}}$ because
$R^{\struct{B}}$ is preserved by $\pp$.
It follows from the definition of $\pp$
that, for all $j \in [m]$, $j \in \Minset({\boldf{r}}_i)$ if and only if $\boldf{s}\of{j} \in F_i$.
Moreover, $({\boldf{r}}_i\of{\ell+1},\dots,{\boldf{r}}_i\of{m})$ and
$({\boldf{t}}\of{\ell+1},\dots,{\boldf{t}}\of{m})$ lie in the same orbit of $\Aut(\mathbb{Q};<)$.
Define ${\boldf{p}}_k,{\boldf{p}}_{k-1},\dots,{\boldf{p}}_1 \in \mathbb{Q}^m$ in this order as follows. Define ${\boldf{p}}_k \coloneqq  {\boldf{r}}_k$ and, for $i \in \{1,\dots,k-1\}$,
$
{\boldf{p}}_i \coloneqq  \pp(\beta_i {\boldf{r}}_{i},{\boldf{p}}_{i+1}) 
$
where $\beta_i \in \Aut(\mathbb{Q};<)$ is chosen such that
$\beta_{i}({\boldf{r}}_i\of{j}) = 0$ for all $j \in \Minset({\boldf{r}}_i)$.
We verify by induction that for all $i \in [k]$
\begin{enumerate}
\item ${\boldf{p}}_i$ is contained in $R^{\struct{B}}$.
\item $({\boldf{p}}_i\of{\ell+1},\dots,{\boldf{p}}_i\of{m})$,
$({\boldf{t}}\of{\ell+1},\dots,{\boldf{t}}\of{m})$ lie in the same orbit of $\Aut(\mathbb{Q};<)$.
\item $j \in \Minset({\boldf{p}}_i)$ if and only if $\boldf{s}\of{j} \in F_i$
for all $j \in [m]$.
\item ${\boldf{p}}_i\of{u}={\boldf{p}}_i\of{v}$ for all $a \in  \{i+1,\dots,k\}$ and $u,v \in [m]$ such that $\boldf{s}\of{u},\boldf{s}\of{v} \in S_a$.
\item ${\boldf{p}}_i\of{u} < {\boldf{p}}_i\of{v}$ for all $a,b \in \{i,i+1,\dots,k\}$ with $a<b$ and $u,v \in [m]$ such that $\boldf{s}\of{u} \in S_a$, $\boldf{s}\of{v} \in S_b$.
\end{enumerate}
For $i=k$, the items (1), (2), and (3) follow from
the respective property of ${\boldf{r}}_k$
and items (4) and (5) are trivial.
For the induction step and $i \in {[k-1]}$ we have that ${\boldf{p}}_{i} = \pp(\beta_{i} {\boldf{r}}_{i},{\boldf{p}}_{i+1})$ satisfies items (1) and (2)
because ${\boldf{p}}_{i+1}$ satisfies
items (1) and (2) by inductive assumption.
For item (3), note that $\Minset({\boldf{p}}_i) = \Minset({\boldf{r}}_{i})$.
Finally, if $\boldf{s}\of{u},\boldf{s}\of{v} \in S_{i+1} \cup \cdots \cup S_k$,
then ${\boldf{p}}_i\of{u} \leq {\boldf{p}}_i\of{v}$ if and only if
${\boldf{p}}_{i+1}\of{u} \leq {\boldf{p}}_{i+1}\of{v}$.
This implies items (4) and (5) by induction.
Note that \blue{$(f'(\boldf{s}\of{1}),\dots,f'(\boldf{s}\of{m}))$}
lies in the same orbit as ${\boldf{p}}_1$ and hence  is contained in $R^{\struct{B}}$.
\end{proof} 
A recursive application of Proposition~\ref{blockedSetProjection} shows the soundness of our choiceless version of the original algorithm which can be found in Figure~\ref{solvestar}.
Its completeness follows from the fact that every instance of a temporal CSP which has a solution must have a free set, namely the set of all variables which denote the minimal value in the solution. 
\begin{figure} 
\begin{center}   
\begin{algorithm}[H]   
\vspace{0.25em}
\SetNoFillComment 
\KwIn{An instance $\struct{A}$ of $\CSP(\struct{B})$ for a temporal structure $\struct{B}$}  
\KwOut{\emph{true} or \emph{false}}   
\blue{\While{$\struct{A}$ changes}{
		$S\gets $  the union of all free sets of $\struct{A}$\;  
		\If{$S= \emptyset$} {\Return \emph{false}} \Else {$\struct{A} \gets  \pr_{A\setminus S} \struct{A}$} 
	}  
	\Return \emph{true}\; }
\end{algorithm} 
\end{center}
\caption{\label{solvestar} A choiceless algorithm that decides whether an instance of a temporal CSP with a template preserved by $\pp$ has a solution using an oracle for testing the containment in a free set.}	 
\end{figure}  
Suitable Ptime procedures for finding unions of free sets for TCSPs with a template  preserved by $\Min$, $\mi$, or $\mx$ exist by the results of \cite{bodirsky2010complexity}, and they generally exploit the algebraic structure of the CSP that is witnessed by one of these operations. 
We revisit them in Section~\ref{section_min}, Section~\ref{section_mi}, and Section~\ref{section_mx}. 

\blue{\begin{corollary} \label{fp_pp} \blue{Let  $\struct{B}$ be a temporal structure preserved by $\pp$ such that all proper projections of the relations of $\struct{B}$ are trivial.
Let  $\phi(x)$ be an $\FPR_2$ formula in the signature of $\struct{B}$ extended by a unary symbol $U$ such that, for every instance $\struct{A}$ of $\CSP(\struct{B})$ and every $U\subseteq A$, we have $(\struct{A};U) \models \phi(x)$ iff $x$ is not contained in a free set of the substructure of $\struct{A}$ on $U$.}
Then   $\struct{A} \models \exists x [\dfp_{U,x} \phi(x)](x)$ iff $\struct{A} \centernot{\rightarrow} \struct{B}$. 
\end{corollary}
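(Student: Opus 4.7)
The plan is to identify the deflationary fixed-point induction on $\phi$ with the main loop of the choiceless algorithm in Figure~\ref{solvestar}. Write $F \coloneqq \Op^\struct{A}\llbracket \phi \rrbracket$ for the associated operator on $\mathrm{Pow}(A)$, and let $X_0 \coloneqq A$, $X_{i+1} \coloneqq X_i \cap F(X_i)$ be its deflationary stages, so that $X_\infty \coloneqq \Dfp(F) = \bigcap_i X_i$. By the specification of $\phi$, the set $X_i \setminus X_{i+1} = X_i \setminus F(X_i)$ is precisely the union of free sets of $\pr_{X_i}(\struct{A})$. Using the routine fact that projections compose, $\pr_{X_{i+1}}(\pr_{X_i}(\struct{A})) = \pr_{X_{i+1}}(\struct{A})$, the sequence $\pr_{X_0}(\struct{A}), \pr_{X_1}(\struct{A}), \ldots$ coincides with the sequence of instances generated by the while-loop of the algorithm on input $\struct{A}$.

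Next, I would prove by induction on $i$ that $\struct{A} \rightarrow \struct{B}$ iff $\pr_{X_i}(\struct{A}) \rightarrow \struct{B}$, the inductive step being a direct application of Proposition~\ref{blockedSetProjection} to the instance $\pr_{X_i}(\struct{A})$ and the union $X_i \setminus X_{i+1}$ of its free sets. Passing to the limit: either $X_\infty = \emptyset$, in which case $\pr_{X_\infty}(\struct{A})$ is the empty structure and trivially maps to $\struct{B}$, yielding $\struct{A}\rightarrow \struct{B}$; or $X_\infty \neq \emptyset$, in which case the fixed-point identity $X_\infty = F(X_\infty)$ together with the specification of $\phi$ witnesses that $\pr_{X_\infty}(\struct{A})$ is a non-empty instance with no free set. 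Completeness of the algorithm, i.e.\ the observation that every non-empty TCSP instance admitting a solution possesses a free set (take the preimage of the minimum value under any solution, which is free by the definition of $\Minset$ in Definition~\ref{minindicator}), then forces $\pr_{X_\infty}(\struct{A}) \centernot{\rightarrow} \struct{B}$, and hence $\struct{A} \centernot{\rightarrow} \struct{B}$.

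Combining the two cases, $\struct{A} \centernot{\rightarrow} \struct{B}$ holds iff $X_\infty \neq \emptyset$, which by the semantics of $\dfp$ recalled in Section~\ref{fragments} is exactly $\struct{A} \models \exists x [\dfp_{U,x} \phi(x)](x)$. The only part requiring any real care is the matching between the algorithm's state after $i$ iterations and the set $X_i$; once that identification is made, Proposition~\ref{blockedSetProjection} and the completeness observation were both already established earlier in the section, so no serious obstacle remains.
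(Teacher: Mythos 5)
The paper leaves Corollary~\ref{fp_pp} without an explicit proof, deferring to the preceding discussion of soundness (via Proposition~\ref{blockedSetProjection}) and completeness of the algorithm in Figure~\ref{solvestar}; your proposal simply spells out that implicit argument by matching the deflationary stages of $[\dfp_{U,x}\phi]$ with the iterations of the while-loop, and the argument is correct. In short, your proof is correct and takes essentially the same route the paper intends.
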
 
\begin{proof} Observe that, since all proper projections of the relations of $\struct{B}$ are trivial, for every $U\subseteq A$, the following two statements are equivalent: 
\begin{itemize}
\item $x$ is  contained in a free set of the substructure of $\struct{A}$ on $U$,
\item $x$ is  contained in a free set of the projection of $\struct{A}$ to $U$.
\end{itemize}	
By definition, $\struct{A} \models \exists x [\dfp_{U,x} \phi(x)](x)$ iff $x \in \Dfp \Op^\struct{A} \llbracket  \phi(\cdot)  \rrbracket$.
By the assumptions about $\phi$ and the observation above, for every $U\subseteq A$,
\[\Op^\struct{A}\llbracket  \phi(\cdot)  \rrbracket(U) = \{x\in A \mid  x\text{ is not contained in a free set of } \pr_{U}(\struct{A})\}.\]
By definition, $\Dfp \Op^\struct{A}\llbracket  \phi(\cdot)  \rrbracket$ is the limit of the sequence $U_{i+1}\coloneqq U_i \cap \Op^\struct{A}\llbracket  \phi(\cdot)  \rrbracket(U_i)$  with $U_0\coloneqq A$.
Now we can easily conclude the proof of the corollary.

``$\Leftarrow$'': This is a direct consequence of Proposition~\ref{blockedSetProjection}.

``$\Rightarrow$'': This follows from Proposition~\ref{blockedSetProjection} and the fact that the set of all elements taking the minimal value in a solution for an instance of $\CSP(\struct{B})$ is a free set of the instance.
\end{proof}}

\subsection{\blue{An FP algorithm for TCSPs preserved by min}}\label{section_min}
For TCSPs with a template preserved by $\Min$,  the algorithm in Figure~\ref{algo:FreeSetsMIN} can be used for finding  the union of all free sets due to the following lemma.  It can be proved by a simple induction using the observation that, for every $\boldf{s}\in R^{\struct{A}}$ and every $V \subseteq \{\boldf{s}\of{1},\dots,\boldf{s}\of{\ar(R)}\}$, \blue{the set $\Ideal{R}{V}(\boldf{s})$ is closed under taking unions.}
\begin{lemma}[\cite{bodirsky2010complexity}]  \label{UCfree} Let $\struct{A}$ be an instance of $\CSP(\struct{B})$ for a temporal structure $\struct{B}$ preserved by a binary operation $f$ such that  $f(0,0)=f(0,x)=f(x,0)$ for every $x>0$. 
Then the set returned by the algorithm in Figure~\ref{algo:FreeSetsMIN} is the union of all free sets of $\struct{A}$.  
\end{lemma}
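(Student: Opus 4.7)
The plan is to verify two facts: that the union of all free sets of $\struct{A}$ is itself a free set (so that there is a unique inclusion-maximal free set $F^{\star}$ of $\struct{A}$), and that the algorithm in Figure~\ref{algo:FreeSetsMIN} returns precisely $F^{\star}$.

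For the first fact, I would argue directly from the stated hint. Suppose $F_1,\dots,F_m$ are free sets of $\struct{A}$ and set $F \coloneqq F_1 \cup \cdots \cup F_m$. Given a constraint $\boldf{s}\in R^{\struct{A}}$ with $V \coloneqq \{\boldf{s}\of{i} \mid \boldf{s}\of{i}\in F\} \neq \emptyset$, every index $j$ for which some entry of $\boldf{s}$ lies in $F_j$ supplies, by freeness of $F_j$, a set $M_j \coloneqq \{\boldf{s}\of{i} \mid \boldf{s}\of{i} \in F_j\}$ belonging to $\Minsystem_R(\boldf{s})$; since $M_j \subseteq V$, in fact $M_j \in \Ideal{R}{V}(\boldf{s})$. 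Because $\bigcup_j M_j = V$, the hint (closure of $\Ideal{R}{V}(\boldf{s}) \cup \{\emptyset\}$ under unions) yields $V \in \Minsystem_R(\boldf{s})$, witnessing freeness of $F$ at $\boldf{s}$. Thus $F$ is free, and there is a largest free set $F^{\star}$.

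For the second fact, I would analyse the algorithm as an iterative shrinking procedure producing a decreasing chain $A = F_0 \supseteq F_1 \supseteq \cdots$ whose stable value is the returned set $F_{\mathrm{alg}}$. The key observation, again from the hint, is that for every $\boldf{s}\in R^{\struct{A}}$ and every $V\subseteq \{\boldf{s}\of{1},\dots,\boldf{s}\of{\ar(R)}\}$, the family $\Ideal{R}{V}(\boldf{s})\cup \{\emptyset\}$ has a unique maximum element $M^{\star}(\boldf{s},V)$ (the union of all its members). Any variable lying in $V\setminus M^{\star}(\boldf{s},V)$ cannot belong to any free set contained in $V$, since such a free set would supply a member of $\Ideal{R}{V}(\boldf{s})$ strictly larger than $M^{\star}(\boldf{s},V)$. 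The loop body of Figure~\ref{algo:FreeSetsMIN} should amount to deleting exactly these variables at each iteration, which gives by induction on $k$ the invariant $F^{\star}\subseteq F_k$ and hence $F^{\star}\subseteq F_{\mathrm{alg}}$. Conversely, at the fixed point, each constraint $\boldf{s}$ satisfies $V = M^{\star}(\boldf{s},V)\in \Minsystem_R(\boldf{s})$, so $F_{\mathrm{alg}}$ itself satisfies the defining condition of a free set, giving $F_{\mathrm{alg}}\subseteq F^{\star}$.

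The only real obstacle is to inspect the body of Figure~\ref{algo:FreeSetsMIN} and confirm that its update rule indeed removes exactly the variables in $V\setminus M^{\star}(\boldf{s},V)$ for every constraint considered in one pass; once that bookkeeping is in place, the algebraic content, namely closure of $\Ideal{R}{V}(\boldf{s})\cup \{\emptyset\}$ under unions under the hypothesis on $f$, has already been supplied by the hint, and the proof collapses to a clean induction on the number of iterations together with the simple characterisation of the fixed point above.
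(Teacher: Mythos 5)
Your argument is correct, and it follows exactly the route the paper indicates (the paper gives no detailed proof, only the one-line hint that $\Ideal{R}{V}(\boldf{s})\cup\{\emptyset\}$ is closed under unions, citing \cite{bodirsky2010complexity}); you are simply filling in the "simple induction" the paper alludes to. A minor phrasing issue: where you say a variable in $V\setminus M^{\star}(\boldf{s},V)$ "cannot belong to any free set \emph{contained in} $V$," what is really meant, and what your invariant $F^{\star}\subseteq F_k$ delivers, is that it cannot belong to any free set $G$ with $G\cap\{\boldf{s}\of{1},\dots,\boldf{s}\of{\ar(R)}\}\subseteq V$; with that reading, $G\cap\{\boldf{s}\of{1},\dots,\boldf{s}\of{\ar(R)}\}\in\Ideal{R}{V}(\boldf{s})\subseteq M^{\star}(\boldf{s},V)$ gives the contradiction as you intend. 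Also note that your "fact 1" (the union of all free sets is itself free) is pleasant but not strictly needed for the invariant direction: one can apply the same argument directly to each individual free set $G$ rather than to $F^{\star}$; it is only used to close the loop at the fixed point, and even there the weaker observation that $F_{\mathrm{alg}}$ is itself a free set (when nonempty) already gives $F_{\mathrm{alg}}\subseteq F^{\star}$.
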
    
\begin{figure}  
\begin{center}   
\begin{algorithm}[H]   
\vspace{0.25em}
\SetNoFillComment 
\KwIn{An instance $\struct{A}$ of $\CSP(\struct{B})$ for a temporal structure $\struct{B}$} 
\KwOut{A subset $F\subseteq A$}  
$F \gets A$\; 
\While{$F$ changes}{ 
\ForAll{$\boldf{s}\in R^{\struct{A}}$}{
	\If{$\{\boldf{s}\of{1},\dots, \boldf{s}\of{\ar(R)}\}\cap F \neq \emptyset$}{   
		$\displaystyle F \gets  (F\setminus \{\boldf{s}\of{1},\dots, \boldf{s}\of{\ar(R)}\})\cup \bigcup \Ideal{R}{\{\boldf{s}\of{1},\dots, \boldf{s}\of{\ar(R)}\}\cap F}(\boldf{s})$  
	}
}
} 
\Return \textup{$F$}\; 
\end{algorithm}  	 
\end{center}
\caption{\label{algo:FreeSetsMIN} A choiceless algorithm that computes the union of all free sets   for temporal CSPs with a template preserved by the operation $\Min$.
}	
\end{figure}  
The following lemma in combination with \red{Theorem~\ref{REDUCTION}} shows that instead of
presenting an $\FP$ algorithm for each TCSP with a template preserved by $\Min$, it
suffices to present one for $\CSP(\mathbb{Q};\mathrm{R}^{\leq}_{\Min},<)$   
where 
\[
\mathrm{R}^{\leq}_{\Min}\coloneqq \{ (x,y,z)\in \mathbb{Q}^{3}\mid  y \leq x \vee z \leq x\}. 
\] 
\begin{lemma}
\label{lemma:RelBaseMin}  
A temporal relation is preserved by $\Min$ if and only if it is pp-definable in $(\mathbb{Q};<,\mathrm{R}^{\leq}_{\Min})$. 
\end{lemma}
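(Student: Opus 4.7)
The forward direction of the biconditional follows from part~(2) of Proposition~\ref{InvAutPol} once one checks that $<$ and $\mathrm{R}^{\leq}_{\Min}$ are themselves preserved by $\Min$. Preservation of $<$ is immediate. For $\mathrm{R}^{\leq}_{\Min}$, suppose $(x_i,y_i,z_i)\in\mathrm{R}^{\leq}_{\Min}$ for $i=1,2$, and assume without loss of generality that $\Min(x_1,x_2)=x_1$. From $(x_1,y_1,z_1)\in\mathrm{R}^{\leq}_{\Min}$ either $y_1\leq x_1$ or $z_1\leq x_1$; in either case $\Min(y_1,y_2)\leq x_1=\Min(x_1,x_2)$ or $\Min(z_1,z_2)\leq x_1=\Min(x_1,x_2)$, so the min-tuple lies in $\mathrm{R}^{\leq}_{\Min}$.

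For the converse direction, the plan has two ingredients. First, I build a small toolkit of pp-definitions inside $(\mathbb{Q};<,\mathrm{R}^{\leq}_{\Min})$: the atom $y\leq x$ is pp-defined by $\mathrm{R}^{\leq}_{\Min}(x,y,y)$, the atom $x=y$ is pp-defined as $x\leq y\wedge y\leq x$, and by induction on $n\geq 2$ the \emph{generalized min-clause} $C_n(x;y_1,\dots,y_n):=(y_1\leq x\vee\cdots\vee y_n\leq x)$ is pp-definable. Indeed, if $\phi_n$ pp-defines $C_n$, then
\[
\exists z\,\bigl(\mathrm{R}^{\leq}_{\Min}(x,y_1,z)\wedge \phi_n(z;y_2,\dots,y_{n+1})\bigr)
\]
pp-defines $C_{n+1}$: the nontrivial direction requires choosing the witness $z$, which we do by cases (if $y_1\leq x$, pick $z$ large enough that some $y_i$ with $i\geq 2$ sits below $z$; if some $y_i\leq x$ with $i\geq 2$, pick $z:=y_i$, which simultaneously discharges $\mathrm{R}^{\leq}_{\Min}(x,y_1,z)$ via $z\leq x$ and $\phi_n(z;y_2,\dots,y_{n+1})$ via $y_i\leq z$).

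Second, I invoke the known syntactic characterisation of $\Min$-preserved temporal relations due to Bodirsky and K\'ara~\cite{bodirsky2010complexity}: every temporal relation preserved by $\Min$ is logically equivalent to a conjunction whose conjuncts are atomic order formulas of the forms $x<y$ and $x=y$, $\leq$-atoms, and generalized min-clauses $C_n$. Substituting the pp-definitions from the first step for each conjunct yields a pp-definition of $R$ in $(\mathbb{Q};<,\mathrm{R}^{\leq}_{\Min})$.

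The main obstacle is the normal-form theorem from \cite{bodirsky2010complexity}; should it not be immediately citable in exactly this shape, I would prove it directly by induction on the (finite) number of $\Aut(\mathbb{Q};<)$-orbits of $k$-tuples omitted by $R$, exploiting $\Min$-invariance to separate, for each excluded orbit, a representative $\boldf{t}\notin R$ from all of $R$ by a single generalized min-clause (the position at which $\boldf{t}$ attains its minimum is the shared right-hand side of the clause, and the left-hand entries range over those indices where some tuple of $R$ enforces a $\leq$-relation). An alternative route is via the $\omega$-categorical Inv--Pol correspondence: show that every polymorphism of $(\mathbb{Q};<,\mathrm{R}^{\leq}_{\Min})$ is locally interpolated on finite sets by a term composed of $\Min$ and automorphisms of $(\mathbb{Q};<)$, so that preservation by $\Min$ alone forces preservation by the whole polymorphism clone.
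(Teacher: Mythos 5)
Your preservation check for the backward implication and your recursion for pp-defining the all-$\leq$ clauses $C_n(x;y_1,\dots,y_n)$ via nested applications of $\mathrm{R}^{\leq}_{\Min}$ both work, and they coincide with the paper's construction. The problem is the normal form you cite. The syntactic characterisation of $\Min$-preserved temporal relations (Proposition~\ref{minsynt}, due to Bodirsky, Chen and Wrona \cite{bodirsky2014tractability}, not Bodirsky--K\'ara) allows clauses $z_1\circ_1 x\vee\cdots\vee z_n\circ_n x$ with $\circ_i\in\{<,\leq\}$, so strict and non-strict disjuncts can occur together against a common right-hand variable. Your claimed normal form --- order atoms plus all-$\leq$ min-clauses --- is strictly weaker and in fact false: the relation $\{(x,y,z)\in\mathbb{Q}^3 : y<x\vee z\leq x\}$ is preserved by $\Min$, but every order atom and every all-$\leq$ min-clause that contains this relation also contains the excluded tuple $(1,1,2)$, so no conjunction of them can define it. Consequently your toolkit leaves the mixed clauses untreated, and neither of your fallbacks closes this gap: the orbit induction would at best recover the correct mixed normal form, which your toolkit still cannot express (and the ``separate each excluded orbit by a single all-$\leq$ min-clause'' step fails for $(1,1,2)$ above), while the Inv--Pol route is a far heavier and genuinely different argument.

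The missing step is one extra trick, and it is exactly how the paper's proof concludes. Given your pp-definition $\phi'_n$ of the all-$\leq$ clause $z_1\leq x\vee\cdots\vee z_n\leq x$, the clause $z_1\circ_1 x\vee\cdots\vee z_n\circ_n x$ with strict disjuncts at the indices in $I$ is pp-defined by
\[
\phi_n(x,z_1,\dots,z_n)\coloneqq\exists z'_1,\dots,z'_n\Bigl(\phi'_n(x,z'_1,\dots,z'_n)\wedge\bigwedge_{i\in I}z_i<z'_i\wedge\bigwedge_{i\notin I}z'_i=z_i\Bigr),
\]
since for $i\in I$ the conjuncts $z_i<z'_i$ and $z'_i\leq x$ together give $z_i<x$, and conversely one can always choose $z'_i$ with $z_i<z'_i\leq x$ when $z_i<x$. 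Adding this to your plan makes it the same proof as the paper's.
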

\blue{\begin{remark} The importance of Lemma~\ref{lemma:RelBaseMin} lies in the fact that it presents a finite relational base for the clone generated by $\Aut(\mathbb{Q};<)\cup \{\Min\}$. Moreover, all proper projections of the relations are trivial.
This eliminates the necessity to use projections of instances for CSPs of temporal structures preserved by $\min$ (they can be replaced by substructures).
\end{remark}}
The following syntactic description of the temporal
relations preserved by $\min$  is due to Bodirsky, Chen, and Wrona \cite{bodirsky2014tractability}.
\begin{proposition}[\cite{bodirsky2014tractability}, page 9] \label{minsynt}
A temporal relation is preserved by $\Min$ if and only if it can be defined by a conjunction of formulas of the form  $z_{1} \circ_{1} x\vee \dots \vee  z_{n} \circ_{n} x$, where $\circ_{i}\in \{<,\leq\}$.  
\end{proposition}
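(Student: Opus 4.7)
The forward direction is a direct verification that $\Min$ preserves any clause $\psi \equiv \bigvee_{i=1}^n (z_i \circ_i x)$ with $\circ_i \in \{<, \le\}$, and hence any conjunction of such clauses. Given $\bar{a}, \bar{b}$ satisfying $\psi$, assume without loss of generality that $\Min(\bar{a}, \bar{b})\of{x} = \bar{a}\of{x}$, pick an index $i$ with $\bar{a}\of{z_i} \circ_i \bar{a}\of{x}$, and chain $\Min(\bar{a},\bar{b})\of{z_i} \le \bar{a}\of{z_i} \circ_i \bar{a}\of{x} = \Min(\bar{a},\bar{b})\of{x}$ by transitivity to obtain the desired disjunct in $\Min(\bar{a},\bar{b})$.

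For the converse, let $R$ be an $n$-ary temporal relation preserved by $\Min$ and let $\Phi_R$ be the conjunction of all clauses of the stated form (over the variables $x_1,\dots,x_n$) that are implied by $R$. The inclusion $R \subseteq \Phi_R^{(\mathbb{Q};<)}$ is immediate. For the reverse inclusion, take $\bar{a}\models\Phi_R$; since $R$ is invariant under $\Aut(\mathbb{Q}; <)$, it suffices to exhibit some $\bar{r}\in R$ with the same order type as $\bar{a}$. I would argue by induction on $n$, with the base case handled by the observation that the only unary temporal relations are $\emptyset$ (definable by $x_1 < x_1$) and $\mathbb{Q}$ (definable by the empty conjunction). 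For the inductive step, let $M\subseteq [n]$ be the set of coordinates where $\bar{a}$ attains its minimum and set
$$R_M \coloneqq \{\pr_{[n]\setminus M}(\bar{r}) \mid \bar{r}\in R \text{ and } \Minset(\bar{r}) = M\}.$$
A short check using that coordinates outside $M$ lie strictly above the minimum shows that $R_M$ inherits preservation by $\Min$, so by the induction hypothesis it is defined by a conjunction of Min-Horn clauses over $\{x_i \mid i \notin M\}$. If $\pr_{[n]\setminus M}(\bar{a}) \in R_M$, any realising tuple lifts via an automorphism of $(\mathbb{Q};<)$ to a tuple of $R$ in the orbit of $\bar{a}$, using the homogeneity of $(\mathbb{Q};<)$ and that the coordinates in $M$ are strictly below the others. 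Otherwise some Min-Horn clause $\psi$ over $\{x_i \mid i \notin M\}$ is implied by $R_M$ and violated by $\pr_{[n]\setminus M}(\bar{a})$.

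The main obstacle is to transform $\psi$ into a clause $\psi^+$ over all of $\{x_1,\dots,x_n\}$ that is implied by the entirety of $R$ yet still violated by $\bar{a}$, contradicting $\bar{a}\models\Phi_R$. The natural transformation moves the head to some $m \in M$ (so that every disjunct whose tail variable lies outside $M$ remains false in $\bar{a}$, as $\bar{a}\of{m}$ is strictly below those entries) and appends enough new disjuncts to cover precisely those $\bar{r}\in R$ with $\Minset(\bar{r}) \ne M$, invoking $\Min$-closure of $R$ to guarantee that an appended disjunct fires on each such $\bar{r}$. Making this recipe fully precise — in particular choosing the strictness flags for the appended disjuncts and verifying soundness by combining an arbitrary $\bar{r}\in R$ with witnesses of $R_M$ under $\Min$ — is the technical heart of the proof. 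A cleaner alternative avoiding the inductive head-swapping is to invoke Lemma~\ref{lemma:RelBaseMin}, writing $R$ as a pp-formula in $(\mathbb{Q}; <, \mathrm{R}^{\leq}_{\Min})$ whose atoms (both $<$ and $\mathrm{R}^{\leq}_{\Min}(u,v,w)\equiv v\le u \vee w\le u$) are already Min-Horn, and then performing a Fourier--Motzkin-style resolution to eliminate the existential quantifiers while preserving the single-head shape; the bookkeeping of strict versus non-strict comparisons in the resolution step is the principal subtlety in that route.
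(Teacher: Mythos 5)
The paper does not prove Proposition~\ref{minsynt}: it is cited directly from Bodirsky, Chen, and Wrona~\cite{bodirsky2014tractability}, so there is no internal argument to compare against. Your forward direction is correct. The converse, however, has a gap that you acknowledge, and the recipe you sketch for closing it is wrong rather than merely under-detailed. Once the head of $\psi$ is moved from $x_h$ (with $h\notin M$) to some $x_m$ with $m\in M$, every original disjunct $z_j\circ_j x_m$ \emph{fails} on every $\bar r\in R$ with $\Minset(\bar r)=M$: the tails $z_j$ all lie outside $M=\Minset(\bar r)$, so $\bar r\of{z_j}>\bar r\of{x_m}=\min\bar r$ and hence $\neg\bigl(\bar r\of{z_j}\circ_j\bar r\of{x_m}\bigr)$ for either choice of $\circ_j$. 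Those $\bar r$ would therefore have to be caught by appended disjuncts, but the only disjuncts with head $x_m$ that can fire on such an $\bar r$ are of the form $x_i\le x_m$ with $i\in M$, and every such disjunct is satisfied by $\bar a$ (since $\bar a\of{x_i}=\bar a\of{x_m}$ for $i,m\in M$), so it is not available. Thus whenever $R_M\neq\emptyset$, no Min-Horn clause with head in $M$ can simultaneously be implied by $R$ and violated by $\bar a$, and the inductive step does not close. Keeping the head at $x_h$ has the dual defect: any appended disjunct whose tail lies in $M$ is satisfied by $\bar a$.

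Your ``cleaner alternative'' is circular in the context of this paper: the proof of Lemma~\ref{lemma:RelBaseMin} reduces both directions to Proposition~\ref{minsynt} via the very normal form you are trying to establish, so invoking that lemma here begs the question. Even granting an independent proof of the lemma, the Fourier--Motzkin step is more delicate than you indicate, because the pp-definition is a conjunction of clauses and eliminating an existential variable from a CNF is not the same as eliminating it clause by clause; establishing that the elimination preserves the single-head shape would require a genuine argument, not only bookkeeping of strict versus non-strict comparisons.
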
   
\begin{proof}[Proof of Lemma~\ref{lemma:RelBaseMin}]  \label{proof_RelationalBaseMin} 
The backward implication is a direct consequence of Proposition~\ref{minsynt}. 

For the forward implication, we show that every temporal relation defined by a formula of the form  $z_{1} \circ_{1} x\vee \dots \vee  z_{n} \circ_{n} x$, where $\circ_{i}\in \{<,\leq\}$, has a pp-definition in   $ (\mathbb{Q};\mathrm{R}^{\leq}_{\Min},<)$. Then the statement follows from Proposition~\ref{minsynt}.
A pp-definition $\phi'_{n}(x,z_{1},\dots,z_{n})$ for the relation defined by  $z_{1} \leq x \vee \dots \vee z_{n} \leq x$   can be obtained by the following simple induction. 

In the \emph{base case} $n=3$ we set $\phi'_{3}(x,z_{1},z_{2})\coloneqq \mathrm{R}^{\leq}_{\Min}(x,z_{1},z_{2})$. 

In the \emph{induction step}, we suppose that $n>3$ and that  $\phi'_{n-1}$ is a pp-definition for the relation defined by $z_{1} \leq x \vee \dots \vee z_{n-1} \leq x$. Then 
\begin{align*}\phi'_{n}(x,z_{1},\dots,z_{n})\coloneqq  \exists h  \big(\mathrm{R}^{\leq}_{\Min}(x,z_{1},h)\wedge   \phi'_{n-1}(h,z_{2},\dots,z_{n})\big)
\end{align*}  
is a pp-definition of the relation defined by $z_{1} \leq x \vee \dots \vee z_{n} \leq x$. 
Finally,
\begin{align*} \phi_{n}(x,z_{1},\dots,z_{n})=\exists z'_{1},\dots,z'_{n}  ( \phi_{n}'(x,z'_{1},\dots,z'_{n})   \wedge    \bigwedge_{i\in I}z_{i}< z'_{i}   \wedge   \bigwedge_{i\notin I}z'_{i}=z_{i}   )
\end{align*}  
is a pp-definition of the relation defined by $z_{1} \circ_{1} x \vee \dots \vee z_{n} \circ_{n} x$  where $\circ_{i}$ equals $<$ if $i\in I$ and  $\leq$ otherwise. 
\end{proof}  
In the case of $\CSP(\mathbb{Q};<,\mathrm{R}^{\leq}_{\Min})$ a procedure from \cite{bodirsky2010complexity} for finding free sets can be directly implemented in FP. 
\begin{proposition} \label{LFPminCorrectnessSoundness}  
$\CSP(\mathbb{Q};<,\mathrm{R}^{\leq}_{\Min})$ is expressible in $\FP$.  
\end{proposition}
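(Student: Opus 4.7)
The plan is to invoke Corollary~\ref{fp_pp} with $\struct{B}=(\mathbb{Q};<,\mathrm{R}^{\leq}_{\Min})$. Because $\Min$ satisfies $\Min(0,0)=\Min(0,x)=\Min(x,0)$ for $x>0$, Lemma~\ref{UCfree} tells us that the union of all free sets of any instance is precisely the output of the deflationary loop in Figure~\ref{algo:FreeSetsMIN}. It therefore suffices to implement that inner loop as an $\FP$ formula $\phi(x)$ in the signature $\{<,\mathrm{R}^{\leq}_{\Min},U\}$ which, evaluated in $(\struct{A};U)$, characterises the elements of $U$ that do not lie in any free set of $\pr_{U}(\struct{A})$.

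I would capture the inner loop with a deflationary fixed point on a unary predicate $F$. Define a first-order update formula $\psi(z,F)$ which expresses ``$z\in F$, and for every $R\in\{<,\mathrm{R}^{\leq}_{\Min}\}$ and every $\boldf{s}\in R^{\struct{A}}$ with entries in $U$ in which $z$ occurs, some $M\in\Ideal{R}{\{\boldf{s}\of{1},\dots,\boldf{s}\of{\ar(R)}\}\cap F}(\boldf{s})$ contains $z$''. Since the signature is finite and both arities are bounded, the admissible index-sets $I\subseteq[\ar(R)]$ that can arise as $\Minset(\boldf{t})$ for some $\boldf{t}\in R^{\struct{B}}$ form a finite list that can be hardwired into $\psi$ as a finite disjunction: for $<$ the only such index is $\{1\}$, and reading off from $y\leq x\vee z\leq x$, the admissible indices for $\mathrm{R}^{\leq}_{\Min}$ are $\{2\},\{3\},\{1,2\},\{1,3\},\{2,3\},\{1,2,3\}$. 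Each disjunct must be guarded by the equality pattern of $\boldf{s}$ so that $\{\boldf{s}\of{i}\mid i\in I\}$ really is the intended min-set, but that guard is again a finite first-order check. Setting $\phi(x):=U(x)\wedge\neg[\dfp_{F,z}(U(z)\wedge \psi(z,F))](x)$ and substituting into Corollary~\ref{fp_pp} yields the $\FP$ sentence $\exists x\,[\dfp_{U,x}\phi(x)](x)$ defining $\textup{co-CSP}(\struct{B})$.

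The assumption stated below Definition~\ref{def:contractions}, that the signature is closed under projections and contractions, is what allows the relations of the projected structure $\pr_{U}(\struct{A})$ to be referred to by finitely many symbols inside $\psi$; without it one would need to synthesise those projections by further existential quantification, but even then only finitely many such projections are needed because $\ar(R)$ is bounded. The main obstacle is purely a bookkeeping one: handling the equality pattern of $\boldf{s}$ correctly when enumerating the min-sets inside $\psi$. This is tedious but conceptually routine given the bounded arities, and does not add any counting or rank resources to the formula, so the final sentence remains in $\FP$.
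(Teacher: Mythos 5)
Your proposal is correct and takes essentially the same route as the paper: reduce to Corollary~\ref{fp_pp}, reuse Lemma~\ref{UCfree} to justify Figure~\ref{algo:FreeSetsMIN}, and encode that inner loop as a deflationary fixed point over a unary variable, with the finite list of admissible min-sets of $<$ and $\mathrm{R}^{\leq}_{\Min}$ hardwired into the update formula. The only cosmetic difference is that the paper streamlines the update formula by keeping only the cases in which a variable can actually be evicted (namely $x$ as the larger endpoint of a $<$ constraint and $x$ as the first argument of an $\mathrm{R}^{\leq}_{\Min}$ constraint), whereas you propose to enumerate all admissible index sets and guard each disjunct by the equality pattern of the scope tuple; both work because proper projections of the chosen relations are trivial and the closed form in~\eqref{minctr1}--\eqref{minctr2} already absorbs repeated entries.
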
  
\begin{proof} 
\red{Recall that $\struct{B}\coloneqq (\mathbb{Q};<,\mathrm{R}^{\leq}_{\Min})$ is preserved by $\pp$.} \blue{Since all proper projections of the relations of $\struct{B}$ are trivial, $\struct{B}$ satisfies the prerequisites of Corollary~\ref{fp_pp}.}
Our aim is to construct a formula $\phi(x)$ satisfying the requirements of Corollary~\ref{fp_pp} by rewriting the algorithm in Figure~\ref{algo:FreeSetsMIN} in the syntax of FP.
In addition to the unary fixed-point variable $U$ coming from Corollary~\ref{fp_pp}, we introduce a fresh unary fixed-point variable $V$ for the union $F$ of all free sets of the current projection.
The algorithm in Figure~\ref{algo:FreeSetsMIN} computes $F$ using a  deflationary induction where parts of the domain which cannot be contained in any free set are gradually cut off.
Thus, we may choose $\phi(x)$ to be of the form $\neg [\dfp_{V,x} \psi(x)](x)$ for some formula $\psi(x)$ testing whether whenever the variable 
$x$ is contained in $ \{\boldf{s}\of{1},\dots, \boldf{s}\of{\ar(R)}\}\cap F$  
for some constraint $R(\boldf{s})$, then it is also contained in the largest min-set within $\{\boldf{s}\of{1},\dots, \boldf{s}\of{\ar(R)}\}\cap F$.
It is easy to see that
\begin{align}
\bigcup\Ideal{<}{\{\boldf{s}\of{1},\boldf{s}\of{2}\}\cap F}(\boldf{s}) & =  (\{\boldf{s}\of{1},\boldf{s}\of{2} \}\cap F) \setminus \{\blue{\boldf{s}\of{2}}\} 
\label{minctr2} \\
\bigcup\Ideal{\mathrm{R}^{\leq}_{\Min}}{\{\boldf{s}\of{1},\boldf{s}\of{2}, \boldf{s}\of{3}\}\cap F}(\boldf{s}) & = \begin{cases}
\{\boldf{s}\of{1},\boldf{s}\of{2}, \boldf{s}\of{3}\}\cap F & \text{ if } F\cap \{ \boldf{s}\of{2},\boldf{s}\of{3} \} \neq \emptyset \\
\emptyset & \text{ otherwise.} \end{cases}
\label{minctr1}
\end{align}  
This leads to the formula
\[
\psi(x)   \coloneqq     U(x) \wedge \forall y,z     \big(( \overset{\eqref{minctr2}}{\overbrace{   U(y) \Rightarrow \neg  (y < x)}})    
\wedge   (  \overset{\eqref{minctr1}}{\overbrace{(U(y) \wedge U(z))  \Rightarrow (V(y)\vee  V(z) \vee \neg  \mathrm{R}^{\leq}_{\Min}(x,y,z)) }} ) \big). 
\]
Therefore, the statement of the proposition  follows from Corollary~\ref{fp_pp}. 
\end{proof} 
To increase readability, the formula $\phi(x)$ in the proof of Proposition~\ref{LFPminCorrectnessSoundness}
can we rewritten into the following formula, using the conversion rule from $\dfp$ to $\ifp$: 
\begin{align*} 
[\ifp_{V,x} U(x) \Rightarrow \exists y,z \big( (U(y) \wedge  y < x) \vee ( U(y)\wedge U(z) \wedge V(y) \wedge V(z) \wedge \mathrm{R}^{\leq}_{\Min}(x,y,z))     \big)](x) .
\end{align*} 
\subsection{\blue{An FP algorithm for TCSPs preserved by mi}}\label{section_mi}
For TCSPs with a template preserved by $\mi$,  the algorithm in Figure~\ref{algo:FreeSetsMIN} can be used for finding  the union of all free sets due to the following lemma.  It can be proved by a simple induction using the observation that, for every $\boldf{s}\in R^{\struct{A}}$  and every $V \subseteq \{\boldf{s}\of{1},\dots,\boldf{s}\of{\ar(R)}\}$, the set $ \Filter{R}{V}(\boldf{s})\cup \{\emptyset\}$ is closed under taking  intersections. 
\begin{lemma}[\cite{bodirsky2010complexity}]  \label{corrFree} Let $\struct{A}$ be an instance of $\CSP(\struct{B})$ for a temporal structure $\struct{B}$ preserved by a binary operation $f$ such that $f(0,0)<f(0,x)$ and $f(0,0)<f(x,0)$ for every $x>0$. 
Then the set returned by the algorithm in Figure~\ref{algo:FreeSetsMI} is the union of all free sets of $\struct{A}$.  
\end{lemma}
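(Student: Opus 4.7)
The plan is to mirror the proof of Lemma~\ref{UCfree}, substituting intersection-closure of $\Filter{R}{V}(\boldf{s}) \cup \{\emptyset\}$ for the union-closure of $\Ideal{R}{V}(\boldf{s}) \cup \{\emptyset\}$. The first task is the algebraic observation itself: given $M_1, M_2 \in \Filter{R}{V}(\boldf{s})$ realised by tuples $\boldf{t}_1, \boldf{t}_2 \in R^{\struct{B}}$, I would use the homogeneity of $(\mathbb{Q};<)$ to replace each $\boldf{t}_i$ by an automorphic image whose minimal entries equal $0$. Then $f(\boldf{t}_1, \boldf{t}_2) \in R^{\struct{B}}$ because $f$ preserves $\struct{B}$, and the hypotheses $f(0,0) < f(0,x)$ and $f(0,0) < f(x,0)$ for $x > 0$ force the $j$-th entry of $f(\boldf{t}_1,\boldf{t}_2)$ to attain the minimum of the tuple precisely when $j \in \Minset(\boldf{t}_1) \cap \Minset(\boldf{t}_2)$. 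This exhibits $M_1 \cap M_2$ as either empty or an element of $\Minsystem_{R}(\boldf{s})$, and since $V \subseteq M_1 \cap M_2$ the intersection lies in $\Filter{R}{V}(\boldf{s}) \cup \{\emptyset\}$ as required.

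With that observation secured, I would prove the two inclusions between the algorithm's output $F$ and the union of all free sets. For completeness, I would argue by induction on the stages of the algorithm that every free set $F_0$ stays inside the current approximation. At each update triggered by a constraint $R(\boldf{s})$ with $F_0 \cap \{\boldf{s}\of{1},\dots,\boldf{s}\of{\ar(R)}\}$ nonempty, the freeness of $F_0$ supplies an element of $\Minsystem_{R}(\boldf{s})$ equal to $F_0 \cap \{\boldf{s}\of{1},\dots,\boldf{s}\of{\ar(R)}\}$; this witness lies above the approximation's intersection with the constraint, so the intersection-minimal member of the corresponding filter — whose existence and membership come from the algebraic observation — sits inside $F_0$, preserving the invariant. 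Taking the union over all free sets $F_0$ yields one inclusion. For soundness, I would analyse $F$ at its fixed point: the termination condition together with intersection-closure forces, for each constraint $R(\boldf{s})$ meeting $F$, the slice $F \cap \{\boldf{s}\of{1},\dots,\boldf{s}\of{\ar(R)}\}$ to coincide with the intersection-minimal element of its filter and thus to be a member of $\Minsystem_{R}(\boldf{s})$, exhibiting the connected pieces of $F$ as free sets whose union is $F$.

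The main obstacle will be handling the case where $\Filter{R}{V}(\boldf{s}) = \emptyset$ arises during the computation, i.e., where an element of the approximation becomes blocked by some constraint. I would need to argue that the offending elements cannot belong to any free set, so that discarding them preserves the invariant $F_0 \subseteq F$; this is precisely where intersection-closure is used, for the contrapositive of the completeness induction shows that any free set touching such a constraint would already supply a nonempty filter containing its own slice. Once this bookkeeping is in place, the induction runs as smoothly as in Lemma~\ref{UCfree}, whose structure it mirrors verbatim.
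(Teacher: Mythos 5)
Your approach matches the paper's one-line hint (establish the intersection-closure of $\Filter{R}{V}(\boldf{s})\cup\{\emptyset\}$, then induct over the stages), and your argument for the closure observation is essentially the expected one, but the soundness paragraph contains a genuine error: you argue about the aggregate output $F$ rather than the per-element sets $F_x$. You claim that at termination the slice $F\cap\{\boldf{s}\of{1},\dots,\boldf{s}\of{\ar(R)}\}$ coincides with the intersection-minimal filter element and therefore lies in $\Minsystem_R(\boldf{s})$, but $F=\bigcup_x F_x$ need not be a free set at all: for $\CSP(\mathbb{Q};\neq)$ with a single constraint $\neq(a,b)$, the algorithm returns $F=\{a,b\}$, while $\{a,b\}\notin\Minsystem_{\neq}(a,b)$. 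The fixed-point property that the algorithm guarantees holds for each $F_x$ at the end of its inner loop, not for $F$. The correct statement --- which your phrase ``connected pieces of $F$'' gestures at --- is that each nonempty $F_x$ is itself a free set at its inner fixed point: there $\bigcap\Filter{R}{F_x\cap\{\boldf{s}\of{1},\dots\}}(\boldf{s})\subseteq F_x$, and intersection-closure (this is where the algebraic observation is genuinely needed) upgrades that intersection from a mere lower bound to a member of the filter, forcing it to equal $F_x\cap\{\boldf{s}\of{1},\dots\}$ and hence to lie in $\Minsystem_R(\boldf{s})$. Your completeness induction, by contrast, does not need the observation at all, since the intersection over the filter is automatically contained in the particular member $F_0\cap\{\boldf{s}\of{1},\dots\}$.

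A secondary issue is that you state the completeness invariant backward, writing that ``every free set $F_0$ stays inside the current approximation'' and later invoking ``the invariant $F_0\subseteq F$''. The algorithm in Figure~\ref{algo:FreeSetsMI} \emph{grows} each $F_x$ from $\{x\}$, unlike Figure~\ref{algo:FreeSetsMIN}, which \emph{shrinks} $F$ from $A$; mirroring the invariant of Lemma~\ref{UCfree} verbatim therefore points the inclusion the wrong way. The invariant you actually exploit downstream --- when you say the witness $F_0\cap\{\boldf{s}\of{1},\dots\}$ ``lies above the approximation's intersection'' --- is the reverse containment $F_x\subseteq F_0$ for every free set $F_0$ containing $x$, and that is the one to write down.
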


\begin{figure} 
\begin{center}    
\begin{algorithm}[H]  
\vspace{0.25em}
\SetNoFillComment 
\KwIn{An instance $\struct{A}$ of $\CSP(\struct{B})$ for a temporal structure $\struct{B}$} 
\KwOut{A subset $F\subseteq A$} 
$F \gets $ the empty subset of $A$\;
\ForAll{$x\in A$}{
$F_{x} \gets \{x\}$\;
\While{$F_{x}$ changes}{ 
	\ForAll{$\boldf{s}\in R^{\struct{A}}$ such that $\{\boldf{s}\of{1},\dots,\boldf{s}\of{\ar(R)}\}\cap F_{x} \neq \emptyset$}{
		\If{$\Filter{R}{F_{x}\cap \{\boldf{s}\of{1},\dots,\boldf{s}\of{\ar(R)}\}}(\boldf{s}) \neq \emptyset$}{ $\displaystyle F_{x}\gets F_{x}\cup \bigcap \Filter{R}{F_{x}\cap \{\boldf{s}\of{1},\dots,\boldf{s}\of{\ar(R)}\}}(\boldf{s})$\; 
		}
		\Else{$F_{x} \gets$ the empty subset of $A$} 
	} 
}
$F \gets F\cup F_{x}$\;
} 
\Return $F$\;	
\end{algorithm}   
\end{center}
\caption{\label{algo:FreeSetsMI}  A choiceless algorithm that computes the union of all free sets for temporal CSPs with a template preserved by a binary operation $f$ such that $f(0,0)<f(0,x)$ and $f(0,0)<f(x,0)$ for every $x>0$.}	
\end{figure}  

The following lemma in combination with 
\red{Theorem~\ref{REDUCTION}}
shows that instead of
presenting an $\FP$ algorithm for each TCSP with a template preserved by $\mi$, it
suffices to present one for $ \CSP(\mathbb{Q};\mathrm{R}_{\mi},\mathrm{S}_{\mi},{\neq})$ where  
\begin{align*} \mathrm{R}_{\mi}\coloneqq   \{ (x,y,z)\in \mathbb{Q}^{3}\mid y < x \vee z \leq x \}  \quad \text{ and } \quad 
\mathrm{S}_{\mi}\coloneqq  \{ (x,y,z)\in \mathbb{Q}^{3}\mid x \neq y \vee z \leq x \}.
\end{align*}  

\begin{lemma}
\label{RelationalBaseMi}
A temporal relation is preserved by $\mi$ if and only if it is pp-definable in $(\mathbb{Q};\mathrm{R}_{\mi},\mathrm{S}_{\mi},{\neq})$.  
\end{lemma}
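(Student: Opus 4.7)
The plan is to parallel the proof of Lemma~\ref{lemma:RelBaseMin}, with $\Min$ replaced by $\mi$ and the relational base $\{\mathrm{R}^{\leq}_{\Min},<\}$ replaced by $\{\mathrm{R}_{\mi},\mathrm{S}_{\mi},{\neq}\}$. The backward direction (pp-definability implies preservation by $\mi$) is the routine one: a direct case analysis using only the facts that $\mi$ refines the kernel of $\Min$ and $\mi(0,0)<\mi(0,x),\mi(x,0)$ for $x>0$ shows that $\mi$ preserves each of $\mathrm{R}_{\mi}$, $\mathrm{S}_{\mi}$, and $\neq$; Proposition~\ref{InvAutPol}(2) then propagates preservation to every pp-definable relation.

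For the forward direction, the first step is to establish a syntactic normal form for temporal relations preserved by $\mi$, analogous to Proposition~\ref{minsynt} and obtainable by the methods of~\cite{bodirsky2014tractability}: every such relation is a finite conjunction of clauses of the form
\[
z_1\circ_1 x\vee z_2\circ_2 x\vee\cdots\vee z_n\circ_n x,\qquad \circ_i\in\{<,\leq,{\neq}\}.
\]
Because pp-definability is closed under conjunctions, it suffices to pp-define each such clause in $(\mathbb{Q};\mathrm{R}_{\mi},\mathrm{S}_{\mi},{\neq})$.

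I would proceed by induction on $n$, having first recorded that the atomic formulas $z\leq x$ and $z<x$ are pp-definable from the generators by $z\leq x \equiv \mathrm{R}_{\mi}(x,z,z)$ and $z<x\equiv \mathrm{R}_{\mi}(x,z,z)\wedge z\neq x$. The base cases ($n\leq 2$) are handled by combining $\mathrm{R}_{\mi}$, $\mathrm{S}_{\mi}$, $\neq$ with the above atoms, using existential shifts such as $z_1<x\vee z_2\neq x\equiv \exists h\,(z_1\leq h\wedge z_1\neq h\wedge \mathrm{S}_{\mi}(x,z_2,h))$. In the inductive step I peel off the leftmost disjunct using a fresh existentially quantified variable $h$: if $\circ_1\in\{<,\leq\}$, the resulting pp-formula is $\exists h\,(\mathrm{R}_{\mi}(x,z_1,h)\wedge\psi_{n-1}(h,z_2,\dots,z_n))$, where $\psi_{n-1}$ is the pp-definition of $z_2\circ_2 h\vee\cdots\vee z_n\circ_n h$ obtained from the induction hypothesis; if $\circ_1={\neq}$, one uses $\mathrm{S}_{\mi}$ in place of $\mathrm{R}_{\mi}$. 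A final shift of variables via fresh strict inequalities, exactly as at the end of the proof of Lemma~\ref{lemma:RelBaseMin}, fixes the precise strictness of each $\circ_i$.

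The main obstacle is the correctness of the inductive step: one must verify that whenever the full clause holds on a tuple in $\mathbb{Q}$, there exists a witness $h$ simultaneously satisfying the binary generator at the first slot and the $(n-1)$-disjunct continuation. This requires a case analysis on which $z_i$ realises the disjunction and a careful choice of $h$ --- large above $x$ when the witness is $z_1$, and positioned to inherit the truth of the continuation otherwise. The asymmetric roles of the second and third slots in $\mathrm{R}_{\mi}$ and $\mathrm{S}_{\mi}$ (strict versus non-strict versus $\neq$) are precisely what allow both the ordered and the $\neq$ disjuncts to be dispatched, and explain why both generators, rather than just one of them, must appear in the relational base.
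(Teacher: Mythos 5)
Your proposed normal form is too loose: you allow any mix of $\{<,\leq,\neq\}$ among the disjuncts, but the actual normal form (Proposition~\ref{misynt}) permits \emph{at most one} $\leq$-disjunct per clause. That constraint is not cosmetic. For example, $z_1\leq x\vee z_2\leq x$ is not preserved by $\mi$: applying $\mi$ coordinatewise to the tuples $(0,0,1)$ and $(0,1,0)$ yields $(\alpha(0),\gamma(0),\beta(0))$ with $\alpha(0)<\beta(0)<\gamma(0)$, which violates the clause. So a clause with two $\leq$-disjuncts cannot be pp-definable in $(\mathbb{Q};\mathrm{R}_{\mi},\mathrm{S}_{\mi},\neq)$, and the reduction ``it suffices to pp-define each such clause'' already breaks down.

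Independently of the normal-form issue, the inductive step as stated produces incorrect formulas whenever the leftmost disjunct is a $\leq$. Take the two-disjunct clause $z_1\leq x\vee z_2\neq x$ (a legitimate $\mi$-clause with one $\leq$ and one $\neq$). Peeling $z_1\leq x$ with $\mathrm{R}_{\mi}(x,z_1,h)$ gives $\exists h\bigl((z_1<x\vee h\leq x)\wedge z_2\neq h\bigr)$. This is satisfied by \emph{every} tuple: the set $\{h:h\leq x\}$ is infinite, so one can always pick $h\leq x$ with $h\neq z_2$. The pp-formula thus defines $\mathbb{Q}^3$, not the intended relation. The underlying mistake is that the $\leq$-variable cannot be peeled; it must be carried as a fixed ``anchor'' throughout the construction. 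That is exactly how the paper's proof is structured: in the clause $z_1\neq x\vee\cdots\vee z_n\neq x\vee y_1<x\vee\cdots\vee y_m<x\vee y\leq x$, the pp-definitions $\phi_{m,0}$ and $\phi_{0,n}$ each retain the variable $y$ in the last slot and chain through a fresh variable $h$ that takes over the role of $y$ in the recursive call, peeling off only a $<$-disjunct or a $\neq$-disjunct at each step (Claims~\ref{claim_mi_m} and~\ref{claim_mi_n}); only at the very end is the optional $y\leq x$ disjunct eliminated via an extra existential. Your left-to-right peeling, modeled on the $\Min$ case of Lemma~\ref{lemma:RelBaseMin}, does not transfer because there the all-$\leq$ clause is itself preserved (by $\Min$), whereas here it is not.
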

\blue{\begin{remark}
Analogously to Lemma~\ref{lemma:RelBaseMin}, Lemma~\ref{RelationalBaseMi} presents a finite relational base for the clone generated by $\Aut(\mathbb{Q};<)\cup \{\mi\}$.
Moreover, all proper projections of the relations are trivial.
This eliminates the necessity to use projections of instances for CSPs of temporal structures preserved by $\mi$ (they can be replaced by substructures).
\end{remark}}
The following syntactic description is due to Micha\l\ Wrona.
\begin{proposition}[see, e.g., \cite{bodirsky2021complexity}] \label{misynt}A temporal relation is preserved by $\mi$ if and only if it can be defined as conjunction of formulas of the form \[  z_{1} \neq x\vee\dots \vee  z_{n} \neq x \vee y_{1} < x \vee \dots \vee  y_{m} <x \vee  y \leq x \tag{$\dagger$}  \label{mi_clause} \] where the last disjunct $y \leq x$ can be omitted.  
\end{proposition}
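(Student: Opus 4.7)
The plan is to prove both directions separately, with the backward direction being a routine verification and the forward direction carrying the main technical weight.

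\textbf{Backward direction.} I would verify directly that every clause of the form $(\dagger)$ defines a relation preserved by $\mi$. The key sublemmas are that $\mi$ preserves each of the binary relations $<$, $\leq$, and $\neq$ on $\mathbb{Q}$; these follow by case analysis from the monotonicity of $\alpha,\beta,\gamma$ and the interleaving $\alpha(x) < \beta(x) < \gamma(x) < \alpha(x+\varepsilon)$, which forces $\mi(a,a') < \mi(b,b')$ whenever $\Min(a,a') < \Min(b,b')$ and separates distinct inputs that share a minimum. Given these, the clause itself is preserved: for any two tuples each satisfying some disjunct of $(\dagger)$, a short case split on which pair of disjuncts is witnessed shows that the componentwise $\mi$ also satisfies at least one disjunct of the clause.

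\textbf{Forward direction.} Let $R$ be a temporal relation preserved by $\mi$. Since $(\mathbb{Q};<)$ admits quantifier elimination, $R$ has a quantifier-free first-order definition, which I would put into conjunctive normal form. After removing redundant literals from each clause, $R$ is presented as a conjunction of \emph{minimal} clauses $C_1 \wedge \dots \wedge C_s$, where each $C_i$ is implied by $R$ but no proper sub-clause of $C_i$ is. The goal is to show that each minimal clause $C_i$ has the shape $(\dagger)$. Minimality ensures that for every literal $L$ of $C_i$ there exists a witness tuple $\boldf{r}_L \in R$ on which $L$ holds but every other literal of $C_i$ fails. Now $\mi$-preservation forces $\mi(\boldf{r}_L, \boldf{r}_{L'}) \in R$ for each pair $L, L'$ of literals of $C_i$, so this tuple satisfies some literal $L''$ of $C_i$. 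A careful case analysis, split on the atomic shapes of $L$ and $L'$ (drawn from $\{<, \leq, =, \neq\}$) and the behaviour of $\mi$ on each shape pair, forces the literals of $C_i$ to all share a common right-hand variable $x$, and restricts their left-hand types to $\{\neq, <, \leq\}$ with at most one $\leq$. This is precisely the syntactic shape $(\dagger)$.

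\textbf{Main obstacle.} The technical crux is the case analysis in the forward direction. One must show that incompatible literal configurations cannot coexist in a minimal clause, because $\mi$ applied to the corresponding witness tuples (possibly pre-composed with suitable automorphisms of $(\mathbb{Q};<)$ to adjust relative magnitudes) would produce a tuple in $R$ violating every literal of the clause. For instance, two literals whose right-hand variables differ can be shown incompatible by arranging the witnesses so that the $\mi$-values land in the ``$\beta$-regime'' on one coordinate and the ``$\gamma$-regime'' on another, breaking all comparisons simultaneously. Organising this bookkeeping across all literal-type pairs, and correctly identifying why at most one $\leq$-literal can survive in a minimal clause, is where the real work lies; this also produces the optional $y \leq x$ disjunct exactly once when it is forced by the orbit structure of $R$.
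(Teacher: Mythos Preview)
The paper does not prove this proposition. It is stated with the attribution ``due to Micha\l\ Wrona'' and the citation ``see, e.g., \cite{Book}'', and is then immediately \emph{used} (not proved) in the proof of Lemma~\ref{RelationalBaseMi}. So there is nothing in the paper to compare your proposal against.

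That said, your outline is the standard route for results of this type and is the approach one finds in the cited sources: quantifier elimination, reduction to a CNF with no redundant literals, and then a witness-tuple argument exploiting that $\mi$ must send the pair of witnesses back into $R$. Your identification of the crux---showing that all literals in a minimal clause share a common ``upper'' variable $x$, that no $=$-literal survives, and that at most one $\leq$-literal remains---is correct, and the mechanism you name (pre-composing witnesses with automorphisms of $(\mathbb{Q};<)$ to force $\mi$ into the $\beta$- versus $\gamma$-regime on selected coordinates) is exactly what is needed.

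One point to be careful about: your backward-direction sketch asserts that $\mi$ preserves $\neq$ as a binary relation. This is true, but it is not entirely obvious since $\mi$ is not injective; the argument requires observing that $\mi(a,a') = \mi(b,b')$ forces $\min(a,a') = \min(b,b')$ together with the same order type among $\{a,a'\}$, which in turn forces $a = b$ or $a' = b'$. You should make this explicit. Also, for the backward direction it is not enough to know that $\mi$ preserves each of $<,\leq,\neq$ separately; you need the clause as a whole to be preserved, and the case split you allude to (on which disjunct each input tuple satisfies) genuinely uses the fact that all disjuncts share the variable $x$ on the right. The cleanest way to organise this is to argue on $\min(\boldf{s}[x],\boldf{t}[x])$ versus the minima at the other coordinates.
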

\begin{proof}[Proof of Lemma~\ref{RelationalBaseMi}] 
The backward direction is a direct consequence of Proposition~\ref{misynt}. 

For the forward direction, it suffices by Proposition~\ref{misynt} 
to show that every temporal relation defined by a formula of the form  \eqref{mi_clause}, where the last disjunct $y \leq x$ can be omitted, has a pp-definition in  \blue{$(\mathbb{Q};\mathrm{R}_{\mi},\mathrm{S}_{\mi},{\neq})$}.
We prove the statement by induction on $m$ and $n$.
Note that both ${\leq}$ and ${<}$ have a pp-definition in $(\mathbb{Q};\mathrm{R}_{\mi},\mathrm{S}_{\mi},{\neq})$. 
For $m,n\geq 0$, let $R_{m,n}$ denote the $(m+n+2)$-ary temporal relation defined by the formula  \eqref{mi_clause}, where we assume that all variables are distinct and in their respective order $x,y_{1},\dots,y_{m},z_{1},\dots,z_{n},y$. 

In the \emph{base case} \blue{$m+n=1$},  we set $\phi_{1,0}(x,y_{1},y)=\mathrm{R}_{\mi}(x,y_{1},y)$ and $\phi_{0,1}(x,z_{1},y)=\mathrm{S}_{\mi}(x,z_{1},y)$. 
The \emph{induction step} is divided into three individual claims.
\begin{claim} \label{claim_mi_m} If $\phi_{m-1,0}(x,y_{1},\dots,y_{m-1},y)$ is a pp-definition of $R_{m-1,0}$, then  
\begin{align*} 
\phi_{m,0}(x,y_{1},\dots,y_{m},y) \coloneqq\exists h  \big(\phi_{1,0}(h,y_{m},y) 
\wedge \phi_{m-1,0}(x,y_{1},\dots,y_{m-1},h) \big)
\end{align*} 
is a pp-definition of $R_{m,0}$.  
\end{claim}
\begin{proof}[Proof of Claim~\ref{claim_mi_m}]  ``$\Rightarrow$'':  
Let ${\boldf{t}}\in R_{m,0}$.  We have to show that ${\boldf{t}}$ satisfies $\phi_{m,0}$. 
In case that ${\boldf{t}}\of{x}> \Min({\boldf{t}}\of{y_{1}},\dots,{\boldf{t}}\of{y_{m-1}})$ we set $h\coloneqq  {\boldf{t}}\of{y}$.  Otherwise, ${\boldf{t}}\of{x}> {\boldf{t}}\of{y_{m}}$ or ${\boldf{t}}\of{x}\geq {\boldf{t}}\of{y}$, in which case we set $h\coloneqq {\boldf{t}}\of{x}$.    

``$\Leftarrow$'': Suppose for contradiction that ${\boldf{t}}\notin R_{m,0}$ satisfies $\phi_{m,0}$ and that this is witnessed by some $h \in \mathbb{Q}$. Since ${\boldf{t}}\of{x}\leq \Min({\boldf{t}}\of{y_{1}},\dots,{\boldf{t}}\of{y_{m-1}})$, we must have ${\boldf{t}}\of{x}\geq h$.  But since ${\boldf{t}}\of{x}\leq {\boldf{t}}\of{y_{m}}$ and ${\boldf{t}}\of{x}<{\boldf{t}}\of{y}$, we get a contradiction to $\phi_{1,0}(h,{\boldf{t}}\of{y_{m}},{\boldf{t}}\of{y})$ being satisfied.
\end{proof} 
\begin{claim} \label{claim_mi_n} If $\phi_{0,n-1}(x,z_{1},\dots,z_{n-1},y)$ is a pp-definition of $R_{0,n-1}$, then 
\begin{align*} \phi_{0,n}(x,z_{1},\dots,z_{n},y) \coloneqq \exists h     \big(\phi_{0,1}(h,z_{n},y)   
\wedge \phi_{0,n-1}(x,z_{1},\dots,z_{n-1},h) \big)
\end{align*} 
is a pp-definition of $R_{0,n}$. 
\end{claim}
The proofs of this claim and the next claim are similar to the proof of the previous claim and omitted.  
\begin{claim} \label{claim_mi_mn} Let $\phi_{m,0}(x,y_{1},\dots,y_{m},y)$ and $\phi_{0,n}(x,z_{1},\dots,z_{n},y)$ be pp-definitions of $R_{m,0}$ and $R_{0,n}$, respectively. Then  
\begin{align*} \blue{\phi_{m,n}(x,y_{1},\dots,y_{m},z_{1},\dots,z_{n},y) =  \exists h   \big(\phi_{0,n}(x,z_{1},\dots,z_{n},h)   \wedge  \phi_{m,0}(h,y_{1},\dots,y_{m},y) \big)}
\end{align*}	is a pp-definition of $R_{m,n}$.
\end{claim}
This completes the proof of the lemma because the last clause $y \leq x$ in \eqref{mi_clause} can be easily eliminated using an additional existentially quantified variable and the relation ${<}$.
\end{proof}  
\begin{proposition}\label{LFPmiCorrectnessSoundness}  
$\CSP(\mathbb{Q};\mathrm{R}_{\mi},\mathrm{S}_{\mi},{\neq})$ is expressible in $\FP$.  
\end{proposition}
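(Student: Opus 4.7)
The plan is to follow the blueprint of Proposition~\ref{LFPminCorrectnessSoundness} and apply Corollary~\ref{fp_pp} to the template $\struct{B}=(\mathbb{Q};\mathrm{R}_{\mi},\mathrm{S}_{\mi},{\neq})$, which is preserved by $\mi$ and hence by $\pp$. This reduces the task to constructing an $\FP$ formula $\phi(x)$ in the signature of $\struct{B}$ extended by a fresh unary predicate $U$ such that $(\struct{A};U)\models\phi(x)$ if and only if $x$ is not contained in any free set of $\pr_U(\struct{A})$. Since $\mi(0,0)<\mi(0,z)$ and $\mi(0,0)<\mi(z,0)$ for every $z>0$, Lemma~\ref{corrFree} guarantees that the algorithm in Figure~\ref{algo:FreeSetsMI} correctly computes the union of all free sets of an instance, so it suffices to translate its inner loop into $\FP$.

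The main obstacle, in contrast to the $\Min$ case, is that the inner loop of Figure~\ref{algo:FreeSetsMI} is not purely inflationary: whenever $\Filter{R}{V}(\boldf{s})=\emptyset$ the algorithm resets $F_x$ to the empty set, and $\ifp$ cannot undo additions it has already made. I would circumvent this by separating the inflationary growth from the reset check. First, introduce a binary fixed-point variable $T$ and define $T(x,y)$ by $[\ifp_{T,(x,y)}\chi(x,y)](x,y)$, where $\chi(x,y)$ is a disjunction of the base case $U(x)\wedge U(y)\wedge x=y$ with finitely many forced-addition disjuncts. For each relation symbol $R\in\tau$ (where $\tau$ is closed under projections and contractions by the assumption below Definition~\ref{def:contractions}) and each index set $I\subseteq[\ar(R)]$ for which $\bigcap\Filter{R}{\{\boldf{s}\of{i}:i\in I\}}(\boldf{s})$ strictly enlarges $\{\boldf{s}\of{i}:i\in I\}$ on a generic tuple $\boldf{s}$, we add to $\chi$ a disjunct of the form $\exists\boldf{y}(R(\boldf{y})\wedge\bigwedge_{j}U(\boldf{y}\of{j})\wedge\bigwedge_{i\in I}T(x,\boldf{y}\of{i})\wedge y=\boldf{y}\of{k})$ for each newly forced index $k$. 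The closure of $\Filter{R}{V}(\boldf{s})\cup\{\emptyset\}$ under intersections, which underpins Lemma~\ref{corrFree}, ensures that this rule-based inflation faithfully tracks the monotone part of the algorithm.

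In the second stage, $\phi(x)$ inspects whether the closure $T(x,\cdot)$ witnesses any reset:
\[\phi(x)\;\coloneqq\;\neg U(x)\;\vee\;\bigvee_{(R,I)}\exists\boldf{y}\Bigl(R(\boldf{y})\wedge\bigwedge_{j\in[\ar(R)]}U(\boldf{y}\of{j})\wedge\bigwedge_{i\in I}T(x,\boldf{y}\of{i})\Bigr),\]
where the disjunction ranges over the finitely many pairs $(R,I)$ for which no element of $\Minsystem_R(\boldf{y})$ contains $\{\boldf{y}\of{i}:i\in I\}$ on a generic $\boldf{y}$, the prototypical case being $R={\neq}$ with $I=[2]$. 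A finite case analysis enumerating the min-systems of the relations $\mathrm{R}_{\mi}$, $\mathrm{S}_{\mi}$, ${\neq}$ together with their projections and contractions confirms the correctness of the simulation: if no reset is triggered then $T(x,\cdot)$ coincides with the set $F_x$ produced by Figure~\ref{algo:FreeSetsMI}, and otherwise $F_x$ is empty, witnessing that $x$ lies in no free set of $\pr_U(\struct{A})$. Corollary~\ref{fp_pp} then yields the $\FP$ sentence $\exists x[\dfp_{U,x}\phi(x)](x)$ defining $\textup{co-CSP}(\struct{B})$.
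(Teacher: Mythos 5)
Your proposal is correct and follows essentially the same approach as the paper: apply Corollary~\ref{fp_pp}, translate Figure~\ref{algo:FreeSetsMI} into FP by computing an inflationary fixed-point of the forced-addition (free-set propagation) relation in a binary fixed-point variable, and then detect the algorithm's non-monotone reset separately by checking whether the closure hits a constraint whose filter is empty — which, as you note, for this template happens exactly at ${\neq}$ constraints. The paper's own proof is just the concrete instantiation of your schema: it computes the filter intersections for $\mathrm{R}_{\mi}$ and $\mathrm{S}_{\mi}$ explicitly and writes out the resulting formula $\psi(x,y)$, whereas you leave the finite case analysis of min-systems stated but unexecuted (and reuse the symbol $\chi$, which collides with the paper's min-indicator notation).
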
  
%
%
\begin{proof}  
Let $\struct{B} \coloneqq  (\mathbb{Q};\mathrm{R}_{\mi},\mathrm{S}_{\mi},{\neq})$.  
\blue{Recall that $\struct{B}$ is preserved by $\pp$. Since all proper projections of the relations of $\struct{B}$ are trivial, $\struct{B}$ satisfies the prerequisites of Corollary~\ref{fp_pp}.}
Our aim is  to construct a formula $\phi(x)$ satisfying the requirements of Corollary~\ref{fp_pp} by rewriting the algorithm in Figure~\ref{algo:FreeSetsMI} in the syntax of FP.
In addition to the unary fixed-point variable $U$ coming from Corollary~\ref{fp_pp}, we introduce a fresh binary fixed-point variable $V$ for the free set propagation relation $\{(x,y) \mid y\in F_{x} \}$ computed during the algorithm in Figure~\ref{algo:FreeSetsMIN}.
The computation takes place through inflationary induction where a pair $(x,y)$ is added to the relation if the containment of $x$ in a free set implies the containment of $y$. The algorithm concludes that a variable $x$ is contained in a free set if there are no $x_1,\dots,x_k \in F_x$ whose containment in the same free set would lead to a contradiction.
Note that $\neq$ is the only relation without a constant polymorphism among the relations of $\struct{B}$, i.e., the only relation for which the \emph{if} condition in the algorithm in Figure~\ref{algo:FreeSetsMI} can evaluate as false.
Thus $\phi(x)$  may be chosen to be  of the form 
\[  U(x) \Rightarrow \exists y, z  \big(  [\ifp_{V,x,y} \psi(x,y)](x,y)      \wedge [\ifp_{V,x,z} \psi(x,z)](x,z)  \wedge {{\neq}}(y,z) \big)\] 
for some formula $\psi(x,y)$ defining the (transitive) free-set propagation relation. 
\blue{The notation ${\neq}(y,z)$ should not be confused with $\neg (y = z)$: the former is an atomic $\tau$-formula because $\neq$ is part of the signature \red{of $\struct{B}$}, while the latter is a valid first-order formula because equality is a built-in part of first-order logic.}
It is easy to see that
\begin{align*}  
\bigcap \Filter{\mathrm{R}_{\mi}}{F_{x}\cap \{\boldf{s}\of{1}, \boldf{s}\of{2},\boldf{s}\of{3} \}}(\boldf{s}) = & 
\left\{\renewcommand\arraystretch{1}\setlength{\arraycolsep}{1pt}
\begin{array}{ll} (F_{x}\cap \{\boldf{s}\of{1}, \boldf{s}\of{2},\boldf{s}\of{3} \}) \cup \{\boldf{s}\of{3}\} & \text{if } F_{x}\cap \{\boldf{s}\of{1},\boldf{s}\of{3}\} = \{\boldf{s}\of{1}\}, \\ 
F_{x}\cap \{\boldf{s}\of{1}, \boldf{s}\of{2},\boldf{s}\of{3} \} & \text{otherwise,}  
\end{array} 
\right.   \\
\bigcap \Filter{\mathrm{S}_{\mi}}{F_{x}\cap \{\boldf{s}\of{1}, \boldf{s}\of{2},\boldf{s}\of{3} \}}(\boldf{s}) = & 
\left\{\renewcommand\arraystretch{1}\setlength{\arraycolsep}{1pt}
\begin{array}{ll} (F_{x}\cap \{\boldf{s}\of{1}, \boldf{s}\of{2},\boldf{s}\of{3} \}) \cup \{\boldf{s}\of{3}\} & \text{if } F_{x}\cap \{\boldf{s}\of{1},\boldf{s}\of{2},\boldf{s}\of{3}\} = \{\boldf{s}\of{1},\boldf{s}\of{2}\}, \\ 
F_{x}\cap \{\boldf{s}\of{1}, \boldf{s}\of{2},\boldf{s}\of{3} \} & \text{otherwise.}  
\end{array} 
\right.  
\end{align*} 
This leads to the formula
\begin{align*}
\psi(x,y)\coloneqq  \ &  U(x) \wedge U(y) \wedge \big((x=y)  \\ &  \vee  \exists a,b, c   \big(U(a) \wedge U(b)\wedge U(c)     \wedge V(x,a) \wedge V(x,b)  \wedge  (\mathrm{R}_{\mi}(a,c,y) \vee \mathrm{S}_{\mi}(a,b,y)) \big) \big)
\end{align*}
Now the  statement of the proposition follows from Corollary~\ref{fp_pp}.
\end{proof}  
\subsection{\blue{An FP algorithm for TCSPs preserved by ll}}  
If a temporal structure $\struct{B}$ is preserved by $\elel$, then it is also preserved by $\lex$, but not necessarily by $\pp$ \cite{bodirsky2010complexity}. In general, the choiceless procedure based on Proposition~\ref{blockedSetProjection} is then not correct for $\CSP(\struct{B})$. 
We present a modified version of this procedure, motivated by the approach of repeated contractions from \cite{bodirsky2010fast}, and show that this version is correct for $\CSP(\struct{B})$.

Let $\struct{A}$ be an instance of $\CSP(\struct{B})$.  
We repeatedly simulate on $\struct{A}$ the choiceless procedure based on Proposition~\ref{blockedSetProjection}  and, each time a union $S$ of free sets is computed, we contract in $\struct{A}$ all variables in every free set within $S$ that is minimal among all existing free sets in the current projection with respect to set inclusion. 
This loop terminates when a fixed-point is reached, where $\struct{A}$ no longer changes, in which case we accept.
The resulting algorithm can be found in Figure~\ref{algo:master_ll}. 
\begin{definition}  A free set (Definition~\ref{def:freeset}) of an instance $\struct{A}$ of a temporal CSP is called \emph{\blue{inclusion-minimal}} if it does not contain any other free set of $\struct{A}$ as a proper subset. 
\end{definition} 
\blue{\begin{theorem}  \label{theorem:ll_algo_correctness}
The algorithm in Figure~\ref{algo:master_ll} is correct for CSPs of temporal structures preserved by  $\elel$.
\end{theorem}
Theorem~\ref{theorem:ll_algo_correctness} is proved using the following three lemmata. 
First, Lemma~\ref{atomic} explains why we may \red{(and in fact why we \emph{must})} contract \blue{inclusion-minimal} free sets.}
\begin{lemma}
\label{atomic}
Let $\struct{B}$ be a temporal structure preserved by $\lex$ and $\struct{A}$ an instance of $\CSP(\struct{B})$. 
Then all variables in an \blue{inclusion-minimal} free set of $\struct{A}$ denote the same value in every solution for $\struct{A}$. 
\end{lemma}
\begin{proof}  Let $F$ be an \blue{inclusion-minimal} free set of $\struct{A}$.  Suppose that   $\struct{A}$ has a solution $f$. We assume that $|F|>1$; otherwise, the statement is trivial. Let $F'$ be the set of all elements from $F$ that denote the minimal value in $f$ among all elements from $F$. Suppose that  $F\setminus F'$ is not empty. 
We show that then $F'$ is a free set that is properly contained in $F$.
Let $R$ be an arbitrary symbol from the signature of $\struct{B}$. We set $k\coloneqq \ar(R)$.
Let $\boldf{s}\in R^{\struct{A}}$ be such that $\{\boldf{s}\of{1},\dots,\boldf{s}\of{k}\}\cap F' \neq \emptyset$.
Without loss of generality, let $1\leq k_{F'}\leq k_{F} \leq k$ be such that $\{\boldf{s}\of{1},\dots, \boldf{s}\of{k_{F'}}\} = \{\boldf{s}\of{1},\dots, \boldf{s}\of{k}\}\cap F'$ and $\{\boldf{s}\of{1},\dots, \boldf{s}\of{k_F}\}= \{\boldf{s}\of{1},\dots, \boldf{s}\of{k}\}\cap F$.
There exists a tuple ${\boldf{t}} \in R^{\struct{B}}$ such that $\Minset(\boldf{t}) = [k_F]$ because $F$ is a free set.
Also, by the definition of $F'$, there exists a tuple $\boldf{t}' \in R^{\struct{B}}$ such that $\Minset\big((\boldf{t}'\of{1},\dots, \boldf{t}'\of{k_{F}})\big)=  [k_{F'}] $.
Let $\boldf{t}''\coloneqq \lex(\boldf{t},\boldf{t}')$.
It is easy to see that $\Minset(\boldf{t}'')=[k_{F'}]$. \blue{Since $\boldf{s}$ was chosen arbitrarily, we conclude that $F'$ is a free set, a contradiction to inclusion-minimality of $F$. Thus $F'=F$.}
\end{proof}
Next, Lemma~\ref{lemma:atomic_intersections} guarantees that distinct \blue{inclusion-minimal} free sets are disjoint. 
\blue{\begin{lemma} \label{lemma:atomic_intersections}
Let $\struct{B}$ be a temporal structure preserved by $\lex$, and $\struct{A}$ an instance of $\CSP(\struct{B})$. 
If $F_1, F_2 $ are free sets of $\struct{A}$ such that $F_1\cap F_2\neq \emptyset$, then $F_1\cap F_2$ is a free set of $\struct{A}$.
\end{lemma}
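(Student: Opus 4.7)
The plan is to verify directly that $F \cap F'$ satisfies the defining condition of a free set, using $\lex$ as a polymorphism to build a suitable witness tuple in each constraint relation. Non-emptiness of $F \cap F'$ is given by hypothesis, so the only task is to handle an arbitrary constraint $\boldf{s} \in R^{\struct{A}}$ whose entries include some element of $F \cap F'$ (the case where no entry meets $F \cap F'$ is vacuous).

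Fix such a constraint $\boldf{s}$. Since $\boldf{s}$ has an entry in $F$ and an entry in $F'$, the freeness of $F$ and $F'$ supplies tuples $\boldf{t}, \boldf{t}' \in R^{\struct{B}}$ with
\begin{align*}
\Minset(\boldf{t}) &= \{ i \mid \boldf{s}\of{i} \in F \}, &
\Minset(\boldf{t}') &= \{ i \mid \boldf{s}\of{i} \in F' \}.
\end{align*}
Set $\boldf{t}'' \coloneqq \lex(\boldf{t}, \boldf{t}')$, which lies in $R^{\struct{B}}$ because $\struct{B}$ is preserved by $\lex$. From the defining inequality of $\lex$, position $i$ lies in $\Minset(\boldf{t}'')$ iff $i \in \Minset(\boldf{t})$ and $\boldf{t}'\of{i} = \min_{j \in \Minset(\boldf{t})} \boldf{t}'\of{j}$. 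The key observation is that by hypothesis there is some $i^{*}$ with $\boldf{s}\of{i^{*}} \in F \cap F'$, so $i^{*} \in \Minset(\boldf{t}) \cap \Minset(\boldf{t}')$; hence the value $\min_{j \in \Minset(\boldf{t})} \boldf{t}'\of{j}$ coincides with the \emph{global} minimum of $\boldf{t}'$. Consequently, for $i \in \Minset(\boldf{t})$ the condition $\boldf{t}'\of{i} = \min_{j \in \Minset(\boldf{t})} \boldf{t}'\of{j}$ is equivalent to $i \in \Minset(\boldf{t}')$, giving $\Minset(\boldf{t}'') = \Minset(\boldf{t}) \cap \Minset(\boldf{t}') = \{ i \mid \boldf{s}\of{i} \in F \cap F' \}$, as required.

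The only delicate step is guaranteeing that the intersection-of-min-sets identity $\Minset(\boldf{t}'') = \Minset(\boldf{t}) \cap \Minset(\boldf{t}')$ holds rather than a proper subset; this is exactly where the hypothesis $F \cap F' \neq \emptyset$ (applied to $\boldf{s}$) is used, since without a common minimum position in $\boldf{t}$ and $\boldf{t}'$ the inner minimum taken by $\lex$ could lie strictly above $\min(\boldf{t}')$ and shrink the result. Once this is observed, no further case analysis is needed and the verification is uniform across all constraints, so no additional obstacle arises.
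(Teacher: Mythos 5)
Your proof is correct and uses the same central device as the paper—applying $\lex$ coordinate-wise to the two tuples furnished by the freeness of $F$ and $F'$—but it is cleaner. The paper first splits off the degenerate case where the constraint's entries that lie in $F$ (resp.\ $F'$) all already lie in $F\cap F'$, handles that directly from freeness of one set, and only invokes $\lex$ in the remaining case, without really explaining why $\Minset(\lex(\boldf{t},\boldf{t}'))$ comes out to exactly the intersection. You instead observe that the hypothesis $\{\boldf{s}\of{1},\dots,\boldf{s}\of{k}\}\cap F\cap F'\neq\emptyset$ forces $\Minset(\boldf{t})\cap\Minset(\boldf{t}')\neq\emptyset$, which is precisely what makes $\min_{j\in\Minset(\boldf{t})}\boldf{t}'\of{j}$ agree with the global minimum of $\boldf{t}'$ and hence makes the $\lex$ construction return the intersection of min-sets. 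This one observation renders the paper's case split unnecessary, so your version is both uniform and better-justified; I have no corrections.
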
 
\begin{proof}
Let $F_1,F_2 $ be free sets of $\struct{A}$ such that $F_1\cap F_2\neq \emptyset$. 
Let $R$ be a symbol from the signature of $\struct{B}$. We set $k\coloneqq \ar(R)$.
Let $\boldf{s}\in R^{\struct{A}} $ be such that $\{\boldf{s}\of{1},\dots,\boldf{s}\of{k}\}\cap F_1\cap F_2 \neq \emptyset$.
%
%
Then there are tuples $ \boldf{t}_1,\boldf{t}_2\in R^{\struct{B}}$ such that $\Minset(\boldf{t}_j) = \{i\in [k] \mid \boldf{s}\of{i}\in \{\boldf{s}\of{1},\dots,\boldf{s}\of{k}\}\cap F \}$ for $j\in[2]$  because $F_1$ and $F_2$ are both free sets.
Let $\boldf{t}\coloneqq \lex( \boldf{t}_1,\boldf{t}_2)$.
Then $\Minset(\boldf{t}') = \{i\in [k] \mid \boldf{s}\of{i}\in \{\boldf{s}\of{1},\dots,\boldf{s}\of{k}\}\cap F_1\cap F_2 \}$. 
Since $\boldf{s}$ was chosen arbitrarily, we conclude that $F_1\cap F_2$ is a free set.
\end{proof}}
\blue{Finally, Lemma~\ref{blockedSetProjection2} is an analogue to Proposition~\ref{blockedSetProjection} for the operation $\elel$ instead of $\pp$.
It allows us to recursively reduce an instance of the CSP to a smaller one, unless a certain condition involving free sets is not met, in which case there is no solution and we may reject.
The proof is quite similar to the one of Proposition~\ref{blockedSetProjection}, because $\elel$ behaves similarly to the operation $\pp$, except that $\elel$ is injective.
The injectivity of $\elel$ has several consequences which need to be handled carefully, e.g.,  the fact that we can no longer work with overlapping free sets.  We also need to do some bookkeeping on the kernel of the solution.}
%
%
\begin{figure} 
\begin{center}   
\begin{minipage}{\textwidth} 
\begin{algorithm}[H]  
\vspace{0.25em}
\SetNoFillComment 
\KwIn{An instance $\struct{A}$ of $\CSP(\struct{B})$ for a temporal structure $\struct{B}$}  
\KwOut{\emph{true} or \emph{false}}     
$C \gets$ the empty binary relation over $A$\;
\While{$\struct{A}$ changes}{  
$\struct{A}' \gets\struct{A}$\;
\While{$\struct{A}'$ changes}{
	\ForAll{$a,b\in A'$}{\If{$a,b$ are in the same \blue{inclusion-minimal} free set of $\struct{A}'$}{$ C \gets C \cup \{(a,b)\}$}} 
	$S\gets $ the union of all \blue{inclusion-minimal} free sets of $\struct{A}'$\; 
	$\struct{A}' \gets\pr_{A'\setminus S}(\struct{A}')$\;  
}  
\If{$A'\neq \emptyset$} {\Return \emph{false}} \Else {$\struct{A} \gets \con_{C}(\struct{A})$}  
}
\Return \emph{true}\; 
\end{algorithm}  
\end{minipage} 
\end{center}
\caption{\label{algo:master_ll}  A choiceless algorithm for temporal CSPs with a template preserved by $\elel$ using an oracle for the computation of \blue{inclusion-minimal} free sets.}	 
\end{figure}  
\begin{lemma}
\label{blockedSetProjection2}
%
\blue{Let $\struct{A}$ be an instance of $\CSP(\struct{B})$ for a temporal structure $\struct{B}$ preserved by $\elel$.
Let $S$ be the union of all  inclusion-minimal free sets of $\struct{A}$.
Let $C$ be a binary relation over $A$ such that 	 $\struct{A} = \con_{C}(\struct{A})$ and $C\cap S^2$ consists of the pairs of elements contained in the same inclusion-minimal free set of $\struct{A}$.
If $\pr_{A\setminus S}(\struct{A})$ has a solution with kernel $C \cap (A\setminus S)^2$, then $ \struct{A}$ has a solution with kernel $C$.}
\end{lemma} 
\begin{proof}[Proof of Lemma~\ref{blockedSetProjection2}]   
Let $F_{1},\dots,F_{k}$ be the \blue{inclusion-minimal} free sets of $\struct{A}$ and set $S\coloneqq F_{1} \cup \cdots \cup F_{k}$. 
\blue{
Suppose that $\pr_{A\setminus S}(\struct{A})$ has a solution $f \colon A \to \mathbb{Q}$ with $\ker f  = C \cap (A\setminus S)^2$.  
Since $F_{1},\dots,F_{k}$ are \blue{inclusion-minimal}, by Lemma~\ref{lemma:atomic_intersections}, we have $F_{i}\cap F_{j} = \emptyset$ for all  distinct  $i,j \in [k]$.
Let $f' \colon A \rightarrow \mathbb{Q}$ be such that $f'|_{A\setminus S}=f$,
$f'(F_1) < f'(F_2) < \cdots < f'(F_k) < f'(A \setminus S)$,
and $f'$ is constant on $F_i$ for every $i\in [k]$.
We claim that $f'$ is a solution to $\struct{A}$ with $\ker f'=C$. 
To verify this,
let $\boldf{s}$ be an arbitrary tuple from $R^{\struct{A}}\subseteq A^m$ such that, without loss of generality,
$  \{\boldf{s}\of{1},\dots,\boldf{s}\of{m}\} \cap S
= \{\boldf{s}\of{1},\dots,\boldf{s}\of{\ell}\} \neq \emptyset.$
By the definition of $\pr_{A\setminus S}(\struct{A})$,  there is a tuple $\boldf{t} \in R^{\struct{B}}$ such that $\boldf{t}\of{i} = f({\boldf{s}}\of{i})$ for every $i \in \{\ell+1,\dots,m\}$.
Since $\struct{A} = \con_{C}(\struct{A})$,  we have $\boldf{t}\of{u}=\boldf{t}\of{v}$ whenever $(\boldf{s}\of{u},\boldf{s}\of{v})\in C$.
For $u,v \geq  \ell+1$, we even have $\boldf{t}\of{u}=\boldf{t}\of{v}$ if and only if $(\boldf{s}\of{u},\boldf{s}\of{v})\in C$ because $ \ker f = C \cap (A\setminus S)^2$.
Since $F_1,\dots,F_k$ are free sets,
there are tuples ${\boldf{t}}_1,\dots,{\boldf{t}}_k \in R^{\struct{B}}$
such that, for every $i \in [k]$ and every $j \in [m]$, we have 
$j \in \Minset({\boldf{t}}_{i}) $ if and only if $ \boldf{s}\of{j} \in F_{i}.$
Again, for every $i \in [k]$, we have $\boldf{t}_{i}\of{u}=\boldf{t}_i\of{v}$ whenever $(\boldf{s}\of{u},\boldf{s}\of{v})\in C$ because $\struct{A} = \con_{C}(\struct{A})$.
This time, we do not obtain a necessary and sufficient condition concerning the entries with indices $u,v \geq  \ell+1$.
For every $i \in [k]$, let $\alpha_i \in \Aut(\mathbb{Q};<)$ be such that $\alpha_i$ maps the minimal entry
of $\boldf{t}_i$ to $0$.
The tuple
${\boldf{r}}_i \coloneqq  \elel(\alpha_i \boldf{t}_i,\boldf{t})$ is contained in $R^{\struct{B}}$ because
$R^{\struct{B}}$ is preserved by $\elel$.
It follows from the definition of $\elel$
that, for every $j \in [m]$, $j \in \Minset({\boldf{r}}_i)$ if and only if $\boldf{s}\of{j} \in F_i$.
Moreover, $({\boldf{r}}_i\of{\ell+1},\dots,{\boldf{r}}_i\of{m})$ and
$({\boldf{t}}\of{\ell+1},\dots,{\boldf{t}}\of{m})$ lie in the same orbit of $\Aut(\mathbb{Q};<)$,
and $\boldf{r}_{i}\of{u}=\boldf{r}_i\of{v}$ whenever $(\boldf{s}\of{u},\boldf{s}\of{v})\in C$.}
Define ${\boldf{p}}_k,{\boldf{p}}_{k-1},\dots,{\boldf{p}}_1 \in \mathbb{Q}^m$ in this order as follows. Define ${\boldf{p}}_k \coloneqq  {\boldf{r}}_k$ and, for $i \in \{1,\dots,k-1\}$,
${\boldf{p}}_i \coloneqq  \elel(\beta_i {\boldf{r}}_{i},{\boldf{p}}_{i+1}) $
where $\beta_i \in \Aut(\mathbb{Q};<)$ is chosen such that
$\beta_{i}({\boldf{r}}_i\of{j}) = 0$ for all $j \in \Minset({\boldf{r}}_i)$.
We verify by induction that for all $i \in [k]$
\begin{enumerate}
\item ${\boldf{p}}_i$ is contained in $R^{\struct{B}}$;
\item $({\boldf{p}}_i\of{\ell+1},\dots,{\boldf{p}}_i\of{m})$;
$({\boldf{t}}\of{\ell+1},\dots,{\boldf{t}}\of{m})$ lie in the same orbit of $\Aut(\mathbb{Q};<)$;
\item $j \in \Minset({\boldf{p}}_i)$ if and only if $\boldf{s}\of{j} \in F_i$
for all $j \in [m]$;
\item ${\boldf{p}}_i\of{u}={\boldf{p}}_i\of{v}$ for all $a \in  \{i+1,\dots,k\}$ and $u,v \in [m]$ such that $\boldf{s}\of{u},\boldf{s}\of{v} \in S_a$;
\item ${\boldf{p}}_i\of{u} < {\boldf{p}}_i\of{v}$ for all $a,b \in \{i,i+1,\dots,k\}$ with $a<b$ and $u,v \in [m]$ such that $\boldf{s}\of{u} \in F_a$, $\boldf{s}\of{v} \in F_b$.
\end{enumerate}
For $i=k$, the items (1), (2), and (3) follow from
the respective property of ${\boldf{r}}_k$
and items (4) and (5) are trivial.
For the induction step and $i \in {[k-1]}$ we have that ${\boldf{p}}_{i} = \elel(\beta_{i} {\boldf{r}}_{i},{\boldf{p}}_{i+1})$ satisfies items (1) and (2)
because ${\boldf{p}}_{i+1}$ satisfies
items (1) and (2) by inductive assumption.
For item (3), note that $\Minset({\boldf{p}}_i) = \Minset({\boldf{r}}_{i})$.
Finally, if $\boldf{s}\of{u},\boldf{s}\of{v} \in F_{i+1} \cup \cdots \cup F_k$,
then ${\boldf{p}}_i\of{u} \leq {\boldf{p}}_i\of{v}$ if and only if
${\boldf{p}}_{i+1}\of{u} \leq {\boldf{p}}_{i+1}\of{v}$.
This implies items (4) and (5) by induction.
Note that $(s'(\boldf{s}\of{1}),\dots,s'(\boldf{s}\of{m}))$
lies in the same orbit as $\boldf{p}_1$ and hence  is contained in $R^{\struct{B}}$. Moreover, it follows from the injectivity of $\elel$  that $\boldf{p}_1\of{u}=\boldf{p}_1\of{v}$ if and only if $(\boldf{s}\of{u},\boldf{s}\of{v})\in C$. 
\end{proof} 
\blue{\begin{proof}[Proof of Theorem~\ref{theorem:ll_algo_correctness}]
Let $\struct{A}$ be an instance of $\CSP(\struct{B})$ for a temporal structure $\struct{B}$ preserved by $\elel$.
First, suppose that $\struct{A}$ has a solution $f$. If $A=\emptyset$, then the algorithm trivially accepts $\struct{A}$ and there is nothing to \red{be shown}. So suppose that $A\neq \emptyset$.
For an arbitrary $\emptyset \subsetneq A' \subseteq A$, let $\struct{A}'\coloneqq \pr_{A'}(\struct{A})$.
By definition, $f'\coloneqq f|_{A'}$ is a solution to $\struct{A}'$.
Let $F$ be the set of all elements of $A'$ on which $f'$ takes the minimal value.
Then clearly $F$ is a free set of $\struct{A}'$.
By definition, $F$ contains an inclusion-minimal free set of $\struct{A}'$ as a subset.
Recall that, since $\struct{B}$ is preserved by $\elel$, it is also preserved by $\lex$. 
Thus, by Lemma~\ref{atomic}, $f'(a)=f'(b)$ whenever $a$ and $b$ are contained in the same inclusion-minimal free set of $\struct{A}'$.
Since $A'$ was chosen arbitrarily, it follows by induction over the inner loop of the the algorithm that \emph{false} is not returned at the end of the inner loop, and that the relation $C$ computed during the inner loop satisfies $C\subseteq \ker f$.
Since $C\subseteq \ker f$, we have that $f$ is also a solution to $\con_{C}(\struct{A})$.
It now follows by induction over the outer loop of the algorithm using the argument above for the inner loop that \emph{true} is returned at the end of the outer loop. 

Now suppose that $\struct{A}$ is accepted by the algorithm.
Let $C$ be the binary relation computed during the algorithm.
Clearly, the algorithm computes the same binary relation when given $\con_{C}(\struct{A})$ as an input, and also accepts on this input.
Moreover, every solution to $\con_{C}(\struct{A})$ is also a solution to $\struct{A}$.
Thus, it is enough to show that $\con_{C}(\struct{A})$ has a solution.
Without loss of generality, we may assume that $\struct{A} = \con_{C}(\struct{A})$.
The inner loop of the algorithm produces a sequence $A_1,\dots,A_\ell$ of subsets of $A$ where $A_1 \coloneqq A$, and for every $i<\ell$ the set
$A_{i+1}$ is the subset of $A_{i}$
where we remove all elements which are contained in an inclusion-minimal free set of  $\pr_{A_i}(\struct{A})$.
Since $\struct{A}$ is accepted, it must be the case that $A_\ell = \emptyset$.
Hence, $\pr_{A_{\ell}}(\struct{A})$ trivially has a solution whose kernel is empty.
Now the existence of a solution for $\struct{A}$ follows by induction on $i\in [\ell]$ using Lemma~\ref{blockedSetProjection2} with the relation $C\cap A^2_{i-1}$ in the induction step from $i $ to $ i-1$.
\end{proof}}

The following lemma in combination with 
\red{Theorem~\ref{REDUCTION}}
shows that instead of
presenting an $\FP$ algorithm for each TCSP with a template preserved by $\elel$, it
suffices to present one for $\CSP(\mathbb{Q};\mathrm{R}_{\elel},\mathrm{S}_{\elel},\neq)$ where 
\begin{align*}
\mathrm{R}_{\elel} & \coloneqq  \{(x,y,z)\in \mathbb{Q}^{3} \mid  y < x \vee z<x  \vee x=y=z \} \\\text{ and }\quad  \mathrm{S}_{\elel} & \coloneqq  \{ (x,y,u,v)\in \mathbb{Q}^{4} \mid x\neq y \vee u \leq v\}. 
\end{align*}  

\begin{lemma} 
\label{RelationalBaseLl}    
A temporal relation is preserved by $\elel$ if and only if it is pp-definable in $(\mathbb{Q};\mathrm{R}_{\elel},\mathrm{S}_{\elel},\neq)$.
\end{lemma}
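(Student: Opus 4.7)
The plan is to mimic the structure of the proofs of Lemmas~\ref{lemma:RelBaseMin} and~\ref{RelationalBaseMi}. The backward direction is an easy case analysis verifying that each of $\mathrm{R}_{\elel}$, $\mathrm{S}_{\elel}$, and $\neq$ is preserved by $\elel$, which follows straightforwardly from the four ordering rules that define $\elel$; then by Proposition~\ref{InvAutPol} every relation pp-definable in $(\mathbb{Q};\mathrm{R}_{\elel},\mathrm{S}_{\elel},\neq)$ is preserved by $\elel$.

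For the forward direction, the first step is to invoke a syntactic normal form for temporal relations preserved by $\elel$, in the spirit of Propositions~\ref{minsynt} and~\ref{misynt}. Such a characterization (essentially due to Bodirsky and K\'ara, see also Wrona's work) represents each $\elel$-preserved temporal relation as a conjunction of disjunctive clauses whose atoms are drawn from $\{<,\leq,=,\neq\}$ and obey a pattern reflecting the two regimes of $\elel$: a block of strict (or non-strict) ``min-set'' disjuncts $y_i \circ_i x$, optionally guarded or accompanied by equality/disequality atoms that match the $\mathrm{S}_{\elel}$-conditional shape $x=y \Rightarrow u\leq v$.

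The second step is to pp-define each such clause in $(\mathbb{Q};\mathrm{R}_{\elel},\mathrm{S}_{\elel},\neq)$ by induction on the number of disjuncts. The base cases extract the atomic primitives directly: e.g.\ $y < x$ from $\mathrm{R}_{\elel}(x,y,y) \wedge (x\neq y)$, $u\leq v$ from $\mathrm{S}_{\elel}(x,x,u,v)$, and the two-disjunct clauses $y<x \vee z<x$ and $x\neq y \vee u\leq v$ directly from $\mathrm{R}_{\elel}$ and $\mathrm{S}_{\elel}$. The inductive step chains an additional disjunct onto a clause using a fresh existentially quantified variable $h$ that plays the role of the intermediate ``pivot'', exactly as in Claims~\ref{claim_mi_m}, \ref{claim_mi_n}, and~\ref{claim_mi_mn}: one replaces a head disjunct $y_n \circ_n x$ by $\mathrm{R}_{\elel}(h,y_n,y) \wedge \phi_{\text{prev}}(x,\dots,h)$ (and similarly with $\mathrm{S}_{\elel}$ when appending an $x=y \Rightarrow u\leq v$ guard), and finally converts $\leq$-atoms into $<$-atoms by shifting a fresh variable via the already-available order relation.

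The main obstacle I expect is the ``extra'' diagonal tuple $x=y=z$ inside $\mathrm{R}_{\elel}$: unlike $\mathrm{R}^{\leq}_{\Min}$, this relation admits a spurious all-equal solution that must be excluded whenever the target clause really is a strict-disjunction. The standard remedy is to insert $\neq$-constraints between the auxiliary chained variables to kill the all-equal branch, while being careful not to forbid legitimate equalities elsewhere. Combined with the bookkeeping required for the guarded $\mathrm{S}_{\elel}$-clauses and for converting between $<$ and $\leq$ via shifting variables (as at the end of the proof of Lemma~\ref{lemma:RelBaseMin}), this is where most of the care lies; the rest is routine induction.
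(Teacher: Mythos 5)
Your plan follows the same road map as the paper's proof: verify preservation for the backward direction, then for the forward direction invoke the syntactic normal form for $\elel$-preserved relations (Proposition~\ref{llSynt}, which the paper attributes to Bodirsky, K\'ara, and Mottet --- Wrona is credited for the $\mi$ normal form, not this one) and pp-define each clause by chaining through fresh existential variables, exactly as in Lemma~\ref{RelationalBaseMi}. So the overall architecture is right, and chaining with a single fresh variable does work here; the paper's Claim~\ref{claim_ll_m} happens to use two auxiliary variables $a,b$ and a third conjunct $\phi_{1,1}(a,b,z,z_1)$ to transmit the $\leq$-disjunct, but that is a stylistic choice rather than a necessity.

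Where your proposal goes slightly astray is in the treatment of the diagonal tuple. You describe the all-equal tuple in $\mathrm{R}_{\elel}$ as a ``spurious'' branch to be killed by inserting $\neq$-constraints between the chained auxiliaries. In fact, the final disjunct $(z=z_1=\dots=z_n)$ in Proposition~\ref{llSynt} is a \emph{legitimate, optional} piece of the normal form, not an artifact --- it corresponds exactly to the diagonal in $\mathrm{R}_{\elel}$, which is precisely why $\mathrm{R}_{\elel}$ rather than $\mathrm{R}_{\Min}$ is the right primitive here. The paper's proof therefore keeps that disjunct intact throughout the whole induction (defining $R_{m,n}$ as the clause \emph{with} the diagonal disjunct), so the chaining never fights against it; only at the very end, for those clauses where the disjunct is omitted, is it eliminated with a single fresh variable and the already-available $<$ (e.g., conjoining $\exists h\, (R_{m,n+1}(\dots,h) \wedge z < h)$). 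Your ``$\neq$-between-chained-variables'' device would work when the target clause really is a strict disjunction, but imposing it uniformly during the induction would destroy exactly the tuples the diagonal disjunct is supposed to admit, so you would have to carry two parallel inductions or track which clauses are strict --- bookkeeping that the paper's ordering (diagonal kept throughout, eliminated once at the end) avoids.
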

\blue{\begin{remark} 	Analogously to Lemma~\ref{lemma:RelBaseMin} and  Lemma~\ref{RelationalBaseMi},  Lemma~\ref{RelationalBaseLl}  presents a finite relational base for the clone generated by $\Aut(\mathbb{Q};<)\cup \{\elel\}$.
Moreover, all proper projections of the relations are trivial.
However, we cannot use this fact to eliminate the use of projections of instances in the algorithm in Figure~\ref{algo:master_ll}.
The reason is that the necessary contractions of variables due to Lemma~\ref{atomic} might introduce new tuples to relations with non-trivial projections. \red{For example, }   $\pr_{\{3,4\}}(\con_{\{(1,2)\}}\mathrm{S}_{\elel})$ equals $\leq$.
\end{remark}}
The following syntactic description is due to Bodirsky, K\'ara, and Mottet. 
\begin{proposition}[\cite{bodirsky2021complexity}] \label{llSynt} A temporal relation is preserved by $\elel$ if and only if it can be defined by a conjunction of formulas of the form 
\begin{align*}    &    x_{1} \neq y_{1}\vee \dots\vee x_{m}\neq y_{m}  \vee
z_{1} < z\vee \dots \vee z_{n} < z  \vee   (z=z_{1}=\dots=z_{n})   
\end{align*}
where the last  disjunct   $(z=z_{1}=\dots=z_{n})$ can be omitted.  
\end{proposition}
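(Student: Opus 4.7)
My plan is to prove the two directions separately. The backward implication is a direct check that each allowed clause defines an $\elel$-preserved relation; this uses the injectivity of $\elel$ (the defining strict inequalities for $\elel$ in Section~\ref{sect:pols} force $\elel(a_1,b_1) = \elel(a_2,b_2)$ to imply $(a_1,b_1)=(a_2,b_2)$, by a short case analysis on the sign of $a_1,a_2$ and the relative order of $b_1,b_2$), which handles the $x_i \neq y_i$ disjuncts. For the $z_j < z$ and all-equal disjuncts, I would do a short case analysis on which disjunct is satisfied in $\boldf{t}_1$ and in $\boldf{t}_2$, reading off from the defining inequalities of $\elel$ that whenever $\boldf{t}_k\of{z}$ is strictly below some $\boldf{t}_k\of{z_j}$ in one of the input tuples while the other input tuple satisfies an equality disjunct, the corresponding inequality persists in $\elel(\boldf{t}_1,\boldf{t}_2)$. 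Since conjunctions of $\elel$-preserved relations are $\elel$-preserved, this establishes one direction.

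For the forward direction, let $R \subseteq \mathbb{Q}^k$ be a $k$-ary temporal relation preserved by $\elel$. Because $(\mathbb{Q};<)$ is $\omega$-categorical, only finitely many orbits of $k$-tuples exist under $\Aut(\mathbb{Q};<)$, and only finitely many clauses over $\{x_1,\dots,x_k\}$ of the specified form are pairwise inequivalent. It therefore suffices to prove the separation statement: for every tuple $\boldf{t} \in \mathbb{Q}^k \setminus R$, some clause $C_{\boldf{t}}$ of the specified form is false on $\boldf{t}$ but true on every tuple of $R$. Taking the finite conjunction of the separating clauses obtained as $\boldf{t}$ ranges over orbit representatives outside $R$ then produces the desired definition of $R$.

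To construct $C_{\boldf{t}}$, I would read off the combinatorial type of $\boldf{t}$: let the $\neq$-disjuncts $x_i \neq y_i$ enumerate (a minimal generating set of) the equalities that hold in $\boldf{t}$; let $M \subseteq [k]$ be the set of indices realising the minimum value of $\boldf{t}$; pick $z \in M$ together with a collection of positions $z_1,\dots,z_n$ lying strictly above $M$ in $\boldf{t}$, chosen so that including the all-equal disjunct $z=z_1=\cdots=z_n$ still leaves the clause false on $\boldf{t}$. When $\boldf{t}$ is non-constant, this is straightforward; when $\boldf{t}$ is constant or of a degenerate type one takes a small set of auxiliary clauses (each of the allowed shape with either $n=0$ and all-equal disjunct omitted or a similar limiting form) that together separate $\boldf{t}$ from $R$.

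The main obstacle is the verification that $C_{\boldf{t}}$ holds on every $\boldf{r} \in R$. I would prove the contrapositive by induction on the number of classes of the weak linear order induced by $\boldf{t}$. If some $\boldf{r} \in R$ falsifies $C_{\boldf{t}}$, then $\boldf{r}$ satisfies all the prescribed equalities of $\boldf{t}$ on $M$ and places $z$ (weakly) below $z_1,\dots,z_n$, but separates at least two classes; combining $\boldf{r}$ with further tuples of $R$ via $\elel$, after normalising each summand by an automorphism of $(\mathbb{Q};<)$ that sends the current minimum class to a point at or below $0$, assembles a tuple of $R$ in the same $\Aut(\mathbb{Q};<)$-orbit as $\boldf{t}$, contradicting $\boldf{t}\notin R$. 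The bookkeeping is essentially the same as in the proof of Proposition~\ref{blockedSetProjection} run in reverse — we need the minimum set of the output to be exactly $M$, the prescribed equalities to persist (which they do thanks to injectivity of $\elel$), and the strictly-above coordinates to land in the right relative order type — and this tracking between the $\leq 0$ and $>0$ regimes of $\elel$ is the technical crux. The inductive step reduces to the base case of a two-class tuple, where a single application of $\elel$ to two suitably shifted tuples of $R$ finishes the contradiction.
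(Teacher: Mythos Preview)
The paper does not prove Proposition~\ref{llSynt}; it is quoted with attribution to Bodirsky, K\'ara, and Mottet and a citation to \cite{Bodirsky-HDR}, and then used as a black box in the proof of Lemma~\ref{RelationalBaseLl}. So there is no in-paper argument to compare against.

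Your forward direction has a real gap. The single clause $C_{\boldf t}$ you build from the minimum class $M$ of $\boldf t$ and \emph{all} positions strictly above $M$ need not hold on every $\boldf r\in R$. Take $R=\{(a,b,c)\in\mathbb Q^3: a=c,\ a<b\}$, which is preserved by $\elel$ (the first and third coordinates stay equal because $\elel$ is a function, and $<$ is preserved by $\elel$ as one checks directly from its defining order). For $\boldf t=(0,1,2)\notin R$ your recipe gives $M=\{1\}$, $z=x_1$, $\{z_1,z_2\}=\{x_2,x_3\}$, and
\[
C_{\boldf t}\;\equiv\;(x_2<x_1)\vee(x_3<x_1)\vee(x_1=x_2=x_3).
\]
But $\boldf r=(0,1,0)\in R$ falsifies $C_{\boldf t}$. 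Your intended contradiction --- ``combine $\boldf r$ with further tuples of $R$ via $\elel$ to land in the orbit of $\boldf t$'' --- cannot arise: $R$ is closed under $\elel$ and simply never meets the orbit of $(0,1,2)$. The clause that actually separates $\boldf t$ from $R$ here is $(x_3<x_1)\vee(x_1=x_3)$, using only $\{x_3\}$ as the set of $z_j$'s; in general the correct subset of above-$M$ positions depends on $R$, not only on $\boldf t$, and your sketch gives no mechanism for selecting it. The induction you outline (``reduce to the two-class base case'') does not supply this either, because the falsifying $\boldf r$ may already collapse classes of $\boldf t$ in a way that blocks the reconstruction you describe.

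A smaller point on the backward direction: your phrase ``$\boldf t_k\of{z}$ is strictly below some $\boldf t_k\of{z_j}$'' has the inequality reversed relative to the disjunct $z_j<z$; more importantly, the case where $\boldf t_1$ satisfies $z_{j_1}<z$ and $\boldf t_2$ satisfies $z_{j_2}<z$ with $j_1\neq j_2$ genuinely requires splitting on the signs of $\boldf t_1\of{z}$ and $\boldf t_1\of{z_{j_1}},\boldf t_1\of{z_{j_2}}$ and is not as immediate as your sketch suggests.
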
  
\begin{proof}[Proof of Lemma~\ref{RelationalBaseLl}]  
The backward implication is a direct consequence of Proposition~\ref{llSynt}. 

For the forward implication, we show that every temporal relation defined by a formula of the form  $ x_{1} \neq y_{1}\vee \dots\vee x_{m}\neq y_{m} \vee z_{1} < z\vee \dots \vee z_{n}<z  \vee   (z=z_{1}=\dots=z_{n})$, where the last disjunct $(z=z_{1}=\dots=z_{n})$ can be omitted, has a pp-definition in   $(\mathbb{Q};\mathrm{R}_{\elel},\mathrm{S}_{\elel},\neq)$. Then the statement follows from Proposition~\ref{llSynt}.	
We prove the statement by induction on $m$ and $n$. 
Note that both ${\leq}$ and ${<}$ have a pp-definition in $(\mathbb{Q};\mathrm{R}_{\elel},\mathrm{S}_{\elel},{\neq})$.   
For $m,n\geq 0$, let $R_{m,n}$ denote the $(2m+n+1)$-ary relation with the syntactic definition by a single formula  from Proposition~\ref{llSynt}  where we assume that all variables are distinct, $x_{1},\dots,x_{m}$ refer to the odd entries among $1,\dots,2m$,
$y_{1},\dots,y_{m}$ refer to the even entries among $1,\dots,2m$, and 
$z,z_{1},\dots,z_{n}$ refer to the entries $2m+1,\dots,2m+n+1$.  
\blue{In the \emph{base cases}}, we set $\phi_{1,1}(x_{1},y_{1},z,z_{1}) \coloneqq  \mathrm{S}_{\elel}(x_{1},y_{1},z_{1},z)$ and $\phi_{0,2}(z,z_{1},z_{2}) \coloneqq  \mathrm{R}_{\elel}(z,z_{1},z_{2})$. 
\begin{claim} \label{claim_ll_m} If $\phi_{m-1,1}(x_{1},y_{1},\dots,x_{m-1},y_{m-1},z,z_{1})$ is a pp-definition of $R_{m-1,1}$, then 
\begin{align*}  \phi_{m,1}(x_{1},y_{1},\dots,x_{m},y_{m},z,z_{1}) \coloneqq  \exists a,b     
\big( & \phi_{m-1,1}(x_{1},y_{1},\dots,x_{m-1},y_{m-1},a,b) \\
& \wedge \phi_{1,1}(x_{m},y_{m},b,a)  \wedge \phi_{1,1}(a,b,z,z_{1})  \big)
\end{align*}
is a pp-definition of $R_{m,1}$.   
\end{claim}
\begin{proof}[Proof of Claim~\ref{claim_ll_m}] 	``$\Rightarrow$'': Arbitrarily choose ${\boldf{t}}\in R_{m,1}$. We verify that ${\boldf{t}}$ satisfies $\phi_{m,1}$.
%
%
If ${\boldf{t}}\of{x_{i}}\neq {\boldf{t}}\of{y_{i}}$ for some $1 \leq i \leq m-1$, then choose any $b>a$. 
If ${\boldf{t}}\of{x_{m}}\neq {\boldf{t}}\of{y_{m}}$, then we pick any $a,b\in \mathbb{Q}$ with $a>b$. Otherwise, ${\boldf{t}}\of{z}\geq {\boldf{t}}\of{z_{1}}$ and we pick any $a,b\in \mathbb{Q}$ with $a=b$.

``$\Leftarrow$'':	Suppose that ${\boldf{t}}\notin R_{m,1}$ satisfies $\phi_{m,1}$ with some witnesses $a,b$. Since ${\boldf{t}}\of{x_{i}}={\boldf{t}}\of{y_{i}}$ for every $1 \leq i \leq m$, we have $a \geq b$ and $b\geq a$, thus $a=b$. But then $\phi_{1,1}(a,b,{\boldf{t}}\of{z},{\boldf{t}}\of{z_{1}})$ cannot hold, a contradiction.  
\end{proof}
\noindent It is easy to see that $R_{m,0}$ has the pp-definition \[ \phi_{m,0}(x_{1},y_{1},\dots,x_{m},y_{m})=\exists a,b   \big((b>a)    \wedge  \phi_{m,1}(x_{1},y_{1},\dots,x_{m},y_{m},a,b)\big). \]

\begin{claim} \label{claim_ll_n}
If $\phi_{0,n-1}(z,z_{1},\dots,z_{n-1})$ is a pp-definition of $R_{0,n-1}$, then  
\begin{align*} \phi_{0,n}(z,z_{1},\dots,z_{n}) \coloneqq   \exists h     \big(\phi_{0,2}(h,z_{n-1},z_{n})  \wedge \phi_{0,n-1}(z,z_{1},\dots,z_{n-2},h)  \big)
\end{align*} 
is a pp-definition of $R_{0,n}$. 
\end{claim} 
The proofs of this claim and the next claim are similar to the proof of the previous claim and omitted. 

\begin{claim} \label{claim_ll_mn} Let $\phi_{m,1}(x_{1},y_{1},\dots,x_{m},y_{m},z,z_{1})$ and $\phi_{0,n}(z,z_{1},\dots,z_{n})$ be pp-defi\-nitions of $R_{m,1}$ and $R_{0,n}$, respectively, then 
\begin{align*} \phi_{m,n}(x_{1},y_{1},\dots,x_{m},y_{m},z,z_{1},\dots,z_{n}) \coloneqq   \exists h  \big(\phi_{0,n}(h,z_{1},\dots,z_{n})  
\wedge \phi_{m,1}(x_{1},y_{1},\dots,x_{m},y_{m},z,h) \big)
\end{align*}
is a pp-definition of $R_{m,n}$.
\end{claim}
This completes the proof of the lemma because the part $(z=z_{1}=\dots=z_{n})$ in the formula from Proposition~\ref{llSynt} can be easily eliminated using an additional existentially quantified variable and the relation ${<}$.
\end{proof}

In the case of $\CSP(\mathbb{Q};\mathrm{R}_{\elel},\mathrm{S}_{\elel},\neq)$, we can use the same FP procedure for finding free sets from \cite{bodirsky2010complexity} that we use for instances of
$\CSP(\mathbb{Q};\mathrm{R}_{\mi},\mathrm{S}_{\mi},{\neq})$.

\begin{proposition} \label{LFPllCorrectnessSoundness}  
$\CSP(\mathbb{Q};\mathrm{R}_{\elel},\mathrm{S}_{\elel},\neq)$ is expressible in $\FP$.  
\end{proposition}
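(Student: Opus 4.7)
My plan is to translate Algorithm~\ref{algo:master_ll} into an $\FP$ sentence defining $\textup{co-CSP}(\mathbb{Q};\mathrm{R}_{\elel},\mathrm{S}_{\elel},{\neq})$. The key observation is that $\elel$ satisfies $\elel(0,0)<\elel(0,x)$ and $\elel(0,0)<\elel(x,0)$ for every $x>0$, so by Lemma~\ref{corrFree} the procedure in Figure~\ref{algo:FreeSetsMI} correctly computes the union of all free sets of a given instance. Combined with Lemma~\ref{lemma:atomic_intersections}, this implies that for each $x$, the propagation started at $\{x\}$ either fails or yields the unique irreducible free set $F_x$ containing $x$. Moreover, $\struct{B}\coloneqq (\mathbb{Q};\mathrm{R}_{\elel},\mathrm{S}_{\elel},{\neq})$ was again chosen so that the relations have only trivial proper projections, so projection and taking substructures coincide with respect to satisfiability.

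I would use three nested inflationary fixed-point operators. The outermost operator inflates a binary variable $C(x,y)$ representing the equivalence closure of the equalities accumulated over outer iterations of Algorithm~\ref{algo:master_ll}. Inside it, a middle inflationary fixed-point over a unary variable $U(x)$ marks the elements that the inner while loop successively finds to lie in irreducible free sets of $\pr_{A\setminus U}(\con_{C}(\struct{A}))$. Innermost, a binary variable $W(x,y)$ captures the free-set propagation relation ``$y\in F_{x}$ in the projected-contracted structure'', defined analogously to the formula $\psi(x,y)$ in the proof of Proposition~\ref{LFPmiCorrectnessSoundness}, but with the enlarged signature $\{\mathrm{R}_{\elel},\mathrm{S}_{\elel},{\neq}\}$ and with all constraint atoms guarded by $U$ (so that already-removed variables are ignored) and all equality atoms enlarged by $C$ (so that $C$-equivalent variables are treated as identical). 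The rules for inflating $U$ and $C$ then read: add $x$ to $U$ once the inner fixed point certifies that the $W$-closure of $\{x\}$ is a nonempty irreducible free set of the current projected-contracted structure, and add $(x,y)$ to $C$ once $W(x,y)$ holds in the final stage of the middle induction. The final sentence asserts $\exists x\,\neg U(x)$ once the outer induction stabilises, matching the ``return \emph{false}'' branch of Algorithm~\ref{algo:master_ll}.

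Correctness splits into soundness and completeness. Soundness follows by applying Lemma~\ref{blockedSetProjection2} repeatedly along the chain of projections produced by the middle fixed-point in the last outer stage: if $U$ eventually covers $A$, then each removal of an irreducible free set preserves satisfiability, so the original instance has a solution. Completeness uses the fact that any satisfiable instance of a TCSP possesses a free set (the preimage of the minimum value in any solution), hence also an irreducible free set by descent, so the inner loop must make progress whenever $A\setminus U\neq \emptyset$; this forces $U=A$ before the outer induction stabilises.

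The main obstacle will be making the nested fixed-point semantics faithfully emulate the ``$\struct{A}'\gets \struct{A}$'' reset that occurs at each iteration of the outer loop of Algorithm~\ref{algo:master_ll}. This is handled automatically by placing the middle induction on $U$ strictly inside the outer induction on $C$, so that $U$ is recomputed from $\emptyset$ each time a new snapshot of $C$ is considered, and analogously for $W$ inside $U$. Verifying that this nesting order yields exactly the run of Algorithm~\ref{algo:master_ll} requires a careful induction on the stages of $C$, where at each stage one invokes the correctness of Figure~\ref{algo:FreeSetsMI} for irreducible free-set identification, the disjointness granted by Lemma~\ref{lemma:atomic_intersections}, and the projection-contraction equivalence of Lemma~\ref{blockedSetProjection2}.
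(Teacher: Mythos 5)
Your high-level plan matches the paper's: three nested fixed-point operators tracking contraction pairs, removed elements, and free-set propagation, with soundness via Lemma~\ref{corrFree}, Lemma~\ref{lemma:atomic_intersections}, and Lemma~\ref{blockedSetProjection2}, and completeness via the free-set existence argument. The paper also reuses the $\psi$ formula from Proposition~\ref{LFPmiCorrectnessSoundness} and certifies irreducibility by the condition that $z\in F_x$ implies $x\in F_z$, which your plan implicitly recovers.

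However, there is a genuine gap in the step where you claim that ``$\struct{B}$ was again chosen so that the relations have only trivial proper projections, so projection and taking substructures coincide with respect to satisfiability.'' That is true for $\mathrm{R}_{\elel}$, $\mathrm{S}_{\elel}$, and $\neq$ themselves, but it fails for contractions of these relations, and the inner loop of Algorithm~\ref{algo:master_ll} operates on $\pr_{A'\setminus S}(\con_{C}(\struct{A}))$. For instance, $\pr_{\{3,4\}}\big(\con_{\{(1,2)\}}\mathrm{S}_{\elel}\big)$ is the relation $\leq$, which is a non-trivial binary constraint. Your construction ``guards all constraint atoms by $U$ so that already-removed variables are ignored'' would therefore simply drop a contracted $\mathrm{S}_{\elel}$ constraint once two of its arguments leave the live set, losing the residual $\leq$-constraint on the surviving pair. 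This can let the middle induction accept spurious free sets and render the translated sentence unsound. The formula $\phi$ thus cannot ignore projections: it must explicitly incorporate the (finitely many) relations arising as projections of contractions of the relations of $\struct{B}$ with respect to the current contraction relation, as the paper emphasises. Once this is repaired, the remaining details of your outline go through.
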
 
%
%
\begin{proof}  
Let $\struct{B}\coloneqq (\mathbb{Q};\mathrm{R}_{\elel},\mathrm{S}_{\elel},\neq)$, and let $\struct{A}$ be an arbitrary instance of $\CSP(\struct{B})$.
Note that the operation $\elel$ satisfies the requirements of Lemma~\ref{corrFree}.	
Thus, the algorithm in Figure~\ref{algo:FreeSetsMI} can be used for computation of free sets for instances of $\CSP(\struct{B})$.
Also note that the algorithm builds free sets from singletons using only necessary conditions for containment.
Thus, for every $x\in A$, the set $F_x$ computed during the algorithm in Figure~\ref{algo:FreeSetsMI} is an \blue{inclusion-minimal} free set iff it is non-empty and does not contain any other non-empty set of the form $F_y$ as a proper subset.
It follows that two variables $x,y \in A$ are contained in the same \blue{inclusion-minimal} free set if $x\in F_y$, $y \in F_x$, and whenever $z\in F_x$ for some $z\in A$, then $x\in F_z$. 

Now suppose that there exists an $\FP$ formula $\phi(x,y)$ in the signature of $\struct{B}$ extended by binary symbols $E,C$  such that, for every instance $\struct{A}$ of $\CSP(\struct{B})$ and all $E,C \subseteq A^2$, $(\struct{A};E,C) \models \phi(x,y)$ iff $x,y$ are contained in the same \blue{inclusion-minimal} free set of $\pr_{U}(\con_{C}(\struct{A}))$ where 
$U= A \setminus \{x \in A \mid (x,x) \in E\}$.
Then, given \blue{$C$} as a parameter, $(\struct{A};C) \models [\ifp_{E,x,y}  \phi(x,y) ](x,y) $ iff $x,y$ are contained in the same \blue{inclusion-minimal} free set at some point during the iteration of the inner loop of the algorithm in Figure~\ref{algo:master_ll}.
Consequently, $\struct{A} \centernot{\rightarrow} \struct{B}$ if and only if $\exists x.\, \neg [\ifp_{C,x,y} [\ifp_{E,x,y}  \phi(x,y) ](x,y) ] (x,x)$, by the soundness and completeness of the algorithm in Figure~\ref{algo:master_ll}. 
We can obtain such a formula $\phi$ by translating the algorithm in Figure~\ref{algo:FreeSetsMI} into the syntax of FP and applying the reasoning from the first paragraph of this proof:
\begin{align*}
\phi(x,y)   \coloneqq\ &  \neg E(x,x) \wedge \neg E(y,y) \wedge   [\ifp_{V,x,y} \psi(x,y)](x,y)   \wedge [\ifp_{V,y,x}   \psi(y,x)](y,x)    \\ & \wedge   \forall a, b  \big(  ([\ifp_{V,x,a} \psi(x,a)](x,a)      \wedge [\textup{ifp}_{V,x,b} \psi(x,b)](x,b) )  \Rightarrow  \neg\, {\neq}(a,b)    \big)    \\ & \wedge \forall z \big( [\ifp_{V,x,z} \psi(x,z)](x,z) \Rightarrow  [\ifp_{V,z,x} \psi(z,x)](z,x)  \big) 
\end{align*}
where $\psi$ can be defined similarly as in Proposition~\ref{LFPmiCorrectnessSoundness}
except that each subformula of the form $U(x)$ must be replaced with $\neg E(x,x)$, and taking into consideration all projections of contractions of relations with respect to $C$.
\end{proof} 

\section{A TCSP in FPR$_\text{2}$ which is not in FP\label{section_inexpressibility}}
Let $\mathrm{X}$ be the temporal relation as defined in the introduction.
In this section, we show that $\CSP(\mathbb{Q};\mathrm{X})$ is  expressible in $\FPR_2$ (Proposition~\ref{LFPmxCorrectnessSoundness}) but inexpressible in $\FPC$ (Theorem~\ref{FPxorsat}).  
\subsection{\blue{An FPR$_\text{2}$ algorithm for TCSPs  preserved by mx}}\label{section_mx} 
It is straightforward to verify that the relation $\mathrm{X}$ is preserved by the operation $\mx$ introduced in Section~\ref{sect:pols}~\cite{bodirsky2010complexity}. 
For TCSPs with a template preserved by $\mx$,  the algorithm in Figure~\ref{algo:FreeSetsMX} can be used for finding  the union of all free sets due to the following lemma.  It can be proved by a simple induction using the observation that, for every $\boldf{s}\in R^{\struct{A}}$, \red{the set} $\Minsystem_{R}(\boldf{s})\cup \{\emptyset\} $  is closed under taking symmetric difference. 
\blue{\red{Note that we can interpret every entry of $\boldf{s}$ as a $\{0,1\}$-variable whose value represents whether or not the entry is contained in a particular min-set.} Then closure under symmetric difference implies that $\Minsystem_{R}(\boldf{s})\cup \{\emptyset\}$ is the solution set of a system of \blue{mod-2} equations of the form  $A \boldf{s}=\boldf{0}$. 
In the algorithm in Figure~\ref{algo:FreeSetsMX} we simply take the largest such system.}
\begin{lemma}[\cite{bodirsky2010complexity}] \label{freeMX}  
Let $\struct{B}$ be a template of a temporal CSP which is preserved by $\mx$. 
Let $\struct{A}$ be an instance of $\CSP(\struct{B})$. 
Then the set returned by the algorithm in Figure~\ref{algo:FreeSetsMX} is the union of all free sets of $\struct{A}$. 
\end{lemma}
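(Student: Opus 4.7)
The plan is to first establish the closure observation highlighted after the statement, and then deduce correctness of the algorithm by a simple induction on its stages. For the observation, given $M_1, M_2 \in \Minsystem_R(\boldf{s})$ witnessed by tuples $\boldf{t}_1, \boldf{t}_2 \in R^{\struct{B}}$ with $\Minset(\boldf{t}_i) = \{j : \boldf{s}\of{j} \in M_i\}$, I would use homogeneity of $(\mathbb{Q};<)$ to pick $\sigma_1, \sigma_2 \in \Aut(\mathbb{Q};<)$ sending the minimum entry of $\boldf{t}_i$ to $0$, and then form $\boldf{t} \coloneqq \mx(\sigma_1 \boldf{t}_1, \sigma_2 \boldf{t}_2)$, which lies in $R^{\struct{B}}$ since $R^{\struct{B}}$ is preserved by $\mx$. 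A coordinatewise case analysis, using the defining inequalities $\alpha(0) < \beta(0) < \alpha(\varepsilon)$ for any $\varepsilon > 0$, shows that positions in $M_1 \triangle M_2$ receive value $\alpha(0)$, positions in $M_1 \cap M_2$ receive $\beta(0)$, and positions outside $M_1 \cup M_2$ receive a value strictly larger than $\beta(0)$. Hence $\Minset(\boldf{t}) = M_1 \triangle M_2$ whenever this set is non-empty, and the remaining case $M_1 = M_2$ yields $M_1 \triangle M_2 = \emptyset$ which belongs to $\Minsystem_R(\boldf{s}) \cup \{\emptyset\}$ by definition.

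From this closure property, the next step is to observe that the indicator vectors in $\mathbb{F}_2^A$ of the free sets of $\struct{A}$, together with the zero vector, form an $\mathbb{F}_2$-linear subspace $V$: the symmetric difference of two free sets $F_1, F_2$ intersects every constraint scope $\{\boldf{s}\of{1},\dots,\boldf{s}\of{\ar(R)}\}$ in $(F_1 \cap \text{scope}) \triangle (F_2 \cap \text{scope})$, which lies in $\Minsystem_R(\boldf{s}) \cup \{\emptyset\}$ by the observation. Consequently the union of all free sets of $\struct{A}$ equals the support $\{a \in A : \eta(a) = 1 \text{ for some } \eta \in V\}$ of $V$. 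I would then verify by induction on its stages that the algorithm in Figure~\ref{algo:FreeSetsMX} computes precisely this support: the invariant is that the maintained set always contains $\supp V$, and the inductive step uses the closure observation to argue that the elimination step applied by the algorithm never removes an element that belongs to some free set; completeness follows because the loop terminates only when no further linear dependency among the min-systems witnesses an inconsistency.

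The main obstacle will be bridging the gap between the combinatorial closure property, which is essentially an algebraic consequence of $\mx$ being a polymorphism, and the specific elimination rules of the algorithm in Figure~\ref{algo:FreeSetsMX}. Unlike the cases of $\Min$ (closure under union) and $\mi$ (closure under intersection), where there exists a unique largest (respectively smallest) generating family of free sets and the union of all free sets is itself a free set, in the symmetric-difference case no single ``extremal'' free set exists: one must argue at the level of the $\mathbb{F}_2$-subspace $V$ itself, and show that the algorithm correctly identifies its support via iterated symmetric-difference operations on the constraint scopes. This obstruction is precisely why the expressibility analysis in Section~\ref{section_inexpressibility} will force the algorithm into $\FPR_2$ rather than plain $\FP$, since computing the support of $V$ reduces to a rank computation over $\mathbb{F}_2$.
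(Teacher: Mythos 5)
Your proof of the closure observation is correct and is exactly the ingredient the paper points to: applying $\mx$ to order-normalised copies of the two witnessing tuples and doing the coordinatewise case analysis gives $\Minset(\boldf{t}) = \{j : \boldf{s}\of{j} \in M_1 \triangle M_2\}$, and you handle the $M_1 = M_2$ corner case correctly. Your reduction from the closure observation on each scope to the statement that the indicator vectors of free sets together with $\boldf{0}$ form an $\mathbb{F}_2$-subspace $V$, and that the union of all free sets equals $\supp V$, is also right.

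The final step, however, does not match the algorithm in Figure~\ref{algo:FreeSetsMX}. You describe an iterative procedure with ``stages,'' an ``elimination step,'' an invariant that the maintained set always contains $\supp V$, and a loop that terminates once no linear dependency witnesses an inconsistency. But the algorithm in Figure~\ref{algo:FreeSetsMX} has none of these: it first assembles, in a single pass, a homogeneous system $E$ over $\mathbb{F}_2$ with variable set $A$ by adding, for each constraint $\boldf{s}$, the linear equations whose solution set (on the scope of $\boldf{s}$) is exactly the subspace $\{\emptyset\} \cup \Minsystem_R(\boldf{s})$ whose linearity your closure observation guarantees; it then initialises $F \coloneqq \emptyset$ and, for each $x \in A$ independently, adds $x$ to $F$ precisely when $E \cup \{x=1\}$ is solvable. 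There is no fixed-point iteration to analyse and no set that starts at $A$ and shrinks. The actual completion is shorter than what you sketch and uses no induction at all: by construction, $\eta \in \{0,1\}^A$ satisfies $E$ iff for every constraint $\boldf{s}$ the restriction of $\eta$ to the scope of $\boldf{s}$ lies in $\{\emptyset\} \cup \Minsystem_R(\boldf{s})$, which is exactly the condition that $\supp\eta$ be a free set or empty; hence the solution space of $E$ is your $V$, and $E \cup \{x=1\}$ is solvable iff $x \in \supp V$, so the returned set is $\supp V$, i.e., the union of all free sets. You have all the mathematical pieces; the only issue is that you analysed a different algorithm (it reads like your mental model was the eliminative algorithm of Figure~\ref{algo:FreeSetsMIN}). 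Your closing comment about the $\FPR_2$ requirement is correct and consistent with Section~\ref{section_mx}.
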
  
\begin{figure} 
\begin{center}     
\begin{algorithm}[H]  
\vspace{0.25em}
\SetNoFillComment 
\KwIn{An instance $\struct{A}$ of $\CSP(\struct{B})$ for a temporal structure $\struct{B}$} 
\KwOut{A subset $F\subseteq A$} 
$E \gets  $ the empty set of \blue{mod-2} equations\;
\ForAll{$\boldf{s}\in R^{\struct{A}}$} 
{   
\blue{\ForAll{$I\subseteq [\ar(R)]$} 
{   
	\If{$ M \cap \{\boldf{s}\of{i} \mid i\in I \}$  has even cardinality for every $M\in \Minsystem_{R}(\boldf{s})$
	}{$E \gets E \cup \{ \sum_{i\in I}\boldf{s}\of{i}=0\}$}
} }
}
$F \gets$ the empty subset of $A$\; 
\ForAll{$x\in A$}{ \If{$E\cup\{x=1\}$ has a solution over $\mathbb{Z}_{2}$
}{$F \gets  F\cup \{x\}$}
}
\Return $F$\;
\end{algorithm}   
\end{center}
\caption{ \label{algo:FreeSetsMX}  A choiceless algorithm that computes the union of all free sets  for   temporal CSPs with a template preserved by $\mx$.}
\end{figure}  
The following lemma in combination with
\red{Theorem~\ref{REDUCTION}} 
shows that instead of
presenting an $\FPR_2$ algorithm for each TCSP with a template preserved by $\mx$, it
suffices to present one for $\CSP(\mathbb{Q};\mathrm{X})$.
\begin{lemma} 
\label{lemma:RelationalBaseMx}    A temporal relation is preserved by $\mx$ if and only if it has a pp-definition in $(\mathbb{Q};\mathrm{X})$.
\end{lemma}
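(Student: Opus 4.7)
\textbf{Proof plan for Lemma~\ref{lemma:RelationalBaseMx}.}
The backward direction is immediate: as remarked at the beginning of Section~\ref{section_mx}, the relation $\mathrm{X}$ is preserved by $\mx$, so by Proposition~\ref{InvAutPol}(2) every relation with a pp-definition in $(\mathbb{Q};\mathrm{X})$ is preserved by $\mx$ as well. The content of the lemma lies entirely in the forward direction, and I would prove it following the same template as Lemmas~\ref{lemma:RelBaseMin}, \ref{RelationalBaseMi}, and \ref{RelationalBaseLl}: first invoke a syntactic normal form for the temporal relations preserved by $\mx$ (analogous to Propositions~\ref{minsynt}, \ref{misynt}, \ref{llSynt}; the relevant result for $\mx$ is known and can be found in \cite{bodirsky2010complexity,Bodirsky-HDR}), and then show by induction that every clause in this normal form has a pp-definition in $(\mathbb{Q};\mathrm{X})$.

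The guiding algebraic observation is the one underlying Lemma~\ref{freeMX}: the system $\Minsystem_{R}(\boldf{s})\cup\{\emptyset\}$ of a relation $R$ preserved by $\mx$ is closed under symmetric difference, i.e.\ forms an affine $\mathbb{Z}_{2}$-subspace of $\{0,1\}^{\ar(R)}$. This is precisely the structural feature that $\mathrm{X}$ itself exhibits: the three min-tuples of $\mathrm{X}$ are $(1,1,0),(0,1,1),(1,0,1)$, and together with the zero vector they form the $2$-dimensional even-weight subspace of $\{0,1\}^{3}$. Thus $\mathrm{X}$ encodes the ternary $\mathbb{Z}_{2}$-linear equation $x_{1}\oplus x_{2}\oplus x_{3}=0$ on min-indicators while simultaneously enforcing ``not all equal''. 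My first step will therefore be to pp-define in $(\mathbb{Q};\mathrm{X})$ the relations corresponding to the auxiliary ingredients needed by the normal form: the relation ${<}$ (using a tuple-restriction trick on $\mathrm{X}$), the equality relation (via $\mathrm{X}$ with a dummy forcing a strict inequality), and, crucially, the generalised XOR-relation of arbitrary arity on min-indicators, built inductively exactly as in Lemma~\ref{RelationalBaseMi}: if $\phi_{k-1}$ pp-defines the $k-1$-ary XOR-constraint, then $\exists h\,(\mathrm{X}(x_{1},x_{2},h)\wedge\phi_{k-1}(h,x_{3},\dots,x_{k}))$ pp-defines the $k$-ary version, using the fact that $\mathrm{X}$ itself is the $3$-ary base case. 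One then combines these with order-disjunctions and disequalities analogously to Claims~\ref{claim_mi_m}–\ref{claim_mi_mn} and Claims~\ref{claim_ll_m}–\ref{claim_ll_mn}, each time verifying the ``$\Rightarrow$'' direction by producing explicit witnesses and the ``$\Leftarrow$'' direction by a contradiction argument based on the min-set structure of $\mathrm{X}$.

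The main obstacle, and the only part that does not reduce to an almost mechanical imitation of the earlier three lemmas, is correctly isolating the building blocks of the syntactic normal form for $\mx$-preserved relations. Unlike the cases of $\Min$, $\mi$, and $\elel$, the normal form is not a single clause schema of bounded complexity but rather a combination of order-literals with a $\mathbb{Z}_{2}$-linear constraint on min-indicators of unbounded arity; the challenge is to show that the $\mathbb{Z}_{2}$-linear part is expressible via nested instances of the single ternary relation $\mathrm{X}$, together with the observation that enlarging the support of a clause by variables forced to exceed the remaining ones does not change definability (this is the analogue of the final trick used at the end of the proofs of Lemmas~\ref{lemma:RelBaseMin}, \ref{RelationalBaseMi}, and \ref{RelationalBaseLl}, where the optional last disjunct is eliminated). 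Once these building blocks are in place, the induction assembles the pp-definition of an arbitrary $\mx$-preserved relation, completing the proof.
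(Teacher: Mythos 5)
The backward direction of your plan and the overall strategy for the forward direction (invoke Theorem~\ref{partialbasismx}, pp-define each basic Ord-Xor relation $R^{\mx}_{I,n}$, then use Lemma~\ref{general}) match the paper. Your proposed nesting $\exists h\,(\mathrm{X}(x_{1},x_{2},h)\wedge\phi_{k-1}(h,x_{3},\dots,x_{k}))$ does correctly pp-define $R^{\mx}_{[k],k}$ — the key fact that makes the induction go through is that $\mathrm{X}(x_1,x_2,h)$ forces $h \geq \Min(x_1,x_2)$ while still allowing $h$ to be placed at the level of any local minimum, so the parity of min-indicators is propagated faithfully. This is actually a genuinely different (and arguably more elegant) route than the paper's, which chains instances of the auxiliary relation $R^{\mx}_{[3],4}$ rather than bare $\mathrm{X}$.

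However, there is a real gap in the rest of the plan: you only pp-define the \emph{full-support} relations $R^{\mx}_{[k],k}$, but the Ord-Xor normal form is built from relations $R^{\mx}_{I,n}$ with $I\subsetneq[n]$, and these have a qualitatively different semantics that your plan does not address. When $|I|<n$, the constraint $\sum_{i\in I}\chi(\boldf{t})\of{i}=0\bmod 2$ is computed relative to the \emph{global} minimum of the full $n$-tuple; in particular, if some $t_j$ with $j\notin I$ falls strictly below all $t_i$ with $i\in I$, the constraint becomes vacuously true. This is the opposite of ``enlarging the support by variables forced to exceed the remaining ones'', which is how you describe it — the extra variables are unconstrained and it is precisely their ability to dip \emph{below} the support that has to be encoded. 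The cases $\Min$, $\mi$, $\elel$ have clause-shaped normal forms, so the trick of padding disjuncts/eliminating an optional last disjunct applies; the Ord-Xor case does not fit that mould, and the references to ``order-disjunctions and disequalities'' analogous to Claims~\ref{claim_mi_m}--\ref{claim_mi_mn} and \ref{claim_ll_m}--\ref{claim_ll_mn} are a misleading analogy (note also that the Ord-Xor normal form of Theorem~\ref{partialbasismx} involves no disequalities at all). Concretely, your plan never produces a pp-definition of $R^{\mx}_{\{1\},3}=\mathrm{R}_{\Min}$ or its higher-arity analogues $R^{\mx}_{\{1\},n}$, which the paper obtains via a chain through $R^{\mx}_{[3],4}$ and $R^{\mx}_{[2],3}$; nor does it explain how to stitch together the XOR part with the ``turn-off'' behaviour to obtain $R^{\mx}_{[k],n}$ as in the paper's Claim~\ref{claim:kn}. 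These are exactly the parts the paper's proof spends most of its effort on, and they require new ideas rather than an ``almost mechanical imitation'' of the earlier lemmas.
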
 
\blue{\begin{remark}
Analogously to Lemma~\ref{lemma:RelBaseMin}, Lemma~\ref{RelationalBaseMi}, and Lemma~\ref{RelationalBaseLl}, Lemma~\ref{lemma:RelationalBaseMx} presents a finite relational base for the clone generated by $\Aut(\mathbb{Q};<)\cup \{\mx\}$.
Moreover, all proper projections of the relations are trivial.
This eliminates the necessity to use projections of instances for CSPs of temporal structures preserved by $\mx$ (they can be replaced by substructures).
\end{remark}} 
Recall the min-indicator function $\chi$ from Definition~\ref{minindicator}.   
\begin{definition} \label{ordxordefinable} 
For a temporal relation $R$, we set $\MS(R)\coloneqq \chi(R)\cup\{{\boldf{0}}\}$. 
A \emph{basic Ord-Xor relation} is a temporal relation $R$ for which there exists a homogeneous system ${{A}}{\boldf{x}}={\boldf{0}}$ of \blue{mod-2} equations such that $\MS(R)$ is the solution set of ${{A}}{\boldf{x}}={\boldf{0}}$,  and $R$ contains all tuples ${\boldf{t}}\in \mathbb{Q}^{n}$ with ${{A}}\chi({\boldf{t}})={\boldf{0}}$. 
If the system ${{A}}{\boldf{x}}={\boldf{0}}$ for the relation specifying a basic Ord-Xor relation consists of a single equation $\sum_{i\in I} x_{i}=0$ for  $I\subseteq [n]$, then we denote this relation by $R^{\mx}_{I,n}$.
A \emph{basic Ord-Xor formula} is 
a $\{<\}$-formula $\phi(x_1,\dots,x_n)$ that defines
a basic Ord-Xor relation.
An \emph{Ord-Xor formula} is a conjunction of basic Ord-Xor formulas. 
\end{definition}  
The next lemma is a straightforward consequence of Definition~\ref{ordxordefinable}.
\begin{lemma} \label{general}   If $\{\sum_{i\in I_{j}}x_{i}=0 \mid j\in J\}$
is the homogeneous system of \blue{mod-2} equations for a basic Ord-Xor relation $R$, then $R = \bigcap_{j\in J}  R^{\mx}_{I_{j},n}.$ 
\end{lemma}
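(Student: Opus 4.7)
The plan is to prove the set equality by rewriting both sides in a common form dictated by Definition~\ref{ordxordefinable}. First I would translate the defining conditions of a basic Ord-Xor relation into the clean reformulation
\[
R \;=\; \{\boldf{t} \in \mathbb{Q}^{n} \mid A \chi(\boldf{t}) = \boldf{0}\},
\]
where $A$ is the coefficient matrix of the system $\{\sum_{i \in I_j} x_i = 0 \mid j \in J\}$. The $\subseteq$-inclusion follows because for every $\boldf{t} \in R$ we have $\chi(\boldf{t}) \in \chi(R) \subseteq \MS(R)$, and $\MS(R)$ is by assumption the solution space of $A\boldf{x} = \boldf{0}$. The $\supseteq$-inclusion is literally the second clause of Definition~\ref{ordxordefinable}.

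Next I would apply the same reformulation to each factor on the right-hand side. For $j \in J$, the single-equation system $\sum_{i \in I_j} x_i = 0$ together with the relation
\[
R^{\mx}_{I_j, n} \;=\; \Bigl\{\boldf{t} \in \mathbb{Q}^{n} \;\Bigm|\; \sum_{i \in I_j} \chi(\boldf{t})\of{i} \equiv 0 \pmod 2\Bigr\}
\]
does satisfy the two requirements of Definition~\ref{ordxordefinable}: the $\MS$-image is contained in the solution space by construction, and every Boolean solution $\boldf{x}$ of $\sum_{i \in I_j} x_i = 0$ is realised as $\chi(\boldf{t})$ for some rational tuple $\boldf{t}$ (pick any $\boldf{t}$ that is minimal exactly on the support of $\boldf{x}$, using that $\mathbb{Q}$ is dense), so equality holds and the relation above coincides with $R^{\mx}_{I_j, n}$.

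To finish, I would simply observe that a Boolean vector $\boldf{v}$ satisfies $A\boldf{v} = \boldf{0}$ if and only if it satisfies every row $\sum_{i \in I_j} v_i \equiv 0 \pmod 2$ individually. Applying this to $\boldf{v} = \chi(\boldf{t})$ and combining the reformulations from the previous two paragraphs yields
\[
\boldf{t} \in R \iff A\chi(\boldf{t}) = \boldf{0} \iff \forall j \in J\colon \boldf{t} \in R^{\mx}_{I_j, n} \iff \boldf{t} \in \bigcap_{j \in J} R^{\mx}_{I_j, n},
\]
which is the claimed equality. There is no substantive obstacle here; the lemma is essentially a bookkeeping step, and the only small care needed is to verify that the pair $\bigl(\sum_{i \in I_j} x_i = 0,\; R^{\mx}_{I_j,n}\bigr)$ really meets both clauses of Definition~\ref{ordxordefinable}, so that the characterisation $R^{\mx}_{I_j,n} = \{\boldf{t} \mid \sum_{i \in I_j} \chi(\boldf{t})\of{i} \equiv 0 \pmod 2\}$ is justified before intersecting.
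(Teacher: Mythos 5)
Your proof is correct, and it is precisely the unwinding of Definition~\ref{ordxordefinable} that the paper has in mind: the paper gives no proof for this lemma, saying only that it is a straightforward consequence of the definition, and your reformulation $R = \{\boldf{t}\mid A\chi(\boldf{t})=\boldf{0}\}$ together with the corresponding reformulation of each $R^{\mx}_{I_j,n}$ is exactly the bookkeeping needed. One small point worth making explicit (you implicitly handle it): when you claim that every Boolean solution of $\sum_{i\in I_j}x_i=0$ is realised as $\chi(\boldf{t})$, this must exclude $\boldf{0}$, since the min-indicator of a rational tuple always has at least one entry equal to $1$; this is fine because $\MS(R)=\chi(R)\cup\{\boldf{0}\}$ adjoins $\boldf{0}$ by fiat, so the requirement ``$\MS(R^{\mx}_{I_j,n})$ equals the solution space'' is met exactly.
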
  
\blue{If a temporal relation $R$ is preserved by $\mx$, then  $\MS(R)$ is closed under the \blue{mod-2} addition and forms a linear subspace of $\{0,1\}^{\ar(R)}$ ~\cite{bodirsky2010complexity}.
In general, $R$ does not contain all tuples over $\mathbb{Q}$ whose min-tuple is contained in this subspace, e.g., the $6$-ary temporal relation defined by $\mathrm{X}(x_1,x_2,x_3)\wedge \mathrm{X}(x_4,x_5,x_6)$ does not contain $(0,0,1,1,1,1)$. 
Therefore, basic Ord-Xor formulas are not sufficient for describing all temporal relations preserved by $\mx$. One must instead consider general Ord-Xor formulas.}
The following syntactic description is due to Bodirsky, Chen, and Wrona.
\begin{theorem}[\cite{bodirsky2014tractability}, Thm. 6] \label{partialbasismx} 
A temporal relation can be defined by an Ord-Xor formula if and only if it is preserved by  $\mx$.
\end{theorem}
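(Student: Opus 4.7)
The plan is to link every basic Ord-Xor relation, specified by a homogeneous $\mathbb{F}_2$-linear system $A\boldf{x}=\boldf{0}$ acting on min-indicators, to the behavior of $\mx$ via the crucial identity $\chi(\mx(\boldf{t},\boldf{s}))\in\{\chi(\boldf{t}),\chi(\boldf{s}),\chi(\boldf{t})\oplus\chi(\boldf{s})\}$. The forward direction then follows by a direct case analysis on whether $\min(\boldf{t})$ is less than, greater than, or equal to $\min(\boldf{s})$: using the ordering $\alpha(x)<\beta(x)<\gamma(x)<\alpha(x+\varepsilon)$ of the unary components of $\mx$, one computes $\chi(\mx(\boldf{t},\boldf{s}))$ explicitly and checks $A\chi(\mx(\boldf{t},\boldf{s}))=\boldf{0}$ whenever $A\chi(\boldf{t})=A\chi(\boldf{s})=\boldf{0}$. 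Since preservation by $\mx$ is closed under conjunction and basic Ord-Xor formulas can be formed on arbitrary subsets of variables, any Ord-Xor-definable relation is preserved by $\mx$.

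For the backward direction, let $R\subseteq\mathbb{Q}^{n}$ be preserved by $\mx$. For every $I\subseteq[n]$ I would set $V_{I}:=\chi(\pr_{I}(R))\cup\{\boldf{0}\}\subseteq\{0,1\}^{I}$ and take $\phi_{I}$ to be a basic Ord-Xor formula in the variables $\{x_{j}:j\in I\}$ whose associated solution space equals $V_{I}$. The candidate Ord-Xor definition of $R$ is then $\phi:=\bigwedge_{I}\phi_{I}$. Trivially every $\boldf{t}\in R$ satisfies $\phi$, so one inclusion is immediate. Closure of $V_{I}$ under addition, which is needed to justify the existence of $\phi_{I}$, is the local version of the forward argument: given $v,v'\in V_{I}\setminus\{\boldf{0}\}$ represented by tuples $\boldf{t},\boldf{t}'\in R$, I apply suitable order-preserving permutations of $\mathbb{Q}$ (which fix $R$ setwise, since $R$ is temporal) so that $\min_{j\in I}\boldf{t}\of{j}=\min_{j\in I}\boldf{t}'\of{j}$, and the $\mx$-case analysis yields $\chi(\pr_{I}(\mx(\boldf{t},\boldf{t}')))=v\oplus v'$.

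The nontrivial inclusion is that every $\boldf{r}$ satisfying $\phi$ lies in $R$. I would proceed by induction on the number of distinct values appearing in $\boldf{r}$. The base case of a constant $\boldf{r}$ is handled by $\chi(\boldf{r})=\boldf{1}\in V_{[n]}$, which produces a constant tuple in $R$ in the same orbit. Otherwise, set $L_{1}:=\Minset(\boldf{r})$, obtain $\boldf{t}_{0}\in R$ with $\Minset(\boldf{t}_{0})=L_{1}$ from the constraint $\chi(\boldf{r})\in V_{[n]}$, and pass to $R':=\pr_{[n]\setminus L_{1}}(\{\boldf{t}\in R:\Minset(\boldf{t})=L_{1}\})$, which is itself preserved by $\mx$ and has strictly smaller arity. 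The plan is to apply the induction hypothesis to $R'$ and to $\boldf{r}':=\pr_{[n]\setminus L_{1}}(\boldf{r})$, which has one fewer level, and then lift the resulting tuple in $R'$ back to a tuple of $R$ in the same orbit as $\boldf{r}$.

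The main obstacle is verifying the induction hypothesis for $\boldf{r}'$: for each $J\subseteq[n]\setminus L_{1}$ one needs $\chi(\pr_{J}(\boldf{r}'))\in V'_{J}:=\chi(\pr_{J}(\{\boldf{t}\in R:\Minset(\boldf{t})=L_{1}\}))\cup\{\boldf{0}\}$, and $V'_{J}$ can in principle be strictly smaller than $V_{J}$, so the bare assumption $\chi(\pr_{J}(\boldf{r}))\in V_{J}$ is not enough. The plan is to combine, via $\mx$ together with carefully chosen automorphisms of $(\mathbb{Q};<)$, the tuple $\boldf{t}_{0}$, which controls the global minimum, with a witness extracted from the constraint $\chi(\pr_{L_{1}\cup J}(\boldf{r}))\in V_{L_{1}\cup J}$, which controls the minimum within $L_{1}\cup J$, so as to obtain a single $\boldf{t}\in R$ with both $\Minset(\boldf{t})=L_{1}$ and $\Minset(\pr_{J}(\boldf{t}))=\Minset(\pr_{J}(\boldf{r}))$. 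Making this combining step work uniformly requires controlling the images of entries outside $L_{1}\cup J$ under the chosen automorphisms so that they do not become new global minima after applying $\mx$; this is the main technical core of the proof.
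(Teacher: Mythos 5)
The paper does not supply its own proof of this statement: it is quoted verbatim as an external result, namely Theorem~6 of the Bodirsky--Chen--Wrona paper cited as \cite{bodirsky2014tractability}. So there is no ``paper proof'' to compare your attempt against, and I will assess it on its own terms.

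Your forward direction is correct and tightly argued. The key identity $\chi(\mx(\boldf{t},\boldf{s}))\in\{\chi(\boldf{t}),\chi(\boldf{s}),\chi(\boldf{t})\oplus\chi(\boldf{s})\}$ holds (with the understanding that $\chi(\boldf{t})\oplus\chi(\boldf{s})$ is never actually produced when $\chi(\boldf{t})=\chi(\boldf{s})$), it implies that every basic Ord-Xor relation $\{\boldf{t}:A\chi(\boldf{t})=\boldf{0}\}$ is preserved by $\mx$, and since $\mx$ acts coordinatewise, preservation is inherited by conjunctions over subsets of variables.

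The backward direction has the right overall shape --- conjunct one basic constraint $\phi_I$ with solution space $V_I=\chi(\pr_I(R))\cup\{\boldf{0}\}$ for every $I\subseteq[n]$, then show $[\![\phi]\!]\subseteq R$ by induction on the number of levels of $\boldf{r}$, passing to $R'=\pr_{[n]\setminus L_1}(\{\boldf{t}\in R:\Minset(\boldf{t})=L_1\})$ --- but the step you yourself flag as ``the main technical core'' is where the proof actually is, and your sketch of it does not quite work as stated. You propose to extract a witness from $\chi(\pr_{L_1\cup J}(\boldf{r}))\in V_{L_1\cup J}$; however, since $L_1=\Minset(\boldf{r})$, the vector $\chi(\pr_{L_1\cup J}(\boldf{r}))$ is simply the indicator of $L_1$ inside $L_1\cup J$ and carries no information about $\Minset(\pr_J(\boldf{r}))$, which is the datum you need. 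The correct witness comes from $V_J$ itself. Concretely: take $\boldf{t}_0\in R$ with $\Minset(\boldf{t}_0)=L_1$ (from $\chi(\boldf{r})\in V_{[n]}$) and $\boldf{s}\in R$ with $\chi(\pr_J(\boldf{s}))=\chi(\pr_J(\boldf{r}))$ (from $\chi(\pr_J(\boldf{r}))\in V_J$). Since $L_1\cap J=\emptyset$ we have $\min(\boldf{t}_0)<\min(\pr_J(\boldf{t}_0))$, and always $\min(\boldf{s})\leq\min(\pr_J(\boldf{s}))$; by homogeneity of $(\mathbb{Q};<)$ one can apply automorphisms to arrange simultaneously $\min(\boldf{t}_0)<\min(\boldf{s})$ and $\min(\pr_J(\boldf{t}_0))>\min(\pr_J(\boldf{s}))$. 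Then the case analysis on $\chi\circ\mx$ applied coordinatewise gives $\Minset(\mx(\boldf{t}_0,\boldf{s}))=L_1$ and $\chi(\pr_J(\mx(\boldf{t}_0,\boldf{s})))=\chi(\pr_J(\boldf{s}))=\chi(\pr_J(\boldf{r}))$, so $\mx(\boldf{t}_0,\boldf{s})$ witnesses $\chi(\pr_J(\boldf{r}'))\in V'_J$. With this correction (and a similarly straightforward verification that $R'$ is temporal and preserved by $\mx$, which follows from the same case analysis because $\chi(\mx(\boldf{t},\boldf{t}'))=\chi_{L_1}$ whenever both arguments have $\Minset=L_1$), your induction closes. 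As it stands, though, the proposal leaves its central step both unverified and pointing to the wrong constraint, so it is not yet a proof.
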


%

%

%
\begin{proof}[Proof of Lemma~\ref{lemma:RelationalBaseMx}] 
We have already mentioned that $\mathrm{X}$ is preserved by $\mx$, and hence all relations that are pp-definable in $(\mathbb{Q};\mathrm{X})$ are preserved by $\mx$ as well. For the converse direction, we  show that $R^{\mx}_{I,n}$ has a pp-definition in $(\mathbb{Q};\mathrm{X})$ for every \red{integer $n >0$} and $I\subseteq [n]$; then the claim follows from Theorem~\ref{partialbasismx} together with Lemma~\ref{general}. Note that we trivially have a pp-definition of ${<}$ in  $(\mathbb{Q};\mathrm{X})$ via \blue{$\phi^{\mx}_{\!\{2\},2}(x,y)\coloneqq \mathrm{X}(x,x,y)$}. 
We first show that the relations  
\begin{align*} 
R^{\mx}_{\!\{1\},3} = \ & \mathrm{R}_{\Min} =   \{{\boldf{t}}\in \mathbb{Q}^{3} \mid {\boldf{t}}\of{2} < {\boldf{t}}\of{1} \vee {\boldf{t}}\of{3} < {\boldf{t}}\of{1}\} \\
\text{and }\quad R^{\mx}_{[3],4}=  \ & \{{\boldf{t}}\in \mathbb{Q}^{4} \mid {\boldf{t}}\of{4} < \Min({\boldf{t}}\of{1},{\boldf{t}}\of{2},{\boldf{t}}\of{3})   \vee ({\boldf{t}}\of{1},{\boldf{t}}\of{2},{\boldf{t}}\of{3})\in \mathrm{X}  \}
\end{align*} 
have pp-definitions in $(\mathbb{Q};\mathrm{X})$ and then proceed with treating the other relations of the form $R^{\mx}_{I,n}$.
\begin{claim}
The following primitive positive formula defines
$R^{\mx}_{[3],4}$ in $(\mathbb{Q};\mathrm{X})$.
\begin{align*} \phi^{\mx}_{[3],4}(x_{1},x_{2},x_{3},x_{4}) \coloneqq   \exists & x_{1}',x_{2}',x_{3}',x_{1}'',x_{2}'',x_{3}''    \big(  \blue{x_{4}< x_{1}''
\wedge x_{4}< x_{2}'' \wedge x_{4}< x_{3}''}\\   
& \wedge \mathrm{X}(x_{1}',x_{2}',x_{3}')  \wedge \mathrm{X}(x_{1},x_{1}',x_{1}'') \wedge   \mathrm{X}(x_{2},x_{2}',x_{2}'')   \wedge \mathrm{X}(x_{3},x_{3}',x_{3}'') \big)
\end{align*}
\end{claim}
\begin{proof}
``$\Rightarrow$'': We first prove that every ${\boldf{t}}\in R^{\mx}_{[3],4}$ satisfies $\phi^{\mx}_{[3],4} $. 

\textit{Case 1:  $({\boldf{t}}\of{1},{\boldf{t}}\of{2},{\boldf{t}}\of{3})\in \mathrm{X}$}. We choose  witnesses for the quantifier-free part of $\phi^{\mx}_{[3],4}$ as follows: $x_{1}'\coloneqq {\boldf{t}}\of{1},$ $ x_{2}'\coloneqq {\boldf{t}}\of{2}$, $x_{3}'\coloneqq {\boldf{t}}\of{3}$, and for $x_{1}'',x_{2}'',x_{3}''$  we choose values arbitrarily such that  $\blue{\Max({\boldf{t}}\of{1},{\boldf{t}}\of{2},{\boldf{t}}\of{3},{\boldf{t}}\of{4})} < \Min(x_{1}'',x_{2}'',x_{3}'')$. It is easy to see that this choice satisfies the quantifier-free part of $\phi^{\mx}_{[3],4}$. 

\textit{Case 2: $({\boldf{t}}\of{1},{\boldf{t}}\of{2},{\boldf{t}}\of{3})\notin \mathrm{X}$}. We have ${\boldf{t}}\of{4} < \Min({\boldf{t}}\of{1},{\boldf{t}}\of{2},{\boldf{t}}\of{3})$ by the definition of $R^{\mx}_{[3],4}$.  By symmetry, it suffices consider 
the following three subcases. 

\textit{Subcase 2.i: ${\boldf{t}}\of{3}< {\boldf{t}}\of{2} <{\boldf{t}}\of{1}$.} We choose $x_{1}'=x_{2}'=x_{1}''=x_{2}''=x_{3}''\coloneqq {\boldf{t}}\of{3}$ and  $x_{3}'\coloneqq {\boldf{t}}\of{1}$. 

\textit{Subcase 2.ii: ${\boldf{t}}\of{3} < {\boldf{t}}\of{1}={\boldf{t}}\of{2}$.} We choose the same witnesses as in the previous case.

\textit{Subcase 2.iii: ${\boldf{t}}\of{1}={\boldf{t}}\of{2}={\boldf{t}}\of{3}$}. We choose any combination of  $x_{1}',x_{2}',x_{3}'$,  
$x_{1}'',x_{2}'',x_{3}''$ that satisfies 
${\boldf{t}}\of{4}<x_{1}'=x_{2}'=x_{1}''=x_{2}''<x_{3}'=x_{3}''<{\boldf{t}}\of{1}$.

In each of the subcases~2.i-iii above, our choice satisfies the quantifier-free part of $\phi^{\mx}_{[3],4}$. 

``$\Leftarrow$'': Suppose for contradiction that there exists a tuple ${\boldf{t}}\notin R^{\mx}_{[3],4}$ that satisfies $\phi^{\mx}_{[3],4} $. 
Then $({\boldf{t}}\of{1},{\boldf{t}}\of{2},{\boldf{t}}\of{3})\notin \mathrm{X}$ and ${\boldf{t}}\of{4}\geq \Min({\boldf{t}}\of{1},{\boldf{t}}\of{2},{\boldf{t}}\of{3})$.  
Consider the witnesses $x_{1}',x_{2}',x_{3}',x_{1}'',x_{2}'',x_{3}'' $ for the fact that ${\boldf{t}}$ satisfies  $\phi^{\mx}_{[3],4}$.
Without loss of generality, we only have the following three cases. 

\textit{Case 1: ${\boldf{t}}\of{1}>{\boldf{t}}\of{2}>{\boldf{t}}\of{3}$}. We  have $x_{3}'={\boldf{t}}\of{3}$  because $({\boldf{t}}\of{3},x_{3}',x_{3}'')\in \mathrm{X}$ and $x_{3}''>{\boldf{t}}\of{4}\geq \Min({\boldf{t}}\of{1},{\boldf{t}}\of{2},{\boldf{t}}\of{3})={\boldf{t}}\of{3}.$

\textit{Subcase 1.i: $x_{3}'>\Min(x_{1}',x_{2}')$}. We have $x_{1}'=x_{2}'<x_{3}'$, because $(x_{1}',x_{2}',x_{3}')\in \mathrm{X}$. This implies $x_{1}''=x_{1}'$, because $x_{1}'<x_{3}'={\boldf{t}}\of{3}<{\boldf{t}}\of{1}$ and $({\boldf{t}}\of{1},x'_{1},x''_{1})\in \mathrm{X}$. But then 
$x_{1}''<{\boldf{t}}\of{3}\leq {\boldf{t}}\of{4}$, a contradiction.  

\textit{Subcase 1.ii: $x_{3}'=\Min(x_{1}',x_{2}',x_{3}')$}. 
Either $x_{1}'=x_{3}'<x_{2}'$ or $x_{2}'=x_{3}'<x_{1}'$ because $(x_{1}',x_{2}',x_{3}')\in \mathrm{X}$.

\textit{Subcase 1.ii.a: $x_{1}'=x_{3}'$}.  We have  $x_{1}''=x_{1}' $ because $x'_{1} = x'_{3}={\boldf{t}}\of{3}<{\boldf{t}}\of{1}$ and  $({\boldf{t}}\of{1},x'_{1},x''_{1})\in \mathrm{X}$. 	But then  $x_{1}''={\boldf{t}}\of{3}\leq {\boldf{t}}\of{4}$, a contradiction. 

\textit{Subcase 1.ii.b: $x_{2}'=x_{3}'$}. We have $x_{2}''=x_{2}' $ because $x'_{2} = x'_{3}={\boldf{t}}\of{3}<{\boldf{t}}\of{2}$ and  $({\boldf{t}}\of{2},x'_{2},x''_{2})\in \mathrm{X}$. But then $x_{2}''={\boldf{t}}\of{3}\leq {\boldf{t}}\of{4}$, a contradiction. 

\textit{Case 2: ${\boldf{t}}\of{1}={\boldf{t}}\of{2}>{\boldf{t}}\of{3}$}. We obtain a contradiction similarly as in the previous case.

\textit{Case 3: ${\boldf{t}}\of{1}={\boldf{t}}\of{2}={\boldf{t}}\of{3}$}. We must have $x_{3}'={\boldf{t}}\of{3}$, $x_{2}'={\boldf{t}}\of{2}$ and $x_{1}'={\boldf{t}}\of{1}$ because $\Min(x_{1}'',x_{2}'',x_{3}'')>{\boldf{t}}\of{4}\geq {\boldf{t}}\of{1}= {\boldf{t}}\of{2}={\boldf{t}}\of{3}.$ But then $(x_{1}',x_{2}',x_{3}')\notin \mathrm{X}$, a contradiction.

In all three cases above, we get a contradiction which means that there is no tuple ${\boldf{t}}\notin R^{\mx}_{[3],4}$ that satisfies $\phi^{\mx}_{[3],4}({\boldf{t}}\of{1},{\boldf{t}}\of{2},{\boldf{t}}\of{3},{\boldf{t}}\of{4})$.
\end{proof}

It is easy to see that the pp-formula 
\begin{align*}
\phi^{\mx}_{[2],3}(x_{1},x_{2},x_{3})\coloneqq \exists h   \big( \phi^{\mx}_{[3],4}(x_{1},x_{2},h,x_{3}) \wedge (h>x_{1}) \big)
\end{align*}  is equivalent to $(x_{1}>x_{3}\wedge x_{2}>x_{3})\vee x_{1}=x_{2}$. 

\begin{claim}
The following pp formula defines   $R^{\mx}_{\{1\},3}$.
\begin{align*} \phi^{\mx}_{\!\{1\},3}(x_{1},x_{2},x_{3}) \ \coloneqq   \exists h_{2},h_{3}   \big(  \phi^{\mx}_{[2],3}(x_{1},h_{2},x_{3})  \wedge (h_{2}>x_{2})   \wedge \phi^{\mx}_{[2],3}(x_{1},h_{3},x_{2}) \wedge (h_{3}>x_{3})\big)
\end{align*}
\end{claim} 
\begin{proof}
``$\Rightarrow$'':
Suppose that $\phi^{\mx}_{\!\{1\},3}({\boldf{t}})$ is true for some ${\boldf{t}}\in \mathbb{Q}^{3}$ . If ${\boldf{t}}\of{1}\leq {\boldf{t}}\of{2}$ and ${\boldf{t}}\of{1}\leq {\boldf{t}}\of{3}$, then $h_{2}={\boldf{t}}\of{1}$ and $h_{3}={\boldf{t}}\of{1}$ which contradicts $h_{2}> {\boldf{t}}\of{2}$ and $h_{3}>{\boldf{t}}\of{3}$. Thus ${\boldf{t}}\in R^{\mx}_{\!\{1\},3}$.

``$\Leftarrow$'':  Suppose that ${\boldf{t}}\in R^{\mx}_{\!\{1\},3}$ for some ${\boldf{t}}\in \mathbb{Q}^{3}$. Without loss of generality, ${\boldf{t}}\of{1}>{\boldf{t}}\of{2}$.  
Then $\phi^{\mx}_{\!\{1\},3}({\boldf{t}})$ being true is witnessed by $h_{2}\coloneqq {\boldf{t}}\of{1}$ and any $h_{3}\in \mathbb{Q}$ that satisfies $h_{3} > \max(\boldf{t}\of{2},\boldf{t}\of{3})$.
\end{proof}

Since we already have a pp-definition $\phi^{\mx}_{\!\{1\},3}$ for $R^{\mx}_{\!\{1\},3}$, we can 
obtain a pp-definition $\phi^{\mx}_{\!\{1\},n+1}$  of $R^{\mx}_{\!\{1\},n+1}$ inductively as in \red{the proof of} Lemma~\ref{lemma:RelBaseMin}. 
The challenging part is showing the pp-definability of $R^{\mx}_{[k],k+1}$. \blue{Note that we have already covered the cases where $k\in [3]$.}
\begin{claim}
For $k\geq 4$, the relation $R^{\mx}_{[k],k+1}$ can be pp-defined by
\begin{align*} \phi^{\mx}_{[k],k+1}(x_{1},\dots,x_{k},y) \coloneqq     \exists h_2,\dots,h_{k-2}   \big ( & \phi^{\mx}_{[3],4}(x_{1},x_{2},h_{2},y) \wedge \phi^{\mx}_{[3],4}(h_{k-2},x_{k-1},x_{k},y)
\\  
&  \wedge   \bigwedge_{i=3}^{k-2} \phi^{\mx}_{[3],4}(h_{i-1},x_{i},h_{i},y)      \big ).
\end{align*}   
\end{claim}

\begin{proof}
Suppose that $  \boldf{t} \in {\mathbb Q}^{k+1}$ satisfies
$\phi_{[k],k+1}^{\mx}$. 
Let $h_{2},\dots,h_{k-2}\in \mathbb{Q}$ 
be witnesses of the fact that $\boldf{t}$ satisfies 
$\phi_{[k],k+1}^{\mx}$. 
If $\boldf{t}\of{y} < m \coloneqq \min(\boldf{t}\of{x_1},\dots,\boldf{t}\of{x_k})$ then 
$\boldf{t} \in R^{\mx}_{[k],k+1}$ and we are done,
so suppose that $\boldf{t}\of{y}\geq m$. 
Define $h_2',\dots,h_{k-2}' \in \{0,1\}$ 
by $h'_i \coloneqq 1$ if $h_i = m$ and $h'_i \coloneqq 0$ otherwise. 
Note that if $m < \min(\boldf{t}\of{x_1},\boldf{t}\of{x_2},h_2)$
then $\chi(\boldf{t})\of{x_1} = \chi(\boldf{t})\of{x_2} = h_2' = 0$. Otherwise, if $m = \min(\boldf{t}\of{x_1},\boldf{t}\of{x_2},h_2)$, then exactly two out
of 
$\chi(\boldf{t})\of{x_1},\chi(\boldf{t})\of{x_2},h_2'$ take the value $1$. 
The same holds for each conjunct of
$\phi^{\mx}_{[k],k+1}$, 
so they imply
\begin{align*}
\chi(\boldf{t})\of{x_1}+\chi(\boldf{t})\of{x_2}+h'_2 & = 0 \mod 2,  \\
h'_{k-2} + \chi(\boldf{t})\of{x_{k-1}} + \chi(\boldf{t})\of{x_k} & = 0 \mod 2,  && \text{and} \\
h'_{i-1} + \chi(\boldf{t})\of{x_i} + h'_i & = 0 \mod 2 &&
\text{for every } i \in \{3,\dots,k-2\}. 
\end{align*} 
Summing all these equations we deduce that 
$\sum_{i=1}^k \chi(\boldf{t})\of{x_i} = 0 \bmod 2$ 
and hence $\boldf{t} \in R^{\mx}_{[k],k+1}$.

Conversely, suppose that $\boldf{t} \in R^{\mx}_{[k],k+1}$. We have to show that $\boldf{t}$ satisfies $\phi_{[k],k+1}^{\mx}(x_1,\dots,x_k,y)$. If $\boldf{t}\of{y} < m \coloneqq \min(\boldf{t}\of{x_1},\dots,\boldf{t}\of{x_k})$ then we set all of 
$h_2,\dots,h_{k-2}$ to $m$ and all conjuncts of 
\red{$\phi_{[k],k+1}^{\mx}$} are satisfied. We may therefore suppose in the following that $\boldf{t}\of{y} \geq m$. 
Then it must be the case that 	$\sum_{i=1}^k \chi(\boldf{t})\of{x_i} = 0 \bmod 2$.
\red{Arbitrarily choose $s>\max(\boldf{t}\of{x_1},\dots,\boldf{t}\of{x_{k}})$.}
\blue{Without loss of generality, we may assume that $\boldf{t}\of{x_k}\leq \cdots \leq \boldf{t}\of{x_1}$; otherwise, we simply rename the variables to achieve the desired order.} 
Define 
\begin{align*}
h_2 & \coloneqq \begin{cases} s & \text{ if } \boldf{t}\of{x_1} = \boldf{t}\of{x_2}, \\
\min(\boldf{t}\of{x_1},\boldf{t}\of{x_2}) & \text{ otherwise},
\end{cases} 
\end{align*}
and, for $i \in \{3,\dots,k-2\}$, define
\begin{align*}
h_i & \coloneqq \begin{cases} s & \text{ if } h_{i-1} = \boldf{t}\of{x_i}, \\
\min(h_{i-1},\boldf{t}\of{x_i}) & \text{ otherwise.} 
\end{cases}
\end{align*}	
This clearly satisfies all conjuncts of 
$\phi_{[k],k+1}^{\mx}$ except for possibly the second. 
\blue{We show that our assignment also satisfies the second conjunct.
\red{Suppose, on the contrary, that the second conjunct is not satisfied.}
Since $\boldf{t}\of{x_k}\leq \cdots \leq \boldf{t}\of{x_1}$, by the definition of our assignment, we have $\boldf{t}\of{x_{k-1}}\leq h_{k-2}$.
Since $\sum_{i=1}^k \chi(\boldf{t})\of{x_i} = 0 \bmod 2$, we also have $m=\boldf{t}\of{x_k}=\boldf{t}\of{x_{k-1}}$.
By our assumption that the second conjunct is not satisfied, it follows that $m=\boldf{t}\of{x_k}=\boldf{t}\of{x_{k-1}}=  h_{k-2} $.
Moreover,
\begin{enumerate}
\item \label{item:counter_X2} for every $i\in \{3,\dots, k-2\}$, either  
\red{
$m \leq h_{i} =  \boldf{t}\of{x_{i}} < h_{i-1}$} 
or $m\leq h_{i-1}=\boldf{t}\of{x_{i}} <h_{i}$,  
\item \label{item:counter_X3} both options in the above item alternate for successive indices within $\{3,\dots, k-2\}$.
\end{enumerate}
%
%
%
Clearly, by  item~\eqref{item:counter_X2}, if $h_i= m$ holds for some $i\in \{3,\dots, k-2\}$, then $\boldf{t}\of{x_i}=m$.
We claim that this is also true whenever $h_i\neq m$, i.e., that $\boldf{t}\of{x_i}=m$ for every $i\in \{3,\dots, k\}$.
The claim can be proved by a simple induction on $i$.
Suppose that $h_i\neq m$ for some $i\in \{3,\dots, k-2\}$ such that  $\boldf{t}\of{x_{i'}}=m$ holds for every $i' \in \{i+1,\dots,k\}$.
Then it follows from item~\eqref{item:counter_X2} and item~\eqref{item:counter_X3} together with $h_{k-2}=m$ and the induction hypothesis that $k-i$ is odd.
Also, by the induction hypothesis, $\sum_{j=i+1}^k \chi(\boldf{t})\of{x_j}$ is odd.
Since $\sum_{j=1}^k \chi(\boldf{t})\of{x_j}$ is even and $\boldf{t}\of{x_k}\leq \cdots \leq \boldf{t}\of{x_1}$, it must be the case that $\boldf{t}\of{x_i}=m$.
This finishes the proof of the claim. 
Since $h_{k-2}=m$, it follows from item~\eqref{item:counter_X2}, item~\eqref{item:counter_X3}, and our claim that $h_2 = m$ if $k$ is even and $h_2 \neq m$ if $k$ is odd.
If $k$ is even, then $\sum_{j=3}^k \chi(\boldf{t})\of{x_j}$ is even.
Since our assignment satisfies the first conjunct  and $h_2 = m$, we must have either $\boldf{t}\of{x_{1}}=m$ or $\boldf{t}\of{x_{2}}=m$.
But then $\sum_{j=1}^k \chi(\boldf{t})\of{x_j}$ is odd, a contradiction to $\boldf{t} \in R^{\mx}_{[k],k+1}$.
If $k$ is odd, then $\sum_{j=3}^k \chi(\boldf{t})\of{x_j}$ is odd.
Since our assignment satisfies the first conjunct and $h_2 = m$, we must have either $\boldf{t}\of{x_{1}}=\boldf{t}\of{x_{2}}< h_2$ or $h_2\leq \min(\boldf{t}\of{x_{1}},\boldf{t}\of{x_{2}})$. 
But then $\sum_{j=1}^k \chi(\boldf{t})\of{x_j}$ is odd, a contradiction to $\boldf{t} \in R^{\mx}_{[k],k+1}$.
Hence, also the second conjunct is satisfied by our assignment.}
%
%
%
%
%
%
%
%
\end{proof}

For the general case, let $k \coloneqq |I|$. Without loss of generality we may assume that $I = [k]$. 
\begin{claim} \label{claim:kn} 
The following pp-formula defines $R^{\mx}_{[k],n}$. 
\begin{align*}  \phi^{\mx}_{[k],n}(x_{1},\dots,x_{n})\coloneqq \exists h   \big( \phi^{\mx}_{[k],k+1}(x_{1},\dots,x_{k},h)   \wedge \phi^{\mx}_{\!\{1\},n+1}(h,x_{1},\dots,x_{n}) \big)
\end{align*}
\end{claim}

\begin{proof} ``$\Rightarrow$'': \red{Let ${\boldf{t}} \in R^{\mx}_{[k],n}$.}  
If  $\sum_{i=1}^{k}\chi({\boldf{t}})\of{i}=0 \bmod 2$, then   $\phi^{\mx}_{[k],k+1}({\boldf{t}}\of{1},\dots,{\boldf{t}}\of{k},h)$ holds for every $h\in \mathbb{Q}$ and   $\phi^{\mx}_{\!\{1\},n+1}(h,{\boldf{t}}\of{1},\dots,{\boldf{t}}\of{n})$ holds for every $h\in \mathbb{Q}$ with  $h>\Min({\boldf{t}}\of{k+1},\dots,{\boldf{t}}\of{n})$.  	
\red{Otherwise, $\Min({\boldf{t}}\of{1},\dots,{\boldf{t}}\of{k})>\Min({\boldf{t}}\of{k+1},\dots,{\boldf{t}}\of{n})$. 
Let $h\in \mathbb{Q}$ be such that $\Min({\boldf{t}}\of{k+1},\dots,{\boldf{t}}\of{n}) < h < \Min({\boldf{t}}\of{1},\dots,{\boldf{t}}\of{k})$. 
Then $h$ is a witness that shows that
${\boldf{t}} $ satisfies $\phi^{\mx}_{[k],n}$.}

``$\Leftarrow$'': Let ${\boldf{t}} $ be an arbitrary $n$-tuple over $\mathbb{Q}$ not contained in $R^{\mx}_{[k],n}$. Then $\sum_{i=1}^{k}\chi({\boldf{t}})\of{i}\neq 0 \bmod 2$, and $\Min({\boldf{t}}\of{1},\dots,{\boldf{t}}\of{k})\leq \Min({\boldf{t}}\of{k+1},\dots,{\boldf{t}}\of{n})$. For every witness $h\in \mathbb{Q}$  such that $\phi^{\mx}_{[k],k+1}({\boldf{t}}\of{1},\dots,{\boldf{t}}\of{k},h)$ is true,  we have   $\Min({\boldf{t}}\of{1},\dots,{\boldf{t}}\of{k})>h$. But then no such $h$ can witness $\phi^{\mx}_{\!\{1\},n+1}(h,{\boldf{t}}\of{1},\dots,{\boldf{t}}\of{n})$ being true. 
Thus, ${\boldf{t}} $ does not satisfy $\phi^{\mx}_{[k],n}$.
\end{proof}

This completes the proof of Lemma~\ref{lemma:RelationalBaseMx}.
\end{proof} 
%
%
The expressibility of $\CSP(\mathbb{Q};\mathrm{X})$ in $\FPR_2$ can be shown using the same approach as in the first part of Section~\ref{section_tcsps_in_fp} via Proposition~\ref{blockedSetProjection} if the suitable procedure from \cite{bodirsky2010complexity} for finding free sets can be implemented in $\FPR_2$.
This is possible by encoding systems of \blue{mod-2} equations in $\FPR_2$ similarly as in the case of symmetric reachability in directed graphs in the paragraph above Corollary~III.2. in \cite{dawar2009logics}.
As usual, a solution to a homogeneous system of \blue{mod-2} equations is called \emph{trivial} if all variables take value $0$, and \emph{non-trivial} otherwise. 

\begin{proposition} \label{LFPmxCorrectnessSoundness}  
$\CSP(\mathbb{Q};\mathrm{X})$ is expressible in $\FPR_{2}$.  
\end{proposition}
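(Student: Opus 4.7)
The plan is to invoke Corollary~\ref{fp_pp}: since $\mathrm{X}$ is preserved by $\mx$ and hence by $\pp$, it suffices to exhibit an $\FPR_2$ formula $\phi(x)$ in the signature $\{\mathrm{X}\}\cup\{U\}$ that, on any instance $\struct{A}$ with unary predicate $U\subseteq A$, holds for an element $x\in A$ iff $x$ is not contained in any free set of $\pr_U(\struct{A})$; the outer sentence $\exists x\,[\dfp_{U,x}\phi(x)](x)$ then defines $\textup{co-CSP}(\mathbb{Q};\mathrm{X})$. By Lemma~\ref{freeMX}, the required test reduces to checking whether the homogeneous Boolean linear system obtained from the min-systems of all constraints $\mathrm{X}(a,b,c)$ with $a,b,c\in U$ admits a solution over $\mathbb{Z}_2$ in which $y_x = 1$. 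Inspection of $\Minsystem_{\mathrm{X}}$ shows that every such constraint contributes exactly the single equation $y_a + y_b + y_c = 0 \pmod 2$, with the correct simplification in degenerate cases (noting in particular that $\mathrm{X}(x,x,x)$ fails, so the row $(x,x,x)$ never arises as a constraint).

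Letting $M$ denote the coefficient matrix of this system and $M'$ the matrix obtained by appending the row $e_x^{\mathsf{T}}$ whose only nonzero entry is a $1$ in column $x$, the element $x$ lies in a free set iff $\rk_{\mathbb{Z}_2}(M')>\rk_{\mathbb{Z}_2}(M)$, and consequently ``\emph{$x$ is not in a free set}'' is equivalent to the rank equality $\rk(M)=\rk(M')$. Both ranks are expressible via the operator $[\rk_{\cdot,\cdot}\,\cdot\,\bmod 2]$. I would encode $M$ with row index $(a,b,c)\in A^3$ and column index $v\in A$ by
$$\psi_M(a,b,c,v) \coloneqq U(a)\wedge U(b)\wedge U(c)\wedge U(v)\wedge \mathrm{X}(a,b,c)\wedge \mathrm{odd}(a,b,c,v),$$
where $\mathrm{odd}(a,b,c,v)$ is a quantifier-free formula expressing that $v$ occurs an odd number of times among $a,b,c$. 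For $M'$ I would enlarge the row index to $A^4$ and set
$$\psi_{M'}(a,b,c,d,v) \coloneqq \psi_M(a,b,c,v) \vee \bigl(a=x\wedge b=x\wedge c=x\wedge d=x\wedge v=x\bigr),$$
so that the unique index $(x,x,x,x)$ furnishes the required extra row $e_x^{\mathsf{T}}$ while the other indices either duplicate a constraint row or are zero, neither of which alters the $\mathbb{Z}_2$-rank. Finally,
$$\phi(x) \coloneqq \neg U(x) \vee \bigl([\rk_{(a,b,c),v}\psi_M\bmod 2] \;=\; [\rk_{(a,b,c,d),v}\psi_{M'}\bmod 2]\bigr).$$

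The main obstacle is the clean syntactic encoding of the augmented matrix $M'$, since the rank operator of $\FPR_2$ takes matrices whose row and column indices range over Cartesian powers of $A$; the ``append one row'' operation therefore cannot be performed atomically, and must be simulated by inflating the row-index space and designating a single coordinate to carry the extra row in a rank-preserving manner. Correctness of the padding relies on the fact that rank over $\mathbb{Z}_2$ is invariant under row duplication and insertion of zero rows, and on the absence of collision between the constraint-row schema and the designated index $(x,x,x,x)$, which is ensured by $\mathrm{X}$ enforcing a strict inequality. Soundness and completeness of the overall $\FPR_2$ sentence then follow by combining Lemma~\ref{freeMX}, Proposition~\ref{blockedSetProjection}, and the standard rank-based criterion for solvability of Boolean linear systems over $\mathbb{Z}_2$.
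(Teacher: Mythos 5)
Your proof is essentially correct and follows the same route as the paper's: invoke Corollary~\ref{fp_pp} together with Lemma~\ref{freeMX} and the algorithm of Figure~\ref{algo:FreeSetsMX}, encode the resulting Boolean linear system in $\FPR_2$, and decide membership of~$x$ in the union of free sets by comparing two $\mathbb{Z}_2$-ranks. The technical execution differs a little: you leave $M$ the purely homogeneous constraint matrix and append the row $e_x^{\mathsf{T}}$ via an enlarged row-index space, comparing $\rk(M)$ with $\rk(M')$; the paper builds $y_x=1$ directly into the system as an extra row/right-hand side pair and compares $\rk(M)$ with $\rk(M\mid\boldf{v})$. Both criteria are equivalent (your $\rk(M')>\rk(M)$ is exactly the statement that $e_x$ is not in the row space, i.e.\ that some vector in the null space of $M$ has a~$1$ at coordinate~$x$). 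Your multiplicity-aware $\mathrm{odd}$ coefficient is also the right way to write a row for a constraint with repeated arguments, whereas the paper's $y_1 = x_1 \vee y_1 = x_2 \vee y_1 = x_3$ ignores parity; so in this respect your encoding is, if anything, cleaner.

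There is, however, one genuinely incorrect claim in your justification. You write that ``$\mathrm{X}(x,x,x)$ fails, so the row $(x,x,x)$ never arises as a constraint,'' and later that the absence of a collision at the designated index $(x,x,x,x)$ is ``ensured by $\mathrm{X}$ enforcing a strict inequality.'' This conflates the template with the instance: an input $\struct{A}$ of $\CSP(\mathbb{Q};\mathrm{X})$ is an arbitrary finite $\{X\}$-structure and may perfectly well contain $(y,y,y)$ in $X^{\struct{A}}$ for any $y$, including $y=x$; indeed the paper singles out this situation explicitly and treats it as a special case. Your formula nevertheless handles it correctly, but for a different reason than the one you give: by the odd-count test, $\psi_M(x,x,x,\cdot)$ can only be the zero row or $e_x$, so the disjunction $\psi_M(x,x,x,v)\vee (v=x)$ at the designated index equals $e_x$ in either case and the appended row is as intended (and, separately, a constraint row $(y,y,y)$ with $(y,y,y)\in X^{\struct{A}}$ simply contributes the equation $y_y=0$, which is exactly what $\Minsystem_{\mathrm{X}}(y,y,y)=\emptyset$ prescribes). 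You should replace the appeal to $\mathrm{X}$'s strictness by this observation.
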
 
\begin{proof}  
Recall that $\struct{B}\coloneqq (\mathbb{Q};\mathrm{X})$ is preserved by $\mx$ and hence also by $\pp$.
\blue{Since all proper projections of the relations of $\struct{B}$ are trivial, $\struct{B}$ satisfies the prerequisites of Corollary~\ref{fp_pp}.}
Our aim is to construct a formula $\phi(x)$ satisfying the requirements of Corollary~\ref{fp_pp} by rewriting the algorithm in Figure~\ref{algo:FreeSetsMX} in the syntax of $\FPR_2$. 
In the computation of the algorithm in Figure~\ref{algo:FreeSetsMX}, each 
constraint is of the form $\mathrm{X}(x,y,z)$, and hence contributes a single equation to $E$, namely  $x+y+z=0$.
The algorithm subsequently isolates those variables which denote the value $1$ in some non-trivial solution for $E$. 
Write $E$ as $M\boldf{x}=\boldf{v}$. 
We define two numeric terms $f_{M}$ and $f_{\boldf{v}}$ which encode the matrix and the vector, respectively, of this system.
\begin{align*}
f_{M}(x_1,x_2,x_3,y_1,x)\coloneqq \ & \big(\mathrm{X}(x_1,x_2,x_3) \wedge U(x_1)\wedge U(x_2) \wedge U(x_3)  \wedge (y_1=x_1 \vee y_1=x_2 \vee y_1=x_3)\big) \\  & \vee (x_1=x_2=x_3=y_1=x)  
\\[0.5\baselineskip]
f_{\boldf{v}}(x_1,x_2,x_3,y_1,x)\coloneqq \ &  (x_1=x_2=x_3=y_1=x)  
\end{align*}
Let $\struct{A}$ be an instance of $\CSP(\struct{B})$ and $U\subseteq A$ arbitrary.
For every $x\in U$, the matrix 
$ \Mat^\struct{A}_2\llbracket  f_{M}(\cdot,\cdot,x) \rrbracket \in  \{0,1\}^{A^{3}\times A}$
contains
\begin{enumerate}
\item 	for each constraint $\mathrm{X}(x_1,x_2,x_3)$ of $\struct{A}$, where $x_1,x_2,x_3\in U$, three 1s in the  $(x_1,x_2,x_3)$-th row: namely, in the $x_1$-th, $x_2$-th, and $x_3$-th column, and
\item  \label{vecctr} a single 1 in the $(x,x,x)$-th row: namely, in the $x$-th column.
\end{enumerate}
We can test the solvability of  $M\boldf{x} = \boldf{v}$ in $\FPR_2$ by comparing the rank of $M$ with the rank of $(M|\boldf{v})$:  the system is satisfiable if and only if they have the same rank.
The case that $\struct{A}$ contains a constraint of the form $\mathrm{X}(y,y,y)$ is treated specially; in this case, $\struct{A}$ does not have a solution (note that our encoding of $M\boldf{x} = \boldf{v}$ is incorrect whenever $\struct{A}$ contains such a constraint).  
The formula $\phi(x)$ can be defined as follows.
\begin{align*}
\phi(x)   \coloneqq \ &  \exists y \big(\mathrm{X}(y,y,y)\vee \neg  \big([\rk_{(x_1,x_2,x_3),y_1} 	f_{M}(x_1,x_2,x_3,y_1,x) \bmod 2 ]  = \\ & [\rk_{(x_1,x_2,x_3),(y_1,y_2)}\ (y_2\neq y)\cdot 	f_{M}(x_1,x_2,x_3,y_1,x)+(y_2=y)  \cdot f_{\boldf{b}}(x_1,x_2,x_3,y_1,x) \bmod 2]      \big)   \big)
\end{align*} 
Now the statement of the proposition  follows from Corollary~\ref{fp_pp}.
\end{proof}  

\subsection{\blue{A proof of inexpressibility in FPC}}
Interestingly, the inexpressibility of $\CSP(\mathbb{Q};\mathrm{X})$ in $\FPC$ cannot be shown by giving a pp-construction of systems of \blue{mod-2} equations and utilizing the inexpressibility result of Atserias, Bulatov, and Dawar \cite{atserias2009affine} (see Corollary~\ref{nointerpretations}).
For this reason we resort to the strategy of showing that $\CSP(\mathbb{Q};\mathrm{X})$ has unbounded counting width and then applying Theorem~\ref{separation} \cite{dawar2017definability}.
\blue{In Proposition~\ref{prop:Xequiv}, we show that  $\CSP(\mathbb{Q};\mathrm{X})$ can be reformulated as a particular decision problem for systems of equations over $\mathbb{Z}_{2}$, where each constraint $\mathrm{X}(x,y,z)$ is viewed as the mod-2 equation $x+y+z=0$.} 

\begin{proposition}\label{prop:Xequiv}
The problem \textup{3-Ord-Xor-Sat} (defined in the introduction) and $\CSP(\mathbb{Q};\mathrm{X})$ are the same computational problem. 
\end{proposition}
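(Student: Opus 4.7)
The plan is to establish the natural correspondence that sends each constraint $X(x,y,z)$ of an $\{X\}$-instance $\struct{A}$ to the Boolean equation $x+y+z=0$, and to verify that this is an isomorphism of decision problems. The pivotal observation is that $(a,b,c)\in X$ iff exactly two of the three entries equal $\min(a,b,c)$, so that $\chi(a,b,c)$ is always a non-zero $\{0,1\}$-solution of $x+y+z\equiv 0\pmod{2}$.

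For the direction $\Rightarrow$, I would fix a homomorphism $s\colon A\to(\mathbb{Q};X)$ and an arbitrary non-empty subset $S$ of the associated Boolean equations. Let $m_S$ denote the minimum of $s$ over the variables that appear in $S$, and set $b_v:=1$ iff $v$ appears in $S$ and $s(v)=m_S$. For each constraint $X(x,y,z)$ in $S$: if $\min(s(x),s(y),s(z))>m_S$ then $b_x=b_y=b_z=0$; otherwise the local minimum equals $m_S$, and by the definition of $X$ exactly two of the three entries attain this minimum, yielding $b_x+b_y+b_z=2\equiv 0\pmod{2}$. Thus $b$ is a Boolean solution of $S$ that is non-zero on the variables of $S$.

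For the direction $\Leftarrow$, I would construct a rational solution iteratively. Set $V_1:=A$ and $E_1:=$ the entire system. At step $i$, if $E_i=\emptyset$ stop; otherwise the \textsc{3-Ord-Xor-Sat} hypothesis applied to the subset $E_i$ of the original system furnishes a Boolean solution of $E_i$ with non-empty support $M_i\subseteq V_i$. Assign the value $i$ to every variable in $M_i$, put $V_{i+1}:=V_i\setminus M_i$, and let $E_{i+1}$ consist of the equations of $E_i$ involving no variable of $M_i$. When the iteration terminates at step $k$, assign the still unassigned variables pairwise distinct rational values, all strictly greater than $k$. For any constraint $X(x,y,z)$ of $\struct{A}$, since $E_k=\emptyset$ there is a smallest index $j^*$ for which some variable of $\{x,y,z\}$ belongs to $M_{j^*}$; the Boolean solution chosen at stage $j^*$ satisfies $x+y+z\equiv 0\pmod{2}$, which rules out all three variables lying in $M_{j^*}$ (that would give parity $1$). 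Hence exactly two of them receive the value $j^*$ and the third a strictly larger value, so the triple lies in $X$. The only genuinely non-trivial verification is this last parity argument ruling out the pattern $X(i,i,i)$; it is the structural bridge between the Ord-Xor encoding and the defining clauses of $X$, and everything else is bookkeeping on the iteration.
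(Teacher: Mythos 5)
Your proof is correct, and it takes a genuinely more elementary route than the paper's. The paper's proof of Proposition~\ref{prop:Xequiv} is parasitic on the algorithmic machinery of Sections~\ref{section_tcsps_in_fp}--\ref{section_mx}: the direction from a positive \textsc{3-Ord-Xor-Sat} instance to a homomorphism is obtained by noting that the choiceless projection algorithm (Figure~\ref{solvestar}, powered by Lemma~\ref{freeMX}) finds a free set at every step and then invoking its soundness, which ultimately rests on the $\mx$-polymorphism via Proposition~\ref{blockedSetProjection}; the converse is read off from the residual-domain sequence $A_1,\dots,A_\ell$ that the same algorithm produces, using the maximal $i$ with $B \subseteq A_i$. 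You bypass the algorithm entirely and argue directly at the level of the $\mathbb{Z}_2$-equations: in one direction you build the required nontrivial solution of a subset $S$ from the min-indicator of the homomorphism restricted to the $S$-variables, and in the other you build a rational solution by repeatedly extracting the support of a nontrivial $\mathbb{Z}_2$-solution and assigning increasing levels. The payoff is a self-contained proof that does not presuppose the correctness of the $\mx$-based algorithm; the cost is that the termination and parity bookkeeping (in particular the degenerate cases $X(v,v,w)$ and $X(v,v,v)$) must be verified by hand, which the paper delegates to its prior lemmas. Two minor polishing remarks: the ``pairwise distinct'' requirement on the final unassigned values is superfluous (any values strictly greater than $k$ suffice), and in the concluding parity step you should say explicitly that ``exactly two'' counts argument \emph{positions} of $X(x,y,z)$ with multiplicity --- the parity argument rules out one as well as three positions in $M_{j^*}$, while zero positions is excluded by the choice of $j^*$, so exactly two is indeed the only possibility.
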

\begin{proof} The structure $(\mathbb{Q};\mathrm{X})$ is preserved by $\mx$.
Thus, the algorithm in Figure~\ref{solvestar} jointly with the algorithm in Figure~\ref{algo:FreeSetsMX} for computing free sets is sound and complete for $\CSP(\mathbb{Q};\mathrm{X})$.  

\blue{Suppose that the equations in an instance $\struct{A}$ of $\CSP(\mathbb{Q};\mathrm{X})$ form a positive instance of \textup{3-Ord-Xor-Sat}.
We show that then every projection of $\struct{A}$ has a free set.
Note that all proper projections of the relation $\mathrm{X}$ are trivial.
Therefore, we can ignore projections of constraints in our \red{argument}.
Let $S\subseteq A$ be arbitrary, and let $E$ be the set of all equations $x+y+z=0$ for $x,y,z\in A\setminus S$ such that $\struct{A}$ has the constraint $\mathrm{X}(x,y,z)$.
By our assumption, $E$ has a solution over $\mathbb{Z}_2$ where at least one variable $x$ denotes the value $1$.
This means that, given  $\pr_{A\setminus S}(\struct{A})$ as an input, the algorithm in Figure~\ref{algo:FreeSetsMX} returns a non-empty set $F$ containing $x$. 
Since $F$ is the union of all free sets for $\pr_{A\setminus S}(\struct{A})$, we conclude that $\pr_{A\setminus S}(\struct{A})$ has a free set.
This means that, given $\struct{A}$ as an input, the algorithm in Figure~\ref{solvestar} jointly with the algorithm in Figure~\ref{algo:FreeSetsMX} finds a free set in every step and accepts $\struct{A}$.}
Since our algorithm is correct for 
$\CSP(\mathbb{Q};\mathrm{X})$, we conclude that 
$\struct{A} \rightarrow (\mathbb{Q};\mathrm{X})$.

Conversely, suppose that $\struct{A} \rightarrow (\mathbb{Q};\mathrm{X})$. Then the algorithm described above produces a sequence $A_1,\dots,A_\ell$ of subsets of $A$ where $A_1 \coloneqq A$ and, for every $i<\ell$ the set
$A_{i+1}$ is the subset of $A_{i}$
where we remove all elements which are contained in a free set of the substructure of $\struct{A}$ with domain $A_i$. Moreover, $\struct{A}$ contains no \blue{mod-2} equations 
on variables from $\struct{A}_{\ell}$. 
Let $E$ be a non-empty subset of the equations from $\struct{A}$
and let $B$ be the variables that appear in the equations from $E$. Let $i$ be maximal such that $B \subseteq A_i$. Then mapping all variables in $B \cap A_{i+1}$ to $0$
and all variables in $B \setminus A_{i+1}$ to $1$ is a non-trivial solution to $E$:
\begin{itemize} 
\item an even number of variables of each constraint 
is in $B \setminus A_{i+1}$, by the definition of free sets; 
\item $B$ cannot be fully contained in $B \setminus A_{i+1}$ because $E$ is non-empty; 
\item $B$ cannot be fully contained 
in $B \cap A_{i+1}$ by the maximal choice of $i$.  \qedhere
\end{itemize} 
\end{proof}

\blue{The satisfiability problem for systems of equations over a fixed finite Abelian group, where the \red{number} of variables per equation is bounded by a constant, can be formulated as a finite-domain CSP.
In the present article, we only need to encode equations of the form $x_1+\cdots+ x_j = a$. For this purpose, we can use the following mixture of definitions from~\cite{atserias2009affine} and \cite{atserias2019definable}.
\begin{definition} \label{def:lineq} Let $\mathscr{G}$ be a finite Abelian group and $k$ a natural number.
Then we define $\struct{E}_{\mathscr{G},k}$ as the relational structure over the domain $G$ of $\mathscr{G}$ with the relations 
$\{{\boldf{t}}\in G^{\,j} \mid \textstyle\sum_{i\in [j]}{\boldf{t}}\of{i}=a \}$
for every $j \in [k]$ and $a\in G$. 
Let $e$ be the neutral element in $\mathscr{G}$, and let $\struct{A}$ be an instance of $\CSP(\struct{E}_{\mathscr{G},k})$ for some $k$.
The \emph{homogeneous companion} of  $\struct{A}$  is obtained by moving the tuples from each $j$-ary relation of $\struct{A}$, $j\in [k]$, to the unique $j$-ary relation $R^{\struct{A}}$  such that $R^{\struct{E}_{\mathscr{G},k}} = \{{\boldf{t}}\in G^{\,j} \mid \textstyle\sum_{i\in [j]}{\boldf{t}}\of{i}=e \}$.
\end{definition}}
\red{Every system of equations over $\mathscr{G}$ of the form $x_1+\cdots + x_{j} = a$ with $j \in [k]$ and $a\in G$ gives rise to a
structure in the signature of $\struct{E}_{\mathscr{G},k}$ whose domain consists of the variables and whose relations are described by the equations. Clearly, 
the system 
is satisfiable if and only if this structure has a homomorphism to $\struct{E}_{\mathscr{G},k}$.} 
%
%
We use the probabilistic construction of multipedes from \cite{gurevich1996finite,blass2002polynomial} as a black box for extracting certain homogeneous systems of \blue{mod-2} equations that represent instances of $\CSP(\mathbb{Q};\mathrm{X})$ \blue{via Proposition~\ref{prop:Xequiv}.}
More specifically, we use the reduction from the proof of Theorem~23 in \cite{blass2002polynomial} of the isomorphism problem for multipedes to the satisfiability of a system of equations over $\mathbb{Z}_{2}$ with $3$ variables per equation.  
The following concepts were introduced in \cite{gurevich1996finite}; we mostly follow the terminology in \cite{blass2002polynomial}. 
\begin{definition} 
A \emph{multipede} is a finite relational structure $\struct{M}$ with the signature $\{<,E,H\}$, where $<,E$ are binary symbols and $H$ is a ternary relation symbol, such that $\struct{M}$ satisfies the following axioms. 
The domain  has a partition   into \emph{segments} $\mathrm{SG}(\struct{M})$  and \emph{feet} $\mathrm{FT}(\struct{M})$  such that ${<}^{\struct{M}}$ is a linear order on $\mathrm{SG}(\struct{M})$, and $E^{\struct{M}}$ is the graph of a surjective function $\mathrm{sg}\colon \mathrm{FT}(\struct{M}) \rightarrow \mathrm{SG}(\struct{M})$ with $|\mathrm{sg}^{-1}(x)|=2$ for every $x\in \mathrm{SG}(\struct{M})$.
\blue{For every $\boldf{t} \in H^{\struct{M}}$, either the entries of $\boldf{t}$ are contained in $\mathrm{SG}(\struct{M})$ and ${\boldf{t}}$ is called a \emph{hyperedge}, or they are contained in $\mathrm{FT}(\struct{M})$ and ${\boldf{t}}$ is called a \emph{positive triple}. 
We require that 
\begin{itemize}
\item 	$H^{\struct{M}}$ only contains triples with pairwise distinct entries and is closed under adding triples obtainable by permuting the entries of an already present triple;
\item For every positive triple $\boldf{t}$, the triple $\mathrm{sg}(\boldf{t})$ is a hyperedge (here $\mathrm{sg}$ acts component-wise);
\item If $\boldf{s}\in H^{\struct{M}}$ is a hyperedge, then exactly $4$ triples $\boldf{t}$ with $\mathrm{sg}(\boldf{t}) = \boldf{s}$ are positive triples;
\item For all positive triples $\boldf{t}_1,\boldf{t}_2 \in \mathrm{sg}^{-1}(\boldf{s})$, the number of entries where $\boldf{t}_1$ and $\boldf{t}_2$ differ is even.
\end{itemize}}
\end{definition}
%

\blue{A multipede $\struct{M}$ is \emph{odd} if for each $\emptyset \subsetneq X\subseteq \mathrm{SG}(\struct{M})$ there is a hyperedge ${\boldf{t}}\in H^{\struct{M}}$ such the number of entries in $\boldf{t}$ containing an element from $X$ is odd. 
A multipede $\struct{M}$ is $k$-\emph{meager} if for each $\emptyset \subsetneq X\subseteq \mathrm{SG}(\struct{M})$ of size  at most $2k$ we have $|X|>  |H^{\struct{M}}\cap X^{3}|/3.$}

\begin{remark}
\red{The relation $H^{\struct{M}}$ might as well be encoded using $3$-element sets.
And indeed, this was the case in \cite{gurevich1996finite,blass2002polynomial}. We have adapted the definition to our setting where relations may only contain ordered tuples.
For this reason, our definition of $k$-meagerness differs from the original by a factor of $6$ because we must take the multiple occurrences of each hyperedge into account.
These are the only deviations from the original definition. In particular, all results from \cite{gurevich1996finite,blass2002polynomial} concerning multipedes remain true after our modifications.}
\end{remark} 
The following four statements (Proposition~\ref{rigid}, Lemma~\ref{countingmultipedes}, Proposition~\ref{existmultipedes}, and Lemma~\ref{conversionmult}) are crucial for our application of multipedes in the context of $\CSP(\mathbb{Q};\mathrm{X})$.  
\begin{proposition}[\cite{blass2002polynomial}, Proposition~17] \label{rigid}
Let $\struct{M}$ be an odd multipede. Then $\Aut(\struct{M}) = \{\mathrm{id}\}$.
\end{proposition} 	 
\begin{lemma}[\cite{gurevich1996finite}, Lemma~4.5] \label{countingmultipedes} For any $k\in \mathbb{N}_{>0}$, let $\struct{M}$ be a $2k$-meager multipede. 
Let $\struct{M}_1$ and $\struct{M}_2$ be two expansions of $\struct{M}$ obtained by placing a constant on the two different feet of one particular segment, respectively.	
Then $\struct{M}_{1} \equiv_{\mathcal{C}^{k}}\struct{M}_{2}$. 
The statement even holds for expansions of $\struct{M}_1$ and $\struct{M}_2$  by constants for all segments. 
\end{lemma}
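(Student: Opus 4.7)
The plan is to use the bijective $k$-pebble game characterisation of $\equiv_{\mathcal{C}_{\infty\omega}^{k}}$: to prove the equivalence it suffices to give Duplicator a winning strategy in the game played on $\struct{M}_1$ and $\struct{M}_2$. Both structures share the same underlying multipede $\struct{M}$ and differ only in which foot of the distinguished segment $s$ is named by the constant, so it is natural to seek Duplicator's bijections among the ``twists'' $\beta_\sigma\colon M \to M$ parameterised by $\sigma\colon \mathrm{SG}(\struct{M}) \to \mathbb{Z}_2$: let $\beta_\sigma$ fix every segment pointwise and, on each fibre $\mathrm{sg}^{-1}(t)$, swap the two feet iff $\sigma(t)=1$. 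Using the parity axiom in the definition of a multipede, one verifies that $\beta_\sigma$ preserves positive triples among feet of segments in a set $X \subseteq \mathrm{SG}(\struct{M})$ iff $\sigma(t_1)+\sigma(t_2)+\sigma(t_3)\equiv 0 \pmod{2}$ for every hyperedge $\{t_1,t_2,t_3\} \subseteq X$, and $\beta_\sigma$ sends the constant of $\struct{M}_1$ to that of $\struct{M}_2$ iff $\sigma(s)=1$.

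Duplicator's strategy is then straightforward: at each round, let $T \subseteq \mathrm{SG}(\struct{M})$ be the set of segments meeting a foot about to be pebbled in the new configuration, and put $X \coloneqq T \cup \{s\}$; since at most $k$ pebbles are ever placed, $|X| \le k+1 \le 2k$. Duplicator picks any $\sigma\colon \mathrm{SG}(\struct{M})\to\mathbb{Z}_2$ satisfying $\sigma(s)=1$ together with the parity equation for every hyperedge inside $X$, and announces the bijection $\beta_\sigma$. By the previous paragraph, $\beta_\sigma$ restricts to a partial isomorphism between $\struct{M}_1$ and $\struct{M}_2$ on the pebbled elements, so the invariant is maintained round after round, whence $\struct{M}_1 \equiv_{\mathcal{C}_{\infty\omega}^{k}} \struct{M}_2$.

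The main obstacle, and where $2k$-meagerness enters, is the existence of such a $\sigma$. The parity equations on $X$ define a $\mathbb{Z}_2$-subspace $V \le \mathbb{Z}_2^X$, and a vector in $V$ with $\sigma(s)=1$ exists iff the coordinate functional $e_s$ is not in the row span of the equation functionals, iff there is no non-empty subset $E$ of unordered hyperedges within $X$ in which $s$ occurs an odd number of times and every other segment occurs an even number of times. Supposing such an $E$ existed, let $Y \subseteq X$ be the set of segments covered by $E$; each $t \in Y \setminus \{s\}$ occurs at least twice in $E$ and $s$ exactly once, so $3|E| \ge 2(|Y|-1)+1$. Since $|Y| \le 2k$, the full symmetry of $H^{\struct{M}}$ together with $2k$-meagerness applied to $Y$ gives $|Y| > 2|H^{\struct{M}}\cap Y^{3}| = 12 \cdot (\text{number of unordered hyperedges in }Y) \ge 12|E|$, whence $|Y| > 4(2|Y|-1)$, which forces $|Y| < 4/7$: contradiction.

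Finally, the stronger version for expansions by constants naming all segments is immediate: every $\beta_\sigma$ fixes all segments pointwise, so such constants are automatically preserved and the identical strategy works verbatim.
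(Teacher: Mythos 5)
The paper cites this lemma as a black box from Gurevich and Shelah, so there is no internal proof to compare against; I will evaluate your argument on its own terms. Your reduction of the problem to the twist maps $\beta_\sigma$ is the right idea, and your meagerness computation showing that the coordinate functional $e_s$ is not in the span of the parity functionals on a set $X$ with $|X|\le 2k$ is correct. However, the strategy you give for Duplicator does not actually play the bijective $k$-pebble game correctly, and repairing it requires a genuinely different and more delicate invariant.

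The gap is one of timing and consistency. In the bijective $k$-pebble game, after Spoiler announces which pebble she will move, Duplicator must commit to a \emph{global} bijection $f$ \emph{before} Spoiler chooses where to place it, and $f$ must agree with the current partial isomorphism on the $k-1$ pebbles that remain. Your construction takes $T$ to be the set of segments of the \emph{new} configuration (including the foot about to be pebbled), which Duplicator cannot know; and your chosen $\sigma$ is only required to satisfy $\sigma(s)=1$ and parity on $X$, with no requirement that it agree with the previous round's $\sigma$ on the segments of the remaining pebbled feet. Without that agreement, $\beta_\sigma$ will in general fail to extend the current partial isomorphism, and Spoiler wins immediately by re-pebbling an already pebbled foot. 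To commit in advance, Duplicator actually needs a $\sigma$ that (i) fixes the values on the segments $T'$ of the $k-1$ remaining pebbled feet, (ii) has $\sigma(s)=1$, and (iii) satisfies parity at \emph{every} hyperedge having at least two segments in $T'$, since Spoiler could place the new pebble on a foot of any third segment. Condition (iii) is a system coupling different ``outside'' segments, and its satisfiability with the boundary values in (i) frozen is not guaranteed by your row-span argument: two hyperedges $\{u,t_1,t_2\}$ and $\{u,t_3,t_4\}$ with $t_i\in T'$ force $\sigma(t_1)+\sigma(t_2)=\sigma(t_3)+\sigma(t_4)$, and nothing in your invariant guarantees the frozen values satisfy this. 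Handling this requires maintaining a ``shadow'' or closure of $T'$ under such propagations, and bounding its size via meagerness; this is exactly where the factor~$2$ in the $2k$-meagerness hypothesis is spent. That your argument only ever invokes meagerness on sets of size at most $k+1$ (so that $k$-meagerness would already suffice) is a symptom of the missing closure step.
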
   
\blue{The above lemma is stated in \cite{gurevich1996finite} using the $\mathcal{C}_{\infty\omega}^{k}$-equivalence instead.
However, it is well-known that for finite $\tau$-structures $\struct{A}$ and $\struct{B}$, we have $\struct{A} \equiv_{\mathcal{C}_{\infty\omega}^{k}} \struct{B}$ if and only if $\struct{A} \equiv_{\mathcal{C}^{k}} \struct{B}$ \cite{gradel1992inductive}.}
\begin{proposition}[\cite{blass2002polynomial}, Proposition~18] \label{existmultipedes} For   \red{any integer $k > 0$}, there exists an odd $k$-meager multipede.  
\end{proposition}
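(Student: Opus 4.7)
The plan is a probabilistic construction. Fix $k\in\mathbb{N}_{>0}$, choose a sufficiently large integer $n=n(k)$, and set $\mathrm{SG}(\struct{M})\coloneqq[n]$ with its natural linear order. Build a random $3$-uniform hypergraph on $[n]$ by including each unordered $3$-element subset independently with probability $p=p(n,k)$, and close under coordinate permutations to obtain a symmetric ternary relation $H_0$. For each segment $v\in[n]$, introduce two fresh feet $v_0,v_1$ with $\mathrm{sg}(v_i)\coloneqq v$, and for every unordered hyperedge $\{a,b,c\}\in H_0$ declare as positive the four foot-triples $(a_i,b_j,c_k)$ with $i+j+k\equiv 0\pmod 2$, closed under coordinate permutations; since the parity of $i+j+k$ is invariant under permutations, this yields exactly four positive triples per hyperedge and satisfies the parity axiom in the definition of a multipede automatically.

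The next step is to choose $p$ so that with positive probability $H_0$ is simultaneously $k$-meager and odd. For oddness, I would observe that for every nonempty $X\subseteq[n]$ of size $s$, the number of unordered $3$-subsets of $[n]$ intersecting $X$ in an odd number of positions is $s\binom{n-s}{2}+\binom{s}{3}=\Omega(s n^2)$, so the probability that none lies in $H_0$ is at most $\exp(-p\cdot\Omega(sn^2))$; for $p\gg (\log n)/n^2$ a union bound over all nonempty $X$ gives $o(1)$. For $k$-meagerness, I would estimate, for each $X\subseteq[n]$ with $|X|\le 2k$, the probability that $X$ contains at least $|X|/2$ unordered hyperedges; a Chernoff bound controls this by roughly $(O(s^2 p))^{s/2}$ for $s=|X|$, and a union bound over the $\binom{n}{\le 2k}$ relevant sets is $o(1)$ when $p$ is below an appropriate threshold. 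Taking $n$ polynomial in $k$ and, if needed, following the random construction by an edge-deletion step that removes one hyperedge from each locally dense $X$, one can arrange both conditions simultaneously.

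The main obstacle is the delicate balance between sparsity and density: $p$ must be small enough that no small subset of $\mathrm{SG}(\struct{M})$ is too dense in hyperedges, yet large enough that every nonempty subset meets some hyperedge in an odd number of positions. Controlling the interaction of these two regimes is the technical heart of the argument, in the spirit of the multipede constructions of Gurevich--Shelah. If a purely random $H_0$ does not meet both requirements for any single $p$, the fallback is the deletion step, and the critical quantitative estimate is that the total number of deletions is dominated by the counts of oddness witnesses for each $X$, so oddness survives the pruning and the resulting structure is the desired odd $k$-meager multipede.
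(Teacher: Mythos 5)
The paper does not prove this statement; it cites it from Blass--Gurevich--Shelah (Prop.~18), which in turn rests on Gurevich--Shelah, so there is no in-paper proof to compare against. Your high-level strategy --- a random $3$-uniform hypergraph on the segments, with feet attached and the four positive foot-triples per hyperedge chosen by the parity rule $i+j+k \equiv 0 \pmod 2$ --- is faithful to the cited construction, and that part of the setup is correct: the parity axiom and the ``exactly four'' requirement from the definition of a multipede are both met, and the hypergraph axioms (linear order on segments, $2$-to-$1$ foot map, full symmetry, distinct entries) all hold.

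The gap is in the probabilistic analysis, and it is more than the ``delicate balance'' you flag. For oddness you correctly need $p = \Omega(\log n / n^2)$; otherwise with high probability some singleton $X=\{v\}$ is in no hyperedge and oddness fails. But at that density the first-moment union bound for $k$-meagerness already breaks: for $|X|=4$ the probability of containing at least $2$ of its $4$ potential hyperedges is about $6p^2$, and summing over $\binom{n}{4}$ sets gives $\Theta(n^4 p^2) = \Theta\bigl((\log n)^2\bigr) \to \infty$; the same happens for every even $s \le 2k$. So there is no single $p$ for which both plain union bounds are $o(1)$, and your proof must pass through the alteration step. That step is where the argument is incomplete. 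The sentence ``the total number of deletions is dominated by the counts of oddness witnesses for each $X$, so oddness survives the pruning'' does not follow: the deleted edges are exactly those participating in locally dense configurations, not a uniformly random small set, and a priori they could concentrate on the odd-intersection witnesses of some particular $X$. What is actually needed is a quantitative argument that, with high probability \emph{simultaneously over all nonempty $X$}, the number of deleted edges giving odd intersection with $X$ is strictly less than the number of its witnesses --- a nontrivial concentration/second-moment estimate in the presence of the correlations introduced by conditioning on the alteration. That verification is precisely the technical heart of the Gurevich--Shelah argument, and your sketch leaves it as an unsubstantiated claim. (A minor point: $k$ is fixed, so ``taking $n$ polynomial in $k$'' is vacuous; one should take $n \to \infty$ for fixed $k$.)
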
    
\blue{Let $\struct{M}$ be a multipede and let $ A$ be  the incidence matrix of the hyperedges on the segments, i.e., the value of $A$ at the coordinate $(\boldf{t},s) \in  (H^{\struct{M}}\cap \mathrm{SG}(\struct{M})^{3})\times \mathrm{SG}(\struct{M})$ equals  $1$ if  $s$ is one of the entries in $\boldf{t}$ and $0$ otherwise.
Note that $A$ has exactly three non-zero entries per row. 
Let $\struct{A}$ be the system $A \boldf{x}  = \boldf{0}$ viewed as an instance of $\CSP(\struct{E}_{\mathbb{Z}_{2},3})$.
%
%
%
For all $X\subseteq Y \subseteq \mathrm{SG}(\struct{M})$, we define the maps $f_{X,Y}\colon Y\cup \mathrm{sg}^{-1}(Y) \rightarrow M$ and $\tilde{f}_{X,Y}\colon Y \rightarrow \{0,1\}$ as follows:
\[ f_{X,Y}(x) \coloneqq 
\begin{cases}
y & \text{if } \mathrm{sg}^{-1}(s)=\{x,y\} \text{ for some } s\in X, \\
x & \text{otherwise},
\end{cases}  
\qquad \text{and} \qquad    \tilde{f}_{X,Y}(x) \coloneqq \begin{cases}  1 & \text{if } s\in X, \\
0 & \text{otherwise}.
\end{cases}
\]   
%
%
%
%
%
\begin{lemma}[cf.\ \cite{blass2002polynomial}, the proof of Theorem 23] \label{conversionmult} 
For every $X\subseteq Y \subseteq \mathrm{SG}(\struct{M})$, the following are \red{equivalent}:
\begin{enumerate}
\item \label{item:systems_1} $f_{X,Y}$ is a partial isomorphism from $\struct{M}$ to $\struct{M}$;
\item \label{item:systems_2} $\tilde{f}_{X,Y}$ is a partial homomorphism from $\struct{A}$ to $\struct{E}_{\mathbb{Z}_{2},3}$.
\end{enumerate} 
\end{lemma} 
\begin{proof} Clearly,  $f_{X,Y}$  \red{preserves} ${<}^{\struct{M}}\cap Y^2$ and also the set of all hyperedges whose entries are in $Y$.
Hence, \eqref{item:systems_1} holds iff $f_{X,Y}$ preserves the set of all positive triples whose entries are in $\mathrm{sg}^{-1}(Y)$.
Note that, for a hyperedge $\boldf{s}$, $f_{X,Y}$ preserves the set of all positive triples whose entries are in $\mathrm{sg}^{-1}(\boldf{s}\of{1},\boldf{s}\of{2},\boldf{s}\of{3})$  iff  the number of entries of $\boldf{s}$ contained in $X$ is even, i.e., iff $\tilde{f}_{X,Y}(\boldf{s}\of{1})+\tilde{f}_{X,Y}(\boldf{s}\of{2})+ \tilde{f}_{X,Y}(\boldf{s}\of{3})=0 \bmod 2$.
This is true for every hyperedge with entries in $Y$ if and only if \eqref{item:systems_2} holds. 
\end{proof}}

\begin{example}
We now describe the multipede $\struct{M}$ from Figure~\ref{multipede} in detail.
We have that $\mathrm{SG}(\struct{M})= \mathbb{Z}_{9}$, $\mathrm{FT}(\struct{M})=\mathbb{Z}_{9}\times \mathbb{Z}_{2}$,   ${<}^{\struct{M}}$ is the linear order $0<\cdots < 8$, and 
$E^{\struct{M}} = \{({\boldf{t}},s)\in  (\mathbb{Z}_{9}\times \mathbb{Z}_{2})\times \mathbb{Z}_{9} \mid {\boldf{t}}\of{1}=s\}$.
Moreover, we have the following set of hyperedges: 
\blue{$$  H^{\struct{M}}\cap \mathrm{SG}(\struct{M})^3 = \{ \boldf{s} \in  \mathbb{Z}_{9}^3 \mid \text{there are } i,j,k \in [3] \text{ such that }  \boldf{s}\of{i}=\boldf{s}\of{j}+2  \text{ and }
\boldf{s}\of{j}={\boldf{s}}\of{k}+3\bmod 9  \},$$ }
and the following set of positive triples:
\begin{align*} H^{\struct{M}}\cap \mathrm{FT}(\struct{M})^3  = \{ ({\boldf{t}}_1,{\boldf{t}}_2,{\boldf{t}}_3) \in (\mathbb{Z}_{9}\times \mathbb{Z}_{2})^{3} \mid & \ ({\boldf{t}}_1\of{1},{\boldf{t}}_2\of{1},{\boldf{t}}_3\of{1}) \in H^{\struct{M}}\cap \mathrm{SG}(\struct{M})^3  \\  
& \text{ and }  {\boldf{t}}_1\of{2}+	{\boldf{t}}_2\of{2}+	{\boldf{t}}_3\of{2} = 1 \bmod 2 \}.
\end{align*}  

Note that the hyperedges do not overlap on more than one segment, because the minimal distances between two entries of an hyperedge are $2,3$, or $4 \bmod  9$. This directly implies that both multipedes are $2$-meager. 
Using Gaussian elimination, one can check that the  system of \teal{mod-$2$} equations $A\boldf{x}=\boldf{0}$, where $A$ is the incidence matrix of the hyperedges on the segments, only admits the trivial solution. We claim that from this fact it follows that $\struct{M}$ is odd.  Otherwise, suppose that there exists a non-empty subset $X$ of the hyperedges witnessing that this is not the case. Then $A\boldf{x}=\boldf{0}$ is satisfied by the non-trivial assignment that maps $\boldf{x}\of{s}$ to $1$ if and only if $s\in X$, which yields a contradiction.
Thus, by Proposition~\ref{rigid} and Lemma~\ref{conversionmult}, the expansions $\struct{M}_1$ and $\struct{M}_2$ of $\struct{M}$ obtained by placing a constant on the two different feet of the segment $0$ are not isomorphic. 
\end{example}

Keeping the construction above Lemma~\ref{conversionmult} in mind, we can derive the following statement about systems of \blue{mod-2} equations. 
\begin{proposition} \label{frstclaim} For every $k\geq 3$, there exist instances $\struct{A}_1$ and $\struct{A}_2$ of $\CSP(\struct{E}_{\mathbb{Z}_{2},3})$ such that 
\begin{enumerate}
\item $\struct{A}_1$ and $\struct{A}_2$ have the same homogeneous companion which only has the trivial solution,
\item $\struct{A}_1$ has no solution and $\struct{A}_2$ has a solution,
\item $\struct{A}_1\equiv_{\mathcal{C}^{k}} \struct{A}_2$.
\end{enumerate}
\end{proposition}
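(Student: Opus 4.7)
My plan is to derive the pair from the odd meager multipede construction of \cite{gurevich1996finite,blass2002polynomial} and to transfer $\mathcal{C}^{k}$-equivalence along a bounded-width first-order interpretation using Lemma~\ref{countingmultipedes}.

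First, by Proposition~\ref{existmultipedes}, I fix an odd $2k$-meager multipede $\struct{M}$; the randomized construction of~\cite{blass2002polynomial} can be tuned so that $|H^{\struct{M}}\cap \mathrm{SG}(\struct{M})^{3}|>|\mathrm{SG}(\struct{M})|$.  Let $H' \coloneqq H^{\struct{M}}\cap \mathrm{SG}(\struct{M})^{3}$ and let $A \in \{0,1\}^{H' \times \mathrm{SG}(\struct{M})}$ be the incidence matrix from the setup preceding Lemma~\ref{conversionmult}.  Every row of $A$ has exactly three nonzero entries, so for every $\boldf{b}\in \mathbb{Z}_{2}^{H'}$ the system $A\boldf{x}=\boldf{b}$ translates into an instance of $\CSP(\struct{E}_{\mathbb{Z}_{2},3})$.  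Applying Lemma~\ref{conversionmult} with $f=\mathrm{id}_{M}$ (for which the associated right-hand side is $\boldf{0}$), Proposition~\ref{rigid} implies that $A\boldf{x}=\boldf{0}$ has only the trivial solution; thus the columns of $A$ are linearly independent, and the column span of $A$ is a proper subspace of $\mathbb{Z}_{2}^{H'}$.

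Second, I pick any $\boldf{b}_{1}\in \mathbb{Z}_{2}^{H'}$ outside the column span of $A$ and let $\struct{A}_{1}, \struct{A}_{2}$ be the instances of $\CSP(\struct{E}_{\mathbb{Z}_{2},3})$ encoding the systems $A\boldf{x}=\boldf{b}_{1}$ and $A\boldf{x}=\boldf{0}$, respectively.  Both instances share the coefficient matrix $A$ and thus have the same homogeneous companion, which only admits the trivial solution; this yields~(1).  The instance $\struct{A}_{1}$ is unsatisfiable by the choice of $\boldf{b}_{1}$, while $\struct{A}_{2}$ is satisfied by the all-zero assignment, giving~(2).

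The main work lies in establishing~(3), namely $\struct{A}_{1}\equiv_{\mathcal{C}^{k}}\struct{A}_{2}$.  My plan is to realize the pair as the image under a fixed first-order interpretation of bounded width of two multipede expansions $\struct{M}_{1}, \struct{M}_{2}$ by constants on selected feet; the interpretation reads off, at each hyperedge $\boldf{t}$, an equation whose right-hand side is determined by whether the triple of distinguished feet above $\boldf{t}$ is positive in $\struct{M}$.  Lemma~\ref{countingmultipedes} yields $\struct{M}_{1}\equiv_{\mathcal{C}_{\infty\omega}^{k}}\struct{M}_{2}$, and a standard bijective $k$-pebble game argument transfers this equivalence along the interpretation to $\struct{A}_{1}\equiv_{\mathcal{C}^{k}}\struct{A}_{2}$, after possibly absorbing a constant factor into the meagerness parameter initially chosen in Proposition~\ref{existmultipedes}.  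The principal obstacle is the simultaneous choice of $\boldf{b}_{1}$ and of the foot expansions so that $\boldf{b}_{1}$ genuinely lies outside the column span of $A$ (forcing unsatisfiability) while still mirroring locally the single-segment swap guaranteed by Lemma~\ref{countingmultipedes}; here the $2k$-meagerness is precisely what ensures that any $\leq k$ pebbled hyperedges cannot detect the global discrepancy between $\struct{A}_{1}$ and $\struct{A}_{2}$.
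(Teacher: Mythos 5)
Your handling of items (1) and (2) is fine: the incidence matrix $A$ has linearly independent columns (since $\struct{M}$ is odd and by Lemma~\ref{conversionmult} the system $A\boldf{x}=\boldf{0}$ has only the trivial solution), and if $|H'|>|\mathrm{SG}(\struct{M})|$ then the column span is proper, so a right-hand side $\boldf{b}_1$ outside it exists. The gap is in item (3), and it is not a technical wrinkle but a structural obstruction to the route you chose.

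Your plan is to realize $\struct{A}_1$ and $\struct{A}_2$ as images of the two foot expansions $\struct{M}_1$ and $\struct{M}_2$ (differing by a single foot swap at one segment $s$, in order to invoke Lemma~\ref{countingmultipedes}) under a \emph{single} interpretation that, at each hyperedge, writes an equation whose right-hand side records whether the triple of distinguished feet is positive. But the multipede axioms force the two resulting right-hand sides $\boldf{b}_1$ and $\boldf{b}_2$ to satisfy $\boldf{b}_1 + \boldf{b}_2 = A_s$, the $s$-th column of $A$: swapping the foot at $s$ flips positiveness at exactly the hyperedges containing $s$. Since $A_s$ lies in the column span of $A$, the systems $A\boldf{x}=\boldf{b}_1$ and $A\boldf{x}=\boldf{b}_2$ are simultaneously satisfiable or simultaneously unsatisfiable. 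In particular, if you arrange $\boldf{b}_2 = \boldf{0}$ then $\boldf{b}_1 = A_s$ is in the column span and $\struct{A}_1$ is satisfiable, contradicting (2); and conversely the $\boldf{b}_1$ you pick outside the column span cannot arise from any foot expansion differing from the one yielding $\boldf{0}$ by a single-segment swap. So the pair you constructed in steps (1)--(2) cannot simultaneously be the interpretation images you need for step (3), and the appeal to $2k$-meagerness does not rescue this: the obstruction is algebraic, not a matter of local indistinguishability.

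What the multipede pebble game actually gives is only the one-sided relation $\struct{A}'_1 \Rightarrow_{\exists^{+}\!\!\mathcal{L}^{k}} \struct{E}_{\mathbb{Z}_{2},3}$, which is strictly weaker than $\mathcal{C}^{k}$-equivalence of a system with its homogeneous companion. The missing ingredient is the Atserias--Dawar blow-up $G(\cdot)$ of Lemma~\ref{lem:AD}: it converts exactly this one-sided $\exists^{+}\mathcal{L}^{k}$ relation into $G(\struct{A}'_1) \equiv_{\mathcal{C}^{k}} G(\struct{A}'_0)$ where $\struct{A}'_0$ is the homogeneous companion, while $G(\cdot)$ preserves (un)satisfiability. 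The paper's proof accordingly takes $\struct{A}'_1$ to be $A\boldf{x}=\boldf{0}$ together with the single extra constraint $x_s = 1$ (rather than an $A\boldf{x}=\boldf{b}_1$ with $\boldf{b}_1$ outside the column span), uses the multipede game only to establish $\struct{A}'_1 \Rightarrow_{\exists^{+}\!\!\mathcal{L}^{k}} \struct{E}_{\mathbb{Z}_{2},3}$, and then sets $\struct{A}_i = G(\struct{A}'_i)$; the $\mathcal{C}^{k}$-equivalence comes entirely from Lemma~\ref{lem:AD}, not from transferring Lemma~\ref{countingmultipedes} through an interpretation.
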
   
Our proof strategy for Proposition~\ref{frstclaim} is as follows. 
We first use multipedes to construct instances $\struct{A}'_1$ and $\struct{A}'_2$ of $\CSP(\struct{E}_{\mathbb{Z}_{2},3})$ that satisfy
item (1) and (2) of the statement. 
%
%
Then we use the following construction of Atserias and Dawar~\cite{atserias2019definable} to transform them into instances $\struct{A}_1$ and $\struct{A}_2$ that additionally satisfy
item (3) of the statement. 
For an instance $\struct{A}$ of $\CSP(\struct{E}_{\mathbb{Z}_{2},3})$,  
let $G(\struct{A})$ be the system that contains for each equation 
$x_1+ \cdots + x_j= b$ of $\struct{A}$ and all $a_{1},\dots,a_{j}\in \{0,1\}$ the equation \[\blue{x_{1,a_{1}}+\dots+x_{j,a_j}=b+a_{1}+\cdots+a_{j}.}\]

%

\begin{lemma}[Atserias and Dawar~\cite{atserias2019definable}]\label{lem:AD}
Let $\struct{A}$ be an instance of $\CSP(\struct{E}_{\mathbb{Z}_{2},3})$ and $k \geq 3$ such that $\struct{A}\Rightarrow_{\exists^{+}\!\!\mathcal{L}^{k}} \struct{E}_{\mathbb{Z}_{2},3}$. Then 
$G(\struct{A}) \equiv_{\mathcal{C}^{k}} G(\struct{A}_0)$
where $\struct{A}_0$ is the homogeneous companion of $\struct{A}$.  
\end{lemma}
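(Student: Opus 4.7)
The plan is to translate Duplicator's winning strategy in the existential $k$-pebble game witnessing $\struct{A} \Rightarrow_{\exists^{+}\!\!\mathcal{L}^{k}} \struct{E}_{\mathbb{Z}_{2},3}$ into a winning strategy in the bijective $k$-pebble game between $G(\struct{A})$ and $G(\struct{A}_{0})$, since this game characterises the conclusion $G(\struct{A}) \equiv_{\mathcal{C}^{k}} G(\struct{A}_{0})$. First I would unpack the hypothesis as a non-empty family $\mathcal{F}$ of partial assignments $f \colon S \to \{0,1\}$ with $S \subseteq A$ and $|S| \leq k$, each a partial homomorphism from $\struct{A}$ to $\struct{E}_{\mathbb{Z}_{2},3}$ (i.e., satisfying every equation of $\struct{A}$ whose variables all lie in $S$), closed under restrictions, and having the forth property: whenever $f \in \mathcal{F}$ with $|S| < k$ and $y \in A$, some extension $f' \in \mathcal{F}$ has $y \in \mathrm{dom}(f')$.

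The key algebraic observation is that each $f \in \mathcal{F}$ with domain $S$ induces a partial isomorphism $h_{f} \colon (x,a) \mapsto (x, a + f(x))$ from $G(\struct{A})$ to $G(\struct{A}_{0})$ with domain $S \times \{0,1\}$. Indeed, by the definition of $G$, for every equation $x_{1}+\cdots+x_{j}=b$ of $\struct{A}$ with $x_{1},\dots,x_{j} \in S$ and every $a_{1},\dots,a_{j} \in \{0,1\}$, the tuple $(x_{1}^{a_{1}},\dots,x_{j}^{a_{j}})$ lies in $\mathit{eqn}_{b + \sum_{i} a_{i},\, j}^{G(\struct{A})}$, while the same definition applied to $\struct{A}_{0}$ places its image $(x_{1}^{a_{1}+f(x_{1})},\dots,x_{j}^{a_{j}+f(x_{j})})$ in $\mathit{eqn}_{\sum_{i}(a_{i}+f(x_{i})),\, j}^{G(\struct{A}_{0})}$; these two parameters coincide exactly because $\sum_{i} f(x_{i}) = b$, which is the statement that $f$ satisfies the original equation. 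Conversely, the tuple $(x_{1}^{a_{1}},\dots,x_{j}^{a_{j}})$ fails to lie in any relation of $G(\struct{A})$ precisely when $(x_{1},\dots,x_{j})$ corresponds to no equation of $\struct{A}$; since $\struct{A}$ and $\struct{A}_{0}$ have the same variable-tuple skeleton, the same holds for the image under $h_{f}$.

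Next I would take $\mathcal{H}$ to consist of all restrictions $h_{f}\restriction T$ with $f \in \mathcal{F}$ and $T \subseteq \mathrm{dom}(f) \times \{0,1\}$ of size at most $k$; closure under restrictions is inherited from $\mathcal{F}$. For the bijective forth property, given $h = h_{f}\restriction T$ with $|T| < k$ and $S = \mathrm{dom}(f)$, I would build a bijection $\pi \colon A \times \{0,1\} \to A \times \{0,1\}$ by setting $\pi(x,a) \coloneqq (x, a + f(x))$ for $x \in S$ and, for each $x \in A \setminus S$, invoking the forth property to choose an extension $f_{x} \in \mathcal{F}$ of $f$ with $x \in \mathrm{dom}(f_{x})$ and setting $\pi(x,a) \coloneqq (x, a + f_{x}(x))$. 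For any new pebble position $(y,a)$, the augmented map $h \cup \{((y,a), \pi(y,a))\}$ is the restriction to $T \cup \{(y,a)\}$ of either $h_{f}$ (if $y \in S$) or $h_{f_{y}}$ (if $y \notin S$), and hence lies in $\mathcal{H}$. This shows Duplicator wins the bijective $k$-pebble game and therefore $G(\struct{A}) \equiv_{\mathcal{C}^{k}} G(\struct{A}_{0})$.

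The main technical obstacle is the verification in the second paragraph: the $\mathbb{Z}_{2}$-arithmetic identity that turns $h_{f}$ into a relation-preserving map depends on the offset $b + \sum_{i} a_{i}$ in the definition of $G$ interlocking exactly with the hypothesis that $f$ is a partial solution, and this is precisely the design feature of the doubling construction that makes the extra superscript $a_{i}$ indispensable.
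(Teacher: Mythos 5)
Your proof is correct and reconstructs in full the Duplicator strategy that the paper merely outsources by citing Lemma~2 of Atserias and Dawar (and your $\mathbb{Z}_2$-arithmetic verification of the partial isomorphism handles repeated variables transparently, which is exactly the generalization the paper remarks on). One small point to tighten: when $|\mathrm{dom}(f)| = k$ but $|T| < k$, the forth property of $\mathcal{F}$ cannot be invoked on $f$ directly, so you should first restrict $f$ to the set of first coordinates occurring in $T$ (a set of size $< k$) before extending; this leaves $h_f\restriction T$ unchanged and makes the extension step licit.
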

\begin{proof}
The statement is almost Lemma~2 in Atserias and Dawar~\cite{atserias2019definable} with the only difference that they additionally assume that all constraints in the instance 
are imposed on three distinct variables; however, their winning strategy for Duplicator also works in the more general setting. 
\end{proof}

\begin{proof}[Proof of~\ref{frstclaim}.]
For a given $k\geq 3$, let $\struct{M}$ be an odd $6k$-meager multipede whose existence follows from Proposition~\ref{existmultipedes}. 
\blue{Let $A \boldf{x}=\boldf{0}$ be the system of \teal{mod-$2$} equations derived from  $\struct{M}$ using the construction described in the paragraph above Lemma~\ref{conversionmult}. 
It is easy to see that every automorphism of $\struct{M}$ is of the form $f_{X,Y}$ for $Y\coloneqq M$ and some $X\subseteq  Y$.
Since $\struct{M}$ is odd, by Proposition~\ref{rigid}, the only automorphism of $\struct{M}$ is the identity.
Therefore, by Lemma~\ref{conversionmult}, $A\boldf{x}=\boldf{0}$ only has the trivial solution.}
This means that the  inhomogeneous system obtained
from $A\boldf{x}=\boldf{0}$ by adding the equation $\boldf{x}\of{s}=1$, where $s$ is the first segment, has no solution.
%
%
We refer to this system by $\struct{A}'_1$ and to its homogeneous companion by $\struct{A}'_2$. 
\blue{We clearly have  $\struct{A}'_2 \Rightarrow_{\exists^{+}\!\!\mathcal{L}^{k}} \struct{E}_{\mathbb{Z}_{2},3}$ because Duplicator has the trivial winning strategy of placing all pebbles on $0$ in the existential $k$-pebble game played on $\struct{A}'_2$ and $\struct{E}_{\mathbb{Z}_{2},3}$.

We claim that also $\struct{A}'_1 \Rightarrow_{\exists^{+}\!\!\mathcal{L}^{k}} \struct{E}_{\mathbb{Z}_{2},3}$. 
We may assume that $\struct{M}$ has its signature expanded by constant symbols for every segment (Lemma~\ref{countingmultipedes}). For convenience, we fix an arbitrary linear order on $M$ which coincides with ${<}^{\struct{M}}$ on $\mathrm{SG}(\struct{M})$, and say that $x$ is a \emph{left foot} and $y$ a \emph{right foot} of a segment $s$ with $\mathrm{sg}^{-1}(s)=\{x,y\}$ if $x$ is less than $y$ w.r.t.\ this order. 
Let $\struct{M}_1$ and $\struct{M}_2$ be the expansions of $\struct{M}$ by a constant for the left and the right foot of the first segment, respectively.
By Lemma~\ref{countingmultipedes}, we know that Duplicator has a winning strategy in the bijective $3k$-pebble game played on $\struct{M}_1$ and $\struct{M}_2$. We use it to construct a winning strategy for Duplicator in the existential $k$-pebble game played on $\struct{A}'_1 $ and $\struct{E}_{\mathbb{Z}_{2},3}$. 
Suppose that Spoiler chooses $i\in [k]$ and places the pebble $a_i$ on some  $s \in \mathrm{SG}(\struct{M})$.
Then we consider the situation in the bijective $3k$-pebble game played on $\struct{M}_1$ and $\struct{M}_2$ where Spoiler places, in three succeeding rounds, the pebble $a_i$ on the same segment $s$, the pebble $a_{i+k}$ on its left foot, and the pebble $a_{i+2k}$ on its right foot.
Since Duplicator has a winning strategy in this game, she can always react by selecting a bijection whose restriction to the pebbled elements is  a partial isomorphism.
Let $f$ be the last bijection selected by Duplicator during a winning play.
Since the signature contains constant symbols for every segment, it must be the case that $f(s)=s$. Consequently,  $f(\mathrm{sg}^{-1}(s))=\mathrm{sg}^{-1}(s)$.
Now, if $f$ is the identity on $\mathrm{sg}^{-1}(s)$, then Duplicator places \teal{$b_{i}$} on $0$ in the existential $k$-pebble game, otherwise on $1$. 
Note that, if $s$ is the first segment, then $f$ cannot be the identity on $\mathrm{sg}^{-1}(s)$ due to the presence of the additional constant.
Also note that the restriction of $f$ to $Y\cup \mathrm{sg}^{-1}(Y)$, where $Y$ is the set of pebbled segments, is of the form $f_{X,Y}$ for some $X\subseteq Y$.
Since $f_{X,Y}$ is a partial isomorphism and the function $\tilde{f}$ specified by the pebbles placed in the existential $k$-pebble game is of the form $\tilde{f}_{X,Y}$, by Lemma~\ref{conversionmult}, $\tilde{f}$ is a partial homomorphism.}

%
%
%

For $i \in \{1,2\}$, let $\struct{A}_i$ be $G(\struct{A}_i')$. 
Note that the homogeneous companion $\struct{A}$ of $\struct{A}_1$ and $\struct{A}_2$ is identical and contains a copy of $\struct{A}'_2$ with variables $x_{i,a}$ for both upper indices $a\in \{0,1\}$. Thus,  $\struct{A}$ only admits the trivial solution,  which proves item (1). 
Also note that $\struct{A}_2$ is satisfiable by setting every variable $x_{i,a}$ to $a$, and that the unsatisfiability of
$\struct{A}'_1$ implies 
the unsatisfiability of 
$\struct{A}_1$, 
because the variables of the form \teal{$x_{i,0}$} induce a copy of $\struct{A}'_1$ 
in $\struct{A}_1$.  This proves item (2).
%
It follows from Lemma~\ref{lem:AD} that 
$\struct{A}_1 \equiv_{\mathcal{C}^{k}}   \struct{A}_2$,  which proves item (3).
\end{proof}

\begin{theorem} \label{FPxorsat} $\CSP(\mathbb{Q};\mathrm{X})$ is inexpressible in $\FPC$.
\end{theorem}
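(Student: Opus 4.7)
The proof proceeds by contradiction, combining Theorem~\ref{separation} (Immerman--Lander) with Proposition~\ref{frstclaim}. Assume $\CSP(\mathbb{Q};\mathrm{X})$ is definable by an $\FPC$ sentence $\phi$, and let $k \geq 3$ be the threshold from Theorem~\ref{separation} ensuring that any two $\mathcal{C}^{k}$-equivalent finite $\{\mathrm{X}\}$-structures agree on $\phi$. The goal is to exhibit, for this fixed $k$, two finite instances $\struct{B}_1, \struct{B}_2$ of $\CSP(\mathbb{Q};\mathrm{X})$ with $\struct{B}_1 \equiv_{\mathcal{C}^{k}} \struct{B}_2$ such that $\struct{B}_2 \to (\mathbb{Q};\mathrm{X})$ while $\struct{B}_1 \centernot{\rightarrow} (\mathbb{Q};\mathrm{X})$, directly contradicting the existence of $\phi$.

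For this $k$, Proposition~\ref{frstclaim} supplies instances $\struct{A}_1, \struct{A}_2$ of $\CSP(\struct{E}_{\mathbb{Z}_{2},3})$ which are $\mathcal{C}^{k}$-equivalent, share a common homogeneous companion admitting only the trivial solution, and differ in satisfiability. I would translate each $\struct{A}_i$ into an $\{\mathrm{X}\}$-instance $\struct{B}_i$ via a uniform local gadget: every length-three homogeneous Boolean equation $x+y+z=0$ becomes the atom $\mathrm{X}(x,y,z)$, while each remaining equation (length less than three, or with right-hand side $1$) is replaced by a gadget of bounded size on a few fresh elements whose intended min-indicator pattern encodes the $\mathbb{Z}_{2}$-assignment of the original variables. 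Combining this with Proposition~\ref{prop:Xequiv} and the ``trivial solution only'' hypothesis on the common homogeneous companion, one verifies that $\struct{B}_i \to (\mathbb{Q};\mathrm{X})$ if and only if $\struct{A}_i$ is satisfiable: any non-empty subset of $\struct{B}_i$ violating 3-Ord-Xor-Sat must project down to a subset of $\struct{A}_i$ whose inhomogeneous equations have no $\mathbb{Z}_2$-solution. To transfer $\mathcal{C}^{k}$-equivalence, Duplicator's winning strategy in the bijective $k$-pebble game on $\struct{A}_1, \struct{A}_2$ (whose existence is supplied inside the proof of Proposition~\ref{frstclaim}) is lifted to a winning strategy on $\struct{B}_1, \struct{B}_2$ by answering pebbles placed on gadget elements through the gadget's internal symmetries, which suffices because each gadget is local and uses only a constant number of fresh elements per equation.

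The main obstacle is the design of this gadget. As the paragraph preceding the theorem stresses, $(\mathbb{Q};\mathrm{X})$ provably does not pp-construct $\struct{E}_{\mathbb{Z}_{2},3}$, so that Theorem~\ref{REDUCTION} is unavailable as a black-box Datalog reduction and the gadget cannot be a generic pp-interpretation of $\mathbb{Z}_{2}$-linear equations. It must instead exploit the specific doubled-variable structure $G(\struct{A}'_i)$ produced within the proof of Proposition~\ref{frstclaim}, in which every equation is symmetric under swapping the two copies of each original Boolean variable, so as to obtain a replacement that simultaneously preserves satisfiability and the bijective pebble-game equivalence without yielding a pp-construction of $\struct{E}_{\mathbb{Z}_{2},3}$ from $(\mathbb{Q};\mathrm{X})$.
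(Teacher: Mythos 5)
Your high-level strategy is right: combine Theorem~\ref{separation} with Proposition~\ref{frstclaim}. But the proof has a genuine gap, and you have correctly located it yourself — the ``local gadget'' step is the whole content of the argument and you do not supply it. Moreover, your assessment of the available tools contains a misreading that sends you down the wrong path. You note that $(\mathbb{Q};\mathrm{X})$ does not pp-construct $\struct{E}_{\mathbb{Z}_2,3}$ and conclude that Theorem~\ref{REDUCTION} is unavailable ``as a black-box Datalog reduction,'' so that you are forced to build gadgets directly over the signature $\{\mathrm{X}\}$. That is not what the obstruction rules out. The paper does use Theorem~\ref{REDUCTION} and Proposition~\ref{theorem:reduction_pp_constructible} — just in the other direction: one pp-defines a convenient richer structure $\struct{B} = (\mathbb{Q};R_2,\dots,R_5)$ with $R_i^{\struct{B}} = R^{\mx}_{[i],i}$ inside $(\mathbb{Q};\mathrm{X})$ (Lemma~\ref{lemma:RelationalBaseMx} together with Theorem~\ref{partialbasismx}), proves inexpressibility of $\CSP(\struct{B})$ via Theorem~\ref{separation}, and then transfers it back to $\CSP(\mathbb{Q};\mathrm{X})$ using the pp-definability. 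This completely sidesteps the need to encode constraints by hand in the signature $\{\mathrm{X}\}$.

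The specific construction you are missing is also cleaner than a per-equation gadget, and this matters: your plan introduces fresh elements per equation, which risks breaking the $\mathcal{C}^{k}$-equivalence you need to preserve, since gadget counts and their incidence with the shared variables become new invariants Spoiler can exploit, and ``answering through the gadget's internal symmetries'' will not in general compose with the given Duplicator strategy on the base variables. The paper instead introduces a \emph{single} fresh element $z$ shared by every constraint, and pads: for $(x_1,\dots,x_j)\in \mathit{eqn}_{j,1}^{\struct{A}_i}$ it adds $(x_1,\dots,x_j,z)$ to $R_{j+1}$, and for $(x_1,\dots,x_j)\in \mathit{eqn}_{j,0}^{\struct{A}_i}$ it adds $(x_1,\dots,x_j,z,z)$ to $R_{j+2}$. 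Since $z$ appears once (odd) in formerly inhomogeneous constraints and twice (even) in formerly homogeneous ones, all constraints become homogeneous parity constraints, yet setting $z=1$ recovers the original system. Because $z$ is a single shared element, the Duplicator strategy from Proposition~\ref{frstclaim} extends by always pinning $z$ to $z$, and the indistinguishability carries over. Satisfiability separation then follows from the $3$-Ord-Xor-Sat characterisation (the analogue of Proposition~\ref{prop:Xequiv} for $\struct{B}$), the fact that $z$ occurs in every constraint, and the ``trivial-solution-only'' property of the common homogeneous companion, which you did cite correctly. Without this single-$z$ padding idea and without noticing that the Datalog-reduction machinery applies to pp-definable expansions of $(\mathbb{Q};\mathrm{X})$, your proposal does not close.
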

\begin{proof} Our  strategy is to pp-define  in $(\mathbb{Q};\mathrm{X})$ a temporal structure $\struct{B}$ such that from each pair $\struct{A}_1$ and $\struct{A}_2$  as in Proposition~\ref{frstclaim} we can obtain instances $\struct{A}'_1$ and $\struct{A}'_2$ of $\CSP(\struct{B})$ with
\begin{enumerate}  	 
\item $\struct{A}'_1\equiv_{\mathcal{C}^{k}} \struct{A}'_2$,
\item $\struct{A}'_1 \centernot{\rightarrow} \struct{B}$, and $\struct{A}'_2 \rightarrow \struct{B}$.
\end{enumerate}
The signature of $\struct{B}$ is $\{R_2,\dots, R_{5}\}$, and we set $R^{\struct{B}}_{i}\coloneqq R^{\mx}_{[i],i}$ for $i\in \{2,\dots, 5\}$ (see Definition~\ref{ordxordefinable}).
By Lemma~\ref{lemma:RelationalBaseMx} together with Theorem~\ref{partialbasismx},
we have that $\struct{B}$ is pp-definable in $(\mathbb{Q};\mathrm{X})$.
We now uniformly construct $\struct{A}'_i$ from $\struct{A}_i$ for both $i\in \{1,2\}$.
The domain of $\struct{A}'_i$ is the domain of $\struct{A}_i$ extended by a new element $z$, and the relations of $\struct{A}'_i$ are given as follows: for every $(x_1, \dots, x_{j}) \in R^{\struct{A}_i}$, 
\blue{\begin{itemize}
\item if $R^{\struct{E}_{\mathbb{Z}_{2},3}}= \{{\boldf{t}}\in \{0,1\}^{\,j} \mid \textstyle\sum_{i\in [j]}{\boldf{t}}\of{i}=1   \}$, then $R^{\struct{A}'_{i}}_{j+1}$ contains the tuple $(x_1,\dots, x_{j},z)$, and
\item if $R^{\struct{E}_{\mathbb{Z}_{2},3}}= \{{\boldf{t}}\in \{0,1\}^{\,j} \mid \textstyle\sum_{i\in [j]}{\boldf{t}}\of{i}=0   \}$, then $R^{\struct{A}'_{i}}_{j+2}$ contains the tuple $(x_1,\dots, x_{j},z,z)$.
\end{itemize}}

We have \blue{$\struct{A}'_1 \equiv_{\mathcal{C}^{k}} \struct{A}'_2$ by taking the extension of the winning strategy for Duplicator in the bijective $k$-pebble game} played on $\struct{A}_1$ and $\struct{A}_2$ where the new variable $z$ of $\struct{A}'_1$ is always mapped to its counterpart in $\struct{A}'_2$.  This proves item~(1).

We already know from Proposition~\ref{prop:Xequiv} that $ \CSP(\mathbb{Q};R^{\mx}_{[3],3}) =\CSP(\mathbb{Q};\mathrm{X})$ can be reformulated as a certain decision problem for  \blue{mod-2} equations which we call \textup{\textup{3-Ord-Xor-Sat}}.
Note that double occurrences of variables, such as the occurrence of $z$ above, do matter for 
\textup{\textup{3-Ord-Xor-Sat}} in contrast to plain satisfiability of \blue{mod-2} equations.
Also $\CSP(\struct{B})$ has a reformulation as a decision problem for \blue{mod-2} equations  where each constraint $R_{j}(x_1,\dots, x_j)$ for $j\in \{2,\dots, 5\}$ is interpreted as the homogeneous \blue{mod-2} equation $x_1+\cdots +x_{j}=0$.
The reformulation is as follows and can be obtained as in the proof of Proposition~\ref{prop:Xequiv}:

\smallskip
\noindent  INPUT:  A finite homogeneous system of  \blue{mod-2} equations of length $\ell \in \{2,\dots, 5\}$.

\noindent QUESTION:  Does every non-empty subset $S$ of the equations have a solution where at least one variable  in an equation from $S$ denotes the value $1$?
\medskip

Note that every solution of $\struct{A}_2$ viewed as an instance of $\CSP(\struct{E}_{\mathbb{Z}_{2},3})$ extended by setting $z$ to $1$ restricts to a non-trivial solution to every subset $S$ of the equations of $\struct{A}'_2$ with respect to the variables that appear in $S$, because $z$ occurs in every equation of $S$.   
We claim that the system $\struct{A}'_1$ only admits the trivial solution. If $z$ assumes the value $0$ in a solution of $\struct{A}'_1$, then this case reduces to the homogeneous companion of $\struct{A}_1$ which has only the trivial solution. If $z$ assumes the value $1$ in a solution of $\struct{A}'_1$, then this case reduces to $\struct{A}_1$ which has no solution at all. 
This proves item~(2).

It now follows from Theorem~\ref{separation} that $\CSP(\struct{B})$ is inexpressible in FPC.
Since $\struct{B}$ has a pp-definition in $(\mathbb{Q};\mathrm{X})$, by Theorem~\ref{REDUCTION} and Theorem~\ref{REDUCTION}, also $\CSP(\mathbb{Q};\mathrm{X})$ is inexpressible in FPC.
\end{proof}

\section{\label{section_classification} Classification of TCSPs in FP}
In this section we classify CSPs of temporal structures with respect to expressibility in fixed-point logic. 
We start with the case of a temporal structure $\struct{B}$ that is not preserved by any operation mentioned in Theorem~\ref{TCSPdichot}. 
In general, it is not known whether the NP-completeness of $\CSP(\struct{B})$ is sufficient for obtaining inexpressibility in FP. 
What is sufficient is the fact that $\struct{B}$ pp-constructs  $(\{0,1\};1\textup{IN}3)$.
\begin{lemma}\label{FPNPcomplete} Let $\struct{B}$ be a relational structure that
pp-constructs $(\{0,1\};1\textup{IN}3)$. Then  $\CSP(\struct{B})$ is inexpressible in FPC. 
\end{lemma}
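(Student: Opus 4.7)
The plan is to reduce to the classical Cai--F\"urer--Immerman-style inexpressibility result for linear equations over $\mathbb{Z}_2$, using the reduction machinery already developed in Section~\ref{section_CSPs}. The key observation is that $(\{0,1\};1\textup{IN}3)$ already pp-constructs linear equations over $\mathbb{Z}_2$, so any structure $\struct{B}$ pp-constructing $(\{0,1\};1\textup{IN}3)$ also pp-constructs them, and hence inherits their FPC-inexpressibility.

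First, I would note that the polymorphism clone of $(\{0,1\};1\textup{IN}3)$ consists only of projections; this is the standard fact underlying Schaefer's NP-completeness proof for positive one-in-three SAT. In particular, $\CSP(\{0,1\};1\textup{IN}3)$ is not in Datalog, so by the equivalence of items (1) and (4) in Theorem~\ref{finitedomainsituation}, $(\{0,1\};1\textup{IN}3)$ pp-constructs $\struct{E}_{\mathbb{Z}_2,3}$ (linear equations over $\mathbb{Z}_2$ with three variables per equation, in the sense of Definition~\ref{def:lineq}). Since pp-constructibility is transitive, $\struct{B}$ also pp-constructs $\struct{E}_{\mathbb{Z}_2,3}$.

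Next, I would invoke Theorem~\ref{REDUCTION} to obtain a Datalog-reduction $\CSP(\struct{E}_{\mathbb{Z}_2,3}) \leq_{\textup{Datalog}} \CSP(\struct{B})$. The contrapositive of Proposition~\ref{theorem:reduction_pp_constructible} then says that if $\CSP(\struct{E}_{\mathbb{Z}_2,3})$ is inexpressible in FPC, so is $\CSP(\struct{B})$. Finally, the inexpressibility of $\CSP(\struct{E}_{\mathbb{Z}_2,3})$ in FPC is precisely the Atserias--Bulatov--Dawar extension of the Cai--F\"urer--Immerman theorem, captured by the equivalence of items (3) and (4) in Theorem~\ref{finitedomainsituation}: if $\struct{E}_{\mathbb{Z}_2,3}$ were expressible in FPC then, being a finite structure, it would fail to pp-construct linear equations over $\mathbb{Z}_2$, contradicting the trivial fact that it pp-constructs itself. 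Chaining these three steps yields the desired conclusion.

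There is essentially no hard step here; the only point that requires a moment of care is verifying that ``pp-constructs $(\{0,1\};1\textup{IN}3)$'' implies ``pp-constructs linear equations over some non-trivial finite Abelian group,'' which is immediate from Theorem~\ref{finitedomainsituation} applied to the finite structure $(\{0,1\};1\textup{IN}3)$ together with the transitivity of pp-constructibility. The rest is bookkeeping with the reduction apparatus.
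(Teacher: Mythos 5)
Your proposal is correct and follows essentially the same route as the paper: transitivity of pp-constructibility gets you from $\struct{B}$ to $\struct{E}_{\mathbb{Z}_2,3}$, whose FPC-inexpressibility (the Atserias--Bulatov--Dawar result) is then pulled back via Proposition~\ref{theorem:reduction_pp_constructible}. The one small inefficiency is the detour through ``$\CSP(\{0,1\};1\textup{IN}3)$ is not in Datalog'' followed by the equivalence (1)$\Leftrightarrow$(4) of Theorem~\ref{finitedomainsituation}; the paper instead directly invokes the well-known fact that $(\{0,1\};1\textup{IN}3)$ pp-constructs all finite structures (equivalently, that its trivial polymorphism clone admits a minion homomorphism to any polymorphism clone), which gives you $\struct{E}_{\mathbb{Z}_2,3}$ immediately without routing through Datalog at all.
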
  
\begin{proof} It is well-known that $(\{0,1\};1\textup{IN}3)$ pp-constructs all finite structures. By the transitivity of pp-constructability,  $\struct{B}$ pp-construct the structure $\struct{E}_{\mathbb{Z}_{2},3}$ whose CSP is inexpressible in FPC by Theorem 10 in \cite{atserias2009affine}.  Thus, $\CSP(\struct{B})$ is inexpressible in FPC by 
\red{Theorem~\ref{REDUCTION}}.
\end{proof}  
We show in Theorem~\ref{mixedmx} that the temporal structures preserved by $\mx$ for which we know that their CSP is expressible in FP by the results in Section~\ref{section_tcsps_in_fp} are precisely the ones unable to pp-define the relation $\mathrm{X}$ which we have studied in Section~\ref{section_inexpressibility}.   
\begin{theorem}
\label{mixedmx}
Let  $\struct{B}$ be a temporal structure preserved by $\mx$. Then either $\struct{B}$ admits a pp-definition of $\mathrm{X}$, or one of the following is true:
\begin{enumerate}
\item $\struct{B}$ is preserved by a constant operation,
\item  $\struct{B}$ is preserved by $\Min$.
\end{enumerate} 
\end{theorem}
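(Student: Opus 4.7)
The plan is to prove the contrapositive: if \(\struct{B}\) is preserved by \(\mx\) but neither by \(\Min\) nor by any constant operation, then \(\mathrm{X}\) is pp-definable in \(\struct{B}\). By Theorem~\ref{partialbasismx} and Lemma~\ref{general}, every relation of \(\struct{B}\) is an intersection of basic Ord-Xor relations \(R^{\mx}_{I,n}\).

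The first step would be to establish two classification lemmas for basic Ord-Xor relations: (i) \(R^{\mx}_{I,n}\) is preserved by \(\Min\) if and only if \(|I|\le 2\); and (ii) \(R^{\mx}_{I,n}\) is preserved by every constant operation if and only if \(|I|\) is even. For (i), one observes that \(\chi(\Min(\boldf{t}_{1},\boldf{t}_{2}))\) equals either \(\chi(\boldf{t}_{1})\) or \(\chi(\boldf{t}_{1})\vee\chi(\boldf{t}_{2})\) depending on whether the overall minima coincide, so \(\Min\)-preservation reduces to closure of \(\MS(R^{\mx}_{I,n})\subseteq\{0,1\}^{n}\) under componentwise OR, and this closure holds exactly when \(|I|\le 2\). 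For (ii), it suffices to note that \(\chi(a,\dots,a)=(1,\dots,1)\) and the parity condition \(\sum_{i\in I}1\equiv 0\pmod{2}\) holds iff \(|I|\) is even. Since both properties pass to intersections, the hypotheses on \(\struct{B}\) guarantee a relation of \(\struct{B}\) whose Ord-Xor decomposition contains a component with \(|I|\ge 3\), and a (possibly different) relation whose decomposition contains a component with \(|I|\) odd.

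Next I would show that \(<\) is pp-definable in \(\struct{B}\). Using a relation \(R'\) witnessing non-preservation by constants, fix an equation \(\sum_{i\in I'}x_{i}=0\) defining \(\MS(R')\) with \(|I'|\) odd, pick any \(k\in I'\), identify all variables apart from the \(k\)-th to a single fresh one, and existentially quantify; the resulting binary relation reduces to \(R^{\mx}_{\{1\},2}=\{(x,y)\in\mathbb{Q}^{2}\mid y<x\}\), so \(<\) is pp-definable.

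With \(<\) at hand, the core task is to pp-define \(\mathrm{X}\) from a relation \(R\) witnessing non-\(\Min\)-preservation. If the Ord-Xor decomposition of some relation of \(\struct{B}\) contains a component \(R^{\mx}_{I,n}\) with \(|I|\ge 3\) and \(|I|\) odd, I would identify pairs of variables inside \(I\) (each identification decreases \(|I|\) by two) to reduce \(|I|\) to \(3\), project out extra variables outside \(I\), and apply the pp-equivalence \(\mathrm{X}(x,y,z)\equiv \exists w\,(R^{\mx}_{[3],4}(x,y,z,w)\wedge x<w\wedge y<w\wedge z<w)\). Otherwise, every odd-size component has \(|I|=1\) and some component has even size \(|I|\ge 4\); after reducing by identification to \(|I|=4\), the analogous equivalence \(\mathrm{X}(x,y,z)\equiv \exists w\,(R^{\mx}_{[4],4}(x,y,z,w)\wedge x<w\wedge y<w\wedge z<w)\) delivers \(\mathrm{X}\).

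The main obstacle is that a relation of \(\struct{B}\) is given only as an intersection of basic Ord-Xor components, and we cannot in general pp-define the individual components separately. The constructions above must therefore be carried out directly on the intersections, using existentially quantified auxiliary variables constrained by \(<\) so that the unwanted components are trivially satisfied while the component witnessing non-\(\Min\)-preservation still forces the defining property of \(\mathrm{X}\). Verifying the feasibility of such a construction under the structural hypotheses on \(\MS(R)\) and \(\MS(R')\) is the technically delicate step.
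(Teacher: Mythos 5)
Your high-level plan coincides with the paper's: prove the contrapositive, first pp-define $<$, then pp-define $\mathrm{X}$. Your classification lemmas for the single-equation relations $R^{\mx}_{I,n}$ are also correct --- $\Min$-preservation amounts to closure of $\MS(R^{\mx}_{I,n})$ under componentwise Boolean OR, which holds iff $|I|\le 2$, and constant-preservation amounts to $|I|$ being even --- and both properties pass to intersections (and to components on proper subtuples), so a relation of $\struct{B}$ that is not preserved by a constant must have some component with $|I|$ odd, and a relation not preserved by $\Min$ must have some component with $|I|\ge 3$. So far this mirrors the content of Claim~\ref{projectionsmin}.

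The genuine gap is exactly the step you acknowledge at the end, and it cannot be waved away as merely ``technically delicate''. Your proposed pp-definitions of $<$ and $\mathrm{X}$ manipulate one Ord-Xor component as if it could be isolated, and they fail outright in the presence of further components. Take $R' \coloneqq R^{\mx}_{\{1,2,3\},4}\cap R^{\mx}_{\{1,2\},4}$, i.e.\ the set of $\boldf{t}\in\mathbb{Q}^4$ with $\chi(\boldf{t})\of{1}=\chi(\boldf{t})\of{2}$ and $\chi(\boldf{t})\of{3}=0$; it is a basic Ord-Xor relation, hence pp-definable in $(\mathbb{Q};\mathrm{X})$, and it is not preserved by any constant. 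Your recipe for $<$ (odd component $I'=\{1,2,3\}$, $k=1$, identify $x_2=x_3=x_4$) forces $\chi(x,y,y,y)$ to be one of $(1,0,0,0)$, $(1,1,1,1)$, $(0,1,1,1)$, none of which satisfies the two equations, so the contraction is empty rather than $<$. The pairwise-identification route to $\mathrm{X}$ has the same defect: contractions chosen for the component of size $\ge 3$ will in general corrupt the other equations.

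What resolves this in the paper --- and what your proposal does not supply --- is a normal form that tells you which variables may safely be contracted or constrained. The paper fixes a projection $\pr_I(R)$ whose min-set system needs an equation with at least three variables and brings the matrix $M_I(R)$ defining $\MS(\pr_I(R))$ into reduced row echelon form. The pivot structure of the RREF is then exploited twice: in Claim~\ref{cl2}, identifying all non-pivot variables forces a unique min-pattern, from which $<$ drops out as a binary projection; in Claim~\ref{cl1}, constraining the non-pivot variables in $I$ outside a chosen pair $\{k,\ell\}$ by $x_i < x_j$ (formula~\eqref{eq:Xdef}) makes every equation other than the distinguished $i$-th one predictable, so the projection onto $(x_i,x_k,x_\ell)$ collapses to $\mathrm{X}$. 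Your last paragraph names the right target, but without the RREF there is no principled choice of which auxiliary variables to constrain and no argument that the unwanted components stay benign. Separately, note a small imprecision: Theorem~\ref{partialbasismx} and Lemma~\ref{general} give $R$ as a conjunction of basic Ord-Xor formulas, each on its own subtuple of the variables, not as an intersection of $R^{\mx}_{I,n}$'s on the full tuple; for instance $\{\boldf{t}\in\mathbb{Q}^3:\boldf{t}\of{1}=\boldf{t}\of{2}\}$ is preserved by $\mx$ but is a proper subset of $R^{\mx}_{\{1,2\},3}$, the smallest such intersection containing it. The parity deductions survive this since projection commutes with $\Min$, but it is one more reason an uncontrolled identify-and-project strategy needs the additional structure the RREF provides.
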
   
\begin{proof} \label{proof_mixedmx} 
If (1) or (2) holds, then   $\mathrm{X}$ cannot have a pp-definition in $\struct{B}$ by Proposition~\ref{InvAutPol},  because $\mathrm{X}$ is neither preserved by a constant operation nor by $\Min$. 

Suppose that neither (1) nor (2) holds for $\struct{B}$, that is, $\struct{B}$ contains a relation that is
not preserved by any constant operation, and a relation  that is not preserved by $\Min$.
Our goal is to show that $\mathrm{X}$ has a pp-definition in $\struct{B}$.
The proof strategy is as follows. 
We first show that temporal relations which are preserved by $\mx$ and not preserved by a constant operation admit a pp-definition of $<$. 	
Then we analyse the behaviour of projections of temporal relations which are preserved by $\mx$ and not preserved by $\Min$ and use the pp-definability of $<$ to pp-define $X$.

We need to introduce some additional notation.  
Recall the definition of $\MS(R)$ for a temporal relation $R$ (Definition~\ref{ordxordefinable}).
For every $I\subseteq [n]$, we fix an arbitrary homogeneous system $M_{I}(R)  {\boldf{x}}={\boldf{0}}$ of \teal{mod-$2$} equations with solution set $\MS(\pr_{I}(R))$, where the matrix $M_{I}(R)$ 
is in 
\blue{\emph{reduced row echelon form  without zero rows}:
\begin{itemize}
\item each row contains a non-zero entry, 
\item the leading coefficient of each row is always strictly to the right of the leading coefficient of the row above it,
\item every leading coefficient is the only non-zero entry in its column.
\end{itemize}}
We reorder the columns of $M_{I}(R)$  such that it takes the form  
\begin{align}  
\left(\begin{array}{cc}   {U}_{m}      & * 
\end{array}\right)  
\label{eq:system} 
\end{align}
where ${U}_{m}$ is the $m\times m$ unit matrix for some $m \leq n$; we also write $m_I(R)$ for $m$.
Without loss of generality, we may also assume that $I$ consists of the first $|I|$ elements of $[n]$. 
Finally, we define 
\[\supp_{I,i}(R)\coloneqq \{j \in [|I|] \mid M_{I}(R)\of{i,j}=1  \}.\]  

\begin{claim} \label{cl2} Let $R$ be a non-empty temporal relation preserved by $\mx$. If $R$ is not preserved by a constant operation, then ${<}$ has a pp-definition in $(\mathbb{Q};R)$.
\end{claim}
\begin{proof}   Let $n$ be the arity of $R$ and let $m \coloneqq m_{[n]}(R)$. Since $R$ is not preserved by a constant operation, we have ${\boldf{1}}\notin \chi(R)$. This means that $|\supp_{[n],i}(R)|$ is odd for some $i \leq n$ which is fixed for the remainder of the proof. 
Let $R'$ be the contraction of $R$ given by the pp-definition \[ R(x_{1},\dots,x_{n}) \wedge  \bigwedge_{p,q \in [n]\setminus \{1,\dots,m\}} x_p=x_q. \]
Note that $R'$ is non-empty since $R$ contains a tuple ${\boldf{t}}$ which satisfies 
$\chi({\boldf{t}})\of{x_j}=1$ 
if and only if 
\begin{itemize}
\item $j>m$, or 
\item $j\leq m$ and $|\supp_{[n],j}(R)|$ is even. 
\end{itemize}
We claim that every ${\boldf{t}}\in R'$ is of this form. If $\chi({\boldf{t}})\of{x_j}=0$ for some $j>m$, then $\chi({\boldf{t}})\of{x_\ell}=0$ for every $\ell>m$ by the definition of $R'$, which implies that $\chi({\boldf{t}})\of{x_\ell}=0$ for every $\ell\leq m$ by a parity argument with the equations of $M_{[n]}(R) {\boldf{x}}={\boldf{0}}$. But then no entry can be minimal in ${\boldf{t}}$, a contradiction. Hence, $\chi({\boldf{t}})\of{x_\ell}=1$ for every $\ell>m$. 

For every $\ell\leq m$ we have $\chi({\boldf{t}})\of{\ell}=1$ if and only if $|\supp_{[n],\ell}(R)|$ is even. Since $R$ is non-empty, there exists an index  $k \in \{m+1,\dots,n\}$.
We have ${\boldf{t}}\of{k}<{\boldf{t}}\of{i}$ for every ${\boldf{t}}\in R'$ due to our previous argumentation. Hence, the relation $<$ coincides with $\pr_{\{k,i\}}(R')$ and therefore has a pp-definition in $(\mathbb{Q};R)$.  
\end{proof}

\begin{claim} \label{projectionsmin} 
Let $R$ be an $n$-ary temporal relation preserved by $\mx$
such that  for every $ I\subseteq [n]$, the set $\MS(\pr_{I}(R))$ is the solution set of a homogeneous system of \blue{mod-2} equations with at most two variables per equation. Then $R$ is preserved by $\Min$. 
\end{claim}

\begin{proof}
We show the claim by induction on $n$. 
For $n=0$, there is nothing to show. 
Suppose that the statement holds for all relations with arity less than $n$. 
For every $I\subseteq [n]$ we fix an arbitrary homogeneous system $M_{I}(R) {\boldf{x}}={\boldf{0}}$ of \blue{mod-2} equations with solution set $\MS(\pr_{I}(R)) $ that has at most two variables per equation. Note that $\MS(\pr_{I}(R)) $ is preserved by the \blue{mod-2} maximum operation $\Max$.
Moreover, for all ${\boldf{s}},{\boldf{s}}'\in \{0,1\}^{|I|}$ we have ${\boldf{0}}=\Max({\boldf{s}},{\boldf{s}}')$ if and only if ${\boldf{s}}={\boldf{s}}'={\boldf{0}}$, which means that $\chi(\pr_{I}(R))$ itself is preserved by $\Max$. 
Now for every pair ${\boldf{t}},{\boldf{t}}'\in R$ we want to show that $\Min({\boldf{t}},{\boldf{t}}')\in R$. If $\Min({\boldf{t}})=\Min({\boldf{t}}')$, then $ \chi(\Min({\boldf{t}},{\boldf{t}}'))=\Max(\chi({\boldf{t}}),\chi({\boldf{t}}'))\in \chi(R).$ 
If $\Min({\boldf{t}})\neq \Min({\boldf{t}}')$, then $\chi(\Min({\boldf{t}},{\boldf{t}}'))\in \{\chi({\boldf{t}}),\chi({\boldf{t}}')\}\subseteq \chi(R)$. 
Thus, there exists a tuple ${\boldf{c}}\in R$ with $\chi({\boldf{c}})=\chi(\Min({\boldf{t}},{\boldf{t}}'))$.  
We set $I\coloneqq \Minset({\boldf{c}})$. Since the statement holds for $ \pr_{n\setminus I}(R) $ by induction hypothesis and $\pr_{[n]\setminus I}(\Min({\boldf{t}},{\boldf{t}}'))=\Min(\pr_{[n]\setminus I}({\boldf{t}}),\pr_{[n]\setminus I}({\boldf{t}}'))  \in \pr_{[n]\setminus I}(R),$ there exists ${\boldf{r}}\in R$ with $\pr_{[n]\setminus I}(\Min({\boldf{t}},{\boldf{t}}'))=\pr_{[n]\setminus I}({\boldf{r}})$. 
We can apply an automorphism to ${\boldf{r}}$ to obtain a tuple ${\boldf{r}}'\in R$ where all entries are positive. We can also apply an automorphism to obtain a tuple ${\boldf{c}}'\in R$ so that its minimal entries $i\in I$ are equal $0$ and for every other entry $i\in [n]\setminus I$ it holds that ${\boldf{c}}'\of{i}>{\boldf{r}}'\of{i}$. Then $\mx({\boldf{c}}',{\boldf{r}}')$ yields a tuple in $R$ which in the same orbit as $\Min({\boldf{t}},{\boldf{t}}')$. Hence, $R$ is preserved by $\Min$.
\end{proof}	

%
\begin{claim} \label{cl1} Let $R$ be a temporal relation preserved by $\mx$. If $R$ is not preserved by $\Min$, then $\mathrm{X}$ has a pp-definition in $(\mathbb{Q}; <,R)$.
\end{claim}

\begin{proof} Let $n$ be the arity of $R$.
Since $R$ is not preserved by $\Min$, 
Claim~\ref{projectionsmin} implies that there exists  $I\subseteq [n]$ such that $\MS(\pr_{I}(R))$ is not the solution set of any homogeneous system of \blue{mod-2} equations with at most two variables per equation.
Recall that  $\MS(\pr_{I}(R))$ is the solution set of a system  $M_{I}(R) \boldf{x}=\boldf{0}$ of \teal{mod-2} equations where $M_{I}(R)$ is as in \eqref{eq:system}.
Let $m \coloneqq m_{I}(R)$ and fix an arbitrary index $i\in [m]$ with $|\supp_{I,i}(R)|\geq 3$.
We also fix an arbitrary pair of  distinct indices $k,\ell \in \supp_{I,i}(R)\setminus \{i\}$. 
Note that $k, \ell \in \{m+1,\dots,|I|\}$  by the shape of the matrix $M_{I}(R)$.
We claim that the formula
$\phi(x_{i},x_{k},x_{\ell})$ obtained from the formula 
\begin{align} 
R(x_1,\dots,x_n) 
\wedge    \bigwedge_{j \in I\setminus \{k,\ell,1,\dots,m\}} x_i < x_{j} 
\label{eq:Xdef} 
\end{align}
by existentially quantifying all variables except for $x_i,x_k,x_{\ell}$ 
is a pp-definition of $\mathrm{X}$. 

``$\Rightarrow$'':
Let $\boldf{t} \in \mathrm{X}$.  We have to prove that $\bar t$ satisfies $\phi(x_i,x_k,x_{\ell})$. 
First, suppose that $\boldf{t}\of{x_i}=\boldf{t}\of{x_k}<\boldf{t}\of{x_{\ell}}$. 
Note that $M_I(R) \bar x = \bar 0$ has a solution where 
\begin{itemize}
\item $x_k$ takes value $1$, 
\item all variables $x_j$ such that
the $j$-th equation contains $x_k$ are also set to $1$, and 
\item all other variables are set to $0$. 
\end{itemize}
The reason is that in this way in each equation that contains $x_k$ exactly two variables are set to $1$, and in each equation that does not contain $k$ no variable is set to $1$.
Hence, $R$ contains a tuple
$\bar s'$ such that $\chi(\pr_I(\bar s'))$ corresponds to this solution. Note that $\bar s'$ also  satisfies \eqref{eq:Xdef}, and that there exists $\alpha' \in \Aut(\mathbb{Q};<)$ such that $\boldf{t} = (\alpha' \boldf{s}'\of{x_i},\alpha' \boldf{s}'\of{x_k},\alpha' \boldf{s}'\of{x_\ell})$.

The \blue{second} case  $ \boldf{t}\of{x_\ell}< \boldf{t}\of{x_i}=\boldf{t}\of{x_k}  $  can be treated analogously to the first one, using a tuple $\bar s'' \in R$ such that 
for all $j \in I$
\[
\chi(\pr_I(\boldf{s}''))\of{x_j} = 
\begin{cases} 1 & \text{if } \ell=j, \\
1&  \text{if } \ell \in \supp_{I,j}(R),  \\
0 & \text{otherwise},
\end{cases} 
\]
and $\alpha'' \in \Aut(\mathbb{Q};<)$ such that $ \boldf{t}  = (\alpha'' \boldf{s}''\of{x_i},\alpha'' \boldf{s}''\of{x_k},\alpha'' \boldf{s}''\of{x_\ell})$.

Finally, suppose that  $\boldf{t}\of{x_k}=\boldf{t}\of{x_{\ell}}<\boldf{t}\of{x_i}$.
\blue{Let $\boldf{t}'$ and $\boldf{t}''$ be the two distinct tuples obtainable from $\boldf{t}$ through a non-trivial permutation of entries.
Note that $\boldf{t}'$ and $\boldf{t}''$ fall into the first and the second case, respectively, or vice versa.
Let $\boldf{s}',\boldf{s}''$ and $\alpha',\alpha''$ be any auxiliary tuples and automorphisms of $(\mathbb{Q};<)$ obtained in the previous two cases for $\boldf{t}'$ and $\boldf{t}''$.}
Then $\boldf{s} \coloneqq \mx(\alpha' \boldf{s}', \alpha'' \boldf{s}'') \in R$ is a tuple that satisfies the quantifier-free part of \eqref{eq:Xdef}, and there exists 
$\alpha \in \Aut(\mathbb{Q};<)$ such that $\boldf{t} = (\alpha \boldf{s}\of{x_i},\alpha \boldf{s}\of{x_k},\alpha \boldf{s}\of{x_\ell})$.

``$\Leftarrow$'':
Suppose $\boldf{s}\in R$ satisfies \eqref{eq:Xdef}. We must show 
$(\pr_I(\boldf{s})\of{x_i},\pr_I(\boldf{s})\of{x_k},\pr_I(\boldf{s})\of{x_{\ell}}) \in \mathrm{X}$. By the final conjuncts of \eqref{eq:Xdef} 
all indices of minimal entries in $\pr_I(\boldf{s})$
must be from
$x_k,x_{\ell},x_1,\dots,x_m$.
Let $j\in I$ be the index of a minimal entry in $\pr_I(\boldf{s})$.
First consider the case $j\in \{i,k,\ell\}$. 
The shape of $M_{I}(R)$ implies that the variables of 
the $i$-th equation of 
$M_{I}(R) \boldf{x}=\boldf{0}$ 
must come from $x_i,x_{m+1},\dots,m_{|I|}$.
As mentioned, none of these variables can denote a minimal entry in $\pr_I(\boldf{s})$ except for $x_i$, $x_k$, and $x_\ell$. Hence, 
the $i$-th equation implies that 
$\pr_I(\boldf{s})$ takes a minimal value at exactly two of the indices
$\{i,k,\ell\}$. 
So we conclude that 
$(\pr_I(\boldf{s})\of{x_i},\pr_I(\boldf{s})\of{x_k},\pr_I(\boldf{s})\of{x_{\ell}}) \in \mathrm{X}$. 

Otherwise, $j\in \{1,\dots, m\}\setminus \{i\}$.
The shape of $M_{I}(R)$ implies that 
the variables of 
the $j$-th equation 
of $M_{I}(R) \boldf{x}=\boldf{0}$ 
must come from $x_j,x_{m+1},\dots,m_{|I|}$. 
None of these variables can denote a minimal entry in $\pr_I(\boldf{s})$ except for $x_j$, $x_k$, and $x_\ell$. 
Hence, the $j$-th equation of $M_{I}(R) \boldf{x}=\boldf{0}$ implies that $\pr_I(\boldf{s})$ takes a minimal value at exactly two of the indices
$\{j,k,\ell\}$. We have thus reduced the situation to the first case. 
\end{proof}   
The statement of Theorem~\ref{mixedmx} follows from Claim~\ref{cl2} and Claim~\ref{cl1}.
\end{proof}

We are now ready for the proof of our characterisation of temporal CSPs in $\FP$ and $\FPC$.
\begin{proof}[Proof of Theorem~\ref{mainresult}]
Let $\struct{B}$ be a temporal structure.

``(1)$\Rightarrow$(2)'': Trivial because $\FP$ is a fragment of $\FPC$.

``(2)$\Rightarrow$(3)'':  Lemma~\ref{FPNPcomplete} implies that $\struct{B}$ does not pp-construct $(\{0,1\};1\textup{IN}3)$;
Theorem~\ref{FPxorsat} and 
\red{Theorem~\ref{REDUCTION}} 
show that $\struct{B}$ does not pp-construct $(\mathbb{Q};\mathrm{X})$. 

``(3)$\Rightarrow$(4)'':  Since $\struct{B}$ does not pp-construct $(\{0,1\};1\textup{IN}3)$,  by Theorem~\ref{TCSPdichot}, $\struct{B}$ is preserved by $\Min$, $\mi$, $\mx$, $\elel$, the dual of one of these operations, or by a constant operation.
If $\struct{B}$ is  preserved by $\mx$ but neither by $\Min$ nor by a constant operation, then  $\struct{B}$ pp-defines $\mathrm{X}$ by Theorem~\ref{mixedmx}, a contradiction to (3). 
If $\struct{B}$ is  preserved by $\dual\, \mx$ but neither by $\Max$ nor by a constant operation, then 
$\struct{B}$ pp-defines $-\mathrm{X}$ by the dual version of Theorem~\ref{mixedmx}. Since $(\mathbb{Q};\mathrm{X})$ and $(\mathbb{Q};-\mathrm{X})$ are homomorphically equivalent, we get a contradiction to (3) in this case as well.
Thus (4) must hold for $\struct{B}$.

``(4)$\Rightarrow$(1)'':   	If $\struct{B}$ has a constant polymorphism, then $\CSP(\struct{B})$ is trivial and thus expressible in FP.
If $\struct{B}$ is preserved by $\Min$, $\mi$, or $\elel$, then every relation of  $\struct{B}$ is pp-definable in \mbox{$(\mathbb{Q};<,\mathrm{R}^{\leq}_{\Min})$} by Lemma~\ref{lemma:RelBaseMin}, or in
$(\mathbb{Q};\mathrm{R}_{\mi},\mathrm{S}_{\mi},{\neq})$ by Lemma~\ref{RelationalBaseMi}, or in
$(\mathbb{Q};\mathrm{R}_{\elel},\mathrm{S}_{\elel},\neq)$ by Lemma~\ref{RelationalBaseLl}.
Thus, $\text{CSP}(\struct{B})$ is expressible in FP by Proposition~\ref{LFPminCorrectnessSoundness}, Proposition~\ref{LFPmiCorrectnessSoundness}, or Proposition~\ref{LFPllCorrectnessSoundness} combined with \red{Theorem~\ref{REDUCTION}}. 
Each of the previous statements can be dualized to obtain expressibility of $\CSP(\struct{B})$ in FP if $\struct{B}$ is preserved by $\Max$, $\dual\,\mi, $ or $\dual \,\elel$.
\end{proof}

We finally prove our characterisation of the temporal CSPs in $\FPR_2$.  
\begin{proof}[Proof of Theorem~\ref{FPRclassification}]
If $\struct{B}$ pp-constructs all finite structures, then $\struct{B}$ pp-constructs in particular the structure $\struct{E}_{\mathbb{Z}_{3},3}$. 
It follows from work of Gr\"adel and Pakusa~\cite{gradel2019rank} that $\CSP(\struct{E}_{\mathbb{Z}_{3},3})$ is inexpressible in $\FPR_2$ (see the comments after Theorem 6.8 in~\cite{GradelGPP19}).
\red{Theorem~\ref{REDUCTION}}
then implies that $\CSP(\struct{B})$ is inexpressible in $\FPR_2$ as well by. 

For the backward direction suppose that $\struct{B}$ does not pp-construct all finite structures. 
Then $\struct{B}$ is preserved by one of the operations listed in Theorem~\ref{TCSPdichot}.
If $\struct{B}$ is preserved by $\Min$, $\mi$, $\elel$, the dual of one of these operations, or by a constant operation, then $\struct{B}$ is expressible in $\FP$ by Theorem~\ref{mainresult} and thus in $\FPR_2$.
If $\struct{B}$ has $\mx$ as a polymorphism, then every relation of  $\struct{B}$ is pp-definable in the structure $(\mathbb{Q};\mathrm{X})$ 
by Lemma~\ref{lemma:RelationalBaseMx}.  
Thus, $\text{CSP}(\struct{B})$ is expressible in $\FPR_2$ by Proposition~\ref{LFPmxCorrectnessSoundness} combined with 
\red{Theorem~\ref{REDUCTION}}. 
Dually, if $\struct{B}$ has the polymorphism dual $\mx$, then $\CSP(\struct{B})$ is expressible in $\FPR_2$ as well. 
\end{proof}   

\section{Classification of TCSPs in Datalog\label{section_datalog}}
In this section, we classify  temporal CSPs with respect to expressibility in Datalog.
In some of our syntactic arguments in this section it will be convenient to work with formulas over the structure $({\mathbb Q};\leq,\neq)$ instead of the structure $({\mathbb Q};<)$. 
A $\{\leq,\neq\}$-formula is called \emph{Ord-Horn if it is a conjunction of clauses of the form 
$ x_{1} \neq y_{1} \vee \cdots \vee x_{m}\neq y_{m} \vee x \leq y$} where the last disjunct is optional~\cite{bodirsky2021complexity}. 
Nebel and B\"urckert~\cite{nebel1995reasoning} showed that satisfiability of Ord-Horn formulas can be decided in polynomial time.
Their algorithm shows that if a all relations of a template $\struct{B}$ are definable by Ord-Horn formulas, then  $\CSP(\struct{B})$ can be solved by a $\Datalog$ program. 
\blue{In \cite{nebel1995reasoning}, Ord-Horn was introduced as the greatest tractable subclass of Allen's interval algebra containing all basic relations.
Among temporal structures, the Ord-Horn fragment is not even maximal w.r.t.\ tractability of the CSP as it is properly contained in the tractable class of temporal structures preserved by $\elel$~\cite{bodirsky2010fast}. 
However, it is the greatest element w.r.t.\ expressibility of the CSP in Datalog apart from temporal structures preserved by a constant operation, as we show in Theorem~\ref{datalogmainresult}.}
We first prove in Proposition~\ref{ordhorn} that Ord-Horn definability of temporal relations can be characterized in terms of admitting certain polymorphisms. 
\blue{The condition in Proposition~\ref{ordhorn} can be simplified to preservation by $\elel$ and $\dual\elel$, a characterisation we use in  Theorem~\ref{datalogmainresult}.}
Later we will prove that there is no characterisation of expressibility in Datalog in terms of identities for polymorphism clones (see Proposition~\ref{temporal-structureS_datalog}).  
\begin{proposition}
\label{ordhorn} A  temporal relation is definable by an Ord-Horn formula if and only if it is preserved by every binary injective operation on $\mathbb{Q}$ that preserves $\leq$.
\end{proposition}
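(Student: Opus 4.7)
The forward direction is a direct clause-by-clause verification. Given an Ord-Horn formula $\phi$ defining $R$, a binary injective $\leq$-preserving operation $f$ on $\mathbb{Q}$, and $\boldf{t}_1,\boldf{t}_2 \in R$, I plan to show that $\boldf{s} \coloneqq f(\boldf{t}_1,\boldf{t}_2)$ satisfies each clause $C = \bigvee_{i=1}^{m} x_i \neq y_i \vee (x \leq y)$ of $\phi$ (last disjunct possibly absent) in two cases. If for some $i$ at least one of $\boldf{t}_1,\boldf{t}_2$ has $\boldf{t}_j\of{x_i} \neq \boldf{t}_j\of{y_i}$, then $(\boldf{t}_1\of{x_i},\boldf{t}_2\of{x_i}) \neq (\boldf{t}_1\of{y_i},\boldf{t}_2\of{y_i})$ and injectivity of $f$ yields $\boldf{s}\of{x_i} \neq \boldf{s}\of{y_i}$. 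Otherwise both tuples agree at all listed positions, forcing the last disjunct to be present and satisfied in both, and $\leq$-preservation of $f$ carries $x \leq y$ over to $\boldf{s}$.

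For the backward direction I will show that if $R$ is preserved by every binary injective $\leq$-preserving operation on $\mathbb{Q}$, then $R$ equals the relation defined by $\phi_R$, the conjunction of all Ord-Horn clauses (in the free variables of $R$) that hold on every tuple of $R$. The inclusion of $R$ in this relation is immediate, so I suppose some $\boldf{s} \models \phi_R$ lies outside $R$ and aim to derive a contradiction by producing a tuple in $R$ lying in the same $\Aut(\mathbb{Q};<)$-orbit as $\boldf{s}$; since $R$ is temporal and hence invariant under $\Aut(\mathbb{Q};<)$, this forces $\boldf{s} \in R$. Set $E_{\boldf{s}} \coloneqq \{(i,j) \mid \boldf{s}\of{i} = \boldf{s}\of{j}\}$. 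For each pair $(a,b)$ with $\boldf{s}\of{a} > \boldf{s}\of{b}$, the Ord-Horn clause $\bigvee_{(i,j) \in E_{\boldf{s}},\, i < j} x_i \neq x_j \vee x_a \leq x_b$ is violated at $\boldf{s}$, so $\boldf{s} \models \phi_R$ prevents it from belonging to $\phi_R$; consequently there exists a witness $\boldf{t}_{ab} \in R$ whose equality pattern refines $E_{\boldf{s}}$ with $\boldf{t}_{ab}\of{a} > \boldf{t}_{ab}\of{b}$. The analogous clause without the last disjunct (or the trivial case where $\boldf{s}$ is constant) furnishes a seed tuple $\boldf{t}_0 \in R$ whose equality pattern refines $E_{\boldf{s}}$.

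Having collected these witnesses, I enumerate the strict pairs $(a_1,b_1),\dots,(a_N,b_N)$ of $\boldf{s}$ and iteratively set $\boldf{u}_0 \coloneqq \boldf{t}_0$ and $\boldf{u}_k \coloneqq f_k(\boldf{u}_{k-1},\boldf{t}_{a_k b_k})$ for $k \in [N]$, where each $f_k$ is a binary injective $\leq$-preserving operation chosen to enforce the invariant $\boldf{u}_k\of{a_j} > \boldf{u}_k\of{b_j}$ for every $j \leq k$. The crucial point is that at each step every required strict ordering on the $n$ points $(\boldf{u}_{k-1}\of{\ell},\boldf{t}_{a_k b_k}\of{\ell})_{\ell \in [n]}$ is already witnessed strictly in at least one coordinate (by the inductive invariant or by the choice of $\boldf{t}_{a_k b_k}$), so the combined requirements remain a refinement of the componentwise $\leq$-order on $\mathbb{Q}^2$, and a standard back-and-forth argument using density and homogeneity of $(\mathbb{Q};<)$ produces such an $f_k$. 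After $N$ steps, $\boldf{u}_N \in R$ has equality pattern exactly $E_{\boldf{s}}$ (since each $\boldf{t}_{a_k b_k}$ separates $a_k$ from $b_k$) and realises every strict inequality of $\boldf{s}$, so it lies in the same $\Aut(\mathbb{Q};<)$-orbit as $\boldf{s}$. The main technical obstacle will be the consistency check at each application of $f_k$: verifying that all strict requirements together form an acyclic refinement of the product order on those $n$ points, which I expect to follow from the fact that every imposed strict inequality is an image of the strict weak order of $\boldf{s}$ and therefore inherits its acyclicity.
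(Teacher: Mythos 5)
Your forward direction is correct and essentially the same as the paper's.

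Your backward direction is, however, an entirely different strategy from the paper, and as written it contains a genuine gap precisely at the place you flag as "the main technical obstacle." The difficulty is that at step $k$ the strict requirements for $j<k$ are witnessed in the \emph{first} coordinate of the pairs $(\boldf{u}_{k-1}\of{\ell},\boldf{t}_{a_k b_k}\of{\ell})$, while the requirement for $j=k$ is witnessed in the \emph{second} coordinate; this mixing, together with the product-order constraints coming from $\leq$-preservation, can indeed produce a cycle. Concretely, suppose the four relevant points are $(2,0),(3,1),(0,2),(1,3)$ at indices $\ell_1,\ell_2,\ell_3,\ell_4$, with strict pairs $(\ell_1,\ell_4),(\ell_1,\ell_3),(\ell_2,\ell_4)$ already processed (so $\pi_1$-witnessed) and $(\ell_3,\ell_2)$ being processed now ($\pi_2$-witnessed). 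Then the product order forces $f_k(2,0)<f_k(3,1)$ and $f_k(0,2)<f_k(1,3)$, while your requirements force $f_k(1,3)<f_k(2,0)$ and $f_k(3,1)<f_k(0,2)$ — a cycle. One can check that a cycle can only arise when $\boldf{u}_{k-1}\of{a_k}<\boldf{u}_{k-1}\of{b_k}$, i.e.\ when the tuple built so far already disagrees strictly with $\boldf{s}$ on the pair being fixed, and nothing in your setup rules that out (your seed $\boldf{t}_0$ and the witnesses $\boldf{t}_{ab}$ are only constrained to refine $E_{\boldf{s}}$, not to respect $\boldf{s}$'s strict order). The appeal to "acyclicity inherited from the strict weak order of $\boldf{s}$" does not apply because the product-order edges are not images of $\boldf{s}$'s order. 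The paper sidesteps all of this: it first notes that preservation by all binary injective $\leq$-preserving operations gives in particular preservation by $\elel$, invokes the normal-form Lemma~\ref{weirdnormalform} to write $R$ as a reduced conjunction of clauses $x_1\neq y_1\vee\dots\vee x_m\neq y_m\vee z_1\leq z_0\vee\dots\vee z_\ell\leq z_0$, and then uses the single tailored operation $f(x,y)=\lex_3(\Max(x,y),x,y)$ applied to two carefully chosen witnesses from $R$ (produced by the reducedness of the CNF, Lemma~\ref{pruning}) to show that $\ell\leq 1$ must hold, which is exactly Ord-Horn. If you want to rescue your direct construction, you would need to strengthen the inductive invariant (e.g.\ to maintain that $\boldf{u}_k$ never strictly disagrees with $\boldf{s}$, not just that it agrees on the processed pairs) and justify why such a seed and such witnesses exist; as it stands, the consistency of each step is unproven.
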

Proposition~\ref{ordhorn} is proved using the syntactic normal form for temporal relations preserved by $\pp$ from \cite{bodirsky2014tractability} and the syntactic normal form for temporal relations preserved by $\elel$ from \cite{bodirsky2021complexity}. 
\begin{proposition}[\cite{bodirsky2014tractability}]  \label{ppsynt}	A temporal relation is preserved by $\pp$ if and only if it can be defined by a conjunction of formulas of the form $z_{1} \circ_{} z \vee \cdots \vee  z_{n} \circ_{n} z$ where $\circ_{i}\in \{\leq,\neq\}$ for each $i \in \{1,\dots,n\}$.  
\end{proposition}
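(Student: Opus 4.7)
The plan is to prove the two directions separately: the backward direction by a polymorphism-preservation check, and the forward direction by an induction on the level structure of potentially excluded tuples.

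For the backward direction, I would verify that $\pp$ preserves every clause $\phi = z_{1}\circ_{1}z_{0}\vee\cdots\vee z_{n}\circ_{n}z_{0}$ with each $\circ_{i}\in\{\leq,\neq\}$. The key facts about $\pp$ used here are: first, that $\pp(a,b)\leq\pp(a',b')$ holds whenever $a\leq 0\leq a'$ (so the first argument dominates on non-positive entries); and second, that $\pp(a,b)=\pp(a',b')$ forces either $a=a'\leq 0$ or $a,a'>0$ with $b=b'$. Splitting on whether the first argument at the relevant coordinate is $\leq 0$ then shows that whenever $\boldf{s},\boldf{s}'$ each satisfy some (possibly distinct) disjunct of $\phi$, the tuple $\pp(\boldf{s},\boldf{s}')$ satisfies some disjunct; closure under conjunction finishes the backward direction.

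For the forward direction, let $R\subseteq\mathbb{Q}^{n}$ be a temporal relation preserved by $\pp$. I aim to show that any $\boldf{t}'\in\mathbb{Q}^{n}$ satisfying every clause of the stated form that holds on all of $R$ has the same order type as some $\boldf{t}\in R$; since $R$ is closed under $\Aut(\mathbb{Q};<)$, this yields $\boldf{t}'\in R$. I proceed by induction on the number $m$ of distinct values of $\boldf{t}'$, with level decomposition $S_{1}<\cdots<S_{m}$. For the base case $m=1$, the clause $\bigvee_{j\neq k_{0}}z_{j}\neq z_{k_{0}}$ is false at $\boldf{t}'$ and hence must fail on some $\boldf{s}\in R$, giving a tuple with all entries equal.

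For the inductive step I consider, for a fixed $k_{0}\in S_{1}$, the clause $\phi_{k_{0}}\coloneqq\bigvee_{j\in S_{1}\setminus\{k_{0}\}}z_{j}\neq z_{k_{0}}\vee\bigvee_{j\in[n]\setminus S_{1}}z_{j}\leq z_{k_{0}}$; it is false at $\boldf{t}'$ by construction, so there is $\boldf{s}^{1}\in R$ failing it, i.e., constant on $S_{1}$ and strictly larger on $[n]\setminus S_{1}$. Every clause of the stated form over the variables indexed by $[n]\setminus S_{1}$ that holds on $\pr_{[n]\setminus S_{1}}(R)$ extends trivially to a clause on $R$, hence holds at $\boldf{t}'$ and at $\pr_{[n]\setminus S_{1}}(\boldf{t}')$; since $\pr_{[n]\setminus S_{1}}(R)$ is preserved by $\pp$ and $\pr_{[n]\setminus S_{1}}(\boldf{t}')$ has $m-1$ levels, the induction hypothesis yields $\boldf{s}''\in R$ realising the order type of $\boldf{t}'$ on $[n]\setminus S_{1}$. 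The main obstacle is gluing $\boldf{s}^{1}$ and $\boldf{s}''$ back together into a tuple with the full order type of $\boldf{t}'$. I pick $\alpha_{1},\alpha_{2}\in\Aut(\mathbb{Q};<)$ such that $\alpha_{1}(\boldf{s}^{1})$ is $0$ on $S_{1}$ and positive on $[n]\setminus S_{1}$, and $\alpha_{2}(\boldf{s}'')$ is everywhere strictly positive. Then $\pp(\alpha_{1}(\boldf{s}^{1}),\alpha_{2}(\boldf{s}''))\in R$ realises the order type of $\boldf{t}'$: the first $\pp$-argument pins the entries of $S_{1}$ to a common minimum strictly below $[n]\setminus S_{1}$ (non-positive dominance), while the second argument resolves ties among positive first entries so that the order type on $[n]\setminus S_{1}$ matches $\boldf{t}'$.
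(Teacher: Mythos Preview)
The paper does not actually prove this proposition; it is quoted from \cite{bodirsky2014tractability} and used as a black box in the proof of Lemma~\ref{weirdnormalform}. So there is no in-paper argument to compare your attempt against.

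That said, your proof is correct and self-contained. For the backward direction, the case split on whether $\boldf{s}\of{z_0}\leq 0$ or $\boldf{s}\of{z_0}>0$ goes through exactly as you sketch: in the first case one propagates a disjunct satisfied by $\boldf{s}$, in the second a disjunct satisfied by $\boldf{s}'$, and your two stated facts about $\pp$ handle both the $\leq$- and $\neq$-disjuncts cleanly. For the forward direction, your level induction is sound. The key points all check out: the clause $\phi_{k_0}$ is of the required form with $z_0$ taken to be $x_{k_0}$; clauses over the coordinates $[n]\setminus S_1$ hold on $\pr_{[n]\setminus S_1}(R)$ if and only if they hold on $R$, so the inductive hypothesis applies to the projection; and the gluing via $\pp(\alpha_1\boldf{s}^1,\alpha_2\boldf{s}'')$ reproduces the order type of $\boldf{t}'$ because $\pp(0,b)=\pp(0,b')$ for all $b,b'$ (collapsing $S_1$ to a single minimum) while the second argument controls the relative order on coordinates where the first argument is positive. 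The only minor omissions are the trivial edge cases (empty $R$, arity one), which are easily dispatched by noting that conjunctions of such clauses can be unsatisfiable.
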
  
\begin{lemma} \label{weirdnormalform} Every temporal relation preserved by $\pp$ or $\elel$ can be defined by a conjunction of formulas of the form 
$x_{1} \neq y_{1}\vee \dots\vee x_{m}\neq y_{m} \vee z_{1} \leq z \vee \cdots \vee  z_{\ell} \leq z$.
\end{lemma}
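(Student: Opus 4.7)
The plan is to treat the two cases separately by invoking the existing syntactic normal forms. For the $\pp$ case, Proposition~\ref{ppsynt} already gives a conjunction of clauses of the form $z_1 \circ_1 z_0 \vee \dots \vee z_n \circ_n z_0$ with $\circ_i \in \{\leq,\neq\}$. After grouping the $\neq$-disjuncts to the left and the $\leq$-disjuncts to the right, such a clause fits the target shape directly, since each $z_i \neq z_0$ is an instance of $x_i \neq y_i$ and each $z_i \leq z_0$ already has the common right-hand side $z_0$. So the $\pp$ case is essentially cosmetic.

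For the $\elel$ case, Proposition~\ref{llSynt} gives clauses of the form $\phi := N \vee L \vee E$, where $N := \bigvee_i x_i \neq y_i$, $L := \bigvee_k z_k < z$, and $E := (z = z_1 = \dots = z_n)$ is optional. The first key step, applied only when $E$ is present, is the auxiliary equivalence
\[ N \vee L \vee E \; \equiv \; \bigwedge_{j=1}^n (N \vee L \vee z_j \leq z). \]
The forward direction is immediate; conversely, if the right-hand side holds with $N$ and $L$ both false, then $L$ being false gives $z \leq z_k$ for every $k$, while the conjuncts give $z_j \leq z$ for every $j$, together forcing $E$. When $E$ is absent, one skips this step and works directly with $\phi = N \vee L$.

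The second step is to eliminate the strict inequalities using the propositional identity $z_k < z \equiv (z_k \leq z) \wedge (z_k \neq z)$ and then distribute $\vee$ over $\wedge$. Applied to each clause $N \vee L \vee z_j \leq z$, this yields a conjunction indexed by subsets $S \subseteq [n] \setminus \{j\}$ of clauses
\[ N \vee z_j \leq z \vee \bigvee_{k \in S} z_k \leq z \vee \bigvee_{k \in [n] \setminus (S \cup \{j\})} z_k \neq z , \]
each in the target form with common right-hand side $z_0 := z$; the $k = j$ summand of the distribution gets absorbed into the standalone $z_j \leq z$. The case without $E$ is analogous, with $S$ ranging over all of $[n]$ and the $z_j \leq z$ term dropped.

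The main obstacle I anticipate is handling the equality conjunct $E$. A naive distribution of $z = z_1 = \dots = z_n$ as $\bigwedge_k (z \leq z_k \wedge z_k \leq z)$ would produce clauses whose $\leq$-atoms do not share a common right-hand side, violating the target shape. The auxiliary equivalence in the first step is precisely what avoids this, by folding $E$ into new $\leq$-disjuncts that retain $z$ as the universal right-hand side; after that, everything reduces to the mechanical CNF expansion of the second step.
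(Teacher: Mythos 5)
Your proof is correct, and the core move is the same as in the paper: both proofs handle the optional equality conjunct $E$ via the equivalence $L \vee E \equiv \bigwedge_{j}(z_j \leq z \vee \bigvee_{k \neq j} z_k < z)$; your formulation $\bigwedge_{j}(N\vee L\vee z_j\leq z)$ is the same formula once the redundant disjunct $z_j<z$ inside $L$ is absorbed by $z_j\leq z$. The only genuine divergence is the last step, converting the $\{<,\leq\}$-subformula into the target $\{\leq,\neq\}$-form: the paper observes that this subformula is preserved by $\Min$ (and hence by $\pp$) and invokes Proposition~\ref{ppsynt} as a black box, while you carry out the conversion by hand via $z_k<z\equiv(z_k\leq z)\wedge(z_k\neq z)$, discarding the $j$-th disjunct of the DNF by absorption against the standing $z_j\leq z$, and distributing $\vee$ over $\wedge$. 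Your route is more elementary and self-contained, at the cost of a bulkier but entirely mechanical CNF expansion; both are sound.
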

\begin{proof} 
If $R$ is a temporal relation preserved by $\pp$ then the statement follows from Proposition~\ref{ppsynt}.
Let $R$ be a temporal relation preserved by $\elel$. 
Then $R$ is definable by a conjunction of clauses $\phi\coloneqq\bigwedge_{i}\phi_{i}$ where each clause  $\phi_{i}$ is as in Proposition~\ref{llSynt}. For an index $i$, let $\psi_{i}$ be  obtained from $\phi_{i}$ by dropping the inequality disjuncts of the form $x \neq y$. So $\psi_{i}$ is of the form
\begin{enumerate} 
\item $z_{1} < z \vee \dots \vee z_{\ell} < z$, or of the form
\item  $z_{1} < z\vee \dots \vee z_{\ell}<z  \vee   (z=z_{1}=\dots=z_{\ell})$. 
\end{enumerate}
If $\psi_{i}$ is of the form (1), then $\psi_i$ is a formula preserved by $\Min$ (Proposition~\ref{minsynt}). 
%
If $\psi_{i}$ is of the form (2), then it is easy to see that $\psi_{i}$ is equivalent to 
\[  \bigwedge_{j \in [\ell]}  \left (z_{j} \leq z \vee  \bigvee_{k\in  [\ell]\setminus \{j\}} \blue{z_{k}} < z \right ).   \]
which is a formula preserved by $\Min$ as well (Proposition~\ref{minsynt}). 
In particular, in both cases $\psi_{i}$ is preserved by $\pp$. 
\blue{Since $\psi_i$ is preserved by $\pp$, it is equivalent to a conjunction $\psi'_i$ of clauses as in Proposition~\ref{ppsynt}.
We replace in each $\phi_{i}$ the disjunct $\psi_{i}$ by $\psi'_i$.} By use of distributivity of $\vee$ and $\wedge$, we can then rewrite $\phi_{i}$  	into a definition of $R$ that has the desired form. 
\end{proof}
A $\{\leq,\neq\}$-formula $\phi$ is said to be in \emph{conjuctive normal form (CNF)} if it is a conjunction of \emph{clauses}; a \emph{clause} if a disjunction of 
\emph{literals}, i.e., atomic $\{\leq,\neq\}$-formulas or negations of atomic $\{\leq,\neq\}$-formulas. 
We say that $\phi$ is \emph{reduced} if for any literal of $\phi$, the formula obtained by removing a literal from $\phi$ is 
not equivalent to $\phi$ over $({\mathbb Q};\leq,\neq)$. 
The next lemma is a straightforward but useful observation.
%
%
\begin{definition} \label{definition:lex_k} The $k$-ary operation \emph{lex}  on $\mathbb
{Q}$ is defined by  
\[\lex_{k}({\boldf{t}})\coloneqq  \lex\!\big({\boldf{t}}\of{1},\lex\!\big({\boldf{t}}\of{2},\dots\lex({\boldf{t}}\of{k-1},{\boldf{t}}\of{k})\dots\big)\big).\]
\end{definition} 
\begin{proof}[Proof of Proposition~\ref{ordhorn}]  
The forward direction is straightforward:  
every clause of an Ord-Horn formula is preserved by every injective operation on $\mathbb{Q}$ that preserves $\leq$.
For the backward direction,  let $R$ be a temporal relation preserved by every binary injective operation on $\mathbb{Q}$ that preserves $\leq$.   
In particular, $R$ is preserved by $\elel$ and by  $f\colon \mathbb{Q}^{2} \rightarrow \mathbb{Q}$ defined by $f(x,y)\coloneqq \lex_{3}(\Max(x,y),x,y)$. 
Let $\phi$ be a definition of $R$ provided by Lemma~\ref{weirdnormalform}.  
\blue{Note that if we remove literals from $\phi$, then the resulting formula is still of the same syntactic form, so we may assume that $\phi$ is a reduced CNF definition.} 
Let $\psi = (x_{1} \neq y_{1}\vee \dots\vee x_{m}\neq y_{m} \vee z_{1} \leq z \vee \cdots \vee  z_{\ell} \leq z)$  be a conjunct of $\phi$. 
We claim that $\ell \leq 1$. 
\blue{Otherwise, since $\phi$ is in reduced CNF,} there exist tuples ${\boldf{t}}_{1},{\boldf{t}}_{2}\in R$ 
such that ${\boldf{t}}_{1}$ does not satisfy
all disjuncts of $\psi$ except for $z_1 \leq z$ and ${\boldf{t}}_{2}$ does not satisfy
all disjuncts of $\psi$ except for $z_2 \leq z$. 
Without loss of generality, we may assume that ${\boldf{t}}_{1}\of{z}={\boldf{t}}_{2}\of{z}$, because otherwise we may replace $ {\boldf{t}}_{1}$ with $\alpha {\boldf{t}}_{1}$ for some $\alpha\in\Aut(\mathbb{Q};<)$ that maps ${\boldf{t}}_{1}\of{z}$ to ${\boldf{t}}_{2}\of{z}$.
Note that $f({\boldf{t}}_{1},{\boldf{t}}_{2})$
does not satisfy $z_i \leq z$ for every $i \in \{3,\dots,\ell\}$ because $f$ preserves $<$. 
\blue{Also note that 
${\boldf{t}}_{1}\of{z}<{\boldf{t}}_{1}\of{z_2}$ and 
${\boldf{t}}_{2}\of{z}<{\boldf{t}}_{2}\of{z_1}$.}  
%
Since ${\boldf{t}}_{1}\of{z}={\boldf{t}}_{2}\of{z}$, 
we have $f({\boldf{t}}_{1},{\boldf{t}}_{2})\of{z} < f({\boldf{t}}_{1},{\boldf{t}}_{2})\of{z_1}$ 
and $f({\boldf{t}}_{1},{\boldf{t}}_{2})\of{z} < f({\boldf{t}}_{1},{\boldf{t}}_{2})\of{z_2}$ by the definition of $f$. 
But then $f({\boldf{t}}_{1},{\boldf{t}}_{2})$ does not satisfy $\psi$, a contradiction to $f$ being a polymorphism of $R$. Hence $\ell \leq 1$.
Since $\psi$ was chosen arbitrarily, we conclude that  $\phi$ is Ord-Horn.  
\end{proof}

Let $\mathrm{R}_{\Min}$ be the temporal relation defined by $y<x\vee z<x$ that was already mentioned in the introduction. 
Recall that $\CSP(\mathbb{Q};\mathrm{R}_{\Min})$ is inexpressible in $\Datalog$ \cite{bodirsky2010fast}. The reason for inexpressibility is not unbounded counting width, but the combination of the two facts that $\CSP(\mathbb{Q};\mathrm{R}_{\Min})$ admits unsatisfiable instances of arbitrarily high girth, and that  all proper projections of $\mathrm{R}_{\Min}$ are trivial. 
The counting width of $\CSP(\mathbb{Q};\mathrm{R}_{\Min})$ is bounded because $\textup{co-CSP}(\mathbb{Q};\mathrm{R}_{\Min})$ is definable using the FP sentence $ \exists x [ \dfp_{U,x} \exists y,z( U(y) \wedge U(z) \wedge  \mathrm{R}_{\Min}(x,y,z) )  ](x)$, see the paragraph below Theorem~\ref{separation}.
We show in Theorem~\ref{minordhorn} that the inability of a temporal structure with a polynomial-time tractable CSP to \red{pp-define} $\mathrm{R}_{\Min}$ can be characterised in terms of being preserved by a constant operation, or by the operations from Proposition~\ref{ordhorn} which witness Ord-Horn definability.  

\begin{theorem}
\label{minordhorn}
Let $\struct{B}$ be a temporal structure that admits a pp-definition of $<$. 
Then exactly one of the following two statements is true:
\begin{enumerate} 
\item \label{rmin1} $\struct{B}$ admits a pp-definition of the relation $\mathrm{R}_{\Min}$ or of the relation $-\mathrm{R}_{\Min}$,
\item \label{rmin2} $\struct{B}$ is preserved by every binary injective operation on $\mathbb{Q}$ that preserves $\leq$.    
\end{enumerate} 
\end{theorem}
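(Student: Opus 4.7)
The plan is to prove the biconditional directly: the mutual exclusion of (1) and (2) is short, while the forward direction from failure of (2) to (1) requires a syntactic case analysis. For the mutual exclusion, I would first note that $\mathrm{R}_{\Min}$ has no Ord-Horn definition: the clause $y\leq x \vee z \leq x$ in the $\{\leq,\neq\}$-CNF normal form of $(y<x)\vee(z<x)$ is essential (removing it admits, e.g., $(0,1,1)\notin \mathrm{R}_{\Min}$) and contains two $\leq$-literals. By Proposition~\ref{ordhorn}, some binary injective $\leq$-preserving operation $f$ fails to preserve $\mathrm{R}_{\Min}$; if $\struct{B}$ pp-defined $\mathrm{R}_{\Min}$, then $f\notin \Pol(\struct{B})$, contradicting (2). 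A symmetric argument, using the dual operation, handles $-\mathrm{R}_{\Min}$.

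For the forward direction, I would assume (2) fails, so by Proposition~\ref{ordhorn} some relation $R$ of $\struct{B}$ has no Ord-Horn definition. Every temporal relation admits a CNF definition over the literals $x\leq y$ and $x\neq y$ (rewrite $x<y$ as $(x\leq y)\wedge(x\neq y)$, $x=y$ as $(x\leq y)\wedge(y\leq x)$, and distribute). Applying Lemma~\ref{pruning} I take such a definition $\phi$ of $R$ in reduced form; the failure of Ord-Hornness forces some clause $\psi$ of $\phi$ to contain at least two $\leq$-literals, say $\beta_1\coloneqq(a_1\leq b_1)$ and $\beta_2\coloneqq(a_2\leq b_2)$. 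By reducedness, for each such $\beta$ there is a witness tuple $\boldf{t}_{\beta}\in R$ satisfying $\psi$ only through $\beta$; in particular, $\boldf{t}_{\beta_1}$ and $\boldf{t}_{\beta_2}$ falsify $\beta_{3-i}$ and all $\neq$-literals of $\psi$.

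To conclude (1), I would perform a case analysis on the overlap between $\{a_1,b_1\}$ and $\{a_2,b_2\}$. The central case is the shared right-hand side $b_1=b_2=:b$: replace $b$ by a fresh existentially quantified variable $b'$ inside $R$ and conjoin $b'<b$ (available because $<$ is pp-definable). The equivalence $\exists z\,(u\leq z\wedge z<v)\iff u<v$ then converts each $\beta_i$ into the strict literal $a_i<b$, while the $\neq$-literals of $\psi$ that involve $b$ are automatically falsified; the other $\neq$-literals can be handled similarly by introducing further fresh variables. The witness tuples, translated by suitable automorphisms of $(\mathbb{Q};<)$, certify that the remaining clauses of $\phi$ stay satisfiable under these constraints, and projecting to $(a_1,a_2,b)$ yields a pp-definition of $\mathrm{R}_{\Min}$ up to reordering of arguments. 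A dual argument starting from $a_1=a_2$ produces $-\mathrm{R}_{\Min}$. The remaining configurations (no shared variables, or opposite-side sharing) are reduced to one of these two cases by identifying variables across $\beta_1$ and $\beta_2$; for instance, the identification $b_1=b_2$ converts the disjoint case into the shared-right-side one, and the reducedness of $\phi$ prevents the clause from collapsing to a tautology. The main obstacle is the careful bookkeeping required to simultaneously eliminate the $\neq$-literals of $\psi$ and to avoid falsifying the other clauses of $\phi$; here, the reducedness of $\phi$ together with the shiftability of witness tuples by $\Aut(\mathbb{Q};<)$ does the heavy lifting.
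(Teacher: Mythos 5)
Your mutual-exclusion argument is fine, though the paper does it more directly via $\elel$ and $\dual\elel$ rather than detouring through Proposition~\ref{ordhorn}; both routes work.

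The forward direction, however, has a genuine gap, and it comes from trying to work with an \emph{arbitrary} reduced $\{\leq,\neq\}$-CNF definition. The paper does not do this: it first separately disposes of the case where $\struct{B}$ pp-defines $\mathrm{Betw}$ (Claim~\ref{betw}), and then invokes a classification result (Lemma~10 and Lemma~49 of \cite{bodirsky2010complexity}) to conclude that in the remaining case $\struct{B}$ is preserved by one of $\pp$, $\elel$, $\dual\pp$, $\dual\elel$. Only then does it apply Lemma~\ref{weirdnormalform}, which yields a CNF in which \emph{every} $\leq$-literal of a clause shares the same right-hand variable $z_0$. This structural normal form is exactly what makes the pp-definition $\eqref{eq:rmindef}$ go through: the extra conjuncts $z_0<z_j$ kill all remaining $\leq$-disjuncts because they all point at $z_0$, and the equalities $x_i=y_i$ kill all $\neq$-disjuncts, so the target tuple must satisfy one of the two surviving $\leq$-disjuncts — precisely the shape of $\mathrm{R}_{\Min}$.

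By contrast, a generic reduced $\{\leq,\neq\}$-CNF clause that happens to have two $\leq$-literals may have several more, with unrelated left- and right-hand sides, and nothing forces any two of them to share a variable. Your proposed remedy — contracting $b_1=b_2$ to manufacture the shared-right-side case — is not supported: the contraction $\con_{\{(b_1,b_2)\}}(R)$ is a different relation, and the witness tuples $\boldf{t}_{\beta_1},\boldf{t}_{\beta_2}$ supplied by Lemma~\ref{pruning} typically do not lie in it (they have $b_1\neq b_2$), so you lose the handles needed for the forward direction of your pp-definition. Worse, the contracted relation could become Ord-Horn, at which point the argument produces nothing. Your sentence about automorphisms ``doing the heavy lifting'' is where the actual content would have to go, and it cannot be supplied without some a priori control on the shape of the clause. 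In short: the proposal omits the detour through the Betweenness case and Lemma~\ref{weirdnormalform}, and the replacement case analysis on variable overlap is not carried out and would not close as stated.
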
 
\begin{proof}
``\eqref{rmin2}$\Rightarrow$\eqref{rmin1}'': If \eqref{rmin2} holds, then $\struct{B}$ is in particular preserved by 
$\elel$ and $\dual \elel$.
But then, by Proposition~\ref{InvAutPol}, neither $\mathrm{R}_{\Min}$ nor $-\mathrm{R}_{\Min}$ has a pp-definition in $\struct{B}$ because $\mathrm{R}_{\Min}$ is not preserved by $\dual \elel$ and $-\mathrm{R}_{\Min}$ is not preserved by  
$\elel$ \cite{bodirsky2010fast}. 

``\eqref{rmin1}$\Rightarrow$\eqref{rmin2}'': Suppose that \eqref{rmin2} does not hold.
The \emph{Betweenness} problem mentioned in the introduction can be formulated as the CSP of a temporal structure $(\mathbb{Q};\mathrm{Betw})$ where 
\[
\mathrm{Betw} \coloneqq \{ (x,y,z)  \in \mathbb{Q}^3 \mid x<y<z \blue{\text{ or }} z< y <x \}.
\]

\begin{claim} \label{betw} $(\mathbb{Q};\mathrm{Betw},<)$  pp-defines  both relations $\mathrm{R}_{\Min}$ and $-\mathrm{R}_{\Min}$.
\end{claim}
\begin{proof} We show that $\phi(x,y,z) \coloneqq \exists a,b\big(\mathrm{Betw}(a,x,b) \wedge y<a \wedge  z<b \big)$ is a pp-definition of $\mathrm{R}_{\Min}$. Then it will be clear that $-\mathrm{R}_{\Min}$
has the pp-definition $\exists a,b\big(\mathrm{Betw}(a,x,b) \wedge (a<y) \wedge  (b<z) \big)$. 

``$\Rightarrow$'':	Let $\boldf{t}\in \mathrm{R}_{\Min}$ be arbitrary. We may assume with loss of generality that $\boldf{t}\of{3}< \boldf{t}\of{1} $.
Then any $a,b \in {\mathbb Q}$ such that $ \boldf{t}\of{3} < a < \boldf{t}\of{1}$ and $\Max(\boldf{t}\of{1},\boldf{t}\of{2})< b$ witness that $\boldf{t}$ satisfies $\phi$. 

``$\Leftarrow$'': Let $\boldf{t} \in \mathbb{Q}^3 \setminus \mathrm{R}_{\Min}$ be arbitrary.  
Then $\boldf{t}\of{1}\leq \boldf{t}\of{2}$ and $\boldf{t}\of{1}\leq \boldf{t}\of{3}$. Suppose that there exist $a,b \in \mathbb{Q}$ witnessing that $\boldf{t}$  satisfies $\phi$. Then $\boldf{t}\of{1}\leq \boldf{t}\of{2} < a$ and $\boldf{t}\of{1}\leq \boldf{t}\of{3} < b$, a contradiction to $\mathrm{Betw}(a,\boldf{t}\of{1},b)$. Thus, $\boldf{t}$ does not satisfy $\phi$.
\end{proof}

Now suppose that $\struct{B}$ does not pp-define $\mathrm{Betw}$.
Then by Lemma~10 and Lemma~49 in~\cite{bodirsky2010complexity},   $\struct{B}$ is preserved by some operation $g\in \{\pp,\elel,\dual\pp,\dual\elel\}$. 
If $\struct{B}$ is preserved by $\pp$ or $\elel$ then we can apply  Lemma~\ref{weirdnormalform}. The case where $\struct{B}$ is preserved by $\dual\pp$ or $\dual\elel$ can be shown analogously using a dual version of Lemma~\ref{weirdnormalform}.
The proof strategy is as follows. 
We fix any relation $R$ of $\struct{B}$ which is not preserved by some binary injective operation $f$ on $\mathbb{Q}$ that preserves $\leq$.
Lemma~\ref{weirdnormalform} implies that
$R$ has a definition of a particular form.  
It turns out that the projection onto a particular set of three entries in $R$ behaves like $\mathrm{R}_{\Min}$ modulo imposing some additional constraints onto the remaining variables.
These additional constraints rely on the pp-definability of $<$. 

Let $R$ be a relation of arity $n$ with a \red{pp-definition} in $\struct{B}$ such that 
$R$ is not preserved by a binary operation $f$ on $\mathbb{Q}$ preserving $\leq$.
\blue{Let $\phi(u_1,\dots,u_n)$ be a definition of $R$ 
of the form as described in Lemma~\ref{weirdnormalform}; since the removal of literals from $\phi$ preserves the form in Lemma~\ref{weirdnormalform}, we may \red{additionally} assume that $\phi$ is in reduced CNF.}
Then $\phi$ must have a conjunct $\psi$
of the form 
\[x_{1} \neq y_{1}\vee \dots\vee x_{m}\neq y_{m} \vee z_{1} \leq z \vee \cdots \vee  z_{\ell} \leq z\] 
that is not preserved by $f$. 
Since $f$ is injective and preserves $\leq$, it preserves all Ord-Horn formulas.
Hence, $\ell \geq 2$. Since $\phi$ is reduced, there are tuples 
${\boldf{t}}_1$ and ${\boldf{t}}_2$ \blue{satisfying $\phi$} such that for $i \in \{1,2\}$ 
\begin{itemize}
\item $\boldf{t}_i$ satisfies the disjunct $z_i \leq z$ of $\psi$; 
\item ${\boldf{t}}_i$ does not satisfy all other disjuncts of $\psi$.
\end{itemize} 
Let $\psi_{\mathrm{R}_{\Min}}(z,v_1,v_2)$ be the formula obtained \blue{by} existentially quantifying all variables except for
$z$, $v_1$, and $v_2$ in the following formula 
\begin{align}  v_{1} < z_1 \wedge v_{2}< z_2   \wedge  
R(u_1,\dots,u_n) 
\wedge  \bigwedge_{j\in \{3,\dots,\ell\}} z< z_{j} 
\wedge  \bigwedge_{i \in [m]} x_{i} = y_{i}. \label{eq:rmindef}
\end{align}
We claim that $\psi_{\mathrm{R}_{\Min}}$
is a pp-definition of $R_{\Min}$ over $({\mathbb Q};<,R)$. For the forward direction let ${\boldf{t}}\in \mathrm{R}_{\Min}$.  First suppose that ${\boldf{t}}\of{v_1} < {\boldf{t}}\of{z}$.
\blue{Let $\alpha$ be any automorphism of $(\mathbb{Q};<)$ that sends 
\begin{itemize}
\item ${\boldf{t}}_{1}\of{z}$ to ${\boldf{t}}\of{z}$, and
\item ${\boldf{t}}_{1}\of{z_1}$ to some rational number $q$ with ${\boldf{t}}\of{v_1}< q \leq {\boldf{t}}\of{z}$. 
\end{itemize}
Then 
$\alpha({\boldf{t}}_{1})$ provides witnesses for the variables $u_1,\dots,u_n$}
in 
$(\ref{eq:rmindef})$
showing that ${\boldf{t}}$ satisfies $\psi_{\mathrm{R}_{\Min}}$. The case where  
${\boldf{t}}\of{z}\leq {\boldf{t}}\of{v_1}$ and 
${\boldf{t}}\of{z}> {\boldf{t}}\of{v_2}$ can be treated analogously, using \blue{$\boldf{t}_2$ instead of $\boldf{t}_1$.}

For the backward direction, suppose that $\boldf{s} \in {\mathbb Q}^{n+2}$ satisfies 
$(\ref{eq:rmindef})$.  
In particular, ${\boldf{s}}\of{z} < {\boldf{s}}\of{z_j}$
for every $j\in \{3,\dots,\ell\}$ and ${\boldf{s}}\of{x_i} = {\boldf{s}}\of{y_i}$ for every $i \in \{1,\dots,m\}$, 
and hence  
${\boldf{s}}\of{z_1} \leq {\boldf{s}}\of{z}$ or ${\boldf{s}}\of{z_2} \leq {\boldf{s}}\of{z}$
because $\boldf{s}$ satisfies $\psi$. 
If ${\boldf{s}}\of{z_1} \leq {\boldf{s}}\of{z}$ then ${\boldf{s}}\of{v_1} < {\boldf{s}}\of{z_1} \leq {\boldf{s}}\of{z}$ and hence
$({\boldf{s}}\of{z},{\boldf{s}}\of{z_1}, {\boldf{s}}\of{z_2}) \in \mathrm{R}_{\Min}$. 
Similarly, if ${\boldf{s}}\of{z_2} \leq {\boldf{s}}\of{z}$ then ${\boldf{s}}\of{v_2} < {\boldf{s}}\of{z_2} \leq {\boldf{s}}\of{z}$ 
and again $({\boldf{s}}\of{z},{\boldf{s}}\of{z_1}, {\boldf{s}}\of{z_2}) \in \mathrm{R}_{\Min}$. 
\end{proof}
We are ready for the proof of our second classification result; it combines 
\red{Theorem~\ref{REDUCTION}}, 
Proposition~\ref{ordhorn}, Theorem~\ref{minordhorn}, and results from previous sections. 
\begin{proof}[Proof of Theorem~\ref{datalogmainresult}]
Let $\struct{B}$ be a temporal structure. 

``(1)$\Rightarrow$(2)'':  If $\CSP(\struct{B})$ is expressible in $\Datalog$, then $\struct{B}$ does not pp-construct $(\{0,1\};1\textup{IN}3)$; otherwise we get a contradiction to the expressibility of $\CSP(\struct{B})$ in $\Datalog$ by Theorem~\ref{mainresult}, because $\Datalog$ is a fragment of $\FP$. Moreover, $\struct{B}$  does not pp-construct $(\mathbb{Q},\mathrm{R}_{\Min})$; otherwise we get a contradiction to the inexpressibility of $\CSP(\mathbb{Q},\mathrm{R}_{\Min})$ in $\Datalog$  (Theorem~5.2 in \cite{bodirsky2010fast}) through 
\red{Theorem~\ref{REDUCTION}}. 

``(2)$\Rightarrow$(3)'':  Since $\struct{B}$ does not pp-construct $(\{0,1\};1\textup{IN}3)$, Theorem~\ref{TCSPdichot} implies that $\struct{B}$ is preserved by $\Min$, $\mi$, $\mx$, $\elel$, the dual of one of these operations, or by a constant operation. In the case where $\struct{B}$ is preserved by a constant operation we are done, so suppose that $\struct{B}$  is not preserved by a constant operation.
%
First consider the case that $<$ is 
pp-definable in $\struct{B}$. Since $\mathrm{R}_{\Min}$ and $-\mathrm{R}_{\Min}$ are not 
pp-definable in $\struct{B}$, Theorem~\ref{minordhorn} shows that $\struct{B}$ is preserved by every binary injective operation on $\mathbb{Q}$ preserving $\leq$. 
In particular, $\struct{B}$ is preserved by $\elel$ and $\dual\elel$. 

Now consider the case that $<$ 
is not pp-definable. Since none of the temporal relations $\mathrm{Cycl}$, $\mathrm{Betw}$, $\mathrm{Sep}$ listed in \blue{Theorem~12.3.1 in \cite{bodirsky2021complexity}} is preserved by any of the operations $\Min,\mi,\mx, \elel$, or their duals, the theorem implies that $\Aut(\struct{B})$ contains all permutations of $\mathbb{Q}$. 
This means that $\struct{B}$ is an \emph{equality constraint language} as defined in \cite{bodirsky2008complexity}.
The structure $\struct{B}$ has a polymorphism which depends on two arguments but it does not have a constant polymorphism. Therefore, $\struct{B}$ has a binary injective polymorphism, by Theorem~4 in \cite{bodirsky2008complexity}. 
Since $\struct{B}$ is an equality constraint language with a binary injective polymorphism, by Lemma~2 in \cite{bodirsky2008complexity}, $\struct{B}$ is preserved by every binary injection on $\mathbb{Q}$.
In particular, $\struct{B}$ is preserved by $\elel$ and $\dual\elel$. 

``(3)$\Rightarrow$(1)'': If $\struct{B}$ has a constant polymorphism, then $\CSP(\struct{B})$ is trivial and thus expressible in $\Datalog$.
Otherwise, $\struct{B}$ is preserved by both $\elel$ and $\dual\elel$.  Then also the expansion $\struct{B}'$ of $\struct{B}$ by $<$ 
is preserved by both $\elel$ and $\dual\elel$. The latter implies that $\struct{B}'$  cannot pp-define $\mathrm{R}_{\Min}$ \blue{because $R_{\Min}$ is not preserved by $\dual\elel$ \cite{bodirsky2010fast}}. 
Thus, Theorem~\ref{minordhorn} implies that $\struct{B}'$ is preserved by every binary injective operation on $\mathbb{Q}$ that preserves $\leq$.
Then Proposition~\ref{ordhorn} then shows that all relations of $\struct{B}'$ and in particular of $\struct{B}$ are Ord-Horn definable. 
Therefore, $\CSP(\struct{B})$ is expressible in $\Datalog$ by Theorem~22 in \cite{nebel1995reasoning}.
\end{proof}  

\section{Algebraic conditions for temporal CSPs\label{section_algebraic_conditions}}
In this section, we consider several candidates for general algebraic criteria for expressibility of CSPs in FP and Datalog stemming from the well-developed theory of finite-domain CSPs.  
\blue{These criteria have already been displayed in Theorem~\ref{finitedomainsituation}.}

Our results imply that none of them can be used to characterise expressibility of temporal CSPs in FP or in Datalog.
However, we also present a new simple algebraic condition which characterises 
expressibility of both finite-domain and
temporal CSPs in FP,  proving Theorem~\ref{alternativefp}. 
We assume basic knowledge of universal algebra; see, e.g., the textbook of Burris and Sankappanavar~\cite{BS}. 

\begin{definition} An \emph{identity} is a formal expression $s(x_1, \dots , x_n) \approx t(y_1, \dots, y_m)$ where $s$ and $t$ are  terms built from function symbols and the variables $x_1,\dots,x_n$ and $y_1,\dots,y_m$, respectively. An \emph{(equational)} \emph{condition} is a set of identities. Let $\mathscr{A}$ be a set of operations on a fixed set $A$. 
For a set $F\subseteq A$, a condition $\mathcal{E}$ is \emph{satisfied in $\mathscr{A}$ on $F$} if the function symbols of $\mathcal{E}$ can be assigned functions in  $\mathscr{A}$ in such a way that all identities of $\mathcal{E}$ become true for all possible values of their variables in $F$. If $F=A$, then we simply  say that \emph{$\mathcal{E}$ is satisfied in $\mathscr{A}$}.
\end{definition}

\subsection{\blue{Failures of known equational conditions}}
If we add to the assumptions of Theorem~\ref{finitedomainsituation} that all polymorphisms $f$ of $\struct{B}$ are \emph{idempotent}, i.e., satisfy $f(x,\dots,x) \approx x$, then the list of equivalent items can be prolonged further. 
In this setting, 
a prominent condition is
that the variety of $\Pol(\struct{B})$ is \emph{congruence meet-semidistributive}, short SD($\wedge$), 
which can also be studied over infinite domains. 
By Theorem~1.7 in \cite{olvsak2021maltsev},
in general SD$(\wedge)$ is equivalent to the existence of so-called $(3+n)$-polymorphisms for some $n$; these are idempotent operations $f,g_1,g_2$ where \blue{$g_1$ is $m$-ary,  $g_2$ is $n$-ary}, and 
$f$ is $(m  + n)$-ary, that satisfy 
\begin{align*}
f(x,\dots,x,\underset{i}{y},x,\dots,x) & \approx g_1(x,\dots,x,\underset{i}{y},x,\dots,x) && \text{for every } i \leq m, \\
f(x,\dots,x,\!\! \underset{n+i}{y} \!  ,x,\dots,x) & \approx g_2(x,\dots,x,\underset{i}{y},x,\dots,x) && \text{for every } i \leq n.  
\end{align*}
Proposition~\ref{olsaktermsordhorn} below implies that 
the correspondence between SD$(\wedge)$ 
and expressibility in Datalog / FP / FPC fails for temporal CSPs. 
A set of identities $\mathcal{E}$ is called 
\begin{itemize}
\item \emph{idempotent} if, for each operation symbol $f$
appearing in the condition, $f(x,\dots, x) \approx x$
is a consequence of $\mathcal{E}$, and \item \emph{trivial} if $\mathcal{E}$ can be satisfied by projections  over a set $A$ with $|A|\geq 2$, and \emph{non-trivial} otherwise. 
\end{itemize}
\blue{An $n$-ary operation $f$ 
\begin{itemize}
\item 	\emph{depends on the $i$-th argument} if there exist $a_1,\dots, a_n,a$ with $a_i \neq a$ and \[f(a_1,\dots, a_n) \neq f(a_1,\dots,a_{i-1} ,a,a_{i+1},\dots, a_n) ;\]
\item  is \emph{injective in the $i$-th argument} if the above inequality holds for all 
$a_1,\dots, a_n,a$ with $a_i \neq a$.
\end{itemize}	 
Let $I_f$ be the set of all indices $i\in[n]$ such that $f$ depends on the $i$-th argument.
Then $I_{f}= \{i_{1},\dots, i_{m}\}$ for some $i_1<\cdots < i_m$. 
We define the \emph{essential part} of an operation $f$ as the map  $(x_{1},\dots,x_{m}) \mapsto f(x_{\mu(1)},\dots,x_{\mu(n)})$ where $\mu \colon [n] \rightarrow [m]$ is any map that satisfies $\mu(i_{j})=j$ for each $j \in [m]$.
\red{This} is well-defined because $f$ does not depend on any argument from $[n]\setminus I_{f}$.
Note that an operation is a projection if and only if its essential part is the identity map.}
\begin{proposition}	\label{olsaktermsordhorn}  The polymorphism clone of $(\mathbb{Q};\neq,\mathrm{S}_{\elel})$ does not satisfy any non-trivial idempotent condition 
(but $\CSP(\mathbb{Q},\neq,S_{\elel})$ is expressible in $\FP$).
\end{proposition}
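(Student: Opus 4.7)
The plan is to establish the stronger claim that every idempotent polymorphism of $(\mathbb{Q};\neq,\mathrm{S}_{\elel})$ is a projection. Once this is in hand, any idempotent condition $\mathcal{E}$ satisfiable in $\Pol(\mathbb{Q};\neq,\mathrm{S}_{\elel})$ is realized using only projections as interpretations of its function symbols, and the same assignment transported to any two-element set shows that $\mathcal{E}$ is trivial. The parenthetical remark follows from Theorem~\ref{mainresult} since $(\mathbb{Q};\neq,\mathrm{S}_{\elel})$ is preserved by $\elel$.

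First I would observe that $I_4$ has the pp-definition $\mathrm{S}_{\elel}(x,y,u,v) \wedge \mathrm{S}_{\elel}(x,y,v,u)$ in $(\mathbb{Q};\mathrm{S}_{\elel})$, so every polymorphism of $(\mathbb{Q};\neq,\mathrm{S}_{\elel})$ belongs to $\Pol(\mathbb{Q};I_4)$. By the cited Proposition~6.1.4 of~\cite{Bodirsky-HDR}, any such polymorphism that depends on all of its arguments is injective. Given an idempotent polymorphism $f$, I would pass to its essential part $f^{\textit{ess}}$ of some arity $m$; this is still a polymorphism of $(\mathbb{Q};\neq,\mathrm{S}_{\elel})$, still idempotent, and injective whenever $m \geq 2$.

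Next I would show that $f^{\textit{ess}}$ preserves $\leq$ coordinatewise: feeding it $m$ tuples of the form $\boldf{s}_i = (a_i,a_i,u_i,v_i) \in \mathrm{S}_{\elel}$ with $u_i \leq v_i$ yields an output whose first two coordinates coincide (both equal $f^{\textit{ess}}(\boldf{a})$), so the definition of $\mathrm{S}_{\elel}$ forces $f^{\textit{ess}}(\boldf{u}) \leq f^{\textit{ess}}(\boldf{v})$. Combined with idempotency this monotonicity yields $f^{\textit{ess}}(\boldf{x}) \in [\min_i \boldf{x}\of{i}, \max_i \boldf{x}\of{i}]$ for every $\boldf{x} \in \mathbb{Q}^{m}$.

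The crux is now a short collapse. Setting $c \coloneqq f^{\textit{ess}}(\boldf{x})$, idempotency yields $f^{\textit{ess}}(c,\dots,c) = c = f^{\textit{ess}}(\boldf{x})$, and injectivity forces $\boldf{x} = (c,\dots,c)$. For $m \geq 2$ this cannot hold for every $\boldf{x}$, so $m \leq 1$; the nullary case is excluded by idempotency at two distinct points, and the unary case forces $f^{\textit{ess}}$ to be the identity, making $f$ itself a projection. I expect the most delicate step to be the bookkeeping around the essential part, namely verifying that both idempotency and membership in the polymorphism clone transfer from $f$ to $f^{\textit{ess}}$ so that Proposition~6.1.4 is truly applicable; the final three-line contradiction, driven by the specific shape of $\mathrm{S}_{\elel}$, is then the conceptual heart of the argument.
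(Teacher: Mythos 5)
Your proof is correct and takes essentially the same route as the paper: pp-define $I_4$ in $(\mathbb{Q};\neq,\mathrm{S}_{\elel})$ so that Proposition~6.1.4 of~\cite{Bodirsky-HDR} yields injectivity of the essential part $f^{\textit{ess}}$ of any idempotent witness, and then derive the contradiction $f^{\textit{ess}}(\boldf{x}) = f^{\textit{ess}}(c,\dots,c)$ with $c \coloneqq f^{\textit{ess}}(\boldf{x})$ whenever the essential arity exceeds $1$. The intermediate step in which you prove monotonicity of $f^{\textit{ess}}$ and the resulting bound $f^{\textit{ess}}(\boldf{x}) \in [\min_i \boldf{x}\of{i}, \max_i \boldf{x}\of{i}]$ is never invoked in your final collapse, which relies solely on idempotency and injectivity; the paper omits that detour, so you can delete it without loss. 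The "delicate step" you flag, that $f^{\textit{ess}}$ stays in $\Pol(\mathbb{Q};\neq,\mathrm{S}_{\elel})$ and stays idempotent, is indeed needed but is routine, since $f^{\textit{ess}}$ is a minor of $f$ and hence lies in the clone generated by $f$, and idempotency is preserved under identifying variables.
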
 
\begin{proof} 
\red{We first prove that for every idempotent $f
\in \Pol(\mathbb{Q};\mathrm{S}_{\elel},\neq)$
the operation $f^{\textit{ess}}$ is unary. 
Clearly, $f^{\textit{ess}}$ is idempotent and at least unary because $\neq$ has no constant polymorphism.
Note that the relation $I_4 \coloneqq \{\boldf{t}\in \mathbb{Q}^{4} \mid \boldf{t}\of{1} = \boldf{t}\of{2} \Rightarrow \boldf{t}\of{3} = \boldf{t}\of{4}\}$ has the pp-definition $S_{\elel}(x_1,x_2,x_3,x_4) \wedge S_{\elel}(x_1,x_2,x_4,x_3)$ in  $(\mathbb{Q};\neq,S_{\elel})$.
It is easy to see that if a polymorphism of $I_4$ depends on the $i$-th argument, then it is already injective in the $i$-th argument; see, e.g., the proof of  Proposition 6.1.4 in \cite{bodirsky2021complexity}.
Thus, $f^{\textit{ess}}$ is injective.
We claim that $f^{\textit{ess}}$ is unary.
Suppose for contradiction that $f^{\textit{ess}}$ has more than one argument. 
Let $c\coloneqq f^{\textit{ess}}(0,\dots, 0,1)$. Note that $f^{\textit{ess}}(c,\dots,c)=c$ by the idempotence of $f^{\textit{ess}}$, and hence $f^{\textit{ess}}(0,\dots, 0,1) = f^{\textit{ess}}(c,\dots,c)$,
contradicting the injectivity of $f^{\textit{ess}}$.
Thus, $f^{\textit{ess}}$ must be unary. 
Since  $f^{\textit{ess}}$ is idempotent and unary, it is the identity map.
But then each \blue{idempotent $f \in \Pol(\mathbb{Q};\mathrm{S}_{\elel},\neq)$} 
is a projection.
Hence, $\Pol(\mathbb{Q};\mathrm{S}_{\elel},\neq)$ does not satisfy a non-trivial condition $\mathcal{E}$ witnessed by some idempotent operations.}
\end{proof} 

Simply dropping idempotence in the definition of $(3+n)$-terms does not provide a characterisation of FP either, as  Proposition~\ref{olsakterms} shows.
In the proof of Proposition~\ref{olsakterms} we need the following result. 

\blue{\begin{lemma}[Lemma~4.4 in \cite{barto2019equations}, see also Lemma~3 in \cite{pinsker2021canonical}]\label{lem:construct}
Let $\struct{B}$ be an $\omega$-categorical structure and $f_1,g_1,\dots,f_n,g_n \in \Pol(\struct{B})$ where $f_i$ and $g_i$ have the same arity. If for every $i \in \{1,\dots,n\}$ and every finite $F \subseteq B$ there exists $\alpha_i \in \Aut(\struct{B})$ such that  $\alpha_i \circ f_i(\boldf{t}) = g_i(\boldf{t})$ for all $\boldf{t}$ over $F$, then there are $e,e_1,\dots,e_n \in \End(\struct{B})$ witnessing that 
$\Pol(\struct{B})$ satisfies
\[e_i\circ f_i(x_1,\dots,x_{k_i}) \approx e \circ g_i(x_1,\dots,x_{k_i}).\]
Moreover, if $\alpha_i $ and $\alpha_j$ can always be chosen to be equal for some $i,j\in [n]$, then additionally  $e_i = e_j$.
\end{lemma}}

%
\begin{proposition}	\label{olsakterms}     
The polymorphism clone of $ (\mathbb{Q};\mathrm{X})$ contains not necessarily idempotent $(3+3)$-operations (but  $\CSP({\mathbb Q};\mathrm{X})$ is not expressible in FP). 
\end{proposition}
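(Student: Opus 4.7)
The plan is to exhibit explicit polymorphisms $f\colon \mathbb{Q}^6 \to \mathbb{Q}$ and $g_1, g_2\colon \mathbb{Q}^3 \to \mathbb{Q}$ of $(\mathbb{Q};\mathrm{X})$ satisfying the six $(3+3)$-identities. Since idempotency is not required, there is considerable freedom: I would take $g_1 = g_2 = g$ and aim for both $f$ and $g$ to be fully symmetric, in which case all six identities collapse into the single condition $f(y, x, x, x, x, x) = g(y, x, x)$ for all $x, y \in \mathbb{Q}$.

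To produce such $f$ and $g$, I would exploit the fact that $(\mathbb{Q};\mathrm{X})$ is preserved by $\mx$ (see Section~\ref{section_mx}), which acts on min-indicator tuples essentially as XOR on the two-dimensional $\mathbb{Z}_2$-subspace $\chi(\mathrm{X}) \cup \{\boldf{0}\}$, namely the solution space of $\boldf{s}\of{1}+\boldf{s}\of{2}+\boldf{s}\of{3} = 0 \bmod 2$. Iterating $\mx$ and composing with automorphisms of $(\mathbb{Q};<)$ should yield a symmetric ternary $g \in \Pol(\mathbb{Q};\mathrm{X})$, and the analogous construction should yield a symmetric $6$-ary $f \in \Pol(\mathbb{Q};\mathrm{X})$ whose diagonal trace matches $g$. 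A cleaner alternative is to invoke Theorem~\ref{testing} to reduce satisfaction of the $(3+3)$-minor condition by $\Pol(\mathbb{Q};\mathrm{X})$ to a finitary check on orbits of $\Aut(\mathbb{Q};<)$ acting on tuples of length $3$ and $6$, and then verify this check by hand using the description of $\chi(\mathrm{X})$.

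Verification then splits into two parts: preservation of $\mathrm{X}$ is automatic since $f$ and $g$ are compositions of the known polymorphism $\mx$ with projections and automorphisms of $(\mathbb{Q};<)$, and the diagonal identity follows from the construction by case analysis on whether $x = y$ or $x \neq y$. The main obstacle is making the identity $f(y, x, x, x, x, x) = g(y, x, x)$ hold literally in $\mathbb{Q}$ rather than only up to an automorphism of $(\mathbb{Q};<)$: iterated applications of $\mx$ accumulate distinct shifts by the unary maps $\alpha$ and $\beta$ appearing in its definition on equal versus unequal arguments, and these shifts must line up on the two sides of each identity. This is the step where either careful bookkeeping of outer automorphisms, or the orbit-based reduction via Theorem~\ref{testing}, becomes essential.
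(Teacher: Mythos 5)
The central step of your plan---choosing $g_1 = g_2 = g$ \emph{fully symmetric}---cannot work. A symmetric ternary operation satisfies $g(y,x,x) \approx g(x,y,x) \approx g(x,x,y)$, i.e.\ it is a ternary WNU, and a $k$-ary WNU always witnesses $\mathcal{E}_{k,k+1}$. But the proof of Theorem~\ref{alternativefp} (Case~3, applied with $k=3$) shows via Theorem~\ref{testing} that $\Pol(\mathbb{Q};\mathrm{X})$ does \emph{not} satisfy $\mathcal{E}_{3,4}$: there is a homomorphism $(\{0,1\};1\textup{IN}3) \to (\mathbb{Q};\mathrm{X})$, but the corresponding instance $\struct{A}_2$ on domain $[4]$ has no free set and hence no homomorphism to $(\mathbb{Q};\mathrm{X})$. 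Since idempotence plays no role in this implication, dropping it does not rescue your plan. So no composition of $\mx$ with automorphisms of $(\mathbb{Q};<)$ (or anything else) can produce a symmetric ternary $g \in \Pol(\mathbb{Q};\mathrm{X})$, and the six identities do not collapse to one.

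The paper's witnesses deliberately avoid this trap: $\tilde{g}_1(x_1,x_2,x_3) = \mx(\mx(x_1,x_2),\mx(x_2,x_3))$ is \emph{not} symmetric---for $x \neq y$ one has $\tilde{g}_1(y,x,x) = \tilde{g}_1(x,x,y) = \alpha^2(\Min(x,y))$ but $\tilde{g}_1(x,y,x) = \beta\alpha(\Min(x,y))$. The only symmetry exploited is that of the \emph{outer} $\mx$, which makes $\tilde{f}(\dots,\overset{i}{y},\dots) = \tilde{f}(\dots,\overset{3+i}{y},\dots)$ automatic and is why $\tilde{g}_1 = \tilde{g}_2$ suffices. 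The remaining mismatch between the two sides of each identity (different iterates of $\alpha$ and $\beta$) is the ``only up to automorphism on finite sets'' phenomenon you correctly anticipate; the paper discharges it not by bookkeeping but by invoking Lemma~4.4 of \cite{barto2019equations}, which converts approximate satisfaction on finite sets into genuine satisfaction of the minor condition. Your alternative route via Theorem~\ref{testing} is conceptually sound but must be aimed at the actual (asymmetric) $(3+3)$-condition; it would then require analysing all pp-powers of $(\mathbb{Q};\mathrm{X})$ in which $\struct{A}_1$ embeds but $\struct{A}_2$ does not, which is not obviously a by-hand finitary check and is not what the paper does here.
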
 
\begin{proof} \label{proof_olsakterms} 
Consider the operations
\begin{align*} 
\tilde{g}_{2}(x_{1},x_{2},x_{3})=\tilde{g}_{1}(x_{1},x_{2},x_{3})\coloneqq \ & \mx(\mx(x_{1},x_{2}),\mx(x_{2},x_{3}))  \\
\tilde{f}(x_{1},x_{2},x_{3},x_{4},x_{5},x_{6})\coloneqq \ & \mx\big(\tilde{g}_{1}(x_{1},x_{2},x_{3}),\tilde{g}_{1}(x_{4},x_{5},x_{6})\big). 
\end{align*} 
\blue{Let $S$ be a finite subset of $\mathbb{Q}$, and let $\struct{B}_1$ and $\struct{B}_2$ be the substructures of $(\mathbb{Q};<)$ on the sets 
\begin{align*} 
& \{ \tilde{g}_{1}(y,x,x) , \tilde{g}_{1}(x,y,x), \tilde{g}_{1}(x,x,y)  \mid x,y\in S  \} \\
\text{and}\quad & \{\tilde{f}(y,x,x,x,x,x) , \tilde{f}(x,y,x,x,x,x), \tilde{f}(x,x,y,x,x,x)  \mid x,y\in S  \},
\end{align*}
respectively.  
Let $\alpha$ and $\beta$ be the operations from the definition of $\mx$.  
Consider the map
\[
h(b)\coloneqq \left\{ \begin{array}{ll} \beta(b) & \text{if } b=\tilde{g}_{1}(x,x,x) \text{ for some }x \in \mathbb{Q}, \\ 
\alpha(b) & \text{otherwise.}
\end{array}
\right.
\]
We claim that $h$ is an isomorphism from $\struct{B}_1$ to $\struct{B}_2$.
This is easy to show using Lemma~\ref{claim:comparisson} once we have made the following observation. 
For every $x\in \mathbb{Q}$, we have $\tilde{g}_{1}(x,x,x) = \beta^2(x)$ and $\tilde{f} (x,x,x,x,x,x)= \beta \circ \tilde{g}_{1}(x,x,x) =\beta^3(x)$.
Moreover, for all distinct $x,y\in \mathbb{Q}$, we have 
\[ \begin{array}{l}    \tilde{g}_{1}(y,x,x) = \alpha^2(\min(x,y)  ) \\
\tilde{g}_{1}(x,y,x) = \beta\circ\alpha(\min(x,y)  )\\
\tilde{g}_{1}(x,x,y) = \alpha^2(\min(x,y)  ) 
\end{array}  \quad\text{and}\quad  \begin{array}{l}   \tilde{f}  (y,x,x,x, x,x) =   \alpha (\tilde{g}_{1}(y,x,x) )  =  \alpha^3(\min(x,y)  ) \\
\tilde{f} (x, y,x,x, x,x) =     \alpha (\tilde{g}_{1}(x,y,x) )  = \alpha\circ \beta\circ\alpha (\min(x,y)  ) \\
\tilde{f} (x, x,y,x, x,x) =  \alpha (\tilde{g}_{1}(x,x,y)  ) = \alpha^3 (\min(x,y)  ).
\end{array}   \]  
By Lemma~\ref{claim:comparisson}, if $x<y$, then $\alpha(x)<\beta(y)$ which means that 
\[\hspace{-0.6em} \begin{array}{l} \tilde{g}_{1}(y,x,x)=  \mx(\alpha(x),\beta(x))=\alpha^{2}(x),  \\
\tilde{g}_{1}(x,y,x)=  \mx(\alpha(x),\alpha(x))=\beta(\alpha(x)) \\ 
\tilde{g}_{1}(x,x,y)= \mx(\beta(x),\alpha(x))=\alpha^{2}(x),   
\end{array}   \]
If  \red{$y < x$}, then $\alpha(y)<\beta(x)$ which means that 
\[ \hspace{-0.6em}\begin{array}{ll} \tilde{g}_{1}(y,x,x)=  \mx(\alpha(y),\beta(x))=\alpha^{2}(y),  \\
\tilde{g}_{1}(x,y,x)=  \mx(\alpha(y),\alpha(y))=\beta(\alpha(y)),  \\ 
\tilde{g}_{1}(x,x,y)= \mx(\beta(x),\alpha(y))=\alpha^{2}(y).   
\end{array}   \]
The statement about $\tilde{f}$ follows easily from its definition.
We prove that $h$ \red{preserves $<$}.

Let $b_1,b_2$ be arbitrary elements of $B_1$.
Clearly, if $h(b_1)=\alpha(b_1)$ and $h(b_2)=\alpha(b_2)$, or $h(b_1)=\beta(b_1)$ and $h(b_2)=\beta(b_2)$, then $b_1<b_2$ implies $h(b_1) < h(b_2)$ because $\alpha$ and $\beta$ \red{preserve $<$}.
If $h(b_1)=\alpha(b_1)$ and $h(b_2)=\beta(b_2)$ or $h(b_1)=\beta(b_1)$ and $h(b_2)=\alpha(b_2)$, then $b_1<b_2$ implies $h(b_1) < h(b_2)$ by Lemma~\ref{claim:comparisson}.
%
%
\red{Thus $h$ preserves $<$ and it follows that $h$ is an isomorphism.} 
Since $(\mathbb{Q};<)$ is homogeneous, there exists $\eta \in \Aut(\mathbb{Q};<)$ extending $h$, i.e.,
\[ \begin{array}{c}  \eta \circ \tilde{g}_{1}(y,x,x) = \tilde{f}  (y, x,x,x, x,x)     \\
\eta \circ \tilde{g}_{1}(x,y,x) = \tilde{f} (x, y,x,x, x,x) \\
\eta \circ \tilde{g}_{1}(x,x,y) = \tilde{f} (x, x,y,x, x,x)
\end{array}    \] 
holds for all $x,y\in S$. 
By symmetry of the operation $\mx$, we also have that 
\[ \begin{array}{c}  \eta \circ \tilde{g}_{2}(y,x,x) = \tilde{f}  (x, x,x,y, x,x)     \\
\eta \circ \tilde{g}_{2}(x,y,x) = \tilde{f} (x, x,x,x, y,x) \\
\eta \circ \tilde{g}_{2}(x,x,y) = \tilde{f} (x, x,x,x, x,y)
\end{array}    \] 
holds for all $x,y\in S$. } 
Then Lemma~\ref{lem:construct} yields functions $f$ and $g_1=g_2$ which are $(3+3)$-operations in $\Pol(\mathbb{Q};\mathrm{X})$.   \end{proof}
The requirement of the existence of \blue{non-idempotent} $(m+n)$-operations is an example of a so-called  
\emph{minor condition}, which is a set of identities of the special form
$$ f_{1}(x^{1}_{1},\dots,x^{1}_{n_{1}})  \approx \cdots \approx  f_{k}(x^{1}_{k},\dots,x^{k}_{n_{k}}).$$ 
(such identities are sometimes also called \emph{height-one identities}~\cite{barto2018wonderland}\footnote{\emph{Linear identities} are defined similarly, but also allow that the terms in the identities consist of a single variable, which is more general. A finite minor condition therefore is a special case of what has been called a \emph{strong linear Maltsev condition} in the universal algebra literature).}). 
Another example of minor conditions can be found in
item (5) and (6) of Theorem~\ref{finitedomainsituation}:
an at least binary operation $f$ is called a \emph{weak near-unanimity} (WNU) if it satisfies  
$$f(y,x,\dots,x) \approx f(x,y,x,\dots,x) \approx \cdots \approx f(x,\dots,x,y).$$ 
\blue{Proposition~\ref{criterion} implies that another well-known characterisation of solvability of finite-domain CSPs in FP, namely the inability to express systems of equations over finite non-trivial Abelian groups,  fails for temporal CSPs (Corollary~\ref{nointerpretations}).
Recall the structures $\struct{E}_{\mathscr{G},k}$ from Definition~\ref{def:lineq}.
\begin{corollary}  \label{nointerpretations} $(\mathbb{Q};\mathrm{X})$ does not pp-construct $\struct{E}_{\mathscr{G},3}$ for any finite non-trivial Abelian group $\mathscr{G}$.
\end{corollary}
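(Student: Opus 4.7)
The plan is to derive a contradiction from the inexpressibility in $\FPC$ of $\CSP(\struct{E}_{\mathscr{G},3})$ for non-trivial finite Abelian $\mathscr{G}$ (Theorem~10 in \cite{atserias2009affine}), by combining Proposition~\ref{criterion} with a transfer principle for pseudo-identities along pp-constructions in the presence of an extremely amenable automorphism group. Suppose, towards a contradiction, that $(\mathbb{Q};\mathrm{X})$ pp-constructs $\struct{E}_{\mathscr{G},3}$ for some non-trivial finite Abelian group $\mathscr{G}$. Since $\mathrm{X}$ is first-order definable in the ordered homogeneous Ramsey structure $(\mathbb{Q};<)$ and a direct inspection of its defining formula shows $\Aut(\mathbb{Q};\mathrm{X})=\Aut(\mathbb{Q};<)$, the group $\Aut(\mathbb{Q};\mathrm{X})$ is extremely amenable by the Kechris--Pestov--Todor\v{c}evi\'c correspondence applied to the Ramsey class of finite linear orders.

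The central tool is then the transfer principle in the spirit of \cite{barto2019equations}: whenever a structure $\struct{B}$ with extremely amenable automorphism group pp-constructs a finite structure $\struct{C}$, every pseudo-identity satisfied in $\Pol(\struct{B})$ whose outer unary operations lie in the closure of $\Aut(\struct{B})$ yields a genuine (non-pseudo) identity in $\Pol(\struct{C})$. Concretely, I would invoke Proposition~\ref{criterion} to obtain a ternary pseudo-WNU $f$ and a $4$-ary WNU $g$ in $\Pol(\mathbb{Q};\mathrm{X})$ satisfying the pseudo-identity $f(x,x,y)\approx g(x,x,x,y)$. A careful look at Lemma~\ref{lem:construct}, combined with the concrete computations in the proof of Proposition~\ref{criterion} (where all relevant quantities have the form $\alpha^{2}(\Min(x,y))$ or $\beta(\alpha(\Min(x,y)))$), shows that the outer unary operations witnessing these pseudo-identities can actually be chosen in $\Aut(\mathbb{Q};<)$, since the required outer maps only reshuffle finitely many values of $(\mathbb{Q};<)$ and homogeneity of the order provides \emph{automorphism} extensions.

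Applying the transfer principle, we would obtain in $\Pol(\struct{E}_{\mathscr{G},3})$ a ternary WNU $f'$ and a $4$-ary WNU $g'$ satisfying the plain identity $f'(x,x,y)=g'(x,x,x,y)$. This is precisely the Kozik--Krokhin--Valeriote identity appearing as item (7) of Theorem~\ref{finitedomainsituation}. Hence $\CSP(\struct{E}_{\mathscr{G},3})$ would be expressible in $\Datalog$, and in particular in $\FPC$, contradicting Theorem~10 of \cite{atserias2009affine}.

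The main obstacle I expect is the second step: verifying that the pseudo-identities provided by Proposition~\ref{criterion} are of the correct restricted form for the transfer theorem to apply, i.e., that the unary outer operations lie in the closure of $\Aut(\mathbb{Q};<)$ rather than in the (a priori larger) monoid $\End(\mathbb{Q};\mathrm{X})$. Concretely, one has to check that for every finite $S\subseteq\mathbb{Q}$ the finite substructures of $(\mathbb{Q};<)$ on the images of the ternary terms $f(x,x,y),f(x,y,x),f(y,x,x)$ and their $4$-ary analogues for $g$ are pairwise order-isomorphic \emph{as substructures of $(\mathbb{Q};<)$}, so that homogeneity of $(\mathbb{Q};<)$ furnishes the required outer automorphisms. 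The remaining steps (the transfer itself and the appeal to Theorem~\ref{finitedomainsituation} and \cite{atserias2009affine}) are then essentially bookkeeping.
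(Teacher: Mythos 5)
Your proposal is correct and follows essentially the same route as the paper: invoke Proposition~\ref{criterion}, transfer the pseudo-identity along a pp-construction using the Ramsey transfer theorem from \cite{barto2019equations}, arrive at a genuine $3$--$4$ WNU identity in $\Pol(\struct{E}_{\mathscr{G},3})$, and contradict Theorem~\ref{finitedomainsituation}. The ``main obstacle'' you flag at the end is, however, a non-issue: the transfer principle you are invoking requires only that the outer unary operations be \emph{embeddings}, not automorphisms, and since $<$ is pp-definable in $(\mathbb{Q};\mathrm{X})$ (via $\mathrm{X}(x,x,y)$), every endomorphism of $(\mathbb{Q};\mathrm{X})$ is an injective order-preserving map, hence automatically a self-embedding of $(\mathbb{Q};<)$. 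The paper uses exactly this observation, so no re-examination of the concrete computations in Proposition~\ref{criterion} or of Lemma~\ref{lem:construct} is necessary. (One further small economy: there is no need to route through Theorem~10 of \cite{atserias2009affine}; once $\Pol(\struct{E}_{\mathscr{G},3})$ satisfies item~(7) of Theorem~\ref{finitedomainsituation}, item~(4) follows by the stated equivalence and contradicts the trivial pp-construction of $\struct{E}_{\mathscr{G},3}$ from itself.)
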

\begin{proof}
Suppose, on the contrary, that $(\mathbb{Q};\mathrm{X})$ pp-constructs $\struct{E}_{\mathscr{G},3}$ for a finite non-trivial Abelian group $\mathscr{G}$.
Then, by \blue{Lemma~\ref{pp_clone_minion}}, there exists a minion homomorphism $\xi\colon \Pol(\mathbb{Q};\mathrm{X}) \rightarrow \Pol(\struct{E}_{\mathscr{G},3})$. 
By Proposition~\ref{olsakterms}, $\Pol(\mathbb{Q};\mathrm{X})$ has (not necessarily idempotent) $(3+3)$-terms.
Since minion homomorphisms preserve satisfiability of minor conditions, it follows that $\Pol(\struct{E}_{\mathscr{G},3})$ also has such operations.
But, by definition, every polymorphism of $\struct{E}_{\mathscr{G},3}$ is idempotent.
This leads to a contradiction to Theorem~\ref{finitedomainsituation}.  Thus the statement of the corollary holds.
\end{proof}}

Despite their success in the  setting of finite-domain CSPs, finite minor conditions 
such as item (6) in Theorem~\ref{finitedomainsituation} 
are insufficient for classification purposes in the context of $\omega$-categorical CSPs.
\begin{proposition} \label{noh1finitelybounded} Let $\mathcal{L}$ be any logic at least as expressive as the existential positive fragment of $\FO$. Then there is no finite minor condition that would capture the expressibility of the CSPs of reducts of finitely bounded homogeneous structures in $\mathcal{L}$.
\end{proposition}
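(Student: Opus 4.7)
The plan is to derive the proposition as a direct consequence of Theorem~\ref{topology} together with the classification of temporal CSPs established earlier in the paper. The critical observation is that every finite minor condition is preserved by minion homomorphisms: by the very definition of minion homomorphism, the $\xi$-images of the realising operations witness the same height-one identities in the target clone. Moreover, every clone homomorphism is in particular a minion homomorphism, because preserving arbitrary compositions is strictly stronger than preserving the compositions with projections that define minor conditions. Consequently, every finite minor condition is preserved by uniformly continuous clone homomorphisms, and Theorem~\ref{topology} immediately rules out such conditions from characterising $\Datalog$-expressibility for temporal CSPs.

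Since temporal structures are reducts of the finitely bounded homogeneous structure $(\mathbb{Q};<)$, any finite minor condition characterising $\mathcal{L}$-expressibility for CSPs of reducts of finitely bounded homogeneous structures would in particular characterise it on the subclass of temporal CSPs. For $\mathcal{L}=\Datalog$ the proposition is then immediate. For other $\mathcal{L}$, I would combine the argument of Theorem~\ref{topology} with the classifications in Theorems~\ref{datalogmainresult},~\ref{mainresult}, and~\ref{FPRclassification} to produce, for each candidate finite minor condition $\Sigma$, a pair of temporal structures whose polymorphism clones satisfy the same finite minor conditions but whose CSPs differ in $\mathcal{L}$-expressibility. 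Propositions~\ref{olsaktermsordhorn} and~\ref{olsakterms} in the preceding subsection are concrete instances of this mismatch: one exhibits a structure whose CSP is in $\FP$ while its polymorphism clone satisfies only trivial idempotent conditions, and the other exhibits a structure whose CSP is outside $\FPC$ while its polymorphism clone admits non-trivial $(3+3)$-identities.

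The main obstacle is the breadth of the logics $\mathcal{L}$ covered by the hypothesis, since the classification theorems explicitly address only $\Datalog$, $\FP$, $\FPC$, and $\FPR_2$. The underlying reason a uniform argument succeeds is that finite minor conditions are too coarse: they cannot separate polymorphism clones related by uniformly continuous minion homomorphisms, whereas $\mathcal{L}$-expressibility for temporal CSPs depends finely on the particular polymorphisms present (such as $\Min$, $\mi$, $\elel$, $\mx$, or their duals). Any finite minor condition $\Sigma$ collapses these distinctions, so one can always find two temporal structures on opposite sides of the $\mathcal{L}$-expressibility boundary whose polymorphism clones agree on $\Sigma$.
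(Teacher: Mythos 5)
Your proposal takes a genuinely different route from the paper's, and that route has a gap that you yourself acknowledge but do not actually close.

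The paper's proof is much shorter and has nothing to do with Theorem~\ref{topology} or the temporal CSP classification. It uses the CSS construction (Theorem~\ref{thm:css}) and the result cited from the proof of Theorem~1.3 in \cite{bodirsky2019topology}: for \emph{any} fixed finite minor condition $\mathcal{E}$, there exists a finite family $\mathcal{F}$ of connected structures with $\Pol(\mathrm{CSS}(\mathcal{F})) \not\models \mathcal{E}$. Now $\mathrm{CSS}(\mathcal{F})$ is a reduct of a finitely bounded homogeneous structure, and $\text{co-CSP}(\mathrm{CSS}(\mathcal{F}))$ is defined by the finite disjunction $\bigvee_{\struct{A}\in\mathcal{F}} Q_{\struct{A}}$, an existential positive sentence. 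Since $\mathcal{L}$ is at least as expressive as existential positive $\FO$, $\CSP(\mathrm{CSS}(\mathcal{F}))$ is expressible in $\mathcal{L}$. So $\mathcal{E}$ fails as a characterisation: it is violated by a template whose CSP is in $\mathcal{L}$. This argument works uniformly for every logic $\mathcal{L}$ that contains existential positive $\FO$, which is precisely the hypothesis.

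Your argument only handles $\mathcal{L} = \Datalog$ soundly: there you correctly note that every finite minor condition is preserved by (uniformly continuous) minion homomorphisms, hence by uniformly continuous clone homomorphisms, and you invoke Theorem~\ref{topology}; restricting to temporal CSPs, which are reducts of the finitely bounded homogeneous structure $(\mathbb{Q};<)$, this rules out the condition. But the statement is about \emph{all} such $\mathcal{L}$, and you concede this yourself: ``The main obstacle is the breadth of the logics $\mathcal{L}$ covered by the hypothesis.'' The appeal to Theorems~\ref{datalogmainresult}, \ref{mainresult}, \ref{FPRclassification} cannot fill the gap, because those classifications cover only $\Datalog$, $\FP$/$\FPC$, and $\FPR_2$, not an arbitrary $\mathcal{L}$. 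Moreover, the final paragraph asserts a separation property (``one can always find two temporal structures on opposite sides of the $\mathcal{L}$-expressibility boundary whose polymorphism clones agree on $\Sigma$'') that is neither proved nor even clearly true if one restricts attention to temporal structures; for some logics, the $\mathcal{L}$-expressibility boundary among temporal CSPs might well be a minor-condition boundary (indeed Theorem~\ref{alternativefp} shows exactly this for $\FP$). The paper avoids all of this by leaving the temporal world entirely and producing, for each $\mathcal{E}$, a single explicit counterexample from the CSS family.
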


Proposition~\ref{noh1finitelybounded} is a consequence of the proof of Theorem~1.3 in \cite{bodirsky2019topology}. Both statements rely on the following result.

\begin{theorem}[\cite{cherlin1999universal,hubivcka2016homomorphism}]\label{thm:css}
For every finite set $\mathcal{F}$ of finite connected structures with a finite signature $\tau$, there exists a $\tau$-reduct $\mathrm{CSS}(\mathcal{F})$ of a finitely bounded homogeneous structure such that $\mathrm{CSS}(\mathcal{F})$ embeds precisely those finite $\tau$-structures which do not contain a homomorphic image of any member of $\mathcal{F}$.
\end{theorem}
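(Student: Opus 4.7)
The plan is to argue by contradiction: assume that a finite minor condition $\mathcal{E}$ captures expressibility in $\mathcal{L}$ within the class of CSPs of reducts of finitely bounded homogeneous structures. I will then exhibit such a reduct $\struct{B}$ whose CSP is expressible in the existential positive fragment of $\FO$ (hence, by assumption on $\mathcal{L}$, in $\mathcal{L}$) while $\Pol(\struct{B}) \not\models \mathcal{E}$, yielding the desired contradiction.

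The structure $\struct{B}$ will be of the form $\mathrm{CSS}(\mathcal{F})$ provided by Theorem~\ref{thm:css}, for a finite family $\mathcal{F}$ of finite connected $\tau$-structures chosen in terms of $\mathcal{E}$. For any $\mathcal{F}$, the defining property of $\mathrm{CSS}(\mathcal{F})$ together with the composition of homomorphisms yields that a finite $\tau$-structure $\struct{A}$ admits a homomorphism to $\mathrm{CSS}(\mathcal{F})$ if and only if no member of $\mathcal{F}$ admits a homomorphism into $\struct{A}$. Hence $\text{co-}\CSP(\mathrm{CSS}(\mathcal{F}))$ is defined by the sentence $\bigvee_{F \in \mathcal{F}} Q_F$, which lies in $\exists^+$-$\FO$. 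Consequently $\CSP(\mathrm{CSS}(\mathcal{F}))$ is automatically expressible in $\mathcal{L}$, regardless of the specific choice of $\mathcal{F}$.

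The heart of the argument is to choose $\mathcal{F}$ so that $\Pol(\mathrm{CSS}(\mathcal{F}))$ fails to satisfy $\mathcal{E}$. Let $k$ be the maximum arity among the function symbols appearing in $\mathcal{E}$. Following the strategy employed in the proof of Theorem~1.3 of \cite{bodirsky2019topology}, one selects $\mathcal{F}$ as a finite collection of forbidden connected patterns tailored to $\mathcal{E}$: for any candidate assignment of $k$-ary operations to the function symbols of $\mathcal{E}$ which would witness $\mathcal{E}$ in $\Pol(\mathrm{CSS}(\mathcal{F}))$, one of the relevant minor identities forces the creation, on finitely many tuples, of a configuration admitting a homomorphism from some $F \in \mathcal{F}$, contradicting that the operation preserves $\mathrm{CSS}(\mathcal{F})$. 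The degenerate case in which $\mathcal{E}$ is satisfied by projections needs separate treatment, as such an $\mathcal{E}$ is satisfied by every polymorphism clone and thus predicts expressibility of every such CSP in $\mathcal{L}$; one then rules this out by exhibiting, again via Theorem~\ref{thm:css}, a reduct of a finitely bounded homogeneous structure whose CSP is not in $\mathcal{L}$.

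The main obstacle is the combinatorial design of $\mathcal{F}$ in the non-trivial case: one must guarantee simultaneously that $\mathcal{F}$ is finite and consists of connected structures (so that $\mathrm{CSS}(\mathcal{F})$ is available from Theorem~\ref{thm:css}), and that no operation of arity up to $k$ preserves $\mathrm{CSS}(\mathcal{F})$ in a manner consistent with $\mathcal{E}$. This is accomplished by exploiting the flexibility of CSS in prescribing arbitrary finite connected forbidden patterns, combined with the finite character of $\mathcal{E}$, to reduce the satisfaction of $\mathcal{E}$ in $\Pol(\mathrm{CSS}(\mathcal{F}))$ to a finite-arity obstruction that can be encoded directly into $\mathcal{F}$.
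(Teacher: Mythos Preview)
Your proposal does not address the stated theorem at all. Theorem~\ref{thm:css} is the Cherlin--Shelah--Shi / Hubi\v{c}ka--Ne\v{s}et\v{r}il result asserting, for any finite family $\mathcal{F}$ of finite connected $\tau$-structures, the \emph{existence} of a $\tau$-reduct $\mathrm{CSS}(\mathcal{F})$ of a finitely bounded homogeneous structure with the prescribed embedding property. The paper does not prove this theorem; it is quoted from the cited references and used as a black box. A proof would require constructing an $\omega$-categorical universal structure for the class of $\mathcal{F}$-free $\tau$-structures and showing that it arises as a reduct of a finitely bounded homogeneous expansion; none of this appears in your write-up.

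What you have written is instead a proof sketch of Proposition~\ref{noh1finitelybounded}, the statement that no finite minor condition captures expressibility in $\mathcal{L}$ for CSPs of reducts of finitely bounded homogeneous structures. Your argument \emph{uses} Theorem~\ref{thm:css} as an ingredient, exactly as the paper does in its proof of Proposition~\ref{noh1finitelybounded}, and in that capacity the sketch is broadly along the right lines (modulo your unnecessary case split on whether $\mathcal{E}$ is trivial: the paper simply invokes that by~\cite{bodirsky2019topology} there is some $\mathcal{F}$ with $\Pol(\mathrm{CSS}(\mathcal{F}))\not\models\mathcal{E}$, and combines this with the existential-positive definability of $\text{co-}\CSP(\mathrm{CSS}(\mathcal{F}))$). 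But as a proof of Theorem~\ref{thm:css} itself, the proposal is simply off target.
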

For $\omega$-categorical structures, a statement that is stronger than in the conclusion of Proposition~\ref{noh1finitelybounded} has been shown in~\cite{gillibert2022symmetries}; however, the proof in~\cite{gillibert2022symmetries} does not apply to reducts of finitely bounded homogeneous structures in general. 

\begin{proof}[Proof of Proposition~\ref{noh1finitelybounded}] 
Suppose, on the contrary, that there exists such a condition $\mathcal{E}$. 
By the proof of Theorem~1.3 in \cite{bodirsky2019topology}, there exists a finite family $\mathcal{F}$ of finite connected structures with a finite signature $\tau$ such that $\Pol(\mathrm{CSS}(\mathcal{F}))$  does not satisfy  $\mathcal{E}$.
\blue{Recall the definition of the canonical conjunctive query $Q_\struct{A}$ from Section~\ref{section_CSPs}.}
The existential positive sentence $\phi_{\mathrm{CSS}(\mathcal{F})} \coloneqq \bigvee_{\struct{A}\in \mathcal{F}} Q_{\struct{A}}$ defines the complement of $\CSP(\mathrm{CSS}(\mathcal{F}))$. 
But then $\CSP(\mathrm{CSS}(\mathcal{F}))$ is expressible in $\mathcal{L}$, a contradiction. 
\end{proof}
The satisfiability of minor conditions in polymorphism clones is preserved under minion  homomorphisms, and
the satisfiability of sets of arbitrary identities in polymorphism clones is preserved under clone homomorphisms \cite{barto2018wonderland}. 
In Theorem~\ref{topology} we use the latter to show that, for $\Datalog$, Proposition~\ref{noh1finitelybounded} can be strengthened to sets of arbitrary identities.  
We hereby give a negative answer to a question from \cite{bodirsky2019topology} concerning the existence of a fixed set of identities that would capture $\Datalog$ expressibility for $\omega$-categorical CSPs.
This question was in fact already answered negatively in \cite{gillibert2022symmetries}, however,
our result also provably applies in the setting of finitely bounded homogeneous structures.
Recall the relation $S_{\elel}$ defined before  Lemma~\ref{RelationalBaseLl}. 
\begin{proposition}\label{temporal-structureS_datalog}  \hspace{0em} 
\begin{enumerate}  
\item \label{item:prop7.6_1}$(\mathbb{Q};\neq,\mathrm{S}_{\elel})$ does not pp-construct $(\mathbb{Q};\mathrm{R}_{\Min})$.
\item \label{item:prop7.6_2} There exists a uniformly continuous clone homomorphism  from $ \Pol(\mathbb{Q};\neq,\mathrm{S}_{\elel}) $ to $\Pol(\mathbb{Q};\mathrm{R}_{\Min}).  $   
\end{enumerate}
\end{proposition}
\begin{proof} 
For~\eqref{item:prop7.6_1}, suppose on the contrary that $(\mathbb{Q};\neq,\mathrm{S}_{\elel})$ pp-constructs $(\mathbb{Q};\mathrm{R}_{\Min})$. Since $\neq$ and $ \mathrm{S}_{\elel}$ are Ord-Horn definable,  $\CSP(\mathbb{Q};\neq,\mathrm{S}_{\elel})$ is expressible in $\Datalog$ by Theorem~\ref{datalogmainresult}.
Then, 
by \red{Theorem~\ref{REDUCTION}}, 
$\CSP(\mathbb{Q};\mathrm{R}_{\Min})$ is expressible in $\Datalog$, which contradicts the fact that $\CSP(\mathbb{Q};\mathrm{R}_{\Min})$ is inexpressible in Datalog by Theorem~5.2 in \cite{bodirsky2010fast}.  Thus $(\mathbb{Q};\neq,\mathrm{S}_{\elel})$ does not pp-construct $(\mathbb{Q};\mathrm{R}_{\Min})$. 

\blue{For~\eqref{item:prop7.6_2},  we define $\xi\colon \Pol(\mathbb{Q};\neq,\mathrm{S}_{\elel}) \rightarrow \Pol(\mathbb{Q};\mathrm{R}_{\Min})$ as follows. 
Let $f\in \Pol(\mathbb{Q};\neq,\mathrm{S}_{\elel})$ be arbitrary and let $n$ be its arity.
As in the definition of essential parts, let $I_f $ be the set of all indices $i\in [n]$ such that $f$ depends on the $i$-th argument.
We define $\xi(f)$ as the map $(x_{1},\dots, x_{n}) \mapsto \Min \{x_{i}\mid i\in I_f\}$.
The set $I_f$ is non-empty because $\neq$ is not preserved by any constant operation.
Hence, $\xi$ is well-defined. 
We claim that $\xi$ is a clone homomorphism. 
Clearly, $\xi$ preserves arities and projections. 
Let $g_1,\dots, g_{n}$ be arbitrary $m$-ary operations from $\Pol(\mathbb{Q};\neq,\mathrm{S}_{\elel})$.
To show that $\xi(f(g_{1},\dots,g_{n}))=\xi(f)(\xi(g_{1}),\dots , \xi(g_{n}))$, we must show that 
\[
\min \{x_i \mid i\in I_{f(g_1,\dots, g_n)}\} = \min\{  \min\{x_i \mid i\in I_{g_j} \}\mid j\in I_{f} \}.
\]
Note that the right-hand side equals $\min\{ x_i \mid i\in \bigcup_{j\in I_f}  I_{g_j} \}$.
We show that $\bigcup_{j\in I_f}  I_{g_j} = I_{f(g_1,\dots, g_n)}$.
The backward direction of the set inclusion is trivial: if $i\notin I_{g_j}$ for every $j\in  I_f$, then clearly $i\notin I_{f(g_1,\dots, g_n)}$.
So suppose that $i\in I_{g_j}$ for some $j\in I_f$, i.e., $g_j$ depends on the $i$-th argument and $f$ depends on the $j$-th argument.
Recall from the proof of Proposition~\ref{olsaktermsordhorn} that, since $f$ depends on the $j$-th argument and preserves $S_{\elel}$, it is injective in the $j$-th argument.
Since $g_j$ depends on the $i$-th argument and $f$ is injective in the $j$-th argument, it follows that $f(g_1,\dots, g_n)$ depends on the $i$-th argument, i.e., $i\in I_{f(g_1,\dots, g_n)}$.
Finally, we show that $\xi$ is uniformly continuous. For every finite $B'\subseteq \mathbb{Q}$, we choose $A'\coloneqq B'$.
If $|B'|<2$, then clearly $\xi(f)$ and $\xi(g)$ agree on $B'$.
Otherwise, $f(x_1,\dots, x_{n}) = g(x_1,\dots, x_n)$ for all $x_1,\dots, x_{n} \in A'$ implies $I_{f}=I_g$ since an operation from $\Pol(\mathbb{Q};\neq,\mathrm{S}_{\elel})$
depends on the $i$-th argument iff it is injective in the $i$-th argument.
Thus, in this case, $\xi(f)$ and $\xi(g)$ also agree on $B'$.
This concludes the proof of~\eqref{item:prop7.6_2}.}
\end{proof}   
\begin{proof}[Proof of Theorem~\ref{topology}] 
Suppose, on the contrary, that there is a condition $\mathcal{E}$ which is preserved by uniformly continuous clone homomorphisms and captures expressibility of temporal CSPs in Datalog.
Since $\CSP(\mathbb{Q};\neq,\mathrm{S}_{\elel})$ is expressible in $\Datalog$, $\Pol(\mathbb{Q};\neq,\mathrm{S}_{\elel})$ must satisfy $\mathcal{E}$.
By Item~(2) in Proposition~\ref{temporal-structureS_datalog}, $\Pol(\mathbb{Q};\mathrm{R}_{\Min})$ satisfies $\mathcal{E}$ as well.
By assumption, $\CSP(\mathbb{Q};\mathrm{R}_{\Min})$ must be expressible in $\Datalog$, a contradiction to Theorem~5.2 in \cite{bodirsky2010fast}.  
\end{proof}

\subsection{\blue{New minor conditions}}
The expressibility of temporal and finite-domain CSPs in $\FP$ / $\FPC$ can be characterised by a family of minor conditions (Theorem~\ref{alternativefp}).
Inspired by~\cite{barto2019equations,barto2021algebraic}, we introduce general terminology to conveniently reason with minor conditions.
\blue{The following definition yields the same minor conditions as the paragraph above Example~3.13 in \cite{barto2021algebraic} up to the addition of implied equalities between terms and subsequent removal of the auxiliary terms on the left hand side.}
\begin{definition} \label{generalized}  Let $\struct{A}_1,\struct{A}_2$ be relational structures with a finite signature $\tau$. 
We define the minor condition $\mathcal{E}(\struct{A}_1,\struct{A}_2)$ as follows.
For every $R\in \tau$ and $\boldf{r}\in R^{\struct{A}_2}$, we introduce a unique $|R^{\struct{A}_1}|$-ary function symbol $g^{R}_{\boldf{r}}$.
Also, for every $R\in \tau$, we arbitrarily fix an enumeration $\boldf{x}_1,\dots, \boldf{x}_m$ of $R^{\struct{A}_1}$.
The elements of $A_1$ will be used as names for variables in the following.  
\blue{For every $a\in A_2$, if there exist $R,S\in\tau$ and  $\boldf{r}\in R^{\struct{A}_2}$,  $\boldf{s}\in S^{\struct{A}_2}$ such that 
$\boldf{r}\of{i} =a=\boldf{s}\of{j}$, then $\mathcal{E}(\struct{A}_1,\struct{A}_2)$ contains the \red{identity}  
\[ g^{R}_{\boldf{r}}(\boldf{x}_1\of{i},\dots, \boldf{x}_m\of{i}) \thickapprox g^{S}_{\boldf{s}}(\boldf{y}_1\of{j},\dots, \boldf{y}_n\of{j})\] 
where $\boldf{x}_1,\dots, \boldf{x}_m$ and $\boldf{y}_1,\dots, \boldf{y}_n$ are the fixed enumerations of $R^{\struct{A}_1}$ and $S^{\struct{A}_1}$, respectively. 
There are no other identities in $\mathcal{E}(\struct{A}_1,\struct{A}_2)$.
If  $\boldf{r}$ only appears in a single relation $R^{\struct{A}_2}$, then we set $g_{\boldf{r}}\coloneqq g^{R}_{\boldf{r}}$.}
\end{definition} 

\blue{The relation $1\textup{IN}3$ defined in the introduction can be generalised to  $$1\textup{IN}k\coloneqq \{ \boldf{t} \in \{0,1\}^k \mid \boldf{t}\of{i}=1 \text{ for exactly one } i\in [k] \}.$$
\begin{example} The existence of a $k$-ary WNU operation equals $\mathcal{E}(\struct{A}_1,\struct{A}_2)$ for 
\begin{align*}
\struct{A}_1 & \coloneqq (\{0,1\}; 1\textup{IN}k) 
\quad \text{and}\quad  \struct{A}_2   \coloneqq (\{a\}; \{(a,\dots, a)\}). \end{align*} 
\end{example}

The following proposition is not essential to the present article but demonstrates the magnitude of coverage of Definition~\ref{generalized}. 
We include it here since it was not mentioned in \cite{barto2021algebraic}.}

\blue{\begin{proposition}  For every finite minor condition $\mathcal{E}$, there exists a pair $\struct{A}_1,\struct{A}_2$ of finite structures in a finite signature $\tau$ such that $\mathcal{E}$ and $\mathcal{E}(\struct{A}_1,\struct{A}_2)$ are equivalent with respect to satisfiability in minions.  
\end{proposition}
\begin{proof} Let $\mathcal{E}$ be an arbitrary finite minor condition.
We define $A_1$ as the set of all variables occurring in $\mathcal{E}$.
Fix an arbitrary function symbol $f$ occurring in $\mathcal{E}$. 
Let $m$ be the arity of $f$.
We implicitly define the $k$-tuples $\boldf{x}_1,\dots, \boldf{x}_m$ by listing all the $f$-terms occurring in $\mathcal{E}$ in an arbitrary but fixed order: 
$  f(\boldf{x}_1\of{1},\dots,\boldf{x}_m\of{1}), \dots, 
f(\boldf{x}_1\of{k},\dots,\boldf{x}_m\of{k}).$
Without loss of generality, we may assume that $\boldf{x}_1,\dots,\boldf{x}_m$ are pairwise distinct, otherwise we can reduce the arity of $f$ without changing the satisfiability of $\mathcal{E}$ in minions.  
Now, for every such $f$, we require that $\tau$ contains a $k$-ary relation symbol $R_{f}$ which interprets in $\struct{A}_1$ as  $\{\boldf{x}_1,\dots,\boldf{x}_m\}.$
Let $\sim$ be the smallest equivalence relation on the terms which occur in $\mathcal{E}$ given by the identities in $\mathcal{E}$.
We define $A_{2}$ as the set of all equivalence classes of $\sim$.
For every function symbol $f$ which occurs in $\mathcal{E}$, the relation $ R_{f}^{\struct{A}_2}$ consists of a single tuple $\boldf{t}_{f}$ of  equivalence classes of $\sim$ of all $f$-terms which occur in $\mathcal{E}$, these equivalence classes appearing in $\boldf{t}_{f}$ in the fixed order from above.
It is easy to check that $\mathcal{E}(\struct{A}_1,\struct{A}_2)$ and $\mathcal{E}$ are identical up to reduction of arities through removal of non-essential arguments and adding additional identities which are implied by $\mathcal{E}$.
Thus, $\mathcal{E}$ and $\mathcal{E}(\struct{A}_1,\struct{A}_2)$ are equivalent. 
\end{proof}}

\blue{Recall from Corollary~\ref{nointerpretations} that $(\mathbb{Q};\mathrm{X})$ cannot pp-construct equations over any non-trivial finite Abelian group.
However, by Proposition~\ref{prop:Xequiv}, $\CSP(\mathbb{Q};\mathrm{X})$ can be reformulated as a decision problem for systems of \teal{mod-$2$} equations.
In particular, every homogeneous system of \teal{mod-$2$} equations of length $3$ without a non-trivial solution represents an unsatisfiable instance of $\CSP(\mathbb{Q};\mathrm{X})$.
A similar statement can also be made about some relations which are pp-definable in $(\mathbb{Q};\mathrm{X})$ (see the proof of Theorem~\ref{FPxorsat}).
%
Using this fact and the theory developed in \cite{barto2021algebraic}, it is possible to obtain many non-trivial minor conditions which are unsatisfiable in $\CSP(\mathbb{Q};\mathrm{X})$.
The question is then whether some of these conditions are satisfied in all temporal structures whose CSP is expressible in $\FP$.
The following theorem is a generalization of several observations from \cite{barto2021algebraic}. It allows us to reformulate the satisfiability of a minor condition in a given $\omega$-categorical structure as a statement about the existence of homomorphisms into pp-powers of the structure.}

\blue{\begin{theorem}\label{testing}  Let $\struct{B}$ be a countable $\omega$-categorical structure. 
For every pair $\struct{A}_1,\struct{A}_2$ of finite structures in a finite signature $\tau$, the following are equivalent:
\begin{enumerate}
\item\label{ctr_pp_powers_1} $\Pol(\struct{B}) \models \mathcal{E}(\struct{A}_1,\struct{A}_2)$.
\item \label{ctr_pp_powers_2} For every pp-power $\struct{C}$ of $\struct{B}$, if $\struct{A}_1 \rightarrow \struct{C}$, then $\struct{A}_2\rightarrow \struct{C}$.
\end{enumerate}  
\end{theorem}} 
\begin{definition} 
\label{def:dwnu}
We write $\mathcal{E}_{k,n}$ for 
$\mathcal{E}(\struct{A}_1,\struct{A}_2)$  if 
\begin{align*}
\struct{A}_1 & \coloneqq (\{0,1\}; 1\textup{IN}k) 
\quad \text{and}\quad  \struct{A}_2   \coloneqq ([n]; \{{\boldf{t}}\in [n]^{k}\mid {\boldf{t}}\of{1}<\cdots < {\boldf{t}}\of{k}\}). \end{align*}
The minor condition $\mathcal{E}_{k,n}$ is properly contained in a minor condition called~\emph{dissected WNUs} in~\cite{gillibert2022symmetries}. 
\end{definition}

\begin{example}
The minor condition $\mathcal{E}_{3,4}$  equals 
\[ \begin{array}{c} 
g_{(1,3,4)}(y,x,x)\approx g_{(1,2,4)}(y,x,x)\approx g_{(1,2,3)}(y,x,x), \\
g_{(2,3,4)} (y,x,x)\approx g_{(1,2,4)}(x,y,x)\approx g_{(1,2,3)}(x,y,x), \\
g_{(2,3,4)}(x,y,x)\approx g_{(1,3,4)}(x,y,x)\approx g_{(1,2,3)}(x,x,y), \\
g_{(2,3,4)}(x,x,y)\approx g_{(1,3,4)}(x,x,y)\approx g_{(1,2,4)}(x,x,y). \\
\end{array}  \]
By Theorem~\ref{testing}, $\Pol(\mathbb{Q};\mathrm{X})$ does not satisfy $\mathcal{E}_{3,4}$ because \[(\{0,1\}; 1\textup{IN}3)  \rightarrow (\mathbb{Q};\mathrm{X}) \quad\text{while}\quad  ([4]; \{(1,2,3),(1,2,4),(1,3,4),(2,3,4)\})\centernot{\rightarrow} (\mathbb{Q};\mathrm{X}).\]
\end{example} 	
Note that $\mathcal{E}_{k,n}$ is implied by the existence of a single $k$-ary WNU operation.
Also note that $\mathcal{E}_{k,n}$ implies $\mathcal{E}_{k,k+1}$ for all $n>k>1$. 
\blue{We first give a short proof of Theorem~\ref{testing}; 
then we restrict our attention to the family $(\mathcal{E}_{k,n})$.
%
%
To prove the equivalence of \eqref{ctr_pp_powers_1} and \eqref{ctr_pp_powers_2} in Theorem~\ref{testing}, we need the following lemma.}
\begin{lemma} \label{triviality2}  \hspace{0em}
\begin{enumerate} 
\item   \label{ctr_triviality1} For any structure $\struct{D}$, if $\Pol(\struct{D}) $ satisfies $ \mathcal{E}(\struct{A}_1,\struct{A}_2)$ on the image of a homomorphism from $\struct{A}_1$ to $\struct{D}$, then there exists a homomorphism from $\struct{A}_2$ to $\struct{D}$.   
\item  	\label{ctr_triviality2} For every countable $\omega$-categorical structure $\struct{B}$, every finite $F\subseteq B$, and every finite  structure $\struct{A}_1$, there exists an $|F|^{|A_1|}$-dimensional pp-power $\struct{B}_F(\struct{A}_1)$ of $\struct{B}$ such that $\struct{A}_1 \rightarrow \struct{B}_F(\struct{A}_1)$ and 
\[
\struct{A}_2 \rightarrow \struct{B}_F(\struct{A}_1) \quad \mbox{ iff }\quad  \Pol(\struct{B}) \models \mathcal{E}(\struct{A}_1,\struct{A}_2)\mbox{ on } F. 
\]
\end{enumerate} 
\end{lemma}
The proof of Lemma~\ref{triviality2}(1) is similar to the proof of  Lemma~3.14(2) in \cite{barto2021algebraic}, and the proof of  Lemma~\ref{triviality2}(2)
is similar to the proof of Theorem~3.12(1) in \cite{barto2021algebraic}.
\blue{The notion of a \emph{free structure} plays a central role in~\cite{barto2021algebraic}.
The connection to our work is that the structure $\struct{B}_{F}(\struct{A}_1)$ in Lemma~\ref{triviality2}(2) is homomorphically equivalent to the free structure of the ``minion of local functions defined on $F$'' generated by $\struct{A}_1$.
However, since Lemma~\ref{triviality2}(2) has an elementary proof, it is not necessary to introduce the extra terminology from \cite{barto2021algebraic}.} 

\blue{\begin{proof}   

For \eqref{ctr_triviality1},  let $f\colon \struct{A}_1 \rightarrow \struct{D}$ be a homomorphism such that $\Pol(\struct{D})\models \mathcal{E}(\struct{A}_1,\struct{A}_2)$ on $f(A_1)$.  Consider the map $h \colon A_2 \to D$ defined as follows. If $a\in A_2$ does not appear in any tuple from a relation of $\struct{A}_2$, then we set $h(a)$ to be an arbitrary element of $D$.
\blue{If there exists $\boldf{r}\in R^{\struct{A}_2}$ such that $a=\boldf{r}\of{i}$,  then we take the operation $g^{R}_{\boldf{r}} \in \Pol(\struct{D})$ witnessing $\mathcal{E}(\struct{A}_1,\struct{A}_2)$ on $f(A_1)$ and set $ h(a)\coloneqq g^{R}_{\boldf{r}}(f(\boldf{x}_1),\dots, f(\boldf{x}_m))\of{i}$ where $\boldf{x}_1,\dots, \boldf{x}_m$ is the fixed enumeration of $R^{\struct{A}_1}$ from Definition~\ref{generalized}.
The map $h$ is well-defined: if $\boldf{r}\of{i} = a =\boldf{s}\of{j}$ for 
some $\boldf{r}\in R^{\struct{A}_2}$ and $\boldf{s}\in S^{\struct{A}_2}$, then 
\[g^{R}_{\boldf{r}}(f(\boldf{x}_1),\dots, f(\boldf{x}_m))\of{i} = g^{S}_{\boldf{s}}(f(\boldf{y}_1),\dots, f(\boldf{y}_n))\of{j}\] by the definition of $\mathcal{E}(\struct{A}_1,\struct{A}_2)$.
It remains to show that $h$ is a homomorphism.
Let $R\in \tau$ and $\boldf{r}\in R^{\struct{A}_2}$ be arbitrary. 
Since $f$ is a homomorphism, we have $f(\boldf{x}_i) \in R^{\struct{D}}$ for every $i\in [m]$.
Since $g^{R}_{\boldf{r}}$ is a polymorphism of $\struct{D}$, we have
$ h(\boldf{r}) = g^{R}_{\boldf{r}}(f(\boldf{x}_1),\dots, f(\boldf{x}_m)) \in R^{\struct{D}}$.}

For \eqref{ctr_triviality2}, we fix a finite $F\subseteq B$ and a finite $\tau$-structure $\struct{A}_1$.  
Let $f_1,\dots, f_{d}$ be an arbitrary fixed enumeration of all possible maps from $A_1$ to $F$.
Let $R$ be an arbitrary symbol from $\tau$,  and let $k\coloneqq \ar(R)$.
We fix an  arbitrary enumeration $\boldf{x}_1,\dots, \boldf{x}_m$ of $R^{\struct{A}_1}$. 
The domain of $\struct{B}_F(\struct{A}_1)$ is $B^{d}$, and, for every $R\in \tau$ with $k=\ar(R)$,  the relation $R^{\struct{B}_F(\struct{A}_1)}$ consists of all tuples $(\boldf{t}_1,\dots,\boldf{t}_k)\in (B^d)^k $ for which there exists $m$-ary $g\in \Pol(\struct{B})$ such that $(\boldf{t}_1\of{i},\dots,\boldf{t}_{k}\of{i}) =  g(f_i(\boldf{x}_1),\dots, f_i(\boldf{x}_m))$ for every $i\in [d]$.
Since $\Pol(\struct{B})$ is closed under taking compositions of operations, the relation 
$$ \{(\boldf{t}_1\of{1},\dots, \boldf{t}_1\of{d},\dots,\boldf{t}_k\of{1},\dots, \boldf{t}_k\of{d}) \mid (\boldf{t}_1,\dots,\boldf{t}_k) \in R^{\struct{B}_F(\struct{A}_1)}\}$$  is preserved by every polymorphism of $\struct{B}$. 
Hence, by Theorem~4 in \cite{bodirsky2006constraint}, it has a pp-definition in $\struct{B}$. 
Consequently, $\struct{B}_F(\struct{A}_1)$ is a $d$-dimensional pp-power of $\struct{B}$. 

Next we show that the map $h\colon A_1 \rightarrow B^d$ defined by $h(x)\coloneqq (f_1(x),\dots, f_d(x))$ is a homomorphism from $\struct{A}_1$ to $\struct{B}_F(\struct{A}_1)$.
Let $\boldf{t}$ be an arbitrary tuple from $R^{\struct{A}_1} $  for some $R\in \tau$, and let $k$ be the arity of $R$.
Then there exists $j\in [m]$ such that $\boldf{t}=\boldf{x}_j$ where $\boldf{x}_1,\dots, \boldf{x}_m$ is the fixed enumeration of $R^{\struct{A}_1}$ from the definition of $R^{\struct{B}_F(\struct{A}_1)}$.
Hence,  $h(\boldf{t})$ is of the form $(\boldf{t}_1,\dots ,\boldf{t}_k) \in (B^d)^k$ where $(\boldf{t}_1\of{i},\dots,\boldf{t}_{k}\of{i}) =  \pr_j(f_i(\boldf{x}_1),\dots, f_i(\boldf{x}_m))$ for every $i\in [d]$. It follows that $h(\boldf{t}) \in R^{\struct{B}_F(\struct{A}_1)}$.

It  remains to show that $\struct{A}_2 \rightarrow \struct{B}_F(\struct{A}_1)$  if and only if  $\Pol(\struct{B}) \models \mathcal{E}(\struct{A}_1,\struct{A}_2)$  on   $F$.  

``$\Leftarrow$'':  Suppose that $\Pol(\struct{B}) \models \mathcal{E}(\struct{A}_1,\struct{A}_2)$ on $F$. 
Consider the map $\xi$ which sends an $m$-ary operation  $f $ on $B$ to its component-wise action $(\boldf{t}_1,\dots, \boldf{t}_{m}) \mapsto f(\boldf{t}_1,\dots, \boldf{t}_{m})$ on $B^d$.
Since $\Pol(\struct{B}_F(\struct{A}_1))$ is a pp-power of $\struct{B}$, we have $\xi(f) \in \Pol(\struct{B}_F(\struct{A}_1))$ for every $f\in \Pol(\struct{B})$. 
Moreover, the images under $\xi$ of the operations witnessing $\Pol(\struct{B}) \models \mathcal{E}(\struct{A}_1,\struct{A}_2)$ on $F$ witness that  $\Pol(\struct{B}_F(\struct{A}_1)) \models \mathcal{E}(\struct{A}_1,\struct{A}_2)$ on $F^d$.
Note that $F^d$ contains the image of the homomorphism $h\colon \struct{A}_1 \rightarrow \struct{B}_F(\struct{A}_1), x \mapsto (f_1(x),\dots, f_d(x))$ from the previous paragraph. 
This means that $\Pol(\struct{B}_F(\struct{A}_1)) \models \mathcal{E}(\struct{A}_1,\struct{A}_2)$ on $h(A_1)$ and thus the requirements in item~\eqref{ctr_triviality1} are satisfied.
It now follows from item~\eqref{ctr_triviality1} that $\struct{A}_2 \rightarrow \struct{B}_F(\struct{A}_1)$. 

``$\Rightarrow$'': Suppose that there exists a homomorphism $h\colon \struct{A}_2 \rightarrow \struct{B}_F(\struct{A}_1)$.
Then, for every $R\in \tau$ and every $\boldf{t} \in R^{\struct{A}_2}$, by the definition of $R^{\struct{B}_F(\struct{A}_1)}$, there exists an $m$-ary $g_{\boldf{t}}^{R} \in \Pol(\struct{B})$ such that $h(\boldf{t})$ is of the form $(\boldf{t}_1,\dots ,\boldf{t}_k) \in (B^d)^k$ where $(\boldf{t}_1\of{i},\dots,\boldf{t}_{k}\of{i}) =  g_{\boldf{t}}^{R}(f_i(\boldf{x}_1),\dots, f_i(\boldf{x}_m))$ for every $i\in [d]$.
Now, for every $a\in A_{2}$ such that there exist $R,S\in\tau$ and  $\boldf{r}\in R^{\struct{A}_2}$,  $\boldf{s}\in S^{\struct{A}_2}$ with
$\boldf{r}\of{i} =a=\boldf{s}\of{j}$, we have $h(\boldf{r}\of{i})= h(\boldf{s}\of{j})$, where 
\begin{align*}
h(\boldf{r}\of{i}) = (g_{\boldf{r}}^{R}(f_1(\boldf{x}_1),\dots, f_1(\boldf{x}_m))\of{i},\dots, g_{\boldf{r}}^{R}(f_d(\boldf{x}_1),\dots, f_d(\boldf{x}_m))\of{i}), \\
h(\boldf{s}\of{j}) = (g_{\boldf{s}}^{S}(f_1(\boldf{x}_1),\dots, f_1(\boldf{x}_m))\of{j},\dots, g_{\boldf{s}}^{S}(f_d(\boldf{x}_1),\dots, f_d(\boldf{x}_m))\of{j}).
\end{align*} 
By the definition of $f_1,\dots, f_d$, the operations $g_{\boldf{t}}^{R}$ witness that $\Pol(\struct{B}) \models \mathcal{E}(\struct{A}_1,\struct{A}_2)$ on $F$. 
\end{proof}

\begin{proof}[Proof of Theorem~\ref{testing}]    
``\eqref{ctr_pp_powers_1}$\Rightarrow$\eqref{ctr_pp_powers_2}'':  
Suppose that $\Pol(\struct{B}) \models \mathcal{E}(\struct{A}_1,\struct{A}_2)$.  
If $\struct{C}$ is a pp-power of $\struct{B}$, then, by Proposition~\ref{InvAutPol}, each polymorphism of $\struct{B}$ represents a polymorphism of $\struct{C}$ through its component-wise action.
This means that  $\Pol(\struct{C}) \models \mathcal{E}(\struct{A}_1,\struct{A}_2)$. 
If additionally $\struct{A}_1 \rightarrow \struct{C}$, then it follows from Lemma~\ref{triviality2}\eqref{ctr_triviality1} that $\struct{A}_2 \rightarrow \struct{C}$.

``\eqref{ctr_pp_powers_2}$\Rightarrow$\eqref{ctr_pp_powers_1}'': For every finite $F\subseteq B$, the structure $\struct{B}_F(\struct{A}_1)$ from Lemma~\ref{triviality2}\eqref{ctr_triviality2} is a pp-power of $\struct{B}$.
Also, $\struct{A}_1 \rightarrow \struct{B}_F(\struct{A}_1)$.
By assumption, we have that $\struct{A}_2 \rightarrow \struct{B}_F(\struct{A}_1)$ for every finite $F\subseteq B$.
Using Lemma~\ref{triviality2}\eqref{ctr_triviality2}, we conclude that 
$\Pol(\struct{B}) \models \mathcal{E}(\struct{A}_1,\struct{A}_2)$ 
on $F$ for every finite $F\subseteq B$.
By a compactness argument, e.g.,  Lemma~9.6.10 in \cite{bodirsky2021complexity}, we have that $\Pol(\struct{B}) \models \mathcal{E}(\struct{A}_1,\struct{A}_2)$, which finishes the proof. 
\end{proof}}

\begin{example} \label{expl:non-triv} 
Let $\struct{A}_1$ and $\struct{A}_2$ be the structures from the definition of $\mathcal{E}_{k,n}$ (Definition~\ref{def:dwnu}).  Then Theorem~\ref{testing} implies that $\mathcal{E}_{k,n}$ is non-trivial: 
indeed, first note that there is no homomorphism from $\struct{A}_2$ to $\struct{A}_1$. 
Choose $\struct{C} \coloneqq \struct{B} \coloneqq \struct{A}_1$; then trivially $\struct{A}_1 \rightarrow \struct{C}$ 
but $\struct{A}_2 \not \to \struct{C}$, and hence $\Pol(\struct{B})$,  which only contains the projections,  does not satisfy $\mathcal{E}_{k,n}$. 
\end{example} 

\begin{example} 
We claim that the structures $(\mathbb{Q};\neq,\mathrm{S}_{\elel})$ and $(\mathbb{Q};\mathrm{R}_{\Min})$ are incomparable w.r.t.\ pp-constructibility.
We already know from Proposition~\ref{temporal-structureS_datalog}(1) that $(\mathbb{Q};\neq,\mathrm{S}_{\elel})$ does not pp-construct $(\mathbb{Q};\mathrm{R}_{\Min})$.
The reason there was that $\CSP(\mathbb{Q};\neq,\mathrm{S}_{\elel})$ is expressible in Datalog whereas $\CSP(\mathbb{Q};\mathrm{R}_{\Min})$ is not, and that pp-constructions preserve the expressibility of CSPs in Datalog.
This argument clearly cannot be used the other way around.
However, note that  $\Pol(\mathbb{Q};\mathrm{R}_{\Min})$ 
contains the ternary WNU operation $(x,y,z) \mapsto \min(x,y,z)$.
By Theorem~\ref{testing}, $\Pol(\mathbb{Q};\neq,\mathrm{S}_{\elel})$ does not contain a ternary WNU operation if and only if  there exists a pp-power  $\struct{C}$ of $(\mathbb{Q};\neq,\mathrm{S}_{\elel})$ such that $\struct{A}_1=(\{0,1\};1\mathrm{IN}3) \rightarrow \struct{C}$ and $\struct{A}_2 = ( \{a\};\{(a,a,a)\}) \centernot{\rightarrow} \struct{C}$.
And indeed,  such a pp-power exists:  the structure $\struct{C} \coloneqq (\mathbb{Q}; \{(x,y,z) \mid x\neq y \vee x < z\} )$ is even pp-definable in $\Pol(\mathbb{Q};\neq,\mathrm{S}_{\elel})$. 
Now if follows that  $(\mathbb{Q};\mathrm{R}_{\Min})$ does not pp-construct $(\mathbb{Q};\neq,\mathrm{S}_{\elel})$, otherwise \blue{Lemma~\ref{pp_clone_minion}} would yield a contradiction to the fact that $\Pol(\mathbb{Q};\neq,\mathrm{S}_{\elel})$ does not contain any ternary WNU operation.
\end{example}  
\begin{lemma} \label{noaffinecombinations}  For $n\geq 2$,
$\Pol(\struct{E}_{\mathbb{Z}_n,3})$ satisfies 	$\mathcal{E}_{k,k+1}$ if and only if  $\gcd(k,n)=1$. 
\end{lemma}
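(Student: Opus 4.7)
The plan is to first identify the polymorphism clone of $\struct{E}_{\mathbb{Z}_n,3}$ explicitly as the clone of $\mathbb{Z}_n$-linear combinations whose coefficients sum to $1$, and then to unfold the identities of $\mathcal{E}_{k,k+1}$ in terms of these coefficients, reducing everything to a single linear equation over $\mathbb{Z}_n$ whose solvability is exactly $\gcd(k,n)=1$.

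First I would argue that an $m$-ary operation $f\colon \mathbb{Z}_n^m \to \mathbb{Z}_n$ belongs to $\Pol(\struct{E}_{\mathbb{Z}_n,3})$ if and only if $f(x_1,\dots,x_m) = \sum_{i=1}^{m} c_i\, x_i$ with $c_i \in \mathbb{Z}_n$ and $\sum_{i=1}^m c_i \equiv 1 \pmod{n}$. The forward direction is the standard fact that preservation of the relations $\mathit{eqn}_{a,j}$ for $j\in\{1,2,3\}$ forces $f$ to be affine, and the condition $\mathit{eqn}_{0,1}$ (i.e.\ the single constraint $x=0$) kills the constant term, while preserving $\mathit{eqn}_{a,1}$ for every $a$ forces the coefficient sum to be $1$; the backward direction is a direct verification.

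Next I would unfold $\mathcal{E}_{k,k+1}$. Index the $k+1$ strictly increasing tuples in $[k+1]^k$ by the missing element: $\boldf{r}_i \coloneqq (1,\dots,i-1,i+1,\dots,k+1)$ for $i\in [k+1]$, so that the condition provides $k+1$ operations $g_1,\dots,g_{k+1}$, each of arity $k$ (the size of $1\textup{IN}k$). With $A_1=\{0,1\}$, the two variable names appearing in the identities are $v_0$ and $v_1$, and the enumeration of $R^{\struct{A}_1}=1\textup{IN}k$ yields that $(x_{\ell,1},\dots,x_{\ell,k})$ is the tuple with $v_1$ in position $\ell$ and $v_0$ elsewhere. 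For $i\neq j$ in $[k+1]$ and $m\in[k+1]\setminus\{i,j\}$, if $p_{i,m}$ denotes the position of $m$ in $\boldf{r}_i$, the condition demands
\[
g_i(v_0,\dots,v_0,\overset{p_{i,m}}{v_1},v_0,\dots,v_0) \approx g_j(v_0,\dots,v_0,\overset{p_{j,m}}{v_1},v_0,\dots,v_0).
\]
Writing $g_i(y_1,\dots,y_k) = \sum_{\ell} c_{i,\ell}\,y_\ell$ with $\sum_\ell c_{i,\ell}\equiv 1$, the left-hand side evaluates to $c_{i,p_{i,m}}\,v_1 + (1-c_{i,p_{i,m}})\,v_0$, and similarly for the right-hand side. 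Since $v_0,v_1$ are free variables, the identity is equivalent to $c_{i,p_{i,m}} \equiv c_{j,p_{j,m}} \pmod n$.

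The final step is then linear algebra in $\mathbb{Z}_n$. Define $d_m \coloneqq c_{i,p_{i,m}}$ for any $i\neq m$; the preceding identity shows $d_m$ is well-defined. Since as $m$ ranges over $[k+1]\setminus\{i\}$ the position $p_{i,m}$ ranges bijectively over $[k]$, the normalization $\sum_\ell c_{i,\ell}\equiv 1$ rewrites as $\sum_{m\neq i} d_m \equiv 1 \pmod n$ for every $i\in[k+1]$. Subtracting two such equations for $i\neq i'$ gives $d_{i'}\equiv d_i$, so all $d_m$ are equal to a common value $d\in\mathbb{Z}_n$, and the system collapses to the single congruence $k\,d \equiv 1 \pmod n$. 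This is solvable in $\mathbb{Z}_n$ iff $\gcd(k,n)=1$, and any such solution $d$ yields admissible coefficients $c_{i,\ell}\coloneqq d$ (which indeed satisfy $\sum_\ell c_{i,\ell}=kd\equiv 1$), producing the required operations $g_i$. The only step that requires care is confirming that the identities in $\mathcal{E}_{k,k+1}$ really reduce to pointwise equality of coefficients; this rests on the two variable names $v_0,v_1$ being distinct free variables, which is legitimate because the condition $\mathcal{E}_{k,k+1}$ is non-trivial (as illustrated in Example~\ref{expl:non-triv}).
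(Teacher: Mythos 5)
Your proof is correct, and it takes a genuinely different route from the paper. The paper's argument for the ``only if'' direction is to apply the general Theorem~\ref{testing}: letting $\struct{C} \coloneqq (\mathbb{Z}_n; R)$ with $R \coloneqq \{\boldf{t} \in \mathbb{Z}_n^k \mid \sum_i \boldf{t}\of{i} = 1 \bmod n\}$, which is pp-definable in $\struct{E}_{\mathbb{Z}_n,3}$, and observing that $\struct{A}_1 \rightarrow \struct{C}$, the theorem yields $\struct{A}_2 \rightarrow \struct{C}$, which unpacks to the inhomogeneous system $\sum_{j\neq i} x_j = 1 \bmod n$ for $i \in [k+1]$; summing these gives $k(x_1 + \cdots + x_{k+1} - 1) = 1 \bmod n$. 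You instead bypass Theorem~\ref{testing} entirely by invoking the explicit description of $\Pol(\struct{E}_{\mathbb{Z}_n,3})$ as the clone of $\mathbb{Z}_n$-linear maps $\sum_i c_i x_i$ with $\sum_i c_i = 1$, and then unfold the identities of $\mathcal{E}_{k,k+1}$ coefficient-by-coefficient; the identity $g_i(\ldots,v_1,\ldots) \approx g_j(\ldots,v_1,\ldots)$ at the shared position of a vertex $m$ pins $c_{i,p_{i,m}}=c_{j,p_{j,m}}$, the normalisation $\sum_\ell c_{i,\ell}=1$ turns into $\sum_{m\neq i} d_m = 1$, and the system collapses to $kd \equiv 1$. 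Your route is more elementary and self-contained (no appeal to pp-powers or Lemma~\ref{triviality2}), but it rests on correctly identifying the polymorphism clone, which is a classical but unreferenced fact -- preservation of $\mathit{eqn}_{0,3}$ forces $f$ to be affine via $f(\boldf{x}+\boldf{y})=f(\boldf{x})+f(\boldf{y})-f(\boldf{0})$, and the unary constraints kill the constant and normalise the coefficient sum. The paper's route buys uniformity (it reuses Theorem~\ref{testing}, which is also needed elsewhere) and avoids having to prove the clone characterisation. Interestingly, the quantities you call $d_m$ correspond exactly to the values $x_m$ of the homomorphism $\struct{A}_2 \to \struct{C}$ produced by the paper's argument, so the two proofs converge on the identical linear system; the difference is whether one reaches it through the abstract homomorphism test or through direct coefficient bookkeeping. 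One small wrinkle: your closing remark that the reduction to pointwise equality of coefficients is legitimate ``because the condition is non-trivial'' is not quite the right justification -- it is legitimate simply because satisfaction of a minor condition in a clone is universal quantification over all values of the variables, so $c_{i,p}v_1 + (1-c_{i,p})v_0 = c_{j,q}v_1 + (1-c_{j,q})v_0$ for all $v_0,v_1 \in \mathbb{Z}_n$ forces $c_{i,p}=c_{j,q}$ by taking $v_0=0$, $v_1=1$. This does not affect correctness.
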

\begin{proof} 
First suppose that $\gcd(k,n)=1$. There exists $\lambda \in \mathbb{Z}_{n}$ such that $k\lambda = 1\bmod  n$. We write $g_{i}$ instead of $g_{{\boldf{t}}}$ for $\boldf{t} \in [k+1]^{k}$ with $ {\boldf{t}}\of{1}<\cdots < {\boldf{t}}\of{k}$ that omits $i$ as an entry. Then $\mathcal{E}_{k,k+1}$ is  witnessed by a set of $k$-ary WNU operations $g_1,\dots, g_{k+1}$ given by the affine combinations $g_{j}(x_1,\dots,x_k)\coloneqq \sum_{i=1}^{k} \lambda x_{i}$.  

Next, suppose that  $\Pol(\struct{E}_{\mathbb{Z}_n,3})$ satisfies 	$\mathcal{E}_{k,k+1}$.
It is well-known that, for every $k\geq 1$, the structure $\struct{E}_{\mathbb{Z}_n,k}$ has a pp-definition in $\struct{E}_{\mathbb{Z}_n,3}$.
In particular, the structure $\struct{C}\coloneqq (\mathbb{Z}_n;R)$ where $R\coloneqq \{\boldf{t} \in (\mathbb{Z}_n)^{k} \mid \sum_{i=1}^{k} \boldf{t}\of{i} = 1 \bmod n \}$ has a pp-definition in  $\struct{E}_{\mathbb{Z}_n,3}$.
Let $\struct{A}_1$ and $\struct{A}_2$ be as in Definition~\ref{def:dwnu}.
Clearly $\struct{A}_1 \rightarrow \struct{C}$. 
By Theorem~\ref{testing}, we have that $\struct{A}_2 \rightarrow \struct{C}$.
This means that the inhomogeneous system of $k+1$ \blue{mod-2} equations of the form $\sum_{j\in [k+1]\setminus \{i\}} x_{j}= 1 \bmod n$ has a solution.
By summing up the equations and subtracting $k$ on both sides, we get that 
$
k x_{1}+\cdots + k x_{k+1} -k =  k(x_1+\cdots + x_{k+1}-1)  = 1 \bmod  n. 
$
\blue{This can be the case only if  $\gcd(k,n)=1$.}
\end{proof}

Our next goal is the proof of Theorem~\ref{alternativefp}, which states that for temporal CSPs and finite-domain CSPs,  the condition $\mathcal{E}_{k,k+1}$ characterises expressibility in FP. 
For the proof of  we need to introduce some new polymorphisms of temporal structures. 
\blue{Recall the operation $\lex_k$ from Definition~\ref{definition:lex_k}.
\begin{definition}  \label{definition:PWNUs}	Let $k\in \mathbb{N}_{\geq 2}$.  The following definitions specify  $k$-ary operations on $\mathbb{Q}$: 
\begin{align*}     
\Min_{k}(\boldf{t}) \coloneqq & \, \Min\{\boldf{t}\of{1},\dots, \boldf{t}\of{k}  \}, \\ 
\med_{k}({\boldf{t}})\coloneqq & \,   \Min \{ \Max(\boldf{t}\of{i},\boldf{t}\of{j}) \mid i,j\in [k] \text{ and } i\neq j   \}, \\
\mi_{k}({\boldf{t}})\coloneqq  &  \lex_{k+2}(\Min_{k}({\boldf{t}}),\med_{k}(-\chi({\boldf{t}})),-\chi({\boldf{t}})), \\
\elel_{k}({\boldf{t}})\coloneqq & \lex_{k+2}(\Min_{k}({\boldf{t}}),\med_{k}({\boldf{t}}),{\boldf{t}}).
\end{align*}   
\end{definition}  
\begin{remark}Note that $\med_k(\boldf{t})$ equals the smallest value in $\boldf{t}$ if it appears in at least two entries, and otherwise the second smallest value in $\boldf{t}$.
Consequently, $\med_k$ is a \emph{near unanimity} (NU) operation, i.e., it satisfies the \red{identity} 
\[ f(y,x,\dots,x) \approx f(x,y,x,\dots,x) \approx \cdots \approx f(x,\dots,x,y)\approx x.\]
The involvement of a NU operation in the definitions of $\mi_k$ and $\elel_k$ is necessary for the proofs of Theorem~\ref{alternativefp} and  Proposition~\ref{PWNUs} to work, even though we do not mention this fact explicitly in the proofs.
One could also choose any other NU operation such that Proposition~\ref{newpoly} holds for the resulting operations $\mi_k$ and $\elel_k$.
\end{remark}}
\begin{proposition} \label{newpoly}   
Let $\struct{B}$ be a temporal structure and $k\in \mathbb{N}_{\geq 2}$.
\begin{enumerate}
\item \label{item:eq_1} If $\struct{B}$ is preserved by $\mi$, then also by $\mi_k$.
\item \label{item:eq_2} If $\struct{B}$ is preserved by $\elel$, then also by $\elel_k$.
\end{enumerate} 
\end{proposition}
\begin{proof}  \label{proof_newpoly}  
\blue{We first prove the following claim.
\begin{claim} \label{claim:comparisson_mi_elel} Let  $f\in \{\mi_k,\elel_k\}$, and let $\boldf{t}_1,\boldf{t}_2\in \mathbb{Q}^k$ be arbitrary.
\begin{itemize}
\item  	If $f(\boldf{t}_1)=f(\boldf{t}_2)$, then $\min_k(\boldf{t}_1)=\min_k(\boldf{t}_2)$ and $\chi(\boldf{t}_1)=\chi(\boldf{t}_2)$ if $f=\mi_k$, and $\boldf{t}_1=\boldf{t}_2$ if $f=\elel_k$.
\item If $f(\boldf{t}_1)<f(\boldf{t}_2)$, then $\min_k(\boldf{t}_1)\leq\min_k(\boldf{t}_2)$ and there exists $\ell \in [k]$ such that
\[\Min_k(\boldf{t}_1)=\boldf{t}_1\of{\ell}<\boldf{t}_2\of{\ell}.\]
\end{itemize}
\end{claim}
\begin{proof} The first part is a direct consequence of the following two facts:
\begin{itemize} 
\item $\mi_{k}$ compares $(\min_k(\boldf{t}_1),\chi(\boldf{t}_1))$ and $(\min_k(\boldf{t}_2),\chi(\boldf{t}_2))$  lexicographically, and 
\item $\elel_{k}$ compares $\boldf{t}_1$ and $\boldf{t}_2$  lexicographically.
\end{itemize}

We prove the second part by two separate case distinctions, starting with $\mi_k$.

\textit{Case 1: $\Min_{k}(\boldf{t}_{1})<\Min_{k}(\boldf{t}_{2})$.} 
Then we can choose any $\ell$ such that the $\ell$-th entry is minimal  in $\boldf{t}_{1}$. 

\textit{Case 2: $\Min_{k}(\boldf{t}_{1})=\Min_{k}(\boldf{t}_{2})$ and $ \med_{k}(-\chi(\boldf{t}_{1}))<\med_{k}(-\chi(\boldf{t}_{2})).$}
Then the smallest value in both tuples only appears in one entry of $\boldf{t}_{2}$ but in at least two entries of 
$\boldf{t}_{1}$.
Clearly, we can choose the index of one of these two entries as $\ell$.  

\textit{Case 3: $\Min_{k}(\boldf{t}_{1})=\Min_{k}(\boldf{t}_{2})$,  
$ \med_{k}(-\chi(\boldf{t}_{1}))=\med_{k}(-\chi(\boldf{t}_{2}))$, and $\lex_{k}(-\chi(\boldf{t}_{1}))<\lex_{k}(-\chi(\boldf{t}_{2}))$.}  
Then we define $\ell \in [k]$ as the leftmost index on which  $-\chi(\boldf{t}_{1})$ is pointwise smaller than $-\chi(\boldf{t}_{2})$.  

\textit{Case 1: $\Min_{k}(\boldf{t}_{1})<\Min_{k}(\boldf{t}_{2})$.} 
Then we can choose any $\ell$ such that the $\ell$-th entry is minimal  in $\boldf{t}_{1}$. 

\textit{Case 2: $\Min_{k}(\boldf{t}_{1})=\Min_{k}(\boldf{t}_{2})$ and $ \med_{k}(\boldf{t}_{1})<\med_{k}(\boldf{t}_{2}).$}
Then the smallest value in both tuples only appears in one entry of $\boldf{t}_{2}$ but in at least two entries of $\boldf{t}_{1}$.
Clearly, we can choose the index of one of these two entries as $\ell$. 

\textit{Case 3: $\Min_{k}(\boldf{t}_{1})=\Min_{k}(\boldf{t}_{2})$, $ \med_{k}(\boldf{t}_{1})=\med_{k}(\boldf{t}_{2})$, and $ \lex_{k}(\boldf{t}_{1})<\lex_{k}(\boldf{t}_{2}).$}
Then we define $\ell$ as the leftmost index on which  $\boldf{t}_{1}$ is pointwise smaller than $\boldf{t}_{3}$.   
\end{proof}

For~\eqref{item:eq_1}, by Lemma~\ref{RelationalBaseMi}, it suffices to prove that 
$\mi_{k}$ preserves $(\mathbb{Q};\mathrm{R}_{\mi},\mathrm{S}_{\mi},{\neq})$.

We start with the relation $\neq$.
Suppose that $\boldf{t}_1,\boldf{t}_2 \in \mathbb{Q}^k$ satisfy $\mi_{k}(\boldf{t}_{1} )=\mi_{k}(\boldf{t}_{2})$. 
Then, by Claim~\ref{claim:comparisson_mi_elel}, $\boldf{t}_{1}\of{\ell}=\boldf{t}_{2}\of{\ell}$ for every $\ell \in [k]$ such that the $\ell$-th entry of $\boldf{t}_1$ is minimal. We conclude that $\mi_{k}$ preserves $\neq$.

We continue with the relation $\mathrm{S}_{\mi}$.
Suppose that $\boldf{t}_1,\boldf{t}_2,\boldf{t}_3\in \mathbb{Q}^k$ satisfy
$(\mi_{k}(\boldf{t}_{1} ),\mi_{k}(\boldf{t}_{2} ),\mi_{k}(\boldf{t}_{3} )) \notin \mathrm{S}_{\mi}$.
Then $\mi_{k}(\boldf{t}_{1} )= \mi_{k}(\boldf{t}_{2})$ and $\mi_{k}(\boldf{t}_{1} )< \mi_{k}(\boldf{t}_{3})$.
By Claim~\ref{claim:comparisson_mi_elel}, $\min_k(\boldf{t}_1)=\min_k(\boldf{t}_2)$ and $\chi(\boldf{t}_1)=\chi(\boldf{t}_2)$, and there exists $\ell\in [k]$ such that 
$\min_k(\boldf{t}_1)=\boldf{t}_1\of{\ell}<\boldf{t}_3\of{\ell}$.
Then $(\boldf{t}_1\of{\ell}, \boldf{t}_2\of{\ell}, \boldf{t}_3\of{\ell})\notin \mathrm{S}_{\mi}$. 
We conclude that $\mi_k$ preserves $\mathrm{S}_{\mi}$.

Finally, consider the relation $\mathrm{R}_{\mi}$. 	 Suppose that $\boldf{t}_1,\boldf{t}_2,\boldf{t}_3\in \mathbb{Q}^k$ satisfy
$(\mi_{k}(\boldf{t}_{1} ),\mi_{k}(\boldf{t}_{2} ),\mi_{k}(\boldf{t}_{3} )) \notin \mathrm{R}_{\mi}$.
Then $\mi_{k}(\boldf{t}_{1} )\leq \mi_{k}(\boldf{t}_{2})$ and $\mi_{k}(\boldf{t}_{1} )< \mi_{k}(\boldf{t}_{3})$.
By Claim~\ref{claim:comparisson_mi_elel}, $\min_k(\boldf{t}_1)\leq\min_k(\boldf{t}_2)$ and there exists $\ell\in [k]$ such that  $\min_k(\boldf{t}_1)=\boldf{t}_1\of{\ell} <\boldf{t}_3\of{\ell}$.
Then $(\boldf{t}_1\of{\ell}, \boldf{t}_2\of{\ell}, \boldf{t}_3\of{\ell})\notin \mathrm{R}_{\mi}$. 
We conclude that $\mi_k$ preserves $\mathrm{R}_{\mi}$.

For \eqref{item:eq_2}, by Lemma~\ref{RelationalBaseLl}, it  suffices 
to show that $\elel_{k}$ preserves $(\mathbb{Q};\mathrm{R}_{\elel},\mathrm{S}_{\elel},\neq)$.

We start with the relation $\neq$.  Clearly, $\elel_k$ preserves $\neq$ because it is injective. 

We continue with the relation $\mathrm{S}_{\elel}$.
Suppose that $\boldf{t}_{1},\dots,\boldf{t}_4\in \mathbb{Q}^k$ satisfy  $(\elel_{k}(\boldf{t}_1),\dots,\elel_{k}(\boldf{t}_4)) \notin \mathrm{S}_{\elel}$.
Then $\elel_{k}(\boldf{t}_1) = \elel_{k}(\boldf{t}_2)$ and $\elel_{k}(\boldf{t}_4)<\elel_{k}(\boldf{t}_3)$.
By Claim~\ref{claim:comparisson_mi_elel}, we have $\boldf{t}_1=\boldf{t}_2$, and there exists $\ell \in [k]$ such that $\boldf{t}_4\of{\ell}<\boldf{t}_3\of{\ell}.$
Then $(\boldf{t}_1\of{\ell}, \dots, \boldf{t}_4\of{\ell})\notin \mathrm{S}_{\elel}$. 
We conclude that $\elel_k$ preserves $\mathrm{S}_{\elel}$.

Finally, consider the relation  $\mathrm{R}_{\elel}$.
Suppose that $\boldf{t}_{1},\boldf{t}_2,\boldf{t}_3\in \mathbb{Q}^k$ satisfy  $(\elel_{k}(\boldf{t}_1), \elel_{k}(\boldf{t}_2),\elel_{k}(\boldf{t}_3)) \notin \mathrm{R}_{\elel}$.
Then, without loss of generality, $\elel_{k}(\boldf{t}_1) \leq \elel_{k}(\boldf{t}_2)$ and $\elel_{k}(\boldf{t}_1)<\elel_{k}(\boldf{t}_3)$.
By Claim~\ref{claim:comparisson_mi_elel}, $\min_k(\boldf{t}_1)\leq\min_k(\boldf{t}_2)$ and there exists $\ell\in [k]$ such that  $\min_k(\boldf{t}_1)=\boldf{t}_1\of{\ell} <\boldf{t}_3\of{\ell}$.
Then $(\boldf{t}_1\of{\ell}, \boldf{t}_2\of{\ell}, \boldf{t}_3\of{\ell})\notin \mathrm{R}_{\elel}$. 
We conclude that $\elel_k$ preserves $\mathrm{R}_{\elel}$.} 
\end{proof}
\blue{Note that the proofs that $\mi_k$ preserves $\mathrm{R}_{\mi}$ and that $\elel_k$ preserves $\mathrm{R}_{\elel}$ are almost identical (but not entirely). The reason is that $R_{\elel}(x,y,z)$ is equivalent to $R_{\mi}(x,y,z) \wedge R_{\mi}(x,z,y)$, and  $\elel_k$ is essentially an injective version of $\mi_k$. By Proposition~\ref{InvAutPol}, $R_{\elel}$ is preserved by $\mi_k$.
However, it is not hard to see that $R_{\mi} $ is not preserved by any injective operation of arity $k\geq 2$, in particular not by $\elel_k$.}
\begin{proof}[Proof of Theorem~\ref{alternativefp}]
We start with the case where $\struct{B}$ is a temporal structure. 
Suppose that $\struct{B}$ is neither preserved by $\Min$, $\mi$, $\mx$, $\elel$, the dual of one of these operations, nor by a constant operation. Then, by Theorem~\ref{TCSPdichot}, $\struct{B}$ pp-constructs  $(\{0,1\}; 1\textup{IN}3)$. By \blue{Lemma~\ref{pp_clone_minion}}, there exists a minion homomorphism from $\Pol(\struct{B})$ to $\Pol(\{0,1\}; 1\textup{IN}3)$, the projection clone. By Theorem~\ref{testing},  for every $k\geq 2$, the condition $\mathcal{E}_{k,k+1}$ is non-trivial (see Example~\ref{expl:non-triv}).
Since minion homomorphisms preserve 
minor conditions such as $\mathcal{E}_{k,k+1}$  
it follows that $\Pol(\struct{B})$ cannot satisfy $\mathcal{E}_{k,k+1}$.  Next, we distinguish the subcases where $\struct{B}$ is a temporal structure preserved by one of the operations listed above.  

\textit{Case 1:	$\struct{B}$ is preserved by a constant operation.} Clearly,  $\mathcal{E}_{k,k+1}$ is witnessed by a set of $k$-ary constant operations for every $k \geq 1$.  

\textit{Case 2: $\struct{B}$ is preserved by $\Min$.} Then $\mathcal{E}_{k,k+1}$ is witnessed by a set of $k$-ary minimum operations for every $k\geq 2$.

\textit{Case 3: $\struct{B}$ is preserved by $\mx$.} By Theorem~\ref{mixedmx}, either $\struct{B}$
is preserved by $\Min$ or by a constant operation,
which are cases that we have already treated, or otherwise
$\struct{B}$ admits a pp-definition of $\mathrm{X}$.  
We claim that $\Pol(\mathbb{Q};\mathrm{X})$ does not satisfy $\mathcal{E}_{k,k+1}$ for every odd $k>1$. 
%
%
By Theorem~\ref{partialbasismx}, the temporal relation 
$  R^{\mx}_{[k],k} =  \{ {\boldf{t}}\in \mathbb{Q}^{k} \mid \sum_{\ell=1}^{k}\chi({\boldf{t}})\of{\ell} = 0 \bmod 2 \} $
is preserved by $\mx$.
By Lemma~\ref{lemma:RelationalBaseMx}, $ R^{\mx}_{[k],k}$ is pp-definable in $(\mathbb{Q};\mathrm{X})$.
\blue{Let $\struct{A}_1$ and $\struct{A}_2$ be as in Definition~\ref{def:dwnu}.}
Since $k$ is odd, there exists a homomorphism from $\struct{A}_1$ to $ (\mathbb{Q}; R^{\mx}_{[k],k})$.
However, there exists no homomorphism from $\struct{A}_2$ to $ (\mathbb{Q}; R^{\mx}_{[k],k}) $.
This is because the homogeneous system of $k+1$ \blue{mod-2} equations of the form $\sum_{j\in [k+1]\setminus \{i\}} x_{j}= 0 \bmod 2$ has no non-trivial solution, which means that $\struct{A}_2$ has no free set by Lemma~\ref{freeMX}.
Hence, Theorem~\ref{testing} implies that 
$\Pol(\mathbb{Q};\mathrm{X})$ does not satisfy $\mathcal{E}_{k,k+1} = \mathcal{E}(\struct{A}_1,\struct{A}_2)$.

\textit{Case 4: $\struct{B}$ has $\mi$ as a polymorphism.}  We proceed similarly as in the proof of Proposition~4.10 in \cite{barto2019equations}, but using  our Theorem~\ref{testing}. 
\blue{Let $\struct{A}_1$ and $\struct{A}_2$ be as in Definition~\ref{def:dwnu} for a fixed $k\geq 3$.} 
Let $\struct{C}$ be an arbitrary $d$-dimensional pp-power of $\struct{B}$ with the same signature as $\struct{A}_1$ for which there exists a homomorphism $h\colon \struct{A}_1 \rightarrow \struct{C}$. 
We denote \red{the unique relation of $\struct{C}$} (of arity $k$) by $R$.
Since $\struct{B}$ is preserved by $\mi$, by Proposition~\ref{newpoly}, it is also preserved by $\mi_k$.  
\blue{For every $i\in [k]$, we define $\boldf{t}_{i}\coloneqq (h(0),\dots,h(0),h(1),h(0),\dots, h(0))$ where $h(1)$ appears in the $i$-th entry.
%
%
Since $\struct{C}$ is a pp-power of $\struct{B}$ and $\boldf{t}_{i} \in R$ for every $i\in [k]$, it follows from Proposition~\ref{InvAutPol} that $ \mi_{k}(\boldf{t}_{1},\dots,  \boldf{t}_{k}) \in R$, where $\mi_k$ acts doubly component-wise:
\[
\mi_{k}(\boldf{t}_{1},\dots,  \boldf{t}_{k})  = \left( \begin{array}{c} \mi_{k}(h(1),\dots, h(0)) \\ \vdots \\  \mi_{k}(h(0), \dots, h(1))
\end{array}
\right) = \left( \begin{array}{c} \left( \begin{array}{c} \mi_{k}(h(1)\of{1},\dots, h(0)\of{1}) \\ \vdots \\  \mi_{k}(h(1)\of{d}, \dots, h(0)\of{d})
\end{array}
\right) \\ \vdots \\  \left( \begin{array}{c} \mi_{k}(h(0)\of{1},\dots, h(1)\of{1}) \\ \vdots \\  \mi_{k}(h(0)\of{d}, \dots, h(1)\of{d})
\end{array}
\right) 
\end{array}
\right).
\]
Let $\boldf{t}\coloneqq (h(0),\dots, h(0))$.
We claim that $ \mi_{k+1}(\boldf{t}_{1},\dots, \boldf{t}_{i-1},\boldf{t},\boldf{t}_{i},\dots,  \boldf{t}_{k}) \in R$ for every $i\in [k+1]$.
Note that, for all $x,y,x',y' \in \mathbb{Q}$ and all $i,j\in [k]$,  we have 
\[  \mi_{k}(x,\dots,x,\underset{i}{y},x,\dots,x)<\mi_{k}(x',\dots,x',\underset{j}{y'},x',\dots,x') \label{eq:order} \tag{$*$}\]
iff one of the following holds:
\begin{itemize}
\item $\Min(x,y)<\Min(x',y')$, or
\item $\Min(x,y)=\Min(x',y')=x<x'$, or
\item $\Min(x,y)=\Min(x',y')$, $x<y$, $x'<y'$, and $j<i$, or
\item $\Min(x,y)=\Min(x',y')$, $y<x$, $y'<x'$, and $i<j$.
\end{itemize} 
In all four cases, the order in \eqref{eq:order} remains invariant if we replace $\mi_{k}$ with $\mi_{k+1}$ and expand the inputs on the left- and the right-hand side by the variables $x$ and $x'$, respectively, inserted into the same argument with an index from $[k+1]$.
This means that, when viewed as $(k\cdot d)$-dimensional tuples over $\mathbb{Q}$, 
$ \mi_{k}(\boldf{t}_{1},\dots,  \boldf{t}_{k}) $ and  
$ \mi_{k+1}(\boldf{t}_{1},\dots, \boldf{t}_{i-1},\boldf{t},\boldf{t}_{i},\dots,  \boldf{t}_{k})$ have the same order type for every $i\in [k+1]$.
Then, by the homogeneity of $(\mathbb{Q};<)$, for every $i\in [k+1]$, there exists $\alpha_i \in \Aut(\mathbb{Q};<)$ such that $\mi_{k+1}(\boldf{t}_{1},\dots, \boldf{t}_{i-1},\boldf{t},\boldf{t}_{i},\dots,  \boldf{t}_{k}) = \alpha_i \circ \mi_{k}(\boldf{t}_{1},\dots,  \boldf{t}_{k}) $.
Now the claim follows from Proposition~\ref{InvAutPol}.
Consider the map $g\colon[k+1]\rightarrow \mathbb{Q}^d$ given by $g(i)\coloneqq \mi_{k+1}(h(0),\dots, h(1),\dots, h(0))$ where $h(1)$ appears in the $i$-th entry.}
%
%
%
\red{Note that $g(1,\dots, i-1,i+1,\dots, k+1) = \mi_{k+1}(\boldf{t}_{1},\dots, \boldf{t}_{i-1},\boldf{t},\boldf{t}_{i},\dots,  \boldf{t}_{k})$ for every $i\in [k+1]$. Hence, $g$ is a homomorphism from $\struct{A}_2$ to $\struct{C}$.}
Since $\struct{C}$ was chosen arbitrarily, it follows from Theorem~\ref{testing} that $\Pol(\struct{B}) \models \mathcal{E}_{k,k+1}$ for all $k\geq 3$.

\textit{Case 5: $\struct{B}$ has $\elel$ as a polymorphism.} We repeat the strategy above using $\elel_{k}$ instead of $\mi_k$.
For all $x,y,x',y' \in \mathbb{Q}$ and all $i,j\in [k]$,  we have 
\[  \elel_{k}(x,\dots,x,\underset{i}{y},x,\dots,x)<\elel_{k}(x',\dots,x',\underset{j}{y'},x',\dots,x')  \]
iff one of the following holds:
\begin{itemize}
\item $\Min(x,y)<\Min(x',y')$, or
\item $\Min(x,y)=\Min(x',y')$ and $x<x'$, or
\item $\Min(x,y)=\Min(x',y')$, $x=x'<y'$, and $j<i$, or
\item $\Min(x,y)=\Min(x',y')$, $x=x'$, $i=j$, and $y<y'$. 
\end{itemize}

The cases~2-5 can be dualized in order to obtain witnesses for $\mathcal{E}_{k,k+1}$ for $k\geq 3$ in the cases where $\struct{B}$ is preserved by $\Max$, $\dual\,\mi, $ $\dual \,\elel$, and show that $\Pol(\struct{B})$ does not satisfy $\mathcal{E}_{k,k+1}$ for odd $k>1$ if it admits a pp-definition of $-\mathrm{X}$.  

If $\struct{B}$ is a finite structure, then 
$\CSP(\struct{B})$ is in FP / FPC if
and only if $\struct{B}$ does not pp-construct $\struct{E}_{\mathbb{Z}_n,3}$ for every $n\geq 2$ by  Theorem~\ref{finitedomainsituation}.
Then the claim follows from Lemma~\ref{noaffinecombinations}. 
\end{proof}

We can confirm the condition for expressibility in FP from Theorem~\ref{alternativefp} also for the structures $\mathrm{CSS}(\mathcal{F})$ from Theorem~\ref{thm:css}. 

\begin{theorem} \label{alternativefpGWNUS}  
Let $\mathcal{F}$ be a finite set of finite connected structures with a fixed finite signature,
and let $\struct{B} \coloneqq \mathrm{CSS}(\mathcal{F})$. 
Then 
\begin{enumerate} 
\item $\CSP(\struct{B})$ is expressible in $\FP$ / $\FPC$, and 
\item $\Pol(\struct{B})$ satisfies $\mathcal{E}_{k,k+1}$ for all but finitely many $k\in \mathbb{N}$.  
\end{enumerate} 
\end{theorem}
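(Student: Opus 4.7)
For part~(1), the key observation is that a finite $\tau$-structure $\struct{A}$ admits a homomorphism to $\mathrm{CSS}(\mathcal{F})$ if and only if no $\struct{F}\in\mathcal{F}$ admits a homomorphism to $\struct{A}$. Indeed, if $\struct{A}\to\mathrm{CSS}(\mathcal{F})$ and $\struct{F}\to\struct{A}$ for some $\struct{F}\in\mathcal{F}$, the composite would land in a finite substructure of $\mathrm{CSS}(\mathcal{F})$ that embeds into $\mathrm{CSS}(\mathcal{F})$ and yet has $\struct{F}$ as a homomorphic image, contradicting Theorem~\ref{thm:css}. Conversely, if no $\struct{F}\in\mathcal{F}$ maps to $\struct{A}$, then Theorem~\ref{thm:css} embeds $\struct{A}$ into $\mathrm{CSS}(\mathcal{F})$, providing the homomorphism. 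Hence $\text{co-CSP}(\struct{B})$ is defined by the existential positive FO sentence $\bigvee_{\struct{F}\in\mathcal{F}}Q_{\struct{F}}$, and so $\CSP(\struct{B})$ is expressible in $\FP$ (and hence in $\FPC$).

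For part~(2), the plan is to invoke Theorem~\ref{testing} with the structures $\struct{A}_1$ and $\struct{A}_2$ from Definition~\ref{def:dwnu}: satisfaction of $\mathcal{E}_{k,k+1}$ by $\Pol(\struct{B})$ amounts to the condition that $\struct{A}_1\to\struct{C}$ implies $\struct{A}_2\to\struct{C}$ for every pp-power $\struct{C}$ of $\struct{B}$. I would translate this back to $\struct{B}$ via the standard pp-expansion: letting $\phi$ be a pp-formula in $\struct{B}$ defining the single $k$-ary relation of $\struct{C}$, one forms the finite $\tau$-instance $\struct{I}_i$ from $\struct{A}_i$ by replacing each constraint with a fresh copy of $\phi$ on the corresponding primary blocks (one block per element of $A_i$). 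Then $\struct{A}_i\to\struct{C}$ iff $\struct{I}_i\to\struct{B}$, which by part~(1) is equivalent to $\struct{I}_i$ admitting no homomorphism from any $\struct{F}\in\mathcal{F}$.

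The core of the argument is then to show that for $k$ larger than some threshold depending only on the sizes of structures in $\mathcal{F}$, every homomorphism $h\colon\struct{F}\to\struct{I}_2$ lifts to a homomorphism $\struct{F}\to\struct{I}_1$. The strategy exploits that $\struct{F}$ is connected and finite: the image of $h$ uses at most $|F|$ primary variables, hence touches at most $|F|$ of the $k+1$ primary blocks of $\struct{I}_2$ and at most boundedly many of the $k+1$ copies of $\phi$, so for $k$ large most blocks and copies act as ``slack''. The main obstacle will be the combinatorial bookkeeping behind the folding: a short pigeonhole argument shows that no global retraction $\rho\colon[k+1]\to\{0,1\}$ can turn every strictly increasing $k$-tuple simultaneously into a $1\textup{IN}k$-tuple, so one must work locally, retracting only the copies and primary blocks actually touched by $h$, and rerouting the auxiliary variables inside each touched copy of $\phi$ consistently with the identifications that $h$ makes inside $\struct{F}$.
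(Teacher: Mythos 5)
Your part~(1) is exactly the paper's: $\text{co-CSP}(\mathrm{CSS}(\mathcal{F}))$ is defined by $\bigvee_{\struct{F}\in\mathcal{F}}Q_{\struct{F}}$, hence expressible in existential positive $\FO$, hence in $\FP$.

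For part~(2) you depart sharply from the paper, and the departure is where the trouble is. The paper's proof is essentially a citation: Lemma~5.4 of~\cite{bodirsky2019topology} says that $\Pol(\mathrm{CSS}(\mathcal{F}))$ contains a $k$-ary WNU operation for all but finitely many $k$, and (as noted right after Definition~\ref{def:dwnu}) the existence of a $k$-ary WNU already witnesses $\mathcal{E}_{k,k+1}$ by taking all $g_{\boldf{t}}$ equal to the WNU. Your plan instead unwinds Theorem~\ref{testing} and tries to prove the homomorphism-transfer statement ``$\struct{F}\to\struct{I}_2$ implies $\struct{F}\to\struct{I}_1$'' directly, which amounts to re-deriving the content of that lemma for the special condition $\mathcal{E}_{k,k+1}$. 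This is a legitimate route, but your sketch misses the one idea that makes the local folding well-defined. The real obstruction is not, as you write, ``rerouting the auxiliary variables consistently with the identifications that $h$ makes'' (auxiliary variables are fresh per copy, so there is nothing to reconcile); it is that the primary blocks of $\struct{I}_2$ touched by $h$ are shared across several copies of $\phi$, and the per-copy homomorphisms $\sigma_\ell\colon\phi_{i_\ell}\to\psi_{j_\ell}$ must all agree on those shared blocks. The way to arrange this is to pick each target index $j_\ell\in[k]$ outside the set of positions that the touched blocks occupy inside $\phi_{i_\ell}$ (at most $|F|$ forbidden values, so $k>|F|$ suffices); then every touched primary block is sent to the $0$-block by every $\sigma_\ell$ that sees it, and the union of the $\sigma_\ell$ together with the block-collapse is a genuine homomorphism on the image of $h$. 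One must also separately handle the constraints of $\struct{I}_2$ that lie entirely inside primary blocks (and hence may come from an ``untouched'' copy): they collapse to all-$0$-block tuples, which are constraints of any $\psi_j$ with $j$ avoiding the at most $r$ positions involved, available since $k$ exceeds the maximal arity. Without identifying this choice and this case split, the sketch does not constitute a proof; moreover even once filled in it is a substantially longer path than the two-line citation the paper uses.
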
   
\begin{proof}
$\CSP(\mathrm{CSS}(\mathcal{F}))$ is expressible in $\FP$ because it is even expressible in existential positive first-order logic.  
$\Pol(\mathrm{CSS}(\mathcal{F}))$ satisfies $\mathcal{E}_{k,k+1}$ for all but finitely many arities, because it contains WNU operations for all but finitely many arities by Lemma~5.4 in \cite{bodirsky2019topology}. 
\end{proof}

\subsection{\blue{Failures of known pseudo minor conditions}}
In the context of infinite-domain $\omega$-categorical CSPs, most classification results are formulated using  \emph{pseudo minor conditions} \cite{barto2019equations} which extend minor conditions by outer unary operations, i.e., they are of the form
$$e_{1}\circ f_{1}(x^{1}_{1},\dots,x^{1}_{n_{1}})  \approx \cdots \approx e_{k}\circ f_{k}(x^{1}_{k},\dots,x^{k}_{n_{k}}).$$
For instance, the following generalization of a WNU operation was used in \cite{bodirsky2021complexity} to give an alternative classification of the computational complexity of TCSPs.
An at least binary operation $f\in \Pol(\struct{B})$ is called \emph{pseudo weak near-unanimity} (pseudo-WNU) if there exist $e_{1},\dots,e_{n}\in \End(\struct{B})$ such that $$e_{1} \circ f(x,\dots,x,y) \approx e_{2} \circ f(x,\dots,x,y,x) \approx\cdots \approx e_{n} \circ f(y,x,\dots,x).$$

\begin{theorem}[\cite{bodirsky2021complexity}] \label{alternative}   
Let $\struct{B}$ be a temporal structure. Then either $\struct{B}$ has a pseudo-WNU polymorphism and $\CSP(\struct{B})$ is in P, or  $\struct{B}$ pp-constructs all finite structures and $\CSP(\struct{B})$ is NP-complete.  
\end{theorem}
It is natural to ask whether pseudo minor conditions can be used to formulate a generalization of the  3-4 WNU condition from item 7 of Theorem~\ref{finitedomainsituation} that would capture the expressibility in FP for the CSPs of reducts of finitely bounded homogeneous structures. 
One such generalization was considered in \cite{bodirsky2016reducts}. Proposition~\ref{criterion} shows that the criterion provided by Theorem~8 in \cite{bodirsky2016reducts} is insufficient in general. 

\begin{proposition}
\label{criterion} There exist pseudo-WNU polymorphisms $f,g$ of  $(\mathbb{Q};\mathrm{X})$ that satisfy $$f(x, x, y) \approx g(x, x, x, y).$$ 
\end{proposition}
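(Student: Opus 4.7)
My plan is to build the required operations as explicit compositions of copies of $\mx$ and then verify the pseudo-WNU conditions by hand. Specifically, I would set
\[ f(x_1,x_2,x_3) \coloneqq \mx(\mx(x_1,x_2),\mx(x_2,x_3)) \quad\text{and}\quad g(x_1,x_2,x_3,x_4) \coloneqq \mx(\mx(x_1,x_2),\mx(x_3,x_4)). \]
Both are polymorphisms of $(\mathbb{Q};\mathrm{X})$ because $\mx$ is (see Section~\ref{sect:pols}). The linking identity is then trivial: since $\mx(x,x)=\beta(x)$, both $f(x,x,y)$ and $g(x,x,x,y)$ evaluate to $\mx(\beta(x),\mx(x,y))$. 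The choice of $g$ is deliberately symmetric enough that $g$ is in fact a \emph{strict} WNU: all four tuples $g(y,x,x,x),\dots,g(x,x,x,y)$ are equal by the commutativity of $\mx$ and the outer symmetry of its grouping, so the pseudo-WNU identities for $g$ hold with each $e_i$ being the identity.

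The substantive step is to show that $f$ is a pseudo-WNU. First I would compute, using the defining clauses of $\mx$ together with the inequality $\alpha(u)<\beta(u)<\alpha(v)$ for all rationals $u<v$, that
\[ f(y,x,x)=f(x,x,y)=\begin{cases}\alpha^{2}(\Min(x,y)) & x\neq y\\ \beta^{2}(x) & x=y\end{cases} \quad\text{and}\quad f(x,y,x)=\begin{cases}\beta\alpha(\Min(x,y)) & x\neq y\\ \beta^{2}(x) & x=y\end{cases}. \]
So the identity $f(y,x,x)=f(x,x,y)$ already holds strictly (one can take the corresponding outer endomorphisms to be the identity), and what remains is to produce $e\in\End(\mathbb{Q};\mathrm{X})$ with $e\circ f(x,y,x)=f(x,x,y)$. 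This amounts to finding a strictly increasing map $e\colon\mathbb{Q}\rightarrow\mathbb{Q}$ satisfying $e(\beta\alpha(t))=\alpha^{2}(t)$ for all $t$ and $e(\beta^{2}(x))=\beta^{2}(x)$ for all $x$.

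The main technical obstacle is to verify that the partial map on $\beta\alpha(\mathbb{Q})\cup\beta^{2}(\mathbb{Q})$ defined by these two rules is order-preserving, so that it extends to an endomorphism of $(\mathbb{Q};\mathrm{X})$ by homogeneity of $(\mathbb{Q};<)$ (every strictly increasing self-map of $\mathbb{Q}$ preserves $\mathrm{X}$). Here I would exploit the key consequence $\alpha(v)>\beta(u)\Leftrightarrow v>u$ of the defining property of $\alpha,\beta$: this instantly gives $\beta\alpha(s)<\beta^{2}(x)\Leftrightarrow s\leq x\Leftrightarrow \alpha^{2}(s)<\beta^{2}(x)$, which is exactly the compatibility needed. (If the direct extension ever fails for some particularly pathological choice of $\alpha,\beta$, one can fall back on Lemma~4.4 of \cite{barto2019equations} as in the proof of Proposition~\ref{olsakterms}, using the local order-isomorphism on any finite subset to extract a global endomorphism by compactness.)

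Putting everything together, $f$ and $g$ witness the proposition: both are polymorphisms of $(\mathbb{Q};\mathrm{X})$, $g$ is a strict WNU (hence pseudo-WNU), $f$ is pseudo-WNU via the outer endomorphism $e$ just constructed, and $f(x,x,y)\approx g(x,x,x,y)$ holds by inspection.
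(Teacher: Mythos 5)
Your choice of $f$ and $g$, the computations $f(x,x,y)=f(y,x,x)=\alpha^2(\Min(x,y))$ and $f(x,y,x)=\beta\alpha(\Min(x,y))$, the observation that $g$ is a genuine WNU, and the syntactic identity $f(x,x,y)\approx g(x,x,x,y)$ are exactly as in the paper. The paper then shows that $f$ is pseudo-WNU by precisely your fallback route: it verifies that the three one-$y$ tuples have the same order type on every finite $S\subseteq\mathbb{Q}$, uses homogeneity of $(\mathbb{Q};<)$ to obtain automorphisms matching them on $S$, and applies the compactness result Lemma~\ref{lem:construct} to extract the outer endomorphisms. The one point to flag is your attempted shortcut: the claim that an order-preserving partial map on $\beta\alpha(\mathbb{Q})\cup\beta^2(\mathbb{Q})$ ``extends to an endomorphism by homogeneity of $(\mathbb{Q};<)$'' does not follow as stated. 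Homogeneity only extends order-isomorphisms between \emph{finite} substructures; an order-preserving partial map on an infinite subset of $\mathbb{Q}$ need not extend to an increasing self-map, since the interpolating intervals in the image can degenerate depending on the closure behaviour of the fixed $\alpha,\beta$. The order compatibility you verify is necessary but not sufficient, so the parenthetical fallback is not merely a safety net for pathological cases---it is the argument. Since that fallback coincides with the paper's Lemma~\ref{lem:construct} route, the proposal as a whole is sound and essentially reproduces the paper's proof.
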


%
\begin{proof}[Proof of Proposition~\ref{criterion}] Consider the terms 
\begin{align*} f(x_{1},x_{2},x_{3})\coloneqq  &\ \mx(\mx(x_{1},x_{2}),\mx(x_{2},x_{3})), \\
g(x_{1},x_{2},x_{3},x_{4})\coloneqq  &\ \mx(\mx(x_{1},x_{2}),\mx(x_{3},x_{4})).
\end{align*}
\blue{It is easy to see that, for all distinct $x,y\in \mathbb{Q}$, we have}
\begin{align*} f(x,x,y)=f(y,x,x)=\ & \alpha^{2}(\Min(x,y)), \\  f(x,y,x)=\ & \beta(\alpha(\Min(x,y))), \\ 
g(x,x,x,y)= \cdots =g(y,x,x,x)=\ &   \alpha^{2}(\Min(x,y)), 
\end{align*}
where $\alpha,\beta$ are as in the definition of $\mx$.  
We also have $f(x,x,x)= \beta^{2}(x)= g(x,x,x,x)$ for all $x\in \mathbb{Q}$.
\blue{Clearly, $g$ is a WNU, and $f(x,x,y) = g(x,x,x,y)$ holds for all $x,y\in \mathbb{Q}$.
It remains to show that $f$ is a pseudo-WNU.
Our argumentation here is similar \red{to the one in} the proof of Proposition~\ref{olsakterms}, and in fact even simpler because we do not need any of the witnessing unary operations to be equal.
Let $S$ be a finite subset of $\mathbb{Q}$. We define $\struct{B}_1$ and $\struct{B}_2$ as the substructures of $(\mathbb{Q};<)$ on $\{f(y,x,x) \mid x,y\in S\}$ and $\{f(x,y,x) \mid x,y\in S\}$, respectively.
We claim that $h(f(y,x,x))\coloneqq f(x,y,x)$ is an isomorphism from $\struct{B}_1$ to $\struct{B}_2$.

To show that $h$ is well-defined, we must to show that $f(y,x,x)= f(y',x',x')$ implies $y=y'$ and $x=x'$ for all $x,y,x',y' \in S$.
If $x=y$ and $x'=y'$ or $x\neq y$ and $x'\neq y'$, then this follows directly from the fact that $\alpha$ and $\beta$ preserve $<$.
If $x=y$ and $x'\neq y'$, then $f(y,x,x) = f(y',x',x')$ implies $\beta^2(x)=\alpha^2(\min(x',y'))$.  By Lemma~\ref{claim:comparisson}, this is impossible.
Thus, in this case, the claim that $f(y,x,x)= f(y',x',x')$ implies $y=y'$ and $x=x'$  holds trivially.
The remaining case  $x\neq y$ and $x' =y'$ is analogous to the previous one.

Next we show that $h$ is a homomorphism, i.e., that $f(y,x,x)< f(y',x',x')$ implies that $f(x,y,x)<f(x',y',x')$ for all $x,y,x',y' \in S$.
Again, if $x=y$ and $x'=y'$ or $x\neq y$ and $x'\neq y'$, then this follows directly from the fact that $\alpha$ and $\beta$ preserve $<$.
In the remaining two cases we need to additionally use Lemma~\ref{claim:comparisson}.

\emph{Case 1:}  $x=y$ and $x'\neq y'$. 
Suppose that $f(y,x,x) < f(y',x',x')$. Then $\beta^2(x)<\alpha^2(\min(x',y'))$, which implies $x<\min(x',y')$ by Lemma~\ref{claim:comparisson}.
Then $\beta^2(x)<\beta \circ \alpha(\min(x',y'))$ by Lemma~\ref{claim:comparisson} and because $\beta$ preserves $<$. 
Thus $f(x,y,x) < f(x',y',x')$.

\emph{Case 2:}  $x\neq y$ and $x' =y'$.  
Suppose that $f(y,x,x) < f(y',x',x')$. Then $ \alpha^2(\min(x,y))< \beta^2(x')$, which implies $x\leq \min(x',y')$ by Lemma~\ref{claim:comparisson}.
Then $\beta \circ \alpha (\min(x,y))< \beta^2(x')$ by Lemma~\ref{claim:comparisson} and because $\beta$ preserves $<$. 
Thus $f(x,y,x) < f(x',y',x')$.

Hence, $h$ is an isomorphism. Since $(\mathbb{Q};<)$ is homogeneous, there exists $\eta\in \Aut(\mathbb{Q};<)$ extending $h$.
By Lemma~\ref{lem:construct}, there exist $e'$ and $e$ such that $e' \circ f (x,x,y) = e \circ f (x,y,x)$ holds for all $x,y\in \mathbb{Q}$.
Note that then also $e' \circ f (y,x,x) = e \circ f (x,y,x)$ holds for all $x,y\in \mathbb{Q}$. This completes the proof.}
%
%
%
\end{proof}
Another characterisation of finite-domain CSPs in FP that   fails for temporal CSPs is the existence of pseudo-WNU polymorphisms for all but finitely many arities (Proposition~\ref{PWNUs}).
\blue{\begin{definition} For $k\in \mathbb{N}_{\geq 2}$, the $k$-ary $\mx$ operation on $\mathbb{Q}$ is defined by  
\[
\mx_{k}(\boldf{t}) \coloneqq  \begin{cases}\mx(\boldf{t}) & \text{if  } k=2, \\
\mx\!\big(\!\mx_{k-1}({\boldf{t}}\of{1},\dots,{\boldf{t}}\of{k-1}),   \mx_{k-1}({\boldf{t}}\of{2},\dots,{\boldf{t}}\of{k})  \big) & \text{if } k>2. 
\end{cases} 
\]
\end{definition}}
By definition, every structure preserved by $\mx$ is also preserved by $\mx_k$ for $k\geq 3$.
Recall the operations  $\min_k, \mi_k$, and $\elel_k$ from Definition~\ref{definition:PWNUs}.
\begin{proposition}	\label{PWNUs} For every $k\geq 3$,  $\Min_{k}$, $\mx_{k}$, $\mi_{k}$, and $\elel_{k}$ are pseudo-WNU operations.
\end{proposition}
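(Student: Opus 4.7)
The plan is to treat the four families of operations by a uniform strategy based on Lemma~\ref{lem:construct}: show that for every finite $S\subseteq\mathbb{Q}$ and every pair $i,j\in[k]$, the restrictions of the unary operations $x,y\mapsto f(x,\dots,x,\overset{i}{y},x,\dots,x)$ and $x,y\mapsto f(x,\dots,x,\overset{j}{y},x,\dots,x)$ to $S^2$ produce $\{<\}$-substructures of $(\mathbb{Q};<)$ of the same order type. Homogeneity of $(\mathbb{Q};<)$ then provides automorphisms $\alpha_{i,j}\in\Aut(\mathbb{Q};<)$ that identify these restrictions, and Lemma~\ref{lem:construct} supplies outer endomorphisms $e_1,\dots,e_k$ of the relevant temporal structure witnessing the pseudo-WNU identities. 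This is precisely the tactic already used for the binary operations in Proposition~\ref{criterion}.

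The case of $\Min_k$ is immediate because it is symmetric, hence already a genuine WNU, so all $e_i$ may be taken to be identities. For $\mi_k$ and $\elel_k$, the key observation is that both operations have the form $\lex_{k+2}(\Min_k(\boldf{t}),\med_k(h(\boldf{t})),h(\boldf{t}))$ where the outer functions $\Min_k$ and $\med_k$ are symmetric and $h$ is $-\chi$ or the identity. On an input of the shape $(x,\dots,x,\overset{i}{y},x,\dots,x)$, the values of $\Min_k$ and $\med_k$ therefore depend only on the pair $(x,y)$, not on the position $i$, so every difference between $f(x,\dots,x,\overset{i}{y},x,\dots,x)$ and $f(x,\dots,x,\overset{j}{y},x,\dots,x)$ is confined to the trailing $k$-ary block and is, by construction of $\lex_{k+2}$, an order-preserving permutation of that block in the three cases $y<x$, $y=x$, $y>x$. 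A short case analysis on these three cases then establishes that for all $x,x',y,y'\in S$ the comparison $f(x,\dots,x,\overset{i}{y},x,\dots,x)<f(x',\dots,x',\overset{i}{y'},x',\dots,x')$ does not depend on $i$; this is the order-type identity we need.

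For $\mx_k$ we proceed by induction on $k$, mirroring the computation at the end of Proposition~\ref{criterion}. The recursion $\mx_k(\boldf{t})=\mx(\mx_{k-1}(\boldf{t}\of{1},\dots,\boldf{t}\of{k-1}),\mx_{k-1}(\boldf{t}\of{2},\dots,\boldf{t}\of{k}))$ applied to $(x,\dots,x,\overset{i}{y},x,\dots,x)$ expands into a composition of $\mx$'s whose inputs are of the form $\alpha^a\beta^b(\min(x,y))$ for bounded exponents $a,b$, and the outputs in each of the $k$ positions of $y$ differ only by the pattern of $\alpha$-$\beta$ compositions applied to $\min(x,y)$. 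Using $\alpha(x)<\beta(x)<\alpha(x+\varepsilon)$ one checks that the resulting elements all lie in the same convex region of $\mathbb{Q}$ and that their mutual order depends only on the pair $(x,y)$, so again the order-type hypothesis of Lemma~\ref{lem:construct} is satisfied.

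The main obstacle will be the bookkeeping in the $\mx_k$ case, because the recursive definition generates a profusion of $\alpha$-$\beta$ words whose order relations among each other must be read off from $\alpha(x)<\beta(x)<\alpha(x+\varepsilon)$. A clean way to avoid tedious case analysis is to prove by induction on $k$ the stronger statement that there are fixed $\gamma_{1,k},\dots,\gamma_{k,k}\in\Aut(\mathbb{Q};<)$ such that $\mx_k(x,\dots,x,\overset{i}{y},x,\dots,x)=\gamma_{i,k}(\min(x,y))$ for all $x\ne y$, and $\mx_k(x,\dots,x)=\delta_k(x)$ for a single $\delta_k\in\Aut(\mathbb{Q};<)$; the pseudo-WNU identities then reduce to equalities between $\gamma_{i,k}\circ\min$ and $\gamma_{j,k}\circ\min$ up to outer automorphisms of $(\mathbb{Q};<)$. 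Once this is established, the reduction to Lemma~\ref{lem:construct} is routine in all four cases and the proposition follows.
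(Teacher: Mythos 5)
Your plan follows essentially the same approach as the paper: reduce each case to showing that the order type of $f(x,\dots,x,\overset{i}{y},x,\dots,x)$ over a finite $S$ is independent of the position $i$, then invoke homogeneity of $(\mathbb{Q};<)$ and Lemma~\ref{lem:construct}, with $\Min_k$ being a genuine WNU and the $\lex_{k+2}$-structure of $\mi_k$ and $\elel_k$ giving position-independence via symmetry of $\Min_k$ and $\med_k$. Two small points to keep in mind when you flesh out the $\mx_k$ induction: the maps $\gamma_{i,k}=\beta^{m}\alpha^{n}$ you obtain are self-embeddings rather than automorphisms (since $\alpha,\beta$ are endomorphisms of $(\mathbb{Q};<)$), and the inductive hypothesis you state is slightly too weak to push the recursion through on its own — one needs to additionally track when the two recursive sub-applications of $\mx_{k-1}$ yield \emph{equal} values (so that the outer $\mx$ contributes a $\beta$), which the paper does by proving uniqueness of the decomposition $\beta^m\circ\alpha^n$ and carrying an extra invariant along the induction.
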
  
\blue{\begin{proof} \label{proof_PWNUs}
The statement trivially holds for $\Min_{k}$.  
To show the statement for the operation   $\mx_{k}$, we first prove the following claim. Let \red{$\alpha,\beta \in \End(\mathbb{Q};<)$} from the definition of $\mx$.
For every $k\geq 2$ and $i\in \mathbb{Z}$, we define $f_{k,i}(x,y)\coloneq \mx_k(x_1,\dots, x_k)$ where, for every $j\in [k]$, $x_j$ equals $y$ if $j=i$ and $x$ otherwise. Clearly, if $i\notin [k]$, then $f_{k,i}(x,y) = \mx_k(x,\dots, x)$.
\begin{claim} \label{comparing}    
For every $k\geq 2$ and $i\in \mathbb{Z}$, there exist $h_{k,i,1},\dots, h_{k,i,k-2}\in \{\alpha,\beta\}$ such that, for all distinct $x,y\in \mathbb{Q}$, \[f_{k,i}(x,y) = 
\begin{cases}
h_{k,i,k-2}\circ \cdots \circ h_{k,i,1}\circ\alpha (\min(x,y)) & \text{if } i\in [k], \\
\beta^{k-1}(x) & \text{otherwise}.
\end{cases} \]
\end{claim} 
\begin{proof}[Proof of Claim~\ref{comparing}] We prove the statement by induction on $k$.\footnote{\blue{It is possible to prove stronger statements, e.g., that $h_{k,i,j}=\alpha$ for some $j\in [k-2]$ implies $h_{k,i,j'}=\alpha$ for every $j'\in [j]$, but these are irrelevant for the proof of Proposition~\ref{PWNUs}.}} 
In the base case $k=2$, the statement is trivially true by the definition of $\mx$.
In the induction step, suppose that the statement holds for $k-1$.
By the definition of $\mx_k$, $f_{k,i}(x,y) = \mx(f_{k-1,i}(x,y), f_{k-1,i-1}(x,y))$ for all distinct $x,y\in \mathbb{Q}$.
We have the following four cases:

\emph{Case 1:} $i\in [k-1]$ and $i-1\in [k-1]$. Then there exist  $h_{k-1,i,1},\dots, h_{k-1,i,k-3} \in \{\alpha,\beta\}$ and $h_{k-1,i-1,1},\dots, h_{k-1,i-1,k-3}\in \{\alpha,\beta\}$  such that  $f_{k-1,i}(x,y)= h_{k-1,i,k-3}\circ \cdots \circ h_{k-1,i,1} \circ \alpha (\min(x,y)) $ and $f_{k-1,i-1}(x,y)= h_{k-1,i-1,k-3}\circ \cdots \circ h_{k-1,i-1,1} \circ \alpha (\min(x,y)) $. 
By a repeated application of Lemma~\ref{claim:comparisson} and the fact that \red{$\alpha$ and $\beta$ preserve $<$}, we get that either $f_{k-1,i}(x,y) < f_{k-1,i-1}(x,y)$, $f_{k-1,i}(x,y) = f_{k-1,i-1}(x,y)$, or $f_{k-1,i}(x,y) > f_{k-1,i-1}(x,y)$ holds uniformly for all distinct $x,y\in \mathbb{Q}$.
If $f_{k-1,i}(x,y) < f_{k-1,i-1}(x,y)$, then $f_{k,i}(x,y) = \alpha(f_{k-1,i}(x,y))$. We can set $h_{k,i,k-2}\coloneqq \alpha$ and $h_{k,i,j}\coloneqq h_{k-1,i,j}$ for every $j\in [k-3]$.
If $f_{k-1,i}(x,y) = f_{k-1,i-1}(x,y)$, then $f_{k,i}(x,y) = \beta(f_{k-1,i}(x,y))$. We can set $h_{k,i,k-2}\coloneqq \beta$ and $h_{k,i,j}\coloneqq h_{k-1,i,j}$ for every $j\in [k-3]$.
If $f_{k-1,i}(x,y) > f_{k-1,i-1}(x,y)$,  then $f_{k,i}(x,y) = \alpha(f_{k-1,i-1}(x,y))$. We can set $h_{k,i,k-2}\coloneqq \alpha$ and $h_{k,i,j}\coloneqq h_{k-1,i-1,j}$ for every $j\in [k-3]$.

\emph{Case 2:} $i\in [k-1]$ and $i-1\notin [k-1]$. Then there exist $h_{k-1,i,1},\dots, h_{k-1,i,k-3} \in \{\alpha,\beta\}$ such that $f_{k-1,i}(x,y)= h_{k-1,i,k-3}\circ \cdots \circ h_{k-1,i,1}\circ \alpha (\min(x,y)) $, and $f_{k-1,i-1}(x,y)=  \beta^{k-2}(x) $.
By a repeated application of Lemma~\ref{claim:comparisson} and the fact that \red{$\alpha$ and $\beta$ preserve $<$}, we get that  $f_{k-1,i}(x,y) < f_{k-1,i-1}(x,y)$ holds uniformly for all distinct $x,y\in \mathbb{Q}$.
Then $f_{k,i}(x,y) = \alpha(f_{k-1,i}(x,y))$. We can set $h_{k,i,k-2}\coloneqq \alpha$ and $h_{k,i,j}\coloneqq h_{k-1,i,j}$ for every $j\in [k-3]$.

\emph{Case 3:} $i\notin [k-1]$ and $i-1\in [k-1]$. This case is analogous to the one above. We have $f_{k,i}(x,y) = \alpha(f_{k-1,i-1}(x,y))$. Hence we can set $h_{k,i,k-2}\coloneqq \alpha$ and $h_{k,i,j}\coloneqq h_{k-1,i-1,j}$ for every $j\in [k-3]$.

\emph{Case 4:} $i\notin [k-1]$ and $i-1\notin [k-1]$. Then $f_{k-1,i}(x,y)=  \beta^{k-2}(x) $ and $f_{k-1,i-1}(x,y)=  \beta^{k-2}(x) $.
Then $f_{k,i}(x,y)=  \beta^{k-1}(x) $.
\end{proof}  

Let $k\geq 3$, and let $S$ be an arbitrary finite subset of $\mathbb{Q}$.
For a fixed $i\in [k-1]$, let $\struct{B}_1$ be the substructure of $(\mathbb{Q};<)$ on $\{\mx_k(x,\dots,x,y,x,\dots, x) \mid x,y\in S\}$ where $y$ appears in the $i$-th entry, and let $\struct{B}_2$ be the substructure of $(\mathbb{Q};<)$ on $\{\mx_k(x,\dots,x,y) \mid x,y\in S\}$.
Consider the map $h(\mx_k(x,\dots,x,y,x,\dots, x))\coloneqq \mx_k(x,\dots,x,y)$. 
It follows from Claim~\ref{comparing}, Lemma~\ref{claim:comparisson}, and the fact that \red{$\alpha$ and $\beta$ preserve $<$} that $h$ is a well-defined isomorphism from $\struct{B}_1$ to $\struct{B}_2$.
Since $(\mathbb{Q};<)$ is homogeneous, there exists $\eta\in \Aut(\mathbb{Q};<)$ extending $h$.
Now the statement that $\mx_k$ is a pseudo-WNU operation follows directly from Lemma~\ref{lem:construct}.

\begin{figure}[t]
\begin{center}                       
\begin{tikzpicture} 
\node (k) at (0,0) [] {};

\node (k1l) at (-0.5,0.25) [] {};
\node (k1r) at (0.5,0.25)  [] {};
\node  (k2l) at (-1,0.5)  [] {};
\node (k2r)  at (1,0.5) [] {};

\node  (k2m) at (0,0.75)  [] {};

\node (x1) at (-3,1.5)  [] {$x_1$};
\node (x2) at (-2,1.5)  [] {$x_2$};
\node (xk1) at (2,1.5)  [] {$x_{k}$};
\node  (xk) at (3,1.5) [] {$x_{k+1}$};

\node (xk2) at (1,1.5)  [] {$x_{k-1}$};
\node  (x3) at (-1,1.5) [] {$x_3$};
\node  (xm) at (0,1.5) [] {$\cdots$};

\path (k) edge[draw,-, shorten <= -0.14cm] node[above]  {} (x1); 
\path (k) edge[draw,-  , shorten <= -0.14cm] node[above]  {} (xk);

\path (k1l) edge[draw,- , shorten <= -0.14cm] node[above]  {} (xk1); 
\path (k1r) edge[draw,-  , shorten <= -0.14cm] node[above]  {} (x2);

\path (k2l) edge[draw,- , shorten <= -0.14cm] node[above]  {} (xk2); 
\path (k2r) edge[draw,- , shorten <= -0.14cm] node[above]  {} (x3); 

\node  (k) at (0,-0.25)  [] {$f$};
\node (1l)  at (-0.6,0) [] {$f'$}; 
\node (1r)  at (0.6,0) [] {$f''$};

\end{tikzpicture}  
\caption{\label{decomposition}			 An illustration of the term $f=\mx_{k+1}(x_1,\dots,x_{k+1})$ and its subterms.} 
\end{center} 
\end{figure} 

Next, we consider the operations $\mi_k$ and $\elel_k$. 
We proceed exactly as with the operation $\mx_k$, using homogeneity of $(\mathbb{Q};<)$ and Lemma~\ref{lem:construct}. 
The argument boils down to showing that, for both $f\in \{\mi_k,\elel_k\}$, all $x,y,x',y'\in \mathbb{Q}$, and every $i\in [k]$,
we have  
\[f(x,\dots,x,\underset{i}{y},x,\dots,x)<f(x',\dots,x',\underset{i}{y'},x',\dots,x') \quad\text{iff}\quad f(x,\dots,x,y)<f(x',\dots,x',y').\]
This is the case because both the left- and the right-hand side are true if and only if one of the following cases applies.
For  $f=\mi_k$: 
\begin{itemize}
\item $\Min(x,y)<\Min(x',y')$, or 
\item $\Min(x,y)=\Min(x',y')=x<x'$;
\end{itemize} 
for  $f=\elel_k$:   
\begin{itemize}
\item $\Min(x,y)<\Min(x',y')$, or
\item $\Min(x,y)=\Min(x',y')$ and $x<x'$, or
\item $\Min(x,y)=\Min(x',y')$, $x=x'$, and $y<y'$.
\end{itemize}  
This finishes the proof.\end{proof} }

\subsection{\blue{New pseudo minor conditions}}
We present a new candidate for an algebraic condition given by pseudo minor identities that could capture the expressibility in FP for CSPs of reducts of finitely bounded homogeneous structures.
Let $\mathcal{E}'_{k,k+1}$ be the pseudo minor condition obtained from $\mathcal{E}_{k,k+1}$ by replacing each $g_{\boldf{t}}$ in $\mathcal{E}_{k,k+1}$ with  $e_{\boldf{t}}\circ g$ where $e_{\boldf{t}}$ is unary and $g$ has arity $k$. 
For instance, up to further renaming the function symbols, $\mathcal{E}'_{3,4}$ is the following condition:  
\begin{align*} 
b &\circ g(y,x,x)\approx c \circ g(y,x,x)\approx d \circ g(y,x,x), \\
a & \circ g(y,x,x)\approx c \circ g(x,y,x)\approx d \circ g(x,y,x), \\
a & \circ g(x,y,x)\approx b \circ g(x,y,x)\approx d \circ g(x,x,y), \\
a & \circ g(x,x,y)\approx b \circ g(x,x,y)\approx c \circ g(x,x,y). 
\end{align*}   
Note that $\mathcal{E}'_{k,k+1}$ implies the non-trivial minor condition $\mathcal{E}_{k,k+1}$.
%
%
%
%
%
Also note that the existence of a $k$-ary WNU operation implies $\mathcal{E}'_{k,k+1}$.
However, $\mathcal{E}'_{k,k+1}$ is in general not implied by the existence of a $k$-ary pseudo-WNU operation: 
$\Pol(\mathbb{Q};\mathrm{X})$ contains a $k$-ary pseudo-WNU operation for every $k\geq 2$ (Proposition~\ref{PWNUs}) but does not satisfy 
$\mathcal{E}_{k,k+1}$ for every odd $k\geq 3$.
The latter statement follows from Theorem~\ref{alternativefp}, because 
$\CSP({\mathbb Q};\mathrm{X})$ is not in FP (Theorem~\ref{FPxorsat}).
\blue{The proof of Theorem~\ref{alternativefpGWNUS} shows that the statement of the theorem remains true if we replace $\mathcal{E}_{k,k+1}$ with $\mathcal{E}'_{k,k+1}$.
Theorem~\ref{alternativefp} also remains true under such replacement; in its proof, we can simply use Lemma~\ref{lem:construct} instead of Theorem~\ref{testing} for the cases where $\struct{B}$ is a temporal structure preserved by $\mi$, $\elel$, or their duals.}
\begin{corollary}  \label{pseudoalternativefpGWNUS}  Let $\struct{B}$ be as in \blue{Theorem~\ref{alternativefp} or Theorem~\ref{alternativefpGWNUS}}. The following are equivalent.
\begin{enumerate} 
\item $\CSP(\struct{B})$ is expressible in $\FP$ / $\FPC$. 
\item $\Pol(\struct{B})$ satisfies $\mathcal{E}'_{k,k+1}$ for all but finitely many $k\in \mathbb{N}$.  
\end{enumerate}
\end{corollary}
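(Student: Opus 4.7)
The plan is to observe that, exactly as in the proof of Theorem~\ref{alternativefpGWNUS}, both sides of the claimed equivalence hold unconditionally for $\struct{B} = \mathrm{CSS}(\mathcal{F})$, so the equivalence is a formal triviality. In particular, I would not attempt to derive a substantive implication between (1) and (2); instead, I would verify each side independently, reusing verbatim the two halves of the proof of Theorem~\ref{alternativefpGWNUS} and inserting one short strengthening in the second half.

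For (1), I would repeat the first half of the proof of Theorem~\ref{alternativefpGWNUS}: by the defining property of $\mathrm{CSS}(\mathcal{F})$ from Theorem~\ref{thm:css}, the existential positive sentence $\phi_{\mathrm{CSS}(\mathcal{F})} \coloneqq \bigvee_{\struct{A} \in \mathcal{F}} Q_{\struct{A}}$ defines $\text{co-CSP}(\mathrm{CSS}(\mathcal{F}))$, and since existential positive first-order logic is a fragment of $\FP$ (and hence of $\FPC$), expressibility in $\FP$/$\FPC$ follows immediately.

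For (2), I would invoke Lemma~5.4 of \cite{bodirsky2019topology} to obtain a $k$-ary WNU polymorphism $w_k \in \Pol(\mathrm{CSS}(\mathcal{F}))$ for all but finitely many $k \in \mathbb{N}$, just as in the proof of Theorem~\ref{alternativefpGWNUS}. The only new content is to verify that the existence of such a $w_k$ forces $\Pol(\struct{B})$ to satisfy the pseudo minor condition $\mathcal{E}'_{k,k+1}$ (rather than merely its weaker minor version $\mathcal{E}_{k,k+1}$). To this end, I would set $g \coloneqq w_k$ and $e_{\boldf{r}} \coloneqq \mathrm{id}$ for every tuple $\boldf{r}$ indexing a function symbol in $\mathcal{E}'_{k,k+1}$. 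Unpacking Definition~\ref{def:dwnu}, with the fixed enumeration of the $k$ tuples of $R^{\struct{A}_1} = 1\textup{IN}k$, each identity of $\mathcal{E}'_{k,k+1}$ reduces to the assertion $w_k(x,\dots,x,y,x,\dots,x) \approx w_k(x,\dots,x,y,x,\dots,x)$ with $y$ placed in (possibly distinct) positions on the two sides, which is exactly a defining identity of a WNU.

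No genuine obstacle is anticipated. The key observation is the elementary one that a WNU operation witnesses $\mathcal{E}'_{k,k+1}$ with all outer unary maps taken to be identities; this is a direct strengthening of the trivial reduction from WNUs to $\mathcal{E}_{k,k+1}$ that was already used implicitly in Theorem~\ref{alternativefpGWNUS}, and no additional machinery from Section~\ref{section_algebraic_conditions} (such as Theorem~\ref{testing} or Lemma~\ref{triviality2}) is needed for the corollary.
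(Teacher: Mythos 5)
Your proposal is correct and coincides with the paper's own argument: the paper establishes the corollary simply by noting (in the paragraph preceding it) that the existence of a $k$-ary WNU operation already implies $\mathcal{E}'_{k,k+1}$, so that the two halves of the proof of Theorem~\ref{alternativefpGWNUS} carry over verbatim — exactly the route you take, with the WNU witnessing $\mathcal{E}'_{k,k+1}$ via identity outer unaries.
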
 

\red{We would also like to point out that 
under fairly general assumptions on $\struct{B}$ it is possible to algorithmically test wether $\Pol(\struct{B})$ satisfies the pseudo-minor condition $\mathcal{E}'_{k,k+1}$; 
more specifically, if $\struct{B}$ is a homogeneous finitely bounded Ramsey structure (see~\cite{bodirsky2021complexity} for the definition of Ramsey structures and how to appropriately represent such structures on a computer; all first-order expansions of $(\mathbb{Q};<)$ satisfy the given conditions) then this can be shown as in the proof of Theorem 11.6.7 in~\cite{bodirsky2021complexity}. Such decidability results are not known for minor conditions such as $\mathcal{E}_{k,k+1}$.}

%
%

\section{Open questions} 
We have completely classified expressibility of temporal CSPs in the logics FPC, FP, and Datalog. Our results show that all of the characterisations known for finite-domain CSPs fail for temporal CSPs. However, we have also seen new universal-algebraic conditions that characterise expressibility in FP simultaneously for finite-domain CSPs and for temporal CSPs. 
\blue{It is an open problem to find such conditions in the more general setting of the infinite-domain tractability conjecture:
\begin{conjecture}[\cite{barto2019equations}] \label{conjecture}
Let $\struct{B}$ be a reduct of a finitely bounded
homogeneous structure. Then one of the following holds.
\begin{enumerate} 
\item $\struct{B}$ pp-constructs $(\{0,1\};1\mathrm{IN}3)$ (and consequently, $\CSP(\struct{B})$ is NP-complete);
\item $\struct{B}$ is solvable in polynomial time.
\end{enumerate} 
\end{conjecture} 
\noindent For $\struct{B}$ as in Conjecture~\ref{conjecture}, we ask the following questions:
\begin{enumerate} 
\item Is  $\CSP(\struct{B})$ inexpressible in FP whenever  $\Pol(\struct{B})$ does not satisfy the minor condition $\mathcal{E}_{k,k+1}$   for all but finitely many  $k \geq 2$? 
\item We ask the previous question for  
the pseudo-minor condition $\mathcal{E}_{k,k+1}'$ instead of $\mathcal{E}_{k,k+1}$. 
\item  If $\CSP(\struct{B})$ is in FPC, is it also in FP? 
To the best of our knowledge, this could hold for CSPs in general, even without the additional assumptions on $\struct{B}$.
\end{enumerate}

It is also an open question whether FP extended with rank operators modulo all prime numbers (FPR) captures Ptime for finite-domain CSPs.
Another important candidate is  choiceless polynomial time (CPT)~\cite{blass2002polynomial}. 
We propose to extend both candidates to the setting of Conjecture~\ref{conjecture}:
\begin{enumerate}[resume]
\item Does FPR/CPT capture Ptime for CSPs of reducts of finitely bounded homogeneous structures?    
\end{enumerate}  
In the case of CPT, it is not even clear how to show inexpressibility for $\CSP(\{0,1\};1\mathrm{IN}3)$. In the case of FPR, the inexpressibility of $\CSP(\{0,1\};1\mathrm{IN}3)$ follows from Theorem~\ref{REDUCTION} and the results in \cite{gradel2019rank} because $(\{0,1\};1\mathrm{IN}3)$ pp-constructs all finite structures.} 

\appendix

\let\oldaddcontentsline\addcontentsline
\renewcommand{\addcontentsline}[3]{}

\begin{acks}
	We thank Wied Pakusa
	for the idea how to use the multipedes construction in our context \red{and the referees of the conference version and the journal version for the many very useful suggestions.}  
\end{acks}

\bibliographystyle{ACM-Reference-Format}
\bibliography{final}

\end{document}